\def\?[#1]{\textbf{[#1]}\marginpar{\Large{\textbf{??}}}}
\newcommand{\stopthm}{\hfill$\square$\medskip}
\newtheorem{theo}{Theorem}
\newtheorem{prop}{Proposition}[section]
\newtheorem{defi}[prop]{Definition}
\newtheorem{lemm}[prop]{Lemma}
\newtheorem{corr}[prop]{Corollary}
\newtheorem{rem}{Remark}
\numberwithin{equation}{section}
\newcommand{\mc}{\mathcal}
\newcommand{\rr}{\mathbb{R}}
\newcommand{\nn}{\mathbb{N}}
\newcommand{\cc}{\mathbb{C}}
\newcommand{\hh}{\mathbb{H}}
\newcommand{\la}{\lambda}
\newcommand{\eps}{\epsilon}
\newcommand{\de}{\delta}
\newcommand{\pl}{\partial}
\newcommand{\x}{\times}
\newcommand{\om}{\omega}
\newcommand{\til}{\widetilde}
\newcommand{\bbar}{\overline}
\newcommand{\cjd}{\rangle}
\newcommand{\cjg}{\langle}
\newcommand{\demi}{\tfrac{1}{2}}
\newcommand{\eb}{\mathbf{e}}
\DeclareMathOperator{\grad}{grad}
\DeclareMathOperator{\supp}{supp}
\title[X-Ray Transform and Boundary Rigidity]{X-Ray Transform and Boundary
  Rigidity for Asymptotically Hyperbolic Manifolds}  
\author[Graham, Guillarmou, Stefanov, and Uhlmann]{C. Robin Graham} 
\address{Department of Mathematics, University of Washington,
Box 354350\\
Seattle, WA 98195-4350, USA}
\email{robin@math.washington.edu}
\author[]{Colin Guillarmou}
\address{Laboratoire de Math\'ematiques d'Orsay, UMR 8628 du CNRS,
Universit\'e Paris-Sud, 91405 Orsay Cedex, France} 
\email{colin.guillarmou@math.u-psud.fr}
\author[]{Plamen Stefanov}
\address{Department of Mathematics, Purdue University, West Lafayette, IN 47907, USA}
\email{stefanov@math.purdue.edu}
\author[]{Gunther Uhlmann}
\address{Department of Mathematics, University of Washington, Seattle,
WA 98195-4350, USA, Department of Mathematics and Statistics, University of
Helsinki, Finland, and Institute for Advanced Study of the Hong Kong
University of Science and Technology}
\email{gunther@math.washington.edu}
\begin{document}

\begin{abstract}
We consider the boundary rigidity problem for asymptotically hyperbolic
manifolds. We show injectivity of the X-ray transform in several cases and 
consider the non-linear inverse problem which consists of recovering a 
metric from boundary measurements for the geodesic flow.
\end{abstract}

\maketitle

\thispagestyle{empty}

\section{Introduction}
In this work, we consider the problem of the geodesic X-ray transform on 
asymptotically hyperbolic manifolds, and some applications to the boundary
rigidity problem in that non-compact setting.
 
Let $\bbar{M}$ be a compact connected smooth manifold-with-boundary of
dimension $n+1$ with $n\geq 1$.  A smooth metric $g$ on the interior $M$ of 
$\bbar{M}$ is 
said to be \emph{asymptotically hyperbolic} if $\bbar{g}_0:=\rho_0^2g$  
extends to a smooth metric on $\bbar{M}$ with $|d\rho_0|_{\bbar{g}_0}=1$ at  
$\pl \bbar{M}$, where $\rho_0\in C^\infty(\bbar{M};\rr_{\geq 0})$ is a
smooth defining function for $\pl\bbar{M}$, i.e. $\{\rho_0=0\}=\pl\bbar{M}$
with $d\rho_0$ not vanishing at $\pl \bbar{M}$.   
The boundary $\pl \bbar{M}$ equipped with the conformal
class of $\bbar{g}_0|_{T\pl \bbar{M}}$ is called the   
\emph{conformal boundary}, or \emph{conformal infinity}, of $(M,g)$.  It
follows from \cite{GL} that for each metric $h$   
in the conformal infinity, there exists a smooth 
boundary defining function $\rho$ so that $|d\rho|_{\rho^2g}=1$ near $\pl
\bbar{M}$ and $\rho^2g|_{T\pl\bbar{M}}=h$; this function is uniquely 
determined near $\pl \bbar{M}$ by $h$.  Such a function $\rho$ is called
a \emph{geodesic boundary defining function} associated to the conformal
representative $h$.  The flow of the gradient of $\rho$ with respect to the 
metric $\bbar{g}:=\rho^2g$ induces a product decomposition $(0,\eps)_\rho\x
\pl\bbar{M}$ of a collar neighborhood $\mc{C}_\eps$ near $\pl \bbar{M}$
in which the metric has the form  
\[ 
g= \frac{d\rho^2+h_{\rho}}{\rho^2} \quad\textrm{ on } (0,\eps)_\rho\x
\pl\bbar{M},
\]
with $h_{\rho}$ a smooth 1-parameter family of metrics on $\pl\bbar{M}$
which extends smoothly to $\rho\in[0,\eps)$ and satisfies $h_0=h$. For  
convenience, we can extend freely $\rho$ as a smooth positive function to 
$M$ so that $\rho\geq \eps$ in 
$M\setminus \mc{C}_\eps$. The metric $g$ is a complete metric with
sectional curvatures tending to $-1$ at $\pl \bbar{M}$; it has infinite
volume and all convex co-compact hyperbolic manifolds are particular cases
of asymptotically hyperbolic manifolds. 

Geodesics of $g$ can be viewed as integral curves of the Hamiltonian vector
field $X$ of $|\xi|^2_g/2$ on the unit cotangent bundle $S^*M:=\{
(x,\xi)\in T^*M; |\xi|_g=1\}$ of $M$, projected to $M$ by $\pi:S^*M\to M$
the projection on the base.  Geodesics approach $\pl\bbar{M}$ normally and
are determined by their second order deviation from the normal.  In
order to encode this, we introduce an extension   
$\bbar{S^*M}$ of $S^*M$ to $\bbar{M}$.  Recall, from e.g. \cite{Me}, that   
the $b$-cotangent 
bundle ${}^bT^*\bbar{M}$ is a smooth vector bundle on $\bbar{M}$
isomorphic to $T^*M$ over $M$ and with local smooth sections 
$\{d\rho/\rho,dy_1,\dots,dy_n\}$  
near $\pl\bbar{M}$, if $(\rho,y_1,\dots,y_n)$ are local coordinates near $\pl\bbar{M}$. 
The dual metric to $g$, viewed as a
metric on ${}^bT^*\bbar{M}|_M$, extends smoothly to $\bbar{M}$ but
degenerates over $\pl\bbar{M}$.  The extension $\bbar{S^*M}$ is defined to
be the unit cosphere bundle in ${}^bT^*\bbar{M}|_M$ with respect to the 
quadratic form $g$.  It takes the form  
\[
\bbar{S^*M}= S^*M\sqcup \pl_-S^*M\sqcup \pl_+S^*M, 
\]
where each of $\pl_\pm S^*M$ is a canonical subset of 
${}^bT^*\bbar{M}|_{\pl M}$ independent of $g$ which can be identified with  
$T^*\pl\bbar{M}$ upon choosing a metric $h$ in the conformal infinity of
$g$.  Elements of $\pl_\pm S^*M$ correspond by duality using $g$ to 
second order tangential deviations from the normal at a boundary point. 
$\pl_-S^*M$ is regarded as  
the incoming boundary and $\pl_+S^*M$ as the outgoing boundary.   
Each point $z_\pm\in \pl_\pm S^*M$ is the limit of a unique integral curve
of $X$ as as $t\to \pm\infty$.  

The \emph{trapped set} $K$ of the flow
$\varphi_t:S^*M\to S^*M$ of $X$ is the set of points $z\in S^*M$ for which
the integral curve $\{\varphi_t(z); t\in \rr\}$ remains in a compact set; 
since the regions $\{\rho\geq \eps\}$ are strictly convex for small $\eps$,
this can alternatively be defined by (here $\rho$ is lifted to $S^*M$ by
$\pi$)  
\[ 
K:=\{ z\in S^*M;  \inf_{t\in\rr} \rho(\varphi_t(z))>0\}.
\]
This is a compact set that is globally invariant by $\varphi_t$. We say
that $g$ is \emph{non-trapping} if $K=\emptyset$. In this work, we will
consider either the non-trapping case or the case where $K$ is a
\emph{hyperbolic set} for the flow in the following sense: there is a
continuous, flow-invariant splitting of $T_K(S^*M):=T(S^*M)|_K$ of the
form   
\[ 
T_K(S^*M)=\rr X\oplus E_s\oplus E_u 
\] 
where $E_s,E_u$ are subbundles over $K$ satisfying that there is  $\nu>0$
and $C>0$ such that for all $z=(x,\xi)\in K$ and  
\begin{equation}\label{hypofK}
\begin{gathered}
\forall \zeta\in E_s(z), \quad ||d\varphi_t(x).\zeta||_{G}\leq Ce^{-\nu
  t}||\zeta||_{G}, \quad\forall\, t\geq 0,\\
\forall \zeta \in E_u(z), \quad ||d\varphi_t(x).\zeta||_{G}\leq Ce^{-\nu
  |t|}||\zeta||_{G}, \quad \forall\, t\leq 0,
\end{gathered}
\end{equation}
(Here $G$ denotes the Sasaki metric for $g$, see \eqref{sasaki}).     
The incoming and outgoing trapped sets are defined by   
\[\Gamma_\pm:=\{ z\in S^*M;  \inf_{t\in\rr^+} \rho(\varphi_{\mp t}(z))>0\},\]
they correspond to geodesics trapped in the past (+) or in the future
(-).  When $K$ is hyperbolic, then $\Gamma_\pm$ and $K$ have zero Liouville 
measure.   
Each untrapped geodesic $\gamma(t)$ of $g$ converges to a point $y_-\in\pl
\bbar{M}$ in the past and $y_+\in \pl \bbar{M}$ in the future,
and the corresponding integral curve on $S^*M$ converges to some $z_-\in
\pl_-S^*M$ in the past and $z_+\in \pl_+S^*M$ in the future. 
The set of untrapped geodesics is parametrized by 
$\pl_-S^*M\setminus \bbar{\Gamma_-}$,
corresponding to the backward limit of the integral curve.  
In the non-trapping case, $\Gamma_\pm$ are empty.  

Our first result concerns the X-ray transform on symmetric $m$-tensors, 
which can be defined as the operator  
\[\begin{gathered}
I_m: C_c^\infty(M; \otimes^m_ST^*M)\to C^\infty(\pl_-S^*M\setminus
\bbar{\Gamma_-}), \quad  
I_m(f)(z)=\int_{\rr} f(\gamma_{z}(t))(\otimes^m\dot{\gamma}_z(t))dt
\end{gathered}\] 
where $\gamma_z(t)$ is the geodesic with backward limit $z\in
\pl_-S^*M$ (here $\otimes_S^mT^*M$ denotes the bundle of  
symmetric tensors of rank $m$ on $M$). This operator extends 
continuously to the space
$\rho^{1-m}C^\infty(\bbar{M};\otimes_S^mT^*\bbar{M})$.  
\begin{theo}\label{injXray}
Let $(M,g)$ be an asymptotically hyperbolic manifold such that $g$ has no
conjugate points and the trapped set is either empty or a hyperbolic
set. 
Let $f\in \rho^{1-m} C^\infty(\bbar{M};\otimes^m_S T^*\bbar{M})$ satisfy
$I_mf=0$.
\begin{itemize}
\item[(i)] If $m=0$ then $f=0$.
\item[(ii)] If $m=1$, there exists $q\in \rho C^\infty(\bbar{M})$ such that
  $f=dq$.
\item[(iii)] If $m>1$ and if the curvature of $g$ is non-positive, then there
exists a symmetric tensor $q\in \rho^{2-m}C^\infty(\bbar{M},
\otimes^{m-1}_S T^*\bbar{M})$ such that $f=Dq$, where $D$ denotes the
symmetrized covariant derivative.   
\end{itemize}
\end{theo}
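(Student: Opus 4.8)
The plan is to convert $I_mf=0$ into a transport equation on $S^*M$, use the vanishing to fix the fiber-parity of its solution, apply a Pestov-type energy identity adapted to the asymptotically hyperbolic ends to force that solution to have finite fiber-degree, and then read off the potential $q$.

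\emph{Step 1 (transport equation).} For $z\in S^*M\setminus\Gamma_-$ put $u(z):=\int_{-\infty}^{0}f(\gamma_z(t))(\otimes^m\dot\gamma_z(t))\,dt$. Since a $g$-unit vector has length of order $\rho$ in a metric smooth up to $\pl\bbar{M}$, the integrand is $O(\rho)$ along $\varphi_t(z)$ and $\rho(\varphi_t(z))\to0$ exponentially as $t\to-\infty$, so $u$ is well defined and $Xu=\ell f$ on $S^*M\setminus(\Gamma_+\cup\Gamma_-)$, where $\ell f(x,\xi):=f_x(\otimes^m\xi^\sharp)$. Because $I_mf=0$ one also has $u(z)=-\int_0^{\infty}f(\gamma_z(t))(\otimes^m\dot\gamma_z(t))\,dt$, so $u$ extends continuously to $\bbar{S^*M}\setminus(\bbar{\Gamma_+}\cup\bbar{\Gamma_-})$ and vanishes on $(\pl_-S^*M\setminus\bbar{\Gamma_-})\cup(\pl_+S^*M\setminus\bbar{\Gamma_+})$; comparing the two formulas under $\xi\mapsto-\xi$ shows that $u$ has pure fiber-parity $m-1$, i.e.\ $u(x,-\xi)=(-1)^{m-1}u(x,\xi)$. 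I would also establish $u\in\rho\,C^\infty$ up to $\pl\bbar{M}$ away from the trapped set: this is immediate when $g$ is non-trapping, and in the hyperbolic case it follows from the splitting $\rr X\oplus E_s\oplus E_u$, the vanishing of the Liouville measure of $\Gamma_\pm$ and $K$, and the regularity of the forward and backward resolvents of $X$ at a hyperbolic trapped set; the resulting control of the wavefront set of $u$ near $\Gamma_-\cup K$ is what legitimates the integrations by parts below.

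\emph{Step 2 (finite fiber-degree --- the heart).} Decompose $u=\sum_{k\ge0}u_k$ into spherical-harmonic components along the cosphere fibers and split $X=X_++X_-$ into its degree-raising and degree-lowering parts; since $\ell f$ has fiber-degrees $m,m-2,\dots$, the equation $Xu=\ell f$ becomes a ladder system, and the goal is to prove $u_k=0$ for $k\ge m$. For $m=0,1$ the mechanism is the Pestov identity obtained by applying the vertical gradient to $Xu=\ell f$ and integrating the commutator identity for $[X,\nabla^{\mathrm v}]$ over $S^*M$: the \emph{no-conjugate-points} hypothesis furnishes a global symmetric solution of the associated matrix Riccati equation along the geodesics, which recombines the ``energy'' and ``curvature'' terms into a manifestly nonnegative quadratic form (the modified Pestov identity), forcing $u$ to have fiber-degree $\le m-1$. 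For $m\ge2$ the same identity is used, but now the top-degree part of $\ell f$ keeps the vertical-derivative term from dropping out, and one instead invokes \emph{non-positivity of the curvature} to give the curvature term a favorable sign; iterating over the top modes of $u$ again yields $u_k=0$ for $k\ge m$, as in the tensor-tomography energy argument on nonpositively curved manifolds. The step I expect to be hardest is making this identity converge on the non-compact, infinite-volume $S^*M$: the $L^2(S^*M)$ norms natural to Pestov identities diverge at $\rho=0$ when $n\ge2$, so one needs a weighted version tuned to the asymptotically hyperbolic ends --- where the curvature tends to $-1$ and supplies the extra coercivity --- combined with the sharp $O(\rho)$ decay of $u$ and its derivatives so that the contributions supported on $\{\rho=\eta\}$ vanish as $\eta\to0$; and when $K\neq\emptyset$ this must be spliced together with the microlocal control of $u$ near the hyperbolic trapped set so that the integration by parts across the measure-zero sets $\Gamma_\pm$ and $K$ is valid.

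\emph{Step 3 (reading off the potential).} With $u$ of pure fiber-parity $m-1$ and $u_k=0$ for $k\ge m$, the surviving components are exactly the fiber-pullback $u=\ell_{m-1}q$ of a single symmetric $(m-1)$-tensor $q$ on $M$ (fiberwise this is the standard identification of symmetric tensors with polynomials of fixed degree on the sphere). Since $X\circ\ell_{m-1}=\ell_m\circ D$, the transport equation reads $\ell_m f=\ell_m(Dq)$, hence $f=Dq$. For $m=0$ this degenerates to $u\equiv0$ and $f=Xu=0$; for $m=1$, $q=u_0$ is a function and $f=dq$. Since $D$ is overdetermined elliptic, $q$ is smooth in $M$; and unwinding the $g$-unit normalization near $\pl\bbar{M}$ --- from $u(x,\xi)=q_x(\otimes^{m-1}\xi^\sharp)$, the fact that a $g$-unit vector $\xi^\sharp$ has length of order $\rho$ in a metric smooth up to the boundary, and $u\in\rho\,C^\infty$ there --- gives $q\in\rho^{2-m}C^\infty(\bbar{M};\otimes^{m-1}_S T^*\bbar{M})$ for $m\ge2$ and $q\in\rho\,C^\infty(\bbar{M})$ for $m=1$. (The potential gauge introduces no ambiguity here because the only Killing-type tensors compatible with asymptotically hyperbolic ends and the required decay are trivial.)
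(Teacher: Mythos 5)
Your overall strategy (convert to a transport equation, kill high fiber-modes with a Pestov identity, read off the potential) matches the paper's, and Steps 2 and 3 in outline are close to what is done. But there is a genuine gap in Step 1, and it is exactly where you flag the ``hardest step'': the decay you claim for $u$ is neither correct as stated nor strong enough to make the energy identity converge. The hypothesis $f\in\rho^{1-m}C^\infty$ gives only $\pi_m^*f\in\rho\,C^\infty(\bbar{S^*M})$, i.e.\ $O(\rho)$. For the gradient of $u=R_\pm(0)\pi_m^*f$ in the Sasaki metric, the natural estimate involves $\int_0^\infty \rho(\varphi_t(z))^k\,e^{t}\,dt$: the Jacobi fields that control $d\varphi_t$ on vertical vectors grow like $e^t$, while the decay of $\pi_m^*f$ along the flow is $\rho(z)^ke^{-kt}$, so the integral converges \emph{only when $k>1$}. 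With $k=1$ it diverges, and one cannot conclude $\|\nabla u\|_G=O(\rho)$; in fact $\overset{v}{\nabla}u$ need not even be in $L^2(S^*M,|\mu|)$ since $|\mu|\sim\rho^{-1}\cdot(\text{smooth})$. Your hope that ``the sharp $O(\rho)$ decay of $u$ and its derivatives'' makes the boundary contributions at $\{\rho=\eps\}$ vanish is therefore unfounded: $O(\rho)$ is indeed the best one can extract at this stage, and it is not enough.

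What is missing is the boundary determination step that the paper does \emph{before} touching the Pestov identity. Using the explicit family of ``short geodesics'' near $\pl\bbar{M}$ (those with large $|\eta_0|_{h_0}$, which stay in $\{\rho\lesssim\de\}$ and turn around --- Lemma 2.10 in the paper), one expands $I_mf(y_0,R\omega_0)$ in powers of $\de=1/R$ and reads off, order by order, the Taylor coefficients of $f$ at $\pl\bbar{M}$ modulo a gauge term $Dq$ constructed order by order (Proposition 3.13). The conclusion is that $I_mf=0$ forces $f-Dq\in\rho^N C^\infty$ for all $N$. Only after replacing $f$ by $f-Dq$ does one get $\pi_m^*f\in\rho^\infty C^\infty({}^0S^*\bbar{M})$, hence $u\in\rho^\infty C^\infty({}^0S^*\bbar{M})$ with all Sasaki-derivatives in $\rho^N L^\infty$ for every $N$, which is precisely what lets the Pestov identity on $W_\eps=\{\rho\geq\eps\}$ pass to the limit $\eps\to 0$ with vanishing boundary contributions. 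Without this preliminary step your Step 2 does not close. (Two minor points: your Riccati-equation route to nonnegativity of the energy quadratic form under ``no conjugate points'' is a valid alternative to the paper's use of the index form via Santal\'o's formula; and the claim in Step 3 that the only Killing-type tensors ``compatible with the required decay are trivial'' is plausible but needs an argument --- the paper uses a Weitzenb\"ock identity for trace-free conformal Killing tensors vanishing at the boundary.)
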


\noindent
\textbf{Corollary 1.} {\it
Let $(M,g)$ be an asymptotically hyperbolic manifold with negative   
curvature and $m\geq 0$.  If $f\in \rho^{1-m}
C^\infty(\bbar{M};\otimes^m_S T^*\bbar{M})$ satisfies 
$I_mf=0$, then there exists 
$q\in \rho^{2-m}C^\infty(\bbar{M}, \otimes^{m-1}_S T^*\bbar{M})$ 
such that $f=Dq$.  (In particular, if $m=0$, then $f=0$.)}

\noindent
Corollary 1 follows from Theorem~\ref{injXray} since 
manifolds with negative curvature cannot have conjugate points and the 
trapped set, if nonempty, is hyperbolic.  

The X-ray transform for functions was studied on the hyperbolic space
$\hh^{n+1}$ by Helgason and Berenstein-Casadio Tarabusi: injectivity is
proved in \cite{He2} for functions decaying like $e^{-d_g(\cdot,o)}$ for
$o\in \hh^{n+1}$ fixed  
(this corresponds exactly to the decay condition in Theorem \ref{injXray}), 
and an inversion formula is given in \cite{He1,BeCa}. For Cartan-Hadamard
manifolds, recent work by Lehtonen \cite{Le} shows injectivity of $I_0$ in
dimension $2$ and then Lehtonen-Railo-Salo \cite{LRS} extended the
result to higher dimensions and tensors.  In comparison to \cite{Le,LRS},
we allow hyperbolic 
trapping, we do not require $M$ to be simply connected, and for 
$m\in \{0,1\}$ we allow some positive curvature, but      
our assumption about the geometry at infinity is stronger.  We 
have not tried to obtain the sharpest regularity assumptions on $f$ and it
can easily be seen from the proof that the regularity assumptions can be
relaxed -- we refer to \cite{LRS} for sharper regularity conditions. 

The study of the geodesic X-ray transform on compact domains has a long 
history.  \emph{Simple metrics} are metrics on domains with strictly convex 
boundaries for which the exponential map is a diffeomorphism at each
point.  Injectivity of the X-ray transform  goes back to Mukhometov 
\cite{Mu} for functions, then to Anikonov-Romanov \cite{AnRo} for 1-forms, while  
Pestov-Sharafutdinov \cite{PeSh} proved injectivity for all tensors in
negative curvature (see also Paternain-Salo-Uhlmann \cite{PSU2} 
for more general results on tensors). Similar results for tensors of rank 
$m\leq 2$ was shown for analytic simple metrics and for generic simple
metrics by Stefanov-Uhlmann \cite{StUh1}.  
For simple metrics in dimension $2$, injectivity for $2$-tensors was first
shown by Sharafutdinov \cite{Sh2} and has been proved recently by
Paternain-Salo-Uhlmann \cite{PSU1} for  tensor fields of all ranks. For 
manifolds with strictly convex foliations, injectivity is 
shown in Uhlmann-Vasy \cite{UhVa} for functions and in
Stefanov-Uhlmann-Vasy \cite{SUV2}  for $2$-tensors.  Injectivity for all
tensors for all metrics 
with negative curvature and strictly convex boundary is proved in
Guillarmou \cite{Gu}, without simplicity assumptions.  Microlocal analysis 
of the X-ray transform for some cases with conjugate points was done in
\cite{StUh3, StUh4, MSU, HoUh} with generic uniqueness and stability
results for a certain class of non-simple metrics in \cite{StUh3}.  

To prove Theorem \ref{injXray}, we need to do a careful analysis of the
geodesic flow near infinity.  We show that the X-ray transform determines
the function (or tensor modulo $Dq$ terms) up to $\mc{O}(\rho^\infty)$ at
the boundary by using the  
``short geodesics'', i.e. those geodesics staying in regions   
$\{\rho\leq \eps\}$ for small $\eps>0$. We then conclude by using Pestov
identities on large regions $\{\rho\geq \eps\}$, with $\eps\to 0$.  We also
use the results of \cite{Gu} to deal with the trapped case. 
We observe that our assumptions in (i) and (ii) of Theorem~\ref{injXray}  
allow conjugate points at infinity, in the sense that there could be Jacobi 
fields vanishing at the endpoints $y_-,y_+$ at infinity along a non-trapped
geodesic.

The \emph{boundary rigidity} problem for simple metrics on compact domains
asks if one can recover a simple metric from its boundary distance function
(the set of distances between boundary points).  Many results are known on
the boundary and lens rigidity problems in the compact setting, we refer to
the surveys \cite{Sh1,Cr,Iv,StUh2} and to the introduction of \cite{SUV3}
for references.   Here, we consider an analogue of the boundary rigidity
problem for asymptotically hyperbolic metrics.  First, for each $z_-\in
\pl_-S^*M$, there is a unique geodesic $\gamma_{z_-}$ with backward limit
$z_-$. If $\gamma_{z_-}$ is not trapped in the future, we denote its
forward limit by $z_+\in \pl_+S^*M$.  Thus we can define a map   
\[ 
S_g: \pl_-S^*M\setminus \bbar{\Gamma_-}\to \pl_+S^*M\setminus 
\bbar{\Gamma_+}, \quad S_g(z_-)=z_+\] called the \emph{scattering map} for
the geodesic flow. It is a symplectic map  
with respect to the canonical symplectic structures on $\pl_\pm S^*M$
induced by their identifications with $T^*\pl\bbar{M}$.  
For such a geodesic $\gamma_{z_-}$, given a defining function $\rho$, we
define the \emph{renormalized length} relative to $\rho$ by  
\[
 L_g(z_-):= \lim_{\eps\to 0}\big(\ell_g( \gamma_{z_-}\cap\{\rho\geq
 \eps\})+2\log \eps\big)
\]
where $\ell_g$ denotes the length for the metric $g$.
We show that $L_g$ is a well-defined function on $\pl_-S^*M\setminus 
\bbar{\Gamma_-}$ which  
depends on the choice of $\rho$ in a simple explicit fashion (see 
\eqref{dependLg}).  We may also view $L_g$ as determined by a choice of
representative metric $h$ in the conformal infinity by taking $\rho$ to be
the corresponding geodesic defining function.  The functions $L_g$ and  
$S_g$ are closely related to the sojourn time and scattering relation
appearing in Sa Barreto-Wang \cite{SaWa2}.   
Renormalized volumes, areas
and lengths already appeared quite naturally when analyzing the geometry of    
asymptotically hyperbolic Einstein manifolds and in the AdS/CFT 
correspondence (see for example \cite{Gra,AlMa}). 
Boundary rigidity and integral geometry appear in the physics literature
concerning the AdS/CFT duality and holography as well, 
see \cite{PoRa, CLMS}.  

We first show that the renormalized length data determine the metric to
infinite order at the boundary.
\begin{theo}\label{determination}
Let $\bbar{M}$ be a compact connected manifold-with-boundary and let $g,g'$
be two asymptotically hyperbolic metrics on $M$. Suppose for some choices
$h$ and $h'$ of conformal representatives in the conformal infinities of
$g$ and $g'$, the renormalized lengths agree for the two metrics,
i.e. $L_g=L_{g'}$.  Then there exists a diffeomorphism    
$\psi:\bbar{M}\to \bbar{M}$ which is the identity on $\pl\bbar{M}$ and such 
that $\psi^*g'-g=\mc{O}(\rho^\infty)$ at $\pl\bbar{M}$. 
\end{theo}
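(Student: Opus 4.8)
\medskip
\noindent\textbf{Proof proposal.}
The plan is to reduce Theorem~\ref{determination} to a local computation near $\pl\bbar{M}$, carried out using the \emph{short geodesics} — those staying in a collar $\{\rho\le\eps_0\}$ — whose renormalized lengths depend only on the collar metric. Fix geodesic boundary defining functions $\rho$ for $g$ associated to $h$ and $\rho'$ for $g'$ associated to $h'$, with the induced product decompositions
\[
g=\frac{d\rho^2+h_\rho}{\rho^2},\qquad g'=\frac{d{\rho'}^2+h'_{\rho'}}{{\rho'}^2}
\]
on collar neighborhoods of $\pl\bbar{M}$, where $h_0=h$ and $h'_0=h'$; since the geodesic defining function of a conformal representative is uniquely determined near $\pl\bbar{M}$, these decompositions are canonical. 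It suffices to prove that $L_g=L_{g'}$ forces $\pl_\rho^kh_\rho|_{\rho=0}=\pl_\rho^kh'_\rho|_{\rho=0}$ for all $k\ge0$ (the case $k=0$ being $h=h'$; here and below $h'_\rho$ denotes the family $h'_{(\cdot)}$ evaluated at parameter value $\rho$). Indeed, granting this, let $\psi$ be the diffeomorphism of a collar of $\pl\bbar{M}$ carrying the point with $g$-normal-coordinates $(\rho,y)$ to the point with $g'$-normal-coordinates $(\rho,y)$, extended to a diffeomorphism of $\bbar{M}$ equal to the identity outside the collar; then $\psi|_{\pl\bbar{M}}=\mathrm{id}$ and, in $g$-normal-coordinates, $\psi^*g'-g=\rho^{-2}\big(h'_\rho-h_\rho\big)=\mc{O}(\rho^\infty)$, which is the assertion.

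Next I would study the short geodesics. For $z_-\in\pl_-S^*M$ whose associated covector $\eta\in T^*_y\pl\bbar{M}$ has large norm $|\eta|_h$, the geodesic $\gamma_{z_-}$ enters the collar, attains a small maximal value $r\sim|\eta|_h^{-1}$ of $\rho$, and exits at a nearby boundary point; in the near-hyperbolic collar such geodesics exist, are unique, untrapped and conjugate-point-free, so $L_g(z_-)$ is well defined and is a smooth function of $h_\rho|_{[0,\eps_0)}$ alone. Comparing with $\hh^{n+1}$, in which such a geodesic is a Euclidean semicircle of radius $r$ with renormalized length $2\log(2r)$, one obtains an asymptotic expansion as $|\eta|_h\to\infty$ of the form
\[
L_g(z_-)=-2\log|\eta|_h+a_0(y,\hat\eta)+\sum_{k\ge1}a_k(y,\hat\eta)\,|\eta|_h^{-k},\qquad \hat\eta=\eta/|\eta|_h .
\]
Its leading term recovers the norm $|\cdot|_h$ and hence $h$; applying the same analysis to $g'$ and using $L_g=L_{g'}$, while keeping track of the dependence of $L_g$ on $\rho$ recorded in \eqref{dependLg} and of the dependence of the identification $\pl_-S^*M\cong T^*\pl\bbar{M}$ on the conformal representative, one concludes $h=h'$.

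The core of the argument is the identification of the coefficients $a_k$. The key observation is that, to leading order, the short geodesic lies in the $2$-plane spanned by $\pl_\rho$ and the tangential direction $\hat\eta^\sharp$, so the tangential part of $\dot\gamma$ is at every point a multiple of $\hat\eta^\sharp$; consequently the $\rho^k$-term of the collar metric enters the length only through the scalar $\big(\pl_\rho^kh_\rho|_0\big)(\hat\eta^\sharp,\hat\eta^\sharp)$, with a strictly positive universal prefactor (an integral over the model semicircle of $\rho^k$ against the squared tangential speed). A recursive bookkeeping then shows that, once $h$ and $\pl_\rho h_\rho|_0,\dots,\pl_\rho^{k-1}h_\rho|_0$ are known, the function $a_k(y,\cdot)$ determines the quadratic form of $\pl_\rho^kh_\rho|_0$ at $y$, hence the symmetric $2$-tensor $\pl_\rho^kh_\rho|_0$ itself; since the normal form has already absorbed the diffeomorphism gauge, there is nothing to quotient by at this stage. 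As $L_g=L_{g'}$ and $h=h'$, the same procedure yields $\pl_\rho^kh_\rho|_0=\pl_\rho^kh'_\rho|_0$ for all $k$, which closes the reduction of the first paragraph.

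I expect the main obstacle to be making the second and third steps precise: establishing the asymptotic expansion of $L_g$ along the shrinking short geodesics and pinning down the structure of its coefficients. This requires a perturbative solution of the geodesic equation in the collar for the near-hyperbolic metric $\rho^{-2}(d\rho^2+h_\rho)$, a careful accounting at each order of the contributions to the length coming not only from $\pl_\rho^kh_\rho|_0$ evaluated on the model semicircle but also from lower-order Taylor coefficients evaluated on the geodesic corrections (which must be shown to be already determined by the induction), and a renormalization of the divergent length integral that is uniform in the shrinking parameter. The bookkeeping for the dependence of $L_g$ and of the identifications of $\pl_\pm S^*M$ with $T^*\pl\bbar{M}$ on the choices of defining function and conformal representative is routine but must be tracked; the final construction of $\psi$ by matching normal coordinates and extending over $\bbar{M}$ is standard.
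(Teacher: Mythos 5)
Your proposal follows essentially the same route as the paper's proof: normalize both metrics with respect to a common defining function via a boundary-fixing diffeomorphism, expand $L_g$ along the shrinking short geodesics in the small parameter $\delta=|\eta|_h^{-1}$, recover $h$ from the $-2\log|\eta|_h$ leading term, and then argue inductively that the $\delta^k$ coefficient determines $\pl_\rho^k h_\rho|_{\rho=0}$ because it contains $(\pl_\rho^k h_\rho|_0)(\hat\eta^\sharp,\hat\eta^\sharp)$ with a nonzero universal factor plus terms controlled by lower-order Taylor data. The "main obstacle'' you flag — carrying out the perturbative solution of the geodesic equation in the collar and the order-by-order bookkeeping — is precisely what the paper's Lemma~\ref{smallgeo} and the proof of Theorem~\ref{determination} execute, via the rescaled system \eqref{intY2} in the $\theta$ variable and the observation that the equation for $\pl_\delta^k\omega|_{\delta=0}$ involves only $\pl_\rho^l h_\rho|_0$ with $l\le k-1$ while $\pl_\delta^k F|_{\delta=0}$ produces $k\,\pl_\rho^k h(0,y_0)(\omega_0,\omega_0)\int_0^\pi(\sin\theta)^{k-1}d\theta$ as the top-order term.
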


As a consequence of Theorem~\ref{determination}, we deduce boundary
rigidity for real-analytic metrics under a topological hypothesis.  If
$\bbar{M}$ is a real-analytic 
manifold-with-boundary, we say that a metric $g$ on $M$ is a real-analytic 
asymptotically hyperbolic metric if $g$ is real-analytic, asymptotically
hyperbolic, and $\bbar{g}=\rho^2 g$ is real-analytic up to $\pl\bbar{M}$,
where $\rho$ is a real-analytic defining function for $\pl\bbar{M}$.   

\begin{theo}\label{realanalytic}
Let $\bbar{M}$ be a compact connected real-analytic
manifold-with-boundary such that 
$\pi_1(\bbar{M},\pl\bbar{M})=0$.  Let $g,g'$ be two real-analytic 
asymptotically hyperbolic metrics on $M$. If $L_g=L_{g'}$ for some
real-analytic metrics $h$ and $h'$ in the
conformal infinities of $g$ and $g'$, then there exists a real-analytic 
diffeomorphism $\psi:\bbar{M}\to \bbar{M}$ which is the identity on
$\pl\bbar{M}$ and such that $\psi^*g'=g$.  
\end{theo}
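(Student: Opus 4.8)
The plan is to combine Theorem~\ref{determination} with a real-analytic unique continuation argument, exploiting the topological hypothesis $\pi_1(\bbar{M},\pl\bbar{M})=0$ to globalize a boundary-to-interior matching. First I would apply Theorem~\ref{determination} to the two real-analytic metrics $g,g'$: since $L_g=L_{g'}$, there is a diffeomorphism $\psi_0:\bbar M\to\bbar M$ that is the identity on $\pl\bbar M$ with $\psi_0^*g'-g=\mc O(\rho^\infty)$ at $\pl\bbar M$. A small technical point is that Theorem~\ref{determination} produces only a smooth $\psi_0$; I would upgrade it to a real-analytic one by inspecting its construction (it is built from the formal power series of the metrics in a geodesic normal/defining-function coordinate system, which are real-analytic data), or alternatively replace $g'$ by $\psi_0^*g'$ and henceforth assume $g$ and $\psi_0^*g'$ agree to infinite order at the boundary while both remaining real-analytic up to $\pl\bbar M$.

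Next I would set up the unique continuation. Write $\bbar g=\rho^2 g$ and $\bbar g'=\rho^2\psi_0^*g'$, two real-analytic metrics on $\bbar M$ that agree to infinite order along the real-analytic hypersurface $\pl\bbar M$. By real-analyticity, two real-analytic functions (or tensors) on a connected real-analytic manifold that agree to infinite order at a point agree on the whole connected component; applying this near a boundary point gives $\bbar g=\bbar g'$ on a full collar neighborhood $\mc C_\eps$ of $\pl\bbar M$, hence $g=\psi_0^*g'$ there. The remaining task is to propagate this equality from the collar into all of $M$. Here is where $\pi_1(\bbar M,\pl\bbar M)=0$ enters: every point of $\bbar M$ is joined to $\pl\bbar M$ by a path, and the vanishing of the relative fundamental group means any two such paths are homotopic rel endpoints; so the ``germ of coincidence'' of the real-analytic metrics, which is stable along paths by the identity theorem, extends consistently and single-valuedly to all of $\bbar M$. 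Concretely I would cover $\bbar M$ by real-analytic charts, use the identity theorem chart-by-chart along a path from $\mc C_\eps$, and use the simply-connectedness rel boundary to rule out monodromy obstructions; the conclusion is $g=\psi_0^*g'$ on all of $M$, i.e. $\psi:=\psi_0$ works, with $\psi$ real-analytic (or the real-analytic $\psi$ obtained above) and equal to the identity on $\pl\bbar M$.

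The main obstacle I expect is the passage from ``agrees to infinite order at the boundary'' to ``agrees on a neighborhood'': real-analyticity \emph{up to} the boundary is exactly what licenses this, but one must be careful that the hypothesis ``$\bbar g=\rho^2g$ real-analytic up to $\pl\bbar M$ for a real-analytic defining function $\rho$'' is invariant under the (real-analytic) change of boundary defining function needed to realize the chosen conformal representatives $h,h'$, and that $\psi_0$ can indeed be taken real-analytic. Once coincidence holds on a collar, the global propagation is the familiar ``real-analytic continuation is unobstructed on a simply connected (rel boundary) manifold'' argument and is essentially formal. I would also remark that the topological hypothesis cannot be dropped: on a manifold with nontrivial $\pi_1(\bbar M,\pl\bbar M)$ one can have real-analytic metrics agreeing near the boundary but differing in the interior with the same renormalized length data, so the role of $\pi_1(\bbar M,\pl\bbar M)=0$ is genuinely to guarantee that the locally-defined coincidence extends to a global one.
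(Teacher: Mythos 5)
Your overall strategy agrees with the paper: apply Theorem~\ref{determination} to get infinite-order agreement at the boundary, use real-analyticity to promote this to coincidence on a collar, and invoke $\pi_1(\bbar{M},\pl\bbar{M})=0$ to globalize. But there is a genuine gap in the globalization step, and it stems from which object you analytically continue.

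You aim to conclude ``$g=\psi_0^*g'$ on all of $M$, i.e.\ $\psi:=\psi_0$ works'' by propagating the \emph{tensor} identity $g-\psi_0^*g'=0$ from the collar via the identity theorem. But $\psi_0$ is only real-analytic near $\pl\bbar{M}$; globally it is merely smooth. Hence $\psi_0^*g'$ is not a real-analytic tensor on $\bbar{M}$, the difference $g-\psi_0^*g'$ is not real-analytic in the interior, and there is no identity theorem to apply chart-by-chart. Moreover, if you could actually upgrade $\psi_0$ to a globally real-analytic diffeomorphism, then $g-\psi_0^*g'$ would be a real-analytic tensor on the \emph{connected} $\bbar{M}$ vanishing to infinite order at the boundary, hence identically zero, with no $\pi_1$ hypothesis needed at all---which is inconsistent with your own (correct) remark that the topological hypothesis is essential. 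This is a sign that the tensor-propagation viewpoint is the wrong one.

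What the paper does instead is analytically continue the \emph{map}. Take $\psi$ to be the real-analytic diffeomorphism near $\pl\bbar{M}$ from \eqref{choicediffeo}; by Theorem~\ref{determination} and real-analyticity it is a local isometry from $(M,g)$ to $(M,g')$ on a collar. The classical theorem (Corollary 6.4, p.\,256 of \cite{KN}) extends a local isometry germ between complete real-analytic Riemannian manifolds along any path, using completeness of $g$ and $g'$; the only issue is monodromy, i.e.\ path-independence of the continuation. For $\bbar{M}$ simply connected this is automatic. In general, $\pi_1(\bbar{M},\pl\bbar{M})=0$ means a loop based at a boundary point can be contracted into $\pl\bbar{M}$, where $\psi$ is already globally defined and single-valued, so the monodromy is trivial. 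The output is a \emph{new} real-analytic map $\psi$ on all of $\bbar{M}$ (not $\psi_0$ extended); its inverse is built the same way, and $\psi^*g'=g$ holds by analytic continuation of the local isometry property. So your instincts about the role of $\pi_1(\bbar{M},\pl\bbar{M})=0$ and monodromy are right, but the object that carries the monodromy obstruction is the isometry germ, not a tensor identity attached to a fixed smooth $\psi_0$.
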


In Theorems~\ref{determination} and \ref{realanalytic}, we only require
that $L_g=L_{g'}$ on a neighborhood of infinity in $\pl_-S^*M$, 
corresponding to short geodesics.  
We will show that $\pl_-S^*M\cap\bbar{\Gamma_-}$ is a compact set, so the
domain of $L_g$ always contains a full neighborhood of infinity.     

In the case of compact simple metrics, 
the determination of the metric and the curvature at $\pl \bbar{M}$ was
proved by Michel \cite{Mi}, and the result corresponding to Theorem 
\ref{determination} was shown by 
Lassas-Sharafutdinov-Uhlmann \cite{LSU} (see also Stefanov-Uhlmann
\cite{StUh3} for  non-simple metrics).  
A rigidity result for real-analytic metrics on compact
manifolds-with-boundary is proved in \cite{Va}.     

Finally, we prove a deformation rigidity result for the boundary rigidity 
problem.  We define a non-trapping asymptotically hyperbolic manifold to be 
\emph{simple} if it has no conjugate points
at infinity in the sense stated above.  This holds in particular 
for non-trapping metrics when the sectional curvature is non-positive.
This definition of simple in the asymptotically hyperbolic case is a weaker
starting point than the requirement 
that the exponential map be a diffeomorphism in the compact case.  We show
that if $(M,g)$ is simple, then its geodesic flow is hyperbolic.
This together with a recent result of Knieper \cite{Kn} giving sufficient    
conditions for no conjugate points on a complete non-compact Riemannian
manifold with hyperbolic geodesic flow imply that a simple asymptotically
hyperbolic manifold has no 
conjugate points.  Other consequences of the hyperbolicity of the geodesic
flow are that for each pair of points $y_-\not=y_+\in \pl\bbar{M}$,   
there is a unique geodesic with endpoints $y_\pm$, and that the   
exponential map extends smoothly to the boundary as a diffeomorphism in an
appropriate sense (Propositions~\ref{uniquegeodesics} and \ref{Ediff}).   
The fact that there is a unique geodesic joining any two boundary points
enables us to define the renormalized boundary distance relative  
to a defining function $\rho$ by
\[ 
d_g^R: \pl\bbar{M}\x\pl\bbar{M}\setminus{\rm diag}\to \rr, \quad 
d_g^R(y_-,y_+):=L_g(z_-)
\]
where $z_-\in \pl_-S^*_{y_-}M$ is defined by the equation
$S_g(z_-)=z_+$ for some $z_+\in \pl_+S^*_{y_+}M$.  
\begin{theo}\label{deformation}
Let $\bbar{M}$ be a compact connected manifold-with-boundary and suppose
that for  
$s\in[0,1]$, $g(s)$ is a smooth family of non-trapping asymptotically  
hyperbolic metrics with non-positive sectional curvature.  Assume that for
some smooth family $h(s)$ of representatives of the conformal
infinities of $g(s)$, one of the following two conditions holds: 
\begin{itemize}
\item[(i)] The renormalized length functions $L_{g(s)}$ and scattering maps
$S_{g(s)}$ are constant in $s$.  (Here $S_{g(s)}$ is a viewed as a map
  $:T^*\pl\bbar{M}\to T^*\pl\bbar{M}$ via the identifications induced by
  $h(s)$).  

\item[(ii)]
The renormalized boundary distance functions $d_{g(s)}^R$ are constant in
$s$.  
\end{itemize}

\noindent
Then there is a
smooth family of diffeomorphisms $\psi(s):\bbar{M}\to \bbar{M}$ for $s\in
[0,1]$ which satisfies $\psi(s)^*g(s)=g(0)$ and 
$\psi(s)|_{\pl \bbar{M}}={\rm Id}$.    
\end{theo}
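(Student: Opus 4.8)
The plan is to reduce the deformation rigidity statement to the injectivity result for the X-ray transform on symmetric 2-tensors (Theorem~\ref{injXray}(iii), which applies since non-positive curvature and non-trapping give simplicity and a hyperbolic trapped set that is in fact empty), by differentiating the hypotheses in $s$. First I would set $\dot g := \partial_s g(s)$ at a fixed parameter value $s_0$ and note that, after pulling back by a suitable $s$-dependent diffeomorphism fixing the boundary (to be constructed at the end, or absorbed via a gauge choice), it suffices to show $\dot g = -D q$ for a symmetric 1-tensor $q$ with the appropriate vanishing rate $q\in \rho^0 C^\infty(\bbar M, T^*\bbar M)$; then solving the ODE $\partial_s \psi(s) = $ (the vector field dual to $q$) $\circ\,\psi(s)$ with $\psi(s_0)=\mathrm{Id}$ produces the family $\psi(s)$ with $\psi(s)^*g(s)=g(0)$, by the standard Moser-type argument (the symmetrized covariant derivative is precisely the linearization of pullback). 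The boundary condition $\psi(s)|_{\pl\bbar M}=\mathrm{Id}$ follows if the generating vector field is tangent to $\pl\bbar M$ and vanishes there to the right order, which the decay rate of $q$ ensures.

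The heart of the matter is to show that the infinitesimal data force the integral $I_2(\dot g^{\mathrm{tt}})=0$, where $\dot g^{\mathrm{tt}}$ is the solenoidal (trace-and-divergence-adjusted) part of $\dot g$, and then to identify $\dot g$ with a potential term. Under hypothesis (i): differentiating $L_{g(s)}(z_-)=\mathrm{const}$ in $s$ gives, by the first variation of arclength along geodesics (the renormalization by $2\log\eps$ is $s$-independent once $h(s)$ is fixed as a conformal representative, modulo a correction that I must check vanishes or can be gauged away using the explicit dependence formula \eqref{dependLg}), that $\int_{\gamma_{z_-}}\dot g(\dot\gamma,\dot\gamma)\,dt = 0$ for every non-trapped geodesic; since here every geodesic is non-trapped and has well-defined endpoints at infinity, this is exactly $I_2(\dot g)(z_-)=0$ on all of $\pl_-S^*M$. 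The condition that $S_{g(s)}$ be constant is what guarantees that the variation of the geodesic endpoints contributes nothing to the first variation (the boundary terms in the first-variation formula involve the endpoint displacement, which is controlled by $\dot S_g=0$), so that the first variation is purely the interior integral $I_2(\dot g)$. Under hypothesis (ii): $d_{g(s)}^R(y_-,y_+)=\mathrm{const}$ means $L_{g(s)}$ is constant along the (unique) geodesic between each pair of boundary points, and the simple/hyperbolic structure (Propositions~\ref{uniquegeodesics}, \ref{Ediff}) implies that varying the boundary points sweeps out a full-measure set of geodesics and pins down the endpoint data, so again $I_2(\dot g)=0$; here one does not separately assume $S_{g(s)}$ constant, but the variational identity for the renormalized distance between \emph{fixed} boundary points automatically kills the endpoint boundary terms.

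With $I_2(\dot g)=0$ in hand, Theorem~\ref{injXray}(iii) (using non-positive curvature) yields a symmetric 1-tensor $q\in\rho^{0}C^\infty(\bbar M, T^*\bbar M)$ with $\dot g = D q$; up to sign this is the infinitesimal gauge, and integrating in $s$ via Moser's method as above gives the family $\psi(s)$. I expect the main obstacle to be the careful first-variation computation of the \emph{renormalized} length: one must verify that the divergent $2\log\eps$ counterterm and the collar-dependence of $\rho$ (through $h(s)$) do not spoil the identity $\partial_s L_{g(s)} = I_2(\dot g)$, i.e. that all the boundary-layer contributions either cancel by the renormalization or are governed by the explicit formula \eqref{dependLg} and hence can be arranged to vanish by an allowed $s$-dependent choice of conformal representative or defining function. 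A secondary technical point is checking the decay rate of $q$ produced by Theorem~\ref{injXray} is exactly what is needed for the generating vector field to be tangent to and vanishing at $\pl\bbar M$, so that $\psi(s)$ restricts to the identity on the boundary and the whole family is smooth up to $\bbar M$.
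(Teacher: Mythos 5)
Your overall strategy — linearize to $I_2(\dot g)=0$, invoke Theorem~\ref{injXray}(iii), and integrate the resulting potential field via a Moser argument — is the same skeleton the paper uses. But there is a genuine gap at exactly the point you flag as "the main obstacle," and the resolution is not merely a more careful computation: the proof needs an entirely separate ingredient that your proposal omits.

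The problem is the decay rate of $\dot g = \partial_s g(s)$. Generically $\dot g \in \rho^{-2}C^\infty(\bbar M,\otimes_S^2 T^*\bbar M)$ (since $g=\bbar g/\rho^2$), which is not in the domain $\rho^{-1}C^\infty$ of $I_2$ in Theorem~\ref{injXray}; the integral $I_2(\dot g)$ would in general diverge, and the boundary-layer contributions in the first-variation formula for the renormalized length would not vanish. You attempt to dismiss this with "absorbed via a gauge choice," but what is actually required is Theorem~\ref{determination} (boundary determination to infinite order): since $L_{g(s)}$ is constant in $s$, one can choose a smooth family of boundary-fixing diffeomorphisms $\phi(s)$ such that $\phi(s)^*g(s)$ are all in normal form with $h_\rho(s)=h_\rho(0)+\mc{O}(\rho^\infty)$. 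Only after this reduction does one have $\dot g\in\rho^\infty C^\infty$, which both makes $I_2(\dot g)$ converge, kills all the renormalization counterterms and boundary-layer terms in the first variation (the paper's $\mc{O}(\eps^N)$ estimates are controlled precisely by this decay), and — crucially — forces $h(s)=h(0)$, so that the identifications of $\pl_\pm S^*M$ with $T^*\pl\bbar M$ are $s$-independent and the hypothesis "$S_{g(s)}$ constant" is meaningful. The paper then also notes that the $q$ produced by Theorem~\ref{injXray}(iii) can be taken in $\rho^\infty C^\infty$ when $\dot g$ is, which handles the boundary-tangency issue you raised. Without the Theorem~\ref{determination} step, none of this is available: your claimed $q\in\rho^0 C^\infty$ and the convergence of $I_2(\dot g)$ have not been justified.

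A secondary point: for hypothesis (ii) you sketch an ad-hoc "fixed endpoints kill boundary terms" argument, whereas the paper reduces (ii) to (i) cleanly via Proposition~\ref{equivalencedglens}, which shows that the renormalized boundary distance determines both $L_g$ and $S_g$ (using Proposition~\ref{scatpoints}, which recovers the scattering map from the boundary gradient of $d_g^R$). Your version does not explain why two metrics with the same $d^R_g$ have the same scattering map, and so does not close the loop.
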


To prove Theorem~\ref{deformation} under hypothesis (i), we first use 
Theorem~\ref{determination} to arrange that the metrics agree to infinite
order at the boundary.  Then we use Theorem \ref{injXray} after proving
that the linearization of the pair $(L_{g(s)},S_{g(s)})$ reduces to the 
description of the kernel of the X-ray transform on symmetric $2$-tensors.   
We reduce (ii) to (i) by showing in
Proposition~\ref{equivalencedglens} that if two simple metrics 
have the same renormalized boundary distance functions, then they have the
same scattering maps and renormalized length functions.  As one of the
steps in doing this, we 
show in Proposition~\ref{dtsmooth} that if $(M,g)$ is simple, then
$d_g(p,q)+\log \rho(p)+\log\rho(q)$ extends smoothly to 
$\bbar{M}\times\bbar{M}\setminus {\rm diag}$, where $d_g(p,q)$ denotes the
distance function in the metric $g$.  

\noindent
\textbf{Acknowledgements.} Research of C. R. Graham is partially supported
by NSF grant \# DMS 1308266.  C. Guillarmou is supported by 
ANR-13-BS01-0007-01 and ANR-13-JS01-0006, and ERC consolidator grant
IPFLOW.  P. Stefanov is partially supported by NSF grant \# DMS-1600327. 
G. Uhlmann was partly supported by NSF grant \# DMS-1265958, a
Si-Yuan Professorship at the Institute for Advanced Study of the Hong Kong
University of Science and Technology, and a FiDiPro at the University of
Helsinki, Finland.  We finally thank Gabriel Paternain, Yiran Wang and
Nikolas Eptaminitakis for useful discussions, and Gerhard Knieper for
sharing with us his result \cite{Kn}.  

\section{Asymptotically hyperbolic manifolds and their geodesic flow}

For an asymptotically hyperbolic manifold $(M,g)$, the sectional curvatures
of $g$  tend to $-1$ uniformly at the boundary, and more precisely the
curvature tensor $\mc{R}_g$ of $g$ is of the form 
\begin{equation}\label{curvature} 
\mc{R}_g= -g\circ g +\rho^{-3}Q
\end{equation}
where $\circ$ denotes the Kulkarni-Nomizu product and $Q\in
C^\infty(\bbar{M}; \otimes^4T^*\bbar{M})$.    

By abuse of notation, we will sometimes write also $g$ and $h_\rho$ for the
metrics induced by $g$ and $h_\rho$ on the cotangent bundles $T^*M$ and  
$T^*\pl\bbar{M}$. 

\subsection{The geodesic flow on the unit cotangent bundle}
Let us describe the geodesic flow near the boundary $\pl\bbar{M}$. We work
on the unit cotangent bundle 
\[
S^*M=\{ (x,\xi)\in T^*M; |\xi|^2_g=1\}
\]
and denote by $\pi:S^*M\to M$ the projection to the base.  The 
Liouville $1$-form is denoted $\alpha$ and the Hamilton vector field of
$\demi|\xi|^2_g$ is the generator $X$ of the geodesic flow $\varphi_t$ on
$S^*M$.  If $(\rho,y^1,\dots,y^n)$ are local coordinates near $\pl
\bbar{M}$ with $\rho$ a defining function for $\pl \bbar{M}$, we use 
dual coordinates $(\xi_0,\eta_1,\dots,\eta_n)$ on $T^*\bbar{M}$ so that 
$\xi=\xi_0 d\rho+ \eta.dy$.  We begin by 
considering an extension of the $g$-cosphere bundle $S^*M$ to $\bbar{M}$.     

Recall that ${}^bT^*\bbar{M}$ denotes the $b$-cotangent bundle of
$\bbar{M}$, a smooth vector bundle on $\bbar{M}$.  
A basis for its fibers near $\pl\bbar{M}$ consists of 
$\{\rho^{-1}d\rho,dy^1,\ldots,dy^n\}$, and we use dual coordinates 
$(\bbar{\xi}_0, \eta)$ so that 
$\xi = \bbar{\xi}_0\rho^{-1}d\rho + \sum_i\eta_idy^i$.  It is easily
verified that the function 
$\xi\mapsto \bbar{\xi_0}$ is an invariant on
${}^bT^*\bbar{M}|_{\pl\bbar{M}}$, i.e. it is independent of the choice of
coordinates $(\rho,y)$.  In particular, the subsets $\{\bbar{\xi}_0=\pm
1\}$ of ${}^bT^*\bbar{M}|_{\pl\bbar{M}}$ are invariantly defined
independently of any choices.    

Observe that $g$ defines a smooth
quadratic form on ${}^bT^*\bbar{M}$ all the way up to $\pl\bbar{M}$,
which however degenerates on $\pl\bbar{M}$.  We denote by 
$\bbar{S^*M}=\{(x,\xi)\in {}^bT^*\bbar{M}:|\xi|_g=1\}$ the  
unit cosphere bundle in ${}^bT^*\bbar{M}$ with respect to $g$.  
Choose a representative metric $h$ for the conformal structure at 
infinity and use the induced product decomposition near
$\pl\bbar{M}$.  If $y=(y^1,\dots,y^n)$ are local coordinates on
$\pl\bbar{M}$, we obtain coordinates $(\rho,y)$ on $\bbar{M}$.
We have for $x\in \bbar{M}$ near $\pl\bbar{M}$:
\[
\bbar{S^*_xM}=\{(x,\xi): \bbar{\xi}_0{}^2 +\rho^2|\eta|^2_{h_\rho}=1\}.   
\]
It follows that $\bbar{S^*M}$ is a smooth non-compact
submanifold-with-boundary of ${}^bT^*\bbar{M}$ which is naturally
identified with $S^*M$ over $M$.  We define 
$\pl_{\pm} S^*M=\{(x,\xi); x\in \pl\bbar{M}, \bbar{\xi_0}=\mp 1$\}; as
noted above, these subsets of ${}^bT^*\bbar{M}|_{\pl\bbar{M}}$ are
independent of the choice of $g$ and of the local coordinates.  
Thus we have    
\[
\bbar{S^*M} = S^*M \sqcup \pl_-S^*M\sqcup \pl_+S^*M.
\]
Given $g$ and the choice of
conformal representative metric $h$, we can identify each of 
$\pl_{\pm} S_x^*M$ with $T^*_x\pl\bbar{M}$ via the identifications  
\begin{equation}\label{ident}
\mp \rho^{-1} d\rho + \sum_i\eta_idy^i\mapsto \sum_i\eta_idy^i. 
\end{equation}  
We view   
$\pl_- S^*M$ as the incoming boundary and  
$\pl_+ S^*M$ as the outgoing boundary.  
We will denote by $\iota_\pl: \pl_-S^*M\cup \pl_+S^*M\to \bbar{S^*M}$ the
smooth inclusion map.  The projection $\pi:S^*M\to M$ extends as a smooth map 
\[ \pi: \bbar{S^*M}\to \bbar{M}.\]  We define the vertical bundle
$\mathcal{V}=\ker d\pi$, a smooth subbundle of $T\bbar{S^*M}$ of rank 
$n$.   

Since $g$ degenerates at $\pl\bbar{M}$ as a metric on ${}^bT^*\bbar{M}$, it
does not induce an isomorphism between ${}^bT^*\bbar{M}$ and 
${}^bT\bbar{M}$ over $\pl\bbar{M}$.  Instead, it induces an isomorphism
globally with another natural bundle extending the tangent bundle.  Suppose
$\bbar{M}$ is a manifold-with-boundary equipped with a line 
subbundle $\mc{L}\subset T\bbar{M}|_{\pl{\bbar{M}}}$ which is transverse to  
$T\pl\bbar{M}$.  In our setting, $\mc{L}$ is the orthogonal complement 
to $T\pl\bbar{M}$ with respect to $\bbar{g}=\rho^2g$.  Consider the space
of smooth vector fields ${}^{\mc{L}}\mc{V}$ on $\bbar{M}$ defined by 
$$
{}^{\mc{L}}\mc{V}=\{V\in C^\infty(\bbar{M};T\bbar{M}):V|_{\pl\bbar{M}}=0 \text{ and }
(\rho^{-1}V)(x)\in \mc{L}_x,\,\, x\in \pl\bbar{M}\}.
$$
In the usual way, ${}^{\mc{L}}\mc{V}$ can be regarded as the space of
smooth sections of a smooth vector bundle ${}^{\mc{L}}T\bbar{M}$ on
$\bbar{M}$.  If $(\rho, y=y^1,\dots,y^n)$ are any local coordinates near a point of 
$\pl\bbar{M}$ so that $\mc{L}=\operatorname{span}\{\pl_\rho\}$, then  
$\{\rho\pl_\rho,\rho^2\pl_{y^1},\ldots,\rho^2\pl_{y^n}\}$ is a basis for 
${}^{\mc{L}}T_x\bbar{M}$ for any $x\in\bbar{M}$ near $\pl\bbar{M}$.  For an
asymptotically hyperbolic metric $g$ in normal form, the induced isomorphism $T^*M\rightarrow TM$
maps 
$\bbar{\xi}_0\rho^{-1}d\rho+\sum_{i}\eta_idy^i\mapsto \bbar{\xi}_0\rho\pl_\rho + 
\sum_{ij} h_\rho^{ij}\eta_i\rho^2\pl_{y^j} $, where $(h^{ij}_\rho)$ 
denotes the matrix of the metric  induced by $h_\rho$ on $T^*\pl \bbar{M}$
in the coordinates $\eta_i$. 
Clearly this isomorphism extends to the boundary as a smooth isomorphism of
vector bundles ${}^bT^*\bbar{M}\rightarrow {}^{\mc{L}}T\bbar{M}$ which
pulls back the degenerate metric induced by $g$ on ${}^{\mc{L}}T\bbar{M}$
to that on ${}^bT^*\bbar{M}$.  The bundle
${}^bT^*\bbar{M}\cong {}^{\mc{L}}T\bbar{M}$ is a natural extension of the 
(co)tangent bundle 
for the study of geodesics of an AH metric.  For instance, the tangent
vector field of a geodesic $\gamma$ is a smooth nonvanishing section of
${}^{\mc{L}}T\bbar{M}|_{\gamma}$ all the way up to the boundary, and, as we 
will see, the geodesics emanating from or ending on a boundary
point $x$ are parametrized by the fibers of $\pl_\mp S^*_xM$.  
As a comparison, recall that the $0$-cotangent bundle is the smooth bundle 
${^0}T^*\bbar{M}$ over $\bbar{M}$ whose fibers near the boundary have basis
$\{\frac{d\rho}{\rho},\frac{dy^i}{\rho}\}$. The $0$-unit cotangent bundle
is ${^0S}^*\bbar{M}:=\{(x,\xi)\in {^0}T^*\bbar{M}; |\xi|_{g}=1\}$; this is a 
compact manifold-with-boundary.  The bundle ${^0}T^*\bbar{M}$ is the
natural bundle for analysis of differential operators defined in terms of
an asymptotically hyperbolic metric (see \cite{MaMe}); we will use it only
mildly in \S\ref{Xraysection}.

\begin{lemm}\label{Xform}
The Hamiltonian vector field $X$ on $S^*M$ has the form $X=\rho\bbar{X}$, 
where $\bbar{X}$ is a smooth vector field on $\bbar{S^*M}$ which is
transverse to the boundary $\pl \bbar{S^*M}=\pl_-S^*M\sqcup \pl_+S^*M$. 
\end{lemm}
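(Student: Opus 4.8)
The plan is to compute $X$ in the product coordinates $(\rho,y)$ near $\pl\bbar M$, where $g=\rho^{-2}(d\rho^2+h_\rho)$, and to read off the claimed structure directly from the formula for the geodesic Hamiltonian. Writing $\xi = \bbar\xi_0\,\rho^{-1}d\rho + \eta\cdot dy$, the dual metric gives
\[
2H(x,\xi) = |\xi|^2_g = \bbar\xi_0{}^2 + \rho^2 |\eta|^2_{h_\rho},
\]
so $H$ is a smooth function on the total space of ${}^bT^*\bbar M$ all the way up to $\pl\bbar M$, and $S^*M$ is cut out by $\{2H=1\}$ inside ${}^bT^*\bbar M|_M$, with closure $\bbar{S^*M}$ cut out by the same equation. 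The key point is that Hamilton's equations, when expressed in the variables $(\rho,y,\bbar\xi_0,\eta)$ adapted to the $b$-structure, acquire an overall factor of $\rho$ after rescaling. Concretely one computes
\[
\dot\rho = \rho\,\pl_{\bbar\xi_0}H = \rho\,\bbar\xi_0,\qquad
\dot{\bbar\xi_0} = -\rho\,\pl_\rho H + (\text{correction from }d\rho/\rho\text{ not being exact}),
\]
and similarly $\dot y^i = \rho^2 h_\rho^{ij}\eta_j$, $\dot\eta_i = O(\rho^2)$. So $d\rho/\rho = d(\log\rho)$ is closed, hence there is no correction term; but even if one prefers not to invoke that, the essential observation is that every component of the Hamilton field $X$, written in the $b$-frame $\{\rho\pl_\rho, \pl_{y^i}\}$ of $T S^*M$ together with the vertical derivatives $\pl_{\bbar\xi_0},\pl_{\eta_i}$, carries at least one factor of $\rho$. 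I would phrase this invariantly: $X = \rho\bbar X$ where $\bbar X$ is the smooth vector field on ${}^bT^*\bbar M$ whose flow, to leading order at $\rho=0$, is $(\bbar\xi_0\rho\pl_\rho - \rho\,|\eta|^2_{h}\,\pl_{\bbar\xi_0})$ plus $O(\rho)$ in the $y,\eta$ directions, and note $\bbar X$ is tangent to $\{2H=1\}=\bbar{S^*M}$ because $\bbar X H = \rho^{-1} XH = 0$. This produces the smooth vector field $\bbar X$ on $\bbar{S^*M}$.

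For transversality to the boundary, restrict to $\rho=0$, where $\bbar{S^*M}$ meets $\pl\bbar M$ in $\pl_\pm S^*M = \{\bbar\xi_0 = \mp1,\ \eta\ \text{arbitrary}\}$ (using $\rho^2|\eta|^2_{h_\rho}\to 0$). On $\pl_-S^*M$ we have $\bbar\xi_0 = 1$, and $\rho^{-1}\dot\rho|_{\rho=0} = \bbar\xi_0 = 1 > 0$; on $\pl_+S^*M$ we have $\bbar\xi_0=-1$ and $\rho^{-1}\dot\rho|_{\rho=0} = -1 < 0$. Thus $\bbar X\rho$, where $\rho$ is now the boundary defining function of $\bbar{S^*M}$ pulled back from $\bbar M$ (equivalently $\rho\circ\pi$), is nonzero on each component of $\pl\bbar{S^*M}$, of opposite sign on the two pieces — which is exactly transversality, and incidentally confirms that $\pl_-S^*M$ is the incoming and $\pl_+S^*M$ the outgoing boundary. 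Since $\bbar X\rho = \rho^{-1}X(\rho\circ\pi) = \bbar\xi_0$ at $\rho = 0$, and more generally $\bbar X\rho$ is smooth and equals $\bbar\xi_0$ on the boundary, transversality is immediate.

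The one subtlety I would be careful about is that $d\rho/\rho$ is not a coordinate differential, so Hamilton's equations must be set up correctly in the $b$-cotangent coordinates: either work downstairs with the genuine coordinate $\xi_0 = \bbar\xi_0/\rho$ (where Hamilton's equations are standard) and then change variables to $\bbar\xi_0 = \rho\xi_0$, tracking that $\dot{\bbar\xi_0} = \rho\dot\xi_0 + \dot\rho\,\xi_0$ picks up exactly the factor that makes the result smooth and divisible by $\rho$; or compute $\bbar X f = \rho^{-1}\{f, H\}$ intrinsically on $b$-functions $f\in C^\infty(\bbar M)$ and verify the resulting operator has smooth coefficients in the $b$-frame. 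I expect this bookkeeping — confirming that the apparent singularity from $\xi_0 = \bbar\xi_0/\rho$ cancels and that no negative powers of $\rho$ survive in any component — to be the main (though entirely mechanical) obstacle; everything else is reading off signs at $\rho = 0$. Once the formula $X = \rho\bbar X$ with $\bbar X$ smooth is in hand, the geometric content (smoothness of $\bbar X$ on $\bbar{S^*M}$, tangency to $\bbar{S^*M}$ via $\bbar X H=0$, transversality to $\pl_\pm S^*M$ via the sign of $\bbar\xi_0$) follows in a few lines.
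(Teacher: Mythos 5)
Your argument follows the paper's proof essentially step for step: compute $X$ from the Hamiltonian $H=\tfrac12\rho^2(\xi_0^2+|\eta|^2_{h_\rho})$ in the standard cotangent coordinates $(\rho,y,\xi_0,\eta)$, change variables to the $b$-coordinates $(\rho,y,\bbar\xi_0,\eta)=(\rho,y,\rho\xi_0,\eta)$, observe that an overall factor of $\rho$ emerges so $X=\rho\bbar X$ with $\bbar X$ smooth on $\bbar{S^*M}$, and read off transversality from $\bbar X\rho|_{\rho=0}=\bbar\xi_0=\pm1\ne0$ on $\pl_\mp S^*M$. One small slip in your ``invariant phrasing'' paragraph: you write the leading-order form of $\bbar X$ at $\rho=0$ as $\bbar\xi_0\rho\pl_\rho-\rho|\eta|^2_h\pl_{\bbar\xi_0}$, which vanishes at $\rho=0$ and would contradict transversality -- the correct leading term is $\bbar\xi_0\pl_\rho+\mc O(\rho)$, as your subsequent computation $\bbar X\rho|_{\rho=0}=\bbar\xi_0$ confirms.
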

\begin{proof}
As a vector field on $T^*M$, we know that $X$ is tangent to $S^*M$, so it
suffices to analyze $X$ in coordinates on ${}^bT^*\bbar{M}$.  Since
$H=\frac12 \rho^2(\xi_0^2+|\eta|_{h_\rho}^2)$, we have in coordinates
$(\rho,y,\xi=\xi_0 d\rho+\eta.dy)$ 
\begin{equation}\label{X}
X=\rho^2\xi_0\pl_\rho + \rho^2\sum_{i,j}h_\rho^{ij}\eta_i\pl_{y^j}
-\big[\rho(\xi_0^2+|\eta|_{h_\rho}^2) +\tfrac12 \rho^2\pl_\rho
  |\eta|_{h_\rho}^2\big]\pl_{\xi_0}
-\tfrac12 \rho^2\sum_{k}\pl_{y^k}|\eta|_{h_\rho}^2\pl_{\eta_k}.
\end{equation}
Smooth coordinates $(\bbar{\rho},\bbar{y},\bbar{\xi}_0,\bbar{\eta})$ on 
${}^bT^*\bbar{M}$ are given by
\begin{equation}\label{newcoords}
\bbar{\rho}=\rho,\quad \bbar{y}=y,\quad
\bbar{\xi}_0=\rho\xi_0,\quad \bbar{\eta}=\eta.
\end{equation}
So
\[
\pl_\rho =\pl_{\bbar{\rho}} +\xi_0\pl_{\bbar{\xi}_0},\quad
\pl_{y}=\pl_{\bbar{y}},\quad
\pl_{\xi_0}=\rho\pl_{\bbar{\xi_0}},\quad
\pl_{\eta}=\pl_{\bbar{\eta}}.
\]
Substituting into \eqref{X}, one finds $X=\rho \bbar{X}$, with
\begin{equation}\label{formofbarX}
\bbar{X}=\bbar{\xi}_0\pl_{\bbar{\rho}} +
\bbar{\rho}\sum_{i,j}h^{ij}_\rho\bbar{\eta}_i\pl_{\bbar{y^j}} 
-\big[\bbar{\rho} |\bbar{\eta}|_{h_\rho}^2
+\tfrac12 \bbar{\rho}^2\pl_\rho
|\bbar{\eta}|_{h_\rho}^2\big]\pl_{\bbar{\xi_0}} 
-\tfrac12
\bbar{\rho}\sum_{k}\pl_{y^k}|\bbar{\eta}|_{h_\rho}^2\pl_{\bbar{\eta_k}}.  
\end{equation}
The result is now clear, since $\pl \bbar{S^*M}$
is given by $\bbar{\rho}=0$, and $\bbar{\xi_0}=\pm 1$ on 
$\pl \bbar{S^*M}$.
\end{proof}
We notice that a similar observation was made in \cite[Lemma 2.6]{MSV}.
For simplicity, in what follows we will use the notation
$(\rho,y,\bbar{\xi}_0,\eta)$ for the coordinates on ${}^bT^*\bbar{M}$,
instead of $(\bbar{\rho},\bbar{y},\bbar{\xi}_0,\bbar{\eta})$.

Recall that we identify each of $\pl_\mp S^*M$ with $T^*\pl \bbar{M}$ via
\eqref{ident}.  This identification depends on
the product decomposition induced by the choice of conformal 
representative $h$.  If $\widehat{h}=e^{2u}h$ is another choice, 
with $u\in C^\infty(\pl\bbar{M})$, 
and $\widehat{\rho}$, $\widehat{y}^i$ denote the
corresponding coordinates, then 
$\widehat{\rho}=e^u \rho +\mc{O}(\rho^2)$, $\widehat{y}^i=y^i+\mc{O}(\rho)$.  An 
easy calculation shows that
$$
\pm\widehat{\rho}^{-1}d\widehat{\rho}+\sum_i\widehat{\eta_i}d\widehat{y}^i 
=\pm \rho^{-1} d\rho + \sum_i\widehat{\eta_i}dy^i \pm du
$$ 
as elements of $\pl_\mp S^*M$.  So the identification \eqref{ident} is
determined up to the map $(y,\eta)\mapsto (y,\eta \mp du(y))$ of
$T^*\pl\bbar{M}$.  This is a symplectomorphism  
of $T^*\pl\bbar{M}$ for each $u$, so it follows that each of $\pl_\mp
S^*M$ has a canonical structure as a symplectic manifold, with symplectic
form $\sum_id\eta_i\wedge dy^i$.  

The Liouville $1$-form $\alpha$ on $T^*M$ is given by
$\alpha=\xi_0 d\rho+\eta.dy$ near the boundary and the symplectic form on
$T^*M$ is $d\alpha=d\xi_0\wedge d\rho+d\eta\wedge dy$.  The form $\alpha$ 
restricts to $S^*M$ as a contact form, satisfying $\alpha(X)=1$ and 
$\iota_X d\alpha=0$.  The associated volume form is $\mu=\alpha\wedge
(d\alpha)^n$.  We call 
Liouville symplectic form on $\pl_\pm S^*M$ the symplectic form 
$\sum_id\eta_i\wedge dy^i$ described in the previous paragraph.  
We call Liouville volume form on $\pl_\pm S^*M$ the volume
form $\mu_\pl=(\sum_id\eta_i\wedge dy^i)^n$ 
obtained from the Liouville symplectic form. The volume
forms $\mu$ and $\mu_\pl$ induce densities $|\mu|$ and $|\mu_\pl|$ on
$S^*M$ and $\pl_\pm S^*M$ called Liouville measures. 
The flow $\varphi_t:S^*M\to S^*M$ of $X$ preserves the Liouville measure.   

\begin{lemm}\label{mesure}
The Liouville $1$-form $\alpha$ on $S^*M$ is such that $\rho \alpha$ and  
$d\alpha$ extend smoothly to $\bbar{S^*M}$ and 
$\iota_{\pl}^*(d\alpha)$ is the symplectic form on
$\partial \bbar{S^*M}$.  The volume form $\mu=\alpha\wedge (d\alpha)^n$   
on $S^*M$ is such that $\rho\mu$ and $\iota _X\mu$ extend 
smoothly to $\bbar{S^*M}$, and $\iota_{\pl}^*\iota _X\mu$ is equal to the
Liouville volume form $\mu_\pl$ on $\pl \bbar{S^*M}$. 
\end{lemm}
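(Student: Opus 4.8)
The plan is to carry out everything in the coordinates near $\pl\bbar{S^*M}$ set up in the proof of Lemma~\ref{Xform}, reducing the statement to two elementary facts: $\rho\alpha$ is manifestly smooth in those coordinates, while $d\alpha$, which has an apparent $\rho^{-1}$ singularity when written in $b$-coordinates on ${}^bT^*\bbar{M}$, becomes smooth once restricted to the cosphere bundle thanks to the constraint $\bbar{\xi}_0^2+\rho^2|\eta|^2_{h_\rho}=1$. I would first fix a conformal representative $h$ with its induced product decomposition, and observe that near $\pl\bbar{S^*M}$ the constraint forces $\bbar{\xi}_0^2$ close to $1$, so on each boundary face it solves as $\bbar{\xi}_0=\pm\sqrt{1-\rho^2|\eta|^2_{h_\rho}}$ with a fixed sign; hence $(\rho,y,\eta)$ are smooth coordinates on a collar of $\pl\bbar{S^*M}$ in $\bbar{S^*M}$, in terms of which $\xi_0=\bbar{\xi}_0/\rho$.

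Next I would read off the claims about $\alpha$. From $\alpha=\xi_0\,d\rho+\eta\cdot dy$ one gets $\rho\alpha=\bbar{\xi}_0\,d\rho+\rho\,\eta\cdot dy$, which is visibly smooth up to $\{\rho=0\}$, and since $\iota_\pl^*d\rho=0$ both summands pull back to zero, so $\iota_\pl^*(\rho\alpha)=0$. For $d\alpha=d\xi_0\wedge d\rho+d\eta\wedge dy$ the only term to examine is $d\xi_0\wedge d\rho$, where only the $\pl_{y^k}\xi_0$ and $\pl_{\eta_k}\xi_0$ components survive; this is the one place the constraint is used, because differentiating $\bbar{\xi}_0$ in $y$ or $\eta$ produces a factor $\rho^2$ that cancels the $\rho^{-1}$ in $\xi_0=\bbar{\xi}_0/\rho$, so these coefficients are smooth (in fact vanish at $\rho=0$) and $d\alpha$ extends smoothly to $\bbar{S^*M}$. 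Since every term of $d\xi_0\wedge d\rho$ contains $d\rho$, restriction to $\{\rho=0\}$ gives $\iota_\pl^*d\alpha=\iota_\pl^*(d\eta\wedge dy)=\sum_i d\eta_i\wedge dy^i$, which is the Liouville symplectic form on $\pl_\pm S^*M$.

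For the volume form I would write $\rho\mu=(\rho\alpha)\wedge(d\alpha)^n$, a wedge of forms just shown to extend smoothly to $\bbar{S^*M}$, so $\rho\mu$ does too; and since $X=\rho\bbar{X}$ with $\bbar{X}$ smooth on $\bbar{S^*M}$ by Lemma~\ref{Xform}, $\iota_X\mu=\iota_{\bbar{X}}(\rho\mu)$ extends smoothly as well. To compute the boundary restriction I would expand, in the interior,
\[
\iota_X\mu=\iota_{\bbar{X}}\big((\rho\alpha)\wedge(d\alpha)^n\big)=\big((\rho\alpha)(\bbar{X})\big)\,(d\alpha)^n-(\rho\alpha)\wedge\iota_{\bbar{X}}\big((d\alpha)^n\big),
\]
and use $(\rho\alpha)(\bbar{X})=\alpha(\rho\bbar{X})=\alpha(X)=1$; both sides being smooth up to $\pl\bbar{S^*M}$, the identity holds there too. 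Pulling back by $\iota_\pl$ and using $\iota_\pl^*(\rho\alpha)=0$ kills the last term, leaving $\iota_\pl^*\iota_X\mu=(\iota_\pl^*d\alpha)^n=\big(\sum_i d\eta_i\wedge dy^i\big)^n=\mu_\pl$.

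Essentially all of this is routine bookkeeping; the one delicate point — and the only place where it genuinely matters that we work on the cosphere bundle rather than on the whole $b$-cotangent bundle — is the smoothness of $d\alpha$ up to $\pl\bbar{S^*M}$, where the constraint is exactly what cancels the naive $\rho^{-1}$. Finally, since $\mu$, $\iota_X\mu$, the pullback $\iota_\pl^*$, the canonical symplectic form on $\pl_\pm S^*M$, and $\mu_\pl$ are all independent of $h$, verifying the two boundary identities in these coordinates around an arbitrary point of $\pl\bbar{S^*M}$ (for an arbitrary choice of $h$) establishes them everywhere.
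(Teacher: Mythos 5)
Your proof is correct and follows essentially the same route as the paper's: both work in the coordinates near $\pl\bbar{S^*M}$ from Lemma~\ref{Xform}, read off smoothness of $\rho\alpha$ directly, and use the constraint $\bbar{\xi}_0^2+\rho^2|\eta|^2_{h_\rho}=1$ to cancel the apparent $\rho^{-1}$ singularity in $d\alpha$ and identify $\iota_\pl^*d\alpha$ with $\sum_i d\eta_i\wedge dy^i$. The only cosmetic difference is in the last step: the paper notes $\iota_X d\alpha=0$ so that $\iota_X\mu=(d\alpha)^n$ identically, whereas you keep the extra term $(\rho\alpha)\wedge\iota_{\bbar{X}}(d\alpha)^n$ and dispose of it at the boundary via $\iota_\pl^*(\rho\alpha)=0$ — both are fine.
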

\begin{proof}
We work on ${}^bT^*\bbar{M}$ in the coordinates
$(\rho,y,\bbar{\xi}_0,\eta)$.  We have  
\begin{equation}\label{alpha}
\alpha = \xi_0d\rho +\sum_i\eta_idy^i =\rho^{-1}\bbar{\xi}_0d\rho +\sum_i\eta_idy^i.
\end{equation}
Clearly $\rho\alpha$ extends smoothly to all of ${}^bT^*\bbar{M}$.  Now
$d\alpha = \rho^{-1}d\bbar{\xi}_0\wedge d\rho +\sum_id\eta_i\wedge dy^i.$
But differentiating 
$\bbar{\xi}_0{}^2 +\rho^2|\eta|^2_{h_\rho}=1$ shows that 
$\bbar{\xi}_0 d\bbar{\xi}_0 =- \rho |\eta|^2_{h}d\rho + \mc{O}(\rho^2)$ on
$T\bbar{S^*M}$.  Hence
\begin{equation}\label{dalpha}
d\alpha = \sum_id\eta_i\wedge dy^i +\mc{O}(\rho) \quad \text{on  } \,T\bbar{S^*M}. 
\end{equation}
In particular, $d\alpha$ extends smoothly to
$\bbar{S^*M}$ and $\iota_{\pl}^*(d\alpha)=\sum_id\eta_i\wedge dy^i$ as claimed.  
It follows also that $\rho\mu=(\rho\alpha)\wedge (d\alpha)^n$ and  
$\iota _X\mu =\alpha(X)(d\alpha)^n+0=(d\alpha)^n$ extend smoothly to
$\bbar{S^*M}$, and 
$\iota_{\pl}^*\iota _X\mu = \iota_{\pl}^*((d\alpha)^n) = 
(\iota_{\pl}^*(d\alpha))^n = (\sum_id\eta_i\wedge dy^i)^n$.
\end{proof}

\noindent
Observe from \eqref{alpha}, \eqref{dalpha} that 
$$
\rho \mu = \bbar{\xi}_0 d\rho \wedge (\sum_id\eta_i\wedge dy^i)^n +\mc{O}(\rho).
$$
Since $\bbar{\xi}_0=\pm 1$ on $\pl_\mp S^*M$, it follows that the
orientations induced by $\rho \mu$ and $(\sum_id\eta_i\wedge dy^i)^n$ agree on 
$\pl_+S^*M$, but are opposite on $\pl_-S^*M$.    

The boundary behavior of the geodesics of a conformally compact metric was
analyzed in \cite{Ma}, where in particular it was proved that the flow
$\varphi_t$ is complete.  The following lemma 
describing the trajectories of the flow lines of $X$ near the boundary
is essentially contained in \cite{Ma}.  We formulate the result in terms of
$\bbar{SM}$, and for completeness and for use in our intended applications,
we give a proof.  Note from    
\eqref{X} that Hamilton's equations for the integral curves of $X$ on the
level set $S^*M$ are given near $\pl\bbar{M}$ by    
\begin{equation}\label{floweq} 
\begin{gathered}
\dot{\rho}=\rho^2 \xi_0, \quad \dot{y}^j=\rho^2\sum_{i}h^{ij}_{\rho}\eta_i,\quad
 \dot{\xi}_0=-\frac{1}{\rho}-\frac{\rho^2}{2}\pl_{\rho}|\eta|^2_{h_\rho},
\quad \dot{\eta}_j=-\frac{\rho^2}{2}\pl_{y^j}|\eta|^2_{h_\rho}.
\end{gathered}
\end{equation} 

\begin{lemm}\label{convtobdry}
There is $\eps>0$ small enough so that for each $(x,\xi)\in S^*M$ with $\rho(x)<\eps$, and 
$\xi=\xi_0d\rho+\eta.dy$ with $\xi_0\leq 0$, the flow trajectory $\varphi_t(x,\xi)$ converges 
to a point $z_+\in \pl_+S^*M$ with rate $\mc{O}(e^{-t})$ as $t\to +\infty$
and $\rho(\varphi_t(x,\xi))\leq \rho(x,\xi)$ for all $t\geq 0$.   
In addition, if $A\subset S^*M\cap \{\rho\in(0,\eps), \xi_0\leq 0\}$ is a
compact set, then 
the set $\{\varphi_t(x,\xi);(x,\xi)\in A, t\geq 0\}$ is contained in a
compact set of $\bbar{S^*M}\cap  \{\rho\in [0,\eps), \xi_0\leq 0\}$. The
same results hold with $\xi_0\geq 0$ and backward time, with limit $z_-\in 
\partial_-S^*M$.   
\end{lemm}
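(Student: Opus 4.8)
The plan is to exploit the form $X=\rho\bbar X$ from Lemma~\ref{Xform} together with the explicit Hamilton equations \eqref{floweq}, treating $\rho$ (lifted to $S^*M$) as essentially a monotone quantity along flow lines in the regime $\xi_0\le 0$. The governing fact is the second equation $\dot\xi_0=-\tfrac1\rho-\tfrac{\rho^2}{2}\pl_\rho|\eta|^2_{h_\rho}$: for $\rho$ small the term $-1/\rho$ dominates and is large and negative, so $\xi_0$ is strictly decreasing as long as $\rho$ stays small. Hence if we start with $\xi_0\le 0$ and $\rho<\eps$, then $\xi_0$ stays negative; combined with $\dot\rho=\rho^2\xi_0\le 0$ this already gives $\rho(\varphi_t(x,\xi))\le\rho(x,\xi)$ for all $t\ge 0$, so the trajectory never leaves $\{\rho<\eps,\ \xi_0\le 0\}$ and the local coordinate description remains valid for all forward time. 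The first task, then, is to fix $\eps$ small enough (depending only on uniform bounds for $h_\rho$, $\pl_\rho h_\rho$, and their inverses on the compact collar $[0,\eps]\times\pl\bbar M$) so that $|\tfrac{\rho^2}{2}\pl_\rho|\eta|^2_{h_\rho}|\le\tfrac12$ whenever $\rho\le\eps$ and $|\eta|^2_{h_\rho}\le \rho^{-2}$ (the last from the cosphere constraint $\bbar\xi_0^2+\rho^2|\eta|^2_{h_\rho}=1$). This makes $\dot\xi_0\le -\tfrac{1}{2\rho}<0$ robust.

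Next I would convert to the smooth coordinates $(\rho,y,\bbar\xi_0,\eta)$ on ${}^bT^*\bbar M$, in which the flow is generated by $\bbar X$ in reparametrized time $s$ with $\tfrac{d}{ds}=\tfrac1\rho\tfrac{d}{dt}$. In these coordinates $\tfrac{d\rho}{ds}=\rho\bbar\xi_0$ and $\tfrac{d\bbar\xi_0}{ds}=\bbar\rho|\bbar\eta|^2_{h_\rho}(\bbar\xi_0{}^2-1)+\mc O(\rho^2)$ (using $\bbar\xi_0{}^2+\rho^2|\eta|^2_{h_\rho}=1$ to simplify), and $\bbar X$ is transverse to $\{\rho=0\}$. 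Since $\bbar X$ is smooth and transverse to the boundary, the flow of $\bbar X$ exists on a uniform $s$-interval and depends smoothly on initial data up to the boundary; the subset where $\bbar\xi_0\le 0$ is forward-invariant for $\bbar X$ by the monotonicity above. From $\tfrac{d\rho}{ds}=\rho\bbar\xi_0$ with $\bbar\xi_0$ bounded away from $0$ (it starts $\le0$ and is decreasing toward $-1$, staying $\le -c<0$ after a short time), one gets $\rho(s)$ decaying at least like $\rho_0 e^{-c s}$ and reaching $0$ in finite $s$-time $s_+$. At $s=s_+$ we have $\rho=0$ and $\bbar\xi_0=\pm1$; since $\bbar\xi_0$ is decreasing it must be $\bbar\xi_0=-1$, which is exactly a point $z_+\in\pl_+S^*M$ (recall $\pl_+S^*M=\{\bbar\xi_0=-1\}$). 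Smooth dependence of the $\bbar X$-flow on initial conditions up to the boundary gives that $\varphi_s(x,\xi)\to z_+$ smoothly.

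To get the rate $\mc O(e^{-t})$ in the original time $t$, I would relate $t$ and $s$: $\tfrac{dt}{ds}=\rho(s)$, so $t(s_+)-t(s)=\int_s^{s_+}\rho(\sigma)\,d\sigma$, and since $\rho(\sigma)\le\rho(s)e^{-c(\sigma-s)}$ we get $\rho(s)\asymp$ (distance of $t$ to its finite limit), i.e. $\rho$ along the trajectory is comparable to $e^{-t}$ up to a multiplicative constant; the convergence of all the smooth coordinates of $\varphi_t(x,\xi)$ to those of $z_+$ is then $\mc O(\rho)=\mc O(e^{-t})$, using that they are Lipschitz functions of $s$ which in turn moves at speed $\rho$ in $t$. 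Finally, for the compactness statement: if $A\subset S^*M\cap\{\rho\in(0,\eps),\ \xi_0\le0\}$ is compact, it has a compact closure $\bbar A$ in $\bbar{S^*M}\cap\{\rho\in[0,\eps),\ \xi_0\le0\}$; by the uniform $s$-interval of existence and forward-invariance, the forward $\bbar X$-orbit of $\bbar A$ is the image of the compact set $\bbar A\times[0,s_{\max}]$ under the continuous flow map, hence compact, and it lies in $\{\rho\in[0,\eps),\ \xi_0\le0\}$ by the monotonicity of $\rho$ and the sign-preservation of $\xi_0$. Restricting to $A$ and $t\ge0$ gives a subset of this compact set. The statement for $\xi_0\ge0$ and backward time follows by applying the same argument to the flow of $-X$ (equivalently the time-reversal $(x,\xi)\mapsto(x,-\xi)$, which swaps $\pl_\pm S^*M$).

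The main obstacle I anticipate is not any single estimate but organizing the two-time-scale bookkeeping cleanly: one must justify that the $\bbar X$-flow genuinely extends smoothly past $\{\rho=0\}$ on the forward-invariant region $\{\bbar\xi_0\le0\}$ (rather than stopping at the boundary), identify the exit point as lying in $\pl_+S^*M$ rather than $\pl_-S^*M$ via the sign of $\bbar\xi_0$, and then transfer the resulting smooth convergence back to the original time parameter $t$ to extract the $e^{-t}$ rate. Keeping the constant $\eps$ (and the lower bound $c$ on $-\bbar\xi_0$ after a uniform delay) genuinely uniform over the collar is the one place where the compactness of $\pl\bbar M$ and smoothness of $h_\rho$ up to $\rho=0$ must be invoked carefully.
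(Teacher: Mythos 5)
Your high-level strategy is the ``alternate proof'' the paper explicitly alludes to immediately after the lemma (work with the rescaled, smooth vector field $\bbar{X}=\rho^{-1}X$ and use that it is transverse to $\pl\bbar{S^*M}$), and the structural pieces — forward-invariance of $\{\bbar{\xi}_0\le 0\}$, monotonicity of $\rho$, identification of the exit point as lying in $\pl_+S^*M$ via the sign of $\bbar{\xi}_0$, and the compactness argument via the flow box — are all sound. The paper itself proceeds entirely in $t$-time instead: it sets $u=1/\rho$, shows $\ddot u\ge u/4$ to get $\rho(t)\le\eps/\cosh(t/2)$, then integrates a Riccati-type inequality for $\bbar{\xi}_0$ to get $0\le\bbar{\xi}_0(t)+1\le Ce^{-2t}$ and finally the two-sided estimate $\rho(0)e^{-t}\le\rho(t)\le C\rho(0)e^{-t}$.

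However, your reparametrization bookkeeping between $s$-time (flow of $\bbar X$) and $t$-time (flow of $X$) is wrong in two places, and the errors make the derivation of the $\mc O(e^{-t})$ rate collapse. From $\bbar X=\bbar{\xi}_0\pl_\rho+\cdots$ (formula \eqref{formofbarX}) one has $d\rho/ds=\bbar{\xi}_0$, not $\rho\bbar{\xi}_0$; and from $\frac{d}{ds}=\frac{1}{\rho}\frac{d}{dt}$ (which you state correctly one sentence earlier) one has $dt/ds=1/\rho$, not $\rho$. Your incorrect $d\rho/ds=\rho\bbar{\xi}_0$ with $\bbar{\xi}_0\le-c$ gives $\rho(s)\lesssim e^{-cs}$, i.e.\ exponential decay that never reaches zero, which directly contradicts your next claim that $\rho$ hits zero at a finite $s_+$; and your $dt/ds=\rho$ would make $t(s_+)$ finite, i.e.\ the trajectory would reach the ideal boundary in finite original time, which is false. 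The correct accounting is: since $d\rho/ds\to -1$, $\rho(s)\sim(s_+-s)$ near $s_+$; then $t(s)=\int_0^s\rho^{-1}\,d\sigma\sim -\log(s_+-s)\to\infty$, giving $s_+-s\asymp e^{-t}$ and hence $\rho\asymp e^{-t}$. (Your displayed $d\bbar{\xi}_0/ds$ formula is also off — the right-hand side should be $-\rho|\eta|^2_{h_\rho}+\mc O(\rho^2)$, not multiplied by $(\bbar{\xi}_0^2-1)$ — but this is not load-bearing since in the coordinates $(\rho,y,\eta)$ the constraint already determines $\bbar{\xi}_0$.) Once the two reparametrization formulas are fixed, your route is complete and arguably cleaner than the paper's $t$-time estimates.
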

\begin{proof}
First note that for any $z=(x,\xi)=(\rho,y,\xi_0,\eta)\in S^*M$,  
the trajectory $\varphi_t(x,\xi)=(\rho(t),y(t),\xi_0(t),\eta(t))$ satisfies  
$\rho(t)^2 \xi_0(t)^2+\rho(t)^2|\eta(t)|^2_{h_\rho(t)}=1$.  
In particular, 
$\rho(t)|\eta(t)|_{h_\rho(t)}$ is bounded.  
{From} \eqref{floweq}, we see that if $\eps>0$ is small enough and  
$\xi_0\leq 0$, $\rho\leq \eps$, then 
\[ 
\forall t\geq 0, \quad \dot{\xi}_0(t)<-\frac{1}{4\rho(t)} 
\textrm{ and } \dot{\rho}(t)\leq 0.
\]
Thus $u(t):=\rho(t)^{-1}$ satisfies 
\[
\ddot{u}=-\dot{\xi_0}\geq \tfrac14 u,\quad u(0)\geq \epsilon^{-1}, 
\quad \dot{u}(0)\geq 0.
\]
It follows that $u(t)\geq \epsilon^{-1}\cosh(t/2)$, so 
\begin{equation}\label{firstdecay}
\rho(t)\leq \frac{\eps}{\cosh(t/2)}.
\end{equation}
This preliminary decay estimate will be improved below.   

Now, differentiating $\bbar{\xi}_0(t):=\rho(t)\xi_0(t)$ by using
\eqref{floweq}, we get 
\[
\dot{\bbar{\xi}}_0(t)=\bbar{\xi}_0(t)^2-1+\mc{O}(\rho(t)^3|\eta(t)|^2_{h_{\rho(t)}})
=(\bbar{\xi}_0(t)^2-1)(1+\mc{O}(\rho(t))),
\] 
where we used $\rho(t)^2|\eta(t)|^2_{h_{\rho(t)}}=1-\bbar{\xi}_0(t)^2$ and
the remainder is uniform in $z$.     
Thus there exists $C>0$ uniform in $(x,\xi)$ such that 
\[ 
 \pl_t (F(\bbar{\xi}_0(t)))\leq -1+C\rho(t)  
\]  
where $F(v)=\demi \log \frac{1+v}{1-v}$.   Now \eqref{firstdecay} shows
that $\int_0^\infty \rho(t)\,dt<\infty$, so $F(\bbar{\xi}_0(t))\leq -t+C'$.   
Since $v+1=2e^{2F(v)}/\big(e^{2F(v)}+1\big)$, it follows that 
there is $C>0$  uniform such that for all $t\geq 0$
\begin{equation}\label{xiestimate}
0\leq \bbar{\xi}_0(t)+1\leq Ce^{-2t}.
\end{equation}
This implies that $\rho(t)|\eta(t)|_{h_{\rho(t)}}=\mc{O}(e^{-t})$ as $t\to
+\infty$ uniformly in $(x,\xi)$,  
thus from \eqref{floweq} we have $y(t)$ and $\eta(t)$  
converging exponentially fast to limits for each $(x,\xi)$  and moreover  
$(y(t),\eta(t))$ stays in a compact set 
if $(x,\xi)$ is in a fixed compact set of $S^*M\cap \{\rho 
\in(0,\eps),\xi_0\leq 0\}$.  Now we deduce
from this and from \eqref{xiestimate}, \eqref{floweq} that 
\begin{equation}\label{uniformrho}
0\leq \dot{\rho}/\rho+1\leq Ce^{-2t}, \quad  \rho(0)e^{-t}\leq \rho(t)\leq C\rho(0)e^{-t}
\end{equation}
where $C>0$ is uniform with respect to the initial condition $(x,\xi)$.  
Since all the ${}^bT^*\bbar{M}$-coordinates
$(\rho(t),y(t),\bbar{\xi}_0(t),\eta(t))$ of $\varphi_t(x,\xi)$ converge
exponentially with $\rho(t)\rightarrow 0$ and $\bbar{\xi}_0(t)\rightarrow
-1$, it follows that $\varphi_t(x,\xi)$ converges to some point 
$z_+\in \partial_+S^*M$ as $t\rightarrow \infty$.   
The same argument works in backward time with initial conditions such that
$\xi_0\geq 0$. 
\end{proof}

We remark that one can give an alternate proof of Lemma~\ref{convtobdry} by
analyzing the flow of the vector field $\bbar{X}$ defined in
Lemma~\ref{Xform}, which is smooth up to $\partial \bbar{S^*M}$.  We will
use such an approach in further analysis of the geodesic flow below.   

Lemma~\ref{convtobdry} implies via the duality isomorphism  
${}^bT^*\bbar{M}\cong {}^{\mc{L}}T\bbar{M}$ that the tangent vector to the
geodesic $\gamma(t)=\pi(\varphi_t(x,\xi))$ has the form
$\bbar{\xi}_0(t)\rho(t)\pl_\rho + \rho(t)^2\sum_{ij}
h_{\rho(t)}^{ij}\eta_i(t)\pl_{y^j}$  
with $\bbar{\xi}_0(t)\rightarrow -1$ and $\eta_i(t)$ convergent as
$t\rightarrow \infty$.  
Also, as a consequence of Lemma~\ref{convtobdry}, we see that the regions 
$\{\rho\geq \eps\}$ are strictly convex with respect to the flow for
$\eps>0$ small enough.   

We define the \emph{incoming (-) and outgoing (+) tails} of the flow by 
\[
\Gamma_\mp=\{ (x,\xi)\in S^*M;  \rho(\varphi_t(x,\xi))\not\to 0 \textrm{ as
}t\to \pm \infty\}.
\]
These are closed flow-invariant sets in $S^*M$.  
By Lemma \ref{convtobdry}, there is $\eps>0$ such that
\[
\Gamma_-\cap \{\rho<\eps, \xi_0\leq 0\}=\emptyset, \quad \Gamma_+\cap
\{\rho<\eps, \xi_0\geq 0\}=\emptyset.  
\]
We define the \emph{trapped set of the flow} to be the compact
flow-invariant set \[ K :=\Gamma_-\cap \Gamma_+.\] 
Notice that $K\cap \{\rho\leq \eps\}=\emptyset $ for some $\eps>0$ small 
enough, by Lemma \ref{convtobdry}. We say that $(M,g)$ is
\emph{non-trapping} if $K=\emptyset$.
\begin{lemm}
$(M,g)$ is non-trapping if and only if $\Gamma_+=\emptyset$ if and only if
  $\Gamma_-=\emptyset$.  
\end{lemm}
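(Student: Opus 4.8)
The claim to prove is that the three conditions—$(M,g)$ non-trapping, $\Gamma_+ = \emptyset$, and $\Gamma_- = \emptyset$—are mutually equivalent. Since $K = \Gamma_- \cap \Gamma_+$, it is immediate that $\Gamma_+ = \emptyset$ implies $K = \emptyset$ and likewise for $\Gamma_-$; so the only content is the reverse implications, and by the symmetry of the argument (forward time versus backward time) it suffices to show that $K = \emptyset$ implies $\Gamma_- = \emptyset$, equivalently that every point whose backward trajectory stays in a compact set must also have its forward trajectory trapped. The plan is to argue by contrapositive: take $z \in \Gamma_-$ and show $z \in \Gamma_+$, i.e. show $\rho(\varphi_t(z)) \not\to 0$ as $t \to +\infty$ as well.

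The key mechanism is the convexity built into Lemma~\ref{convtobdry}: there is $\eps>0$ so that on $\{\rho < \eps,\ \xi_0 \leq 0\}$ one has $\dot\rho \leq 0$, and on $\{\rho<\eps,\ \xi_0\geq 0\}$ symmetrically $\dot\rho \geq 0$; moreover once a trajectory enters $\{\rho<\eps\}$ with $\xi_0\leq 0$ it converges to $\pl_+S^*M$ and never returns to $\{\rho \geq \eps\}$. First I would observe the dichotomy for the forward orbit of a point $z\in\Gamma_-$: either $\rho(\varphi_t(z)) \geq \eps$ for all $t\geq 0$, in which case $z\in\Gamma_+$ trivially; or there is a first time $t_0$ with $\rho(\varphi_{t_0}(z)) = \eps$ (entering the collar). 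At that entry time, by strict convexity of $\{\rho\geq\eps\}$ the trajectory is crossing transversally into $\{\rho<\eps\}$, so $\dot\rho(t_0) < 0$, which forces $\xi_0 < 0$ at $t_0$ (from $\dot\rho = \rho^2\xi_0$). Then Lemma~\ref{convtobdry} applies from time $t_0$ onward: $\rho(\varphi_t(z)) \to 0$ as $t\to+\infty$, so $z \notin \Gamma_+$. Combining the two cases: $z \in \Gamma_-$ and $z\notin\Gamma_+$ means the first alternative held, giving $\rho(\varphi_t(z))\geq\eps$ for all $t\geq 0$; together with $z\in\Gamma_-$ (so $\rho$ bounded below along the backward orbit too, along a compact piece and then... — here I need to be careful), this should force $z \in K$.

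The point requiring the most care, and the main obstacle, is making the "combining" step airtight: I have shown $z\in\Gamma_-\setminus\Gamma_+$ implies $\rho(\varphi_t(z))\geq\eps$ for $t\geq 0$, but to conclude $z\in K=\Gamma_-\cap\Gamma_+$ I must show $z\in\Gamma_+$, i.e. $\rho(\varphi_t(z))\not\to 0$ as $t\to+\infty$—which is exactly what $\rho(\varphi_t(z))\geq\eps$ for all $t\geq 0$ gives. So in fact the argument is: if $z\in\Gamma_-$, then either $\rho(\varphi_t(z))\to 0$ as $t\to+\infty$ (impossible once one checks that entering the collar forward forces $\xi_0<0$, landing in the hypothesis of Lemma~\ref{convtobdry}, and hence $z\notin\Gamma_-$ would be contradicted — wait, no) — let me restate cleanly. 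The clean logic: suppose $K=\emptyset$ but $\Gamma_-\neq\emptyset$, pick $z\in\Gamma_-$; since $z\notin K=\Gamma_-\cap\Gamma_+$ we get $z\notin\Gamma_+$, so $\rho(\varphi_t(z))\to 0$ as $t\to+\infty$; thus the forward orbit eventually enters and stays in $\{\rho<\eps\}$, and at the entry time $\xi_0 \le 0$ by convexity; but by the time-reversed half of Lemma~\ref{convtobdry} the condition "$z'\in\{\rho<\eps,\xi_0\leq0\}$ entered from outside" does not constrain the backward orbit — so I instead directly show: if $\rho(\varphi_t(z))\to 0$ forward, I claim $\rho(\varphi_s(z))\to 0$ backward is impossible only if... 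Actually the cleanest route is: prove $\Gamma_- = \{z : \rho(\varphi_t(z))\geq\eps \text{ for all } t\leq 0\}$-type localization is not what we have; rather, use that $\Gamma_-$ is closed and flow-invariant, and that $z\in\Gamma_-$ with forward orbit leaving every compact set (escaping to $\pl_+S^*M$) still has a well-defined backward orbit; the needed fact is that such $z$, if it exists, would have to have $\rho(\varphi_t(z))$ bounded below on $t\leq 0$ yet tend to $0$ on $t\geq 0$, and I must derive a contradiction with $K=\emptyset$ — which there is none, so $\Gamma_-$ genuinely can be nonempty with $K$ empty unless... Here the resolution must be that Lemma~\ref{convtobdry} shows entering $\{\rho<\eps\}$ is a trap in *both* time directions depending on the sign of $\xi_0$: a trajectory in $\Gamma_-$ can never enter $\{\rho<\eps, \xi_0\le 0\}$ at all, because from such a point the backward orbit... no. I would resolve this by showing that $z\in\Gamma_-$ forces $\rho(\varphi_t(z))\ge\eps$ for all $t\le 0$ AND that entering $\{\rho<\eps\}$ forward with $\xi_0\le0$ would (running the flow backward from a point with $\rho$ small and $\xi_0\ge 0$ on the return) contradict $z\in\Gamma_-$; the bookkeeping of signs of $\xi_0$ at the two collar-crossings is the crux, and I expect it to occupy the bulk of the proof, after which the symmetric statement for $\Gamma_+$ follows by reversing time and the equivalences close up.
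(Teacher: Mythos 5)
Your argument never closes because it tries to prove too strong a statement.  At several points you set out to show that an arbitrary $z\in\Gamma_-$ lies in $K=\Gamma_-\cap\Gamma_+$ (``to conclude $z\in K$ I must show $z\in\Gamma_+$''), i.e.\ that $\Gamma_-\subset\Gamma_+$.  This is false in general: a geodesic can perfectly well be trapped in one time direction and escape in the other (this is precisely what happens throughout $\Gamma_-\setminus K$ when the trapped set is nonempty, e.g.\ for a convex cocompact hyperbolic manifold).  You in fact notice the problem yourself --- ``$\Gamma_-$ genuinely can be nonempty with $K$ empty unless\ldots'' --- but never find the way out.  The missing idea, and what the paper actually does, is that the witness for $K\neq\emptyset$ is not $z$ itself but a forward limit point of its orbit: since $\rho(\varphi_t(z))\not\to 0$ there are $t_n\to\infty$ with $z_n:=\varphi_{t_n}(z)$ in the compact set $\{\rho\geq\delta\}$, so after passing to a subsequence $z_n\to y$ for some $y\in S^*M$.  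Then $y\in\Gamma_-$ because $\Gamma_-$ is closed, and $y\in\Gamma_+$ by contradiction: if $y$ escaped backward, Lemma~\ref{convtobdry} gives a ball $B\ni y$ and $T>0$ with $\varphi_{-t}(B)\subset\{\rho<\eps\}$ for all $t>T$, and for $n$ large $z_n\in B$ and $t_n>T$, so $z=\varphi_{-t_n}(z_n)$ would have $\rho(z)<\eps$ --- impossible once $\eps<\rho(z)$.  Hence $y\in K$ and $K\neq\emptyset$.  This $\omega$-limit-set step is what your proposal lacks; the case dichotomy on whether the forward orbit enters the collar, and the sign bookkeeping at collar crossings, are not the crux.

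A secondary issue: you have the sign convention on $\Gamma_\pm$ reversed in several places (you write ``$z\in\Gamma_+$, i.e.\ $\rho(\varphi_t(z))\not\to 0$ as $t\to+\infty$'', whereas the paper's $\Gamma_+$ is the set where $\rho(\varphi_t(z))\not\to 0$ as $t\to-\infty$).  This makes some of your intermediate dichotomies incoherent as stated (e.g.\ ``$\rho(\varphi_t(z))\geq\eps$ for all $t\geq 0$, in which case $z\in\Gamma_+$ trivially'' does not follow with the paper's definitions), though even after fixing the signs the structural gap above remains.
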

\begin{proof}
We show that $\Gamma_-\neq \emptyset$ implies $K\neq \emptyset$; 
the argument for $\Gamma_+$ is the same with the direction
of time reversed.  If $z\in \Gamma_-$, we can choose $t_n\rightarrow
\infty$ so 
that $z_n:=\varphi_{t_n}(z)\rightarrow y$ for some $y\in S^*M$.  Then $y\in
\Gamma_-$ since $\Gamma_-$ is closed.  But we also have $y\in \Gamma_+$,  
since otherwise by Lemma~\ref{convtobdry} there would be a small ball $B$
containing $y$ and 
$\eps>0$, $T>0$ so that $\varphi_{-t}(B)\subset \{\rho< \eps\}$ for all
$t>T$.  But $z_n\in B$ for large $n$, and $\varphi_{-t_n}(z_n)=z\notin
\{\rho< \eps\}$ if $\eps$ is small enough.  Since $t_n\rightarrow
\infty$ as $n\rightarrow \infty$, this is a contradiction.  
\end{proof}

Observe that if $(M,g)$ is non-trapping, then $M$ is necessarily simply
connected, as otherwise there would be a closed geodesic, and $g$ would
have a non-empty trapped set.  

Later, we will deal with the two cases where either $K=\emptyset$, or the  
trapped set $K$ is a hyperbolic set in the sense defined in the
introduction.   
It is shown in Proposition 2.4 of \cite{Gu} that if $K$ is a hyperbolic set, then 
for all $\eps>0$ small, ${\rm Vol}_{\mu}(\Gamma_\pm \cap\{\rho\geq
\eps\})=0$ (here we can use the results of \cite{Gu} since $\{\rho\geq
\eps\}$ is a strictly convex set in $S^*M$). In particular this implies
that ${\rm Vol}_{\mu}(K)=0$ and ${\rm Vol}_\mu(\Gamma_\pm)=0$ in $S^*M$. We
can also define the dual decomposition  
\[
T^*_K(S^*M)= \rr \alpha\oplus E_s^*\oplus E_u^*
\]
where $E_u^*(E_u\oplus \rr X)=0$, $E_s^*(E_s\oplus \rr X)=0$, and $\alpha$ is the contact 
form. As explained in Section 2.3 of \cite{Gu} (see also \cite[Lemma
  2.10]{DyGu}), the bundle $E_{s}$ has a continuous extension to
$\Gamma_-$, denoted $E_-$,  
and $E_u$ has a continuous extension to $\Gamma_+$, denoted $E_+$, 
in a way that the hyperbolicity estimates \eqref{hypofK} still hold. 
The dual bundles also have extensions $E_-^*$ over $\Gamma_-$ and $E_+^*$ 
over $\Gamma_+$, and $E_\pm^*$ are globally invariant by the symplectic
flow $\Phi_t$ on $T^*(S^*M)$.  Here $\Phi_t$ is the symplectic lift of the
flow $\varphi_t$ to $T^*(S^*M)$ given by 
\begin{equation}\label{symplift}
\Phi_t(z,\zeta)=(\varphi_t(z),(d\varphi_t(z)^{-1})^T.\zeta), \quad \zeta \in T^*_z(S^*M).
\end{equation}

As a consequence of Lemma~\ref{convtobdry}, we have the 
\begin{corr}\label{boundmap} 
The following maps are well-defined and smooth
\[B_\pm: S^*M\setminus \Gamma_\mp \to \pl_\pm S^*M , \quad
B_\pm(x,\xi):=\lim_{t\to \pm \infty}\varphi_t(x,\xi).\]  
Moreover, they extend smoothly to 
$\bbar{S^*M}\setminus \bbar{\Gamma_\mp}$, where 
$\bbar{\Gamma_\mp}$ denotes the closure of $\Gamma_\mp$ in $\bbar{S^*M}$, and 
$B_\pm(z)=z$ for each $z\in \pl_\pm S^*M$.   
\end{corr}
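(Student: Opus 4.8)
The plan is to deduce everything from Lemma~\ref{convtobdry} together with the fact, established in Lemma~\ref{Xform}, that the rescaled field $\bbar{X}=\rho^{-1}X$ is smooth on $\bbar{S^*M}$ and transverse to $\pl\bbar{S^*M}$. The first step is to observe that the maps $B_\pm$ are well-defined on $S^*M\setminus\Gamma_\mp$: for $z\notin\Gamma_\mp$ we have by definition $\rho(\varphi_t(z))\to 0$ as $t\to\pm\infty$, so the trajectory eventually enters the region $\{\rho<\eps,\ \pm\xi_0\le 0\}$ where Lemma~\ref{convtobdry} applies and guarantees convergence to a point of $\pl_\pm S^*M$. (Here one should note that once $\rho$ is small and decreasing, $\xi_0$ has the correct sign by the third equation in \eqref{floweq}, which forces $\dot\xi_0<0$; alternatively, if $\rho(\varphi_t(z))\to 0$ then for large $t$ the trajectory must be in the region covered by the lemma, since outside a neighborhood of $\pl\bbar{M}$ one cannot have $\rho$ arbitrarily small.)

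The second step is smoothness on $S^*M\setminus\Gamma_\mp$ and the smooth extension to $\bbar{S^*M}\setminus\bbar{\Gamma_\mp}$. I would treat both at once by working with the flow $\bbar\varphi_s$ of the smooth vector field $\bbar X$ on $\bbar{S^*M}$ rather than with $\varphi_t$. Since $\bbar X$ is transverse to $\pl_+S^*M$ and points outward there (the coefficient of $\pl_{\bbar\rho}$ in \eqref{formofbarX} is $\bbar\xi_0=-1$ on $\pl_+S^*M$, so $\bbar\rho$ is strictly decreasing along $\bbar X$, i.e.\ increasing in backward $\bbar X$-time), every point $z_+\in\pl_+S^*M$ has a neighborhood $U$ in $\bbar{S^*M}$ and an $s_0>0$ such that $\bbar\varphi_{-s_0}$ maps $U$ into the open manifold $S^*M$; moreover $\bbar\varphi$ restricted to a collar of $\pl_+S^*M$ gives, by the transversality, a diffeomorphism onto its image. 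One then checks that for $z\in S^*M$ near $\pl_+S^*M$, the point $B_+(z)$ is obtained by flowing $z$ under $\bbar\varphi$ until $\bbar\rho=0$; since this ``exit map'' for a vector field transverse to a boundary hypersurface is smooth (it is the composition of the flow with the projection along flow lines onto the boundary), $B_+$ is smooth near $\pl_+S^*M$ in $\bbar{S^*M}$, and in particular $B_+(z_+)=z_+$. Away from the boundary, i.e.\ on $\{z\in S^*M\setminus\Gamma_-: \rho(z)\ge\delta\}$ for fixed $\delta$, smoothness follows because after a uniformly bounded flow time every such point is carried into the collar where the previous argument applies (using the compactness statement in Lemma~\ref{convtobdry} and smooth dependence of $\varphi_t$ on initial data for finite time), and smoothness is preserved under composition with the finite-time flow. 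Patching these two descriptions with the complement of $\bbar{\Gamma_-}$ being covered by such pieces gives smoothness of $B_+$ on all of $\bbar{S^*M}\setminus\bbar{\Gamma_-}$; the argument for $B_-$ is identical with time reversed.

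The main obstacle is making the ``exit map is smooth'' step precise at the boundary: one needs that $B_+$, a priori only defined by an infinite-time limit of $\varphi_t$, actually coincides near $\pl_+S^*M$ with the smooth finite-parameter exit map of $\bbar X$, and that the rescaling $dt = \rho^{-1}\,ds$ relating the two flows does not destroy smoothness. This is where the estimates \eqref{xiestimate}--\eqref{uniformrho} from the proof of Lemma~\ref{convtobdry} are used: they show the $\bbar X$-flow reaches $\{\bbar\rho=0\}$ in finite $\bbar X$-time (even though it takes infinite $\varphi_t$-time), so that the limit defining $B_+$ is literally the value of $\bbar\varphi_s$ at the finite exit time $s$, and the latter depends smoothly on the starting point by the standard implicit-function-theorem description of exit maps for transverse flows. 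Once this identification is in hand, everything else is routine smooth-dependence of ODE flows, and the identity $B_\pm|_{\pl_\pm S^*M}=\mathrm{Id}$ is immediate since such points are already fixed by the limit.
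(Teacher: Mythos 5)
Your proposal is correct and follows essentially the same route as the paper: establish convergence via Lemma~\ref{convtobdry}, then identify $B_\pm$ near $\pl_\pm S^*M$ with the exit map of the rescaled flow $\bbar{\varphi}_\tau$ of $\bbar{X}$, whose smoothness and smooth boundary extension come from the implicit function theorem applied to the smooth, nonvanishing, boundary-transverse vector field $\bbar{X}$ on $\bbar{S^*M}$, and patch to the interior by composing with a finite-time flow. One small imprecision in the well-definedness step: the cleanest reason $\xi_0$ eventually becomes negative along a non-$\Gamma_-$ trajectory is not the sign of $\dot\xi_0$ but rather that $\rho\to 0$ forces $\dot\rho=\rho^2\xi_0<0$ on some interval, whence $\xi_0<0$ there and Lemma~\ref{convtobdry} takes over; this is exactly what the paper does.
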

\begin{proof}
If $z_0:=(x_0,\xi_0)\notin \Gamma_-$, for $\eps_0>0$ as in Lemma
\ref{convtobdry},  there is $T>0$ large enough so that  
we have $\rho(t):=\rho(\varphi_t(z_0))<\eps_0$ for all $t>T$. There is
necessarily an open  
interval $A\subset [T,\infty)$ where $\rho(t)$ is decreasing, thus
  $\xi_0(t)<0$ on $A=(a,b)$ and by  
Lemma~\ref{convtobdry}, $\rho(t)$ is actually decreasing on $[a,+\infty)$ 
  and $\varphi_t(z_0)$ converges to a point in $\pl_+S^*M$ as $t\to
  +\infty$; this point is denoted by $B_+(z_0)$.  
Extend $\rho$ from a neighborhood of $\partial\bbar{M}$ 
to all of $\bbar{M}$ so that $\rho>0$ on $M$, and write $X=\rho\bbar{X}$    
as in Lemma~\ref{Xform}.   
The flow lines  of $\bbar{X}$ in $S^*M$ are the same as the flow lines of 
$X$, only the parametrization changes: if $\bbar{\varphi}_\tau(z)$ is the
flow of $\bbar{X}$, then 
\begin{equation}\label{reparam}
\forall z\in S^*M,\quad  \bbar{\varphi}_\tau(z)=\varphi_{t(\tau,z)}(z)
\textrm{ with }
t(\tau,z):=\int_0^\tau\frac{1}{\rho(\bbar{\varphi}_s(z))}ds. 
\end{equation}
Since $\bbar{X}$ is smooth on $\bbar{S^*M}$, does not vanish and is
transverse to $\pl\bbar{S^*M}$,   
the implicit function theorem gives that there is a finite time $\tau_+(z)$ 
smooth in $z$ near $z_0$ such that $\rho(\bbar{\varphi}_{\tau_+(z)}(z))=0$
for all $z$ near $z_0$. The map $B_+(z)$ is simply  
$\bbar{\varphi}_{\tau_+(z)}(z)$ and thus is smooth and extends smoothly to
$\pl\bbar{S^*M}\setminus \bbar{\Gamma_-}$ 
with $B_+(z)=z$ when $z\in \pl_+S^*M$. 
The same argument works with $B_-$.
\end{proof}

As in the proof of Corollary~\ref{boundmap}, we will always denote by
$\tau_\pm(z)\geq 0$ the time so that 
\[
\bbar{\varphi}_{\pm\tau_\pm(z)}(z)=B_{\pm}(z) , \quad z\notin 
\bbar{\Gamma_\mp}.  
\]
We also note that the closures can be described by 
\begin{equation}\label{Gammaclos}
\bbar{\Gamma_\pm}=\Gamma_\pm \cup \{z\in \pl_\pm S^*M; 
\bbar{\varphi}_\tau(z)\in \Gamma_\pm, \forall \tau, \mp
\tau>0\}.
\end{equation} 

We now define the {\it scattering map} 
$S_g: \pl_-S^*M \setminus \bbar{\Gamma_-}\to 
\pl_+S^*M \setminus\bbar{\Gamma_+}$
for the flow by
\begin{equation}\label{scattering}
S_g(z)=B_+(z)=\bbar{\varphi}_{\tau_+(z)}(z).  
\end{equation} 
Corollary~\ref{boundmap} shows that $S_g$ is well-defined and smooth.  
\begin{prop}
The scattering map $S_g: \pl_-S^*M \setminus \bbar{\Gamma_-}\to 
\pl_+S^*M \setminus\bbar{\Gamma_+}$  is a symplectic map.  
\end{prop}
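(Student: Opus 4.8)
The plan is to show that $S_g^*\sigma_+ = \sigma_-$, where $\sigma_\pm = \sum_i d\eta_i\wedge dy^i$ denotes the Liouville symplectic form on $\pl_\pm S^*M$. The natural route is to exploit the fact that the scattering map is built from the flow $\varphi_t$ (equivalently $\bbar\varphi_\tau$), which preserves the contact structure on $S^*M$, and then to relate the symplectic forms on $\pl_\pm S^*M$ to the restriction of $d\alpha$ via Lemma~\ref{mesure}. First I would recall from Lemma~\ref{mesure} that $d\alpha$ extends smoothly to $\bbar{S^*M}$ and that $\iota_\pl^*(d\alpha)$ equals the Liouville symplectic form on $\pl\bbar{S^*M}$; thus it suffices to produce a map on $\bbar{S^*M}$ whose pullback sends the boundary restriction of $d\alpha$ at $\pl_+S^*M$ to that at $\pl_-S^*M$, and then restrict.

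The key observation is that $S_g = \iota_+^* \circ (\bbar\varphi_{\tau_+(\cdot)})$, i.e., $S_g(z) = \bbar\varphi_{\tau_+(z)}(z)$ for $z\in\pl_-S^*M\setminus\bbar{\Gamma_-}$, where $\tau_+$ is the smooth first-hitting time from Corollary~\ref{boundmap}. The map $z\mapsto \bbar\varphi_{\tau_+(z)}(z)$ is, up to reparametrization, the time-$t$ geodesic flow, which on the interior $S^*M$ preserves $\alpha$ up to the differential of a function (indeed $\varphi_t^*\alpha - \alpha = dh_t$ for the function $h_t(z) = \int_0^t (\alpha(X)\circ\varphi_s)(z)\,ds = t$, since $\alpha(X)=1$; more carefully one uses $\mc L_X\alpha = \iota_X d\alpha + d\iota_X\alpha = 0 + d(1) = 0$, so $\varphi_t^*\alpha = \alpha$ exactly). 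Hence $\varphi_t^* d\alpha = d\alpha$ on $S^*M$ for every fixed $t$. The standard argument for flows with varying time: if $\Psi(z) := \varphi_{T(z)}(z)$ for a smooth function $T$, then $\Psi^*\alpha = \alpha + d(\text{something})$ on the set where $\alpha(X)$ integrates nicely — but since here $\alpha(X)\equiv 1$ one gets $\Psi^*\alpha = \alpha + dT$ directly from $\varphi_t^*\alpha=\alpha$ and the chain rule, so $\Psi^* d\alpha = d\alpha$. Applying this with $T = \tau_+$, and using that $\Psi$ extends smoothly to $\bbar{S^*M}\setminus\bbar{\Gamma_-}$ with $\Psi(\pl_-S^*M) = \pl_+S^*M$ (Corollary~\ref{boundmap}), we get $\Psi^*(d\alpha) = d\alpha$ as smooth forms near $\pl_-S^*M$. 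Pulling back by the inclusion $\iota_-$ and using naturality, $\iota_-^*\Psi^*(d\alpha) = (\Psi\circ\iota_-)^*(d\alpha) = S_g^*(\iota_+^* d\alpha)$. By Lemma~\ref{mesure} the right side is $S_g^*\sigma_+$ and the left side is $\iota_-^*(d\alpha) = \sigma_-$. Therefore $S_g^*\sigma_+ = \sigma_-$.

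One should check two points of care. First, although $\alpha$ itself blows up at $\pl\bbar{S^*M}$ (only $\rho\alpha$ is smooth), the two-form $d\alpha$ does extend smoothly, so all the pullback identities should be phrased at the level of $d\alpha$, not $\alpha$; the identity $\varphi_t^* d\alpha = d\alpha$ is an interior identity that extends by continuity/smoothness to the boundary since $d\alpha$ is smooth on $\bbar{S^*M}$ and $\Psi$ is smooth up to the boundary. Second, one must confirm that $\Psi = \bbar\varphi_{\tau_+(\cdot)}$ really is a diffeomorphism from a neighborhood of $\pl_-S^*M$ in $\bbar{S^*M}\setminus\bbar{\Gamma_-}$ onto a neighborhood of $\pl_+S^*M$, so that dimension count makes $S_g^*\sigma_+=\sigma_-$ an equality of symplectic forms and not merely closed $2$-forms; this is immediate from the transversality of $\bbar X$ to $\pl\bbar{S^*M}$ in Lemma~\ref{Xform} together with the implicit function theorem argument already used in Corollary~\ref{boundmap}, and from the reversibility of the construction for $B_-$.

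The main obstacle I anticipate is purely bookkeeping at the boundary: making rigorous the passage from the interior identity $\varphi_t^*\alpha = \alpha$ to the statement about the variable-time map $\Psi$ and then restricting to the (non-compact) boundary hypersurfaces $\pl_\pm S^*M$, keeping track of the fact that $\alpha$ degenerates while $d\alpha$ does not, and that the identifications \eqref{ident} of $\pl_\pm S^*M$ with $T^*\pl\bbar{M}$ are only well-defined up to the symplectomorphism $(y,\eta)\mapsto(y,\eta\mp du)$ — so that the statement "$S_g$ is symplectic" is independent of the choice of conformal representative, as it must be. None of this is deep, but it is where the proof earns its keep; the algebraic heart is simply $\mc L_X\alpha = 0$.
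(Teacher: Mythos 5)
Your argument is essentially the paper's: both prove $S_g^* (\iota_+^* d\alpha) = \iota_-^* d\alpha$ by writing $S_g(z) = \bbar{\varphi}_{\tau_+(z)}(z)$, applying the chain rule to get $dS_g.v = d\bbar{\varphi}_{\tau_+(z)}.v + \bbar{X}(S_g(z))\,d\tau_+.v$, and then observing that the $\bbar{X}$-terms drop out of $d\alpha$ while the fixed-time flow preserves $d\alpha$. One small wrinkle in your write-up: the "variable-time map" you actually need is built from the $\bbar{X}$-flow $\bbar{\varphi}_\tau$ (with its finite hitting time $\tau_+$), not from $\varphi_t$ (whose hitting time is $+\infty$), so the step $\Psi^*\alpha = \alpha + dT$ using $\alpha(X)=1$ is not quite the right model; the $\bbar{X}$-analogue would involve $\alpha(\bbar{X}) = \rho^{-1}$, which blows up at the boundary. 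Your own caveat — work only with $d\alpha$ — repairs this, since the argument needs exactly $\iota_{\bbar{X}} d\alpha = \rho^{-1}\iota_X d\alpha = 0$ (hence $\mathcal{L}_{\bbar{X}} d\alpha = 0$ and the $\bbar{X}$-terms in the chain rule vanish), and this is precisely the invariance the paper invokes.
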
 
\begin{proof}
Recall from Lemma~\ref{mesure} that the symplectic form on
$\partial\bbar{S^*M}$ is $\iota_\partial (d\alpha)$.  
Observe that for each $\tau$,   
$\bbar{\varphi}_\tau^*d\alpha=d\alpha$, since 
$\mc{L}_{\bbar{X}}(d\alpha)=d(\iota_{\bbar{X}}d\alpha)=0$ by the fact that 
$\iota_{\bbar{X}}d\alpha=\rho^{-1}\iota_{X}d\alpha=0$. Now we can write
$S_g(z)=\bbar{\varphi}_{\tau_+(z)}(z)$  
for each $z\in \pl_-S^*M\setminus
\bbar{\Gamma_-}$. We thus get for each $v\in T_z(\pl_-S^*M)$  
\[ dS_g(z).v= d\bbar{\varphi}_{\tau_+(z)}(z).v + \bbar{X}(S_g(z))d\tau_+(z).v .\]
Therefore we get for each $v,w\in T_{z}(\pl_-S^*M)$
\[ \begin{split}
S_g^*(d\alpha)_z(v,w)= &
d\alpha_{S_g(z)}(d\bbar{\varphi}_{\tau_+(z)}(z).v,d\bbar{\varphi}_{\tau_+(z)}(z).w)+ 
(d\tau_+(z).v) d\alpha_{S_g(z)}(\bbar{X}(S_g(z)),w)\\
&+ (d\tau_+(z).w) d\alpha_{S_g(z)}(v,\bbar{X}(S_g(z)))\\
=&  d\alpha_{z}(v,w)
\end{split}\]
since $\iota_{\bbar{X}}d\alpha=0$ and $d\alpha_{\bbar{\varphi}_\tau(z)}
(d\bbar{\varphi}_{\tau}(z).v,d\bbar{\varphi}_{\tau}(z).w)=d\alpha_z(v,w)$
for each $\tau\leq \tau_+(z)$. 
\end{proof}

Recall that a choice of representative metric $h$ in the conformal infinity
of $g$ induces the identifications \eqref{ident} between $\pl_{\pm}S^*M$
and $T^*\pl\bbar{M}$.  When comparing $S_g$ for different metrics, 
we will view $S_g$ as mapping $T^*\pl\bbar{M}$ to itself via such  
identifications.  The main reason for this is that then $S_g=S_{\psi^*g}$
if $\psi:\bbar{M}\to \bbar{M}$ is a diffeomorphism restricting to the
identity on the boundary, so long as the identifications between   
$\pl_{\pm}S^*M$ and $T^*\pl\bbar{M}$ for $g$ and $\psi^*g$ 
are both with respect to the same metric
$h$.  Since $d\psi$ is generally nontrivial on $\pl_\pm S^*M$,
it is not typically the case that $S_g=S_{\psi^*g}$ when $S_g$ and  
$S_{\psi^*g}$ are viewed as maps from $\pl_-S^*M$ to $\pl_+S^*M$.       

Next we describe the pull-back by the flow.
\begin{lemm}\label{smoothpb}
Let $f\in C^\infty(\bbar{S^*M})$, then the function $(t,z)\mapsto
f(\varphi_t(z))$ is a smooth 
function on $\rr\x S^*M$ which can be written for $t\geq 0$ in the form
$f(\varphi_{\pm t}(z))=F_\pm(e^{-t},z)$ 
for some function $F_\pm \in C^\infty\big([0,1)\x (\bbar{S^*M}\setminus (\pl_\mp
S^*M\cup\Gamma_\mp))\big)$ satisfying  
\begin{equation}\label{Fpm} 
F_\pm(e^{-t},z)=f(B_\pm(z))+\mc{O}(\tau_\pm(z)e^{-t})
\end{equation} 
and the remainder is uniform for $z$ in compact sets of 
$\bbar{S^*M}\setminus (\pl_\mp S^*M\cup\Gamma_\mp)$.
\end{lemm}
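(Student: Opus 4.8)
The plan is to derive everything from the smooth dynamics of the vector field $\bbar{X}$ on the manifold-with-boundary $\bbar{S^*M}$, as in the proof of Corollary~\ref{boundmap}. First I would note that $f(\varphi_t(z))$ is smooth on $\rr\x S^*M$ simply because $\varphi_t$ is the flow of a smooth vector field $X$ on the (non-compact, but smooth) manifold $S^*M$ and $f$ is smooth there; the issue is entirely the behavior as $t\to\pm\infty$ and the uniformity up to $\pl_\pm S^*M$. Fix the $+$ case (the $-$ case is identical with time reversed). Recall the reparametrization \eqref{reparam}: $\bbar{\varphi}_\tau(z)=\varphi_{t(\tau,z)}(z)$ with $t(\tau,z)=\int_0^\tau \rho(\bbar{\varphi}_s(z))^{-1}\,ds$, and that for $z\notin\bbar{\Gamma_-}$ the flow of $\bbar{X}$ reaches the boundary $\pl_+S^*M$ at a finite, smoothly varying time $\tau_+(z)$, with $B_+(z)=\bbar{\varphi}_{\tau_+(z)}(z)$. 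So the strategy is: invert the relation $t\leftrightarrow\tau$, show that $e^{-t}$ is (up to a smooth nonvanishing factor) a smooth coordinate measuring the distance of $\bbar{\varphi}_\tau(z)$ to the boundary face $\pl_+S^*M$, and then read off $F_+$ from $f$ composed with the smooth flow $\bbar{\varphi}$.

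The key computation is the asymptotics of $t(\tau,z)$ as $\tau\to\tau_+(z)$. Since $\bbar{X}$ is transverse to $\pl_+S^*M$ and $\bbar{X}\rho = \bbar{\xi}_0 = -1$ on $\pl_+S^*M$ (from \eqref{formofbarX}), along the $\bbar{X}$-trajectory we have, writing $s(\tau,z) := \tau_+(z)-\tau$ for the time-to-boundary, $\rho(\bbar{\varphi}_\tau(z)) = s\cdot a(s,z)$ with $a$ smooth and $a(0,z)=1$. Hence
\[
t = \int_0^\tau \frac{ds'}{\rho(\bbar{\varphi}_{s'}(z))}
= \int_{s(\tau,z)}^{\tau_+(z)}\frac{dr}{r\, a(r,z)}
= -\log s(\tau,z) + b(s(\tau,z),z),
\]
where $b(r,z)$ is smooth in $r\in[0,\tau_+(z))$ (it absorbs $\int (1/(ra)-1/r)\,dr$ plus a smooth endpoint term, all smooth by the fundamental theorem of calculus since $1/a-1$ vanishes at $r=0$). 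Therefore $e^{-t} = s(\tau,z)\, e^{-b(s(\tau,z),z)}$, so $e^{-t}$ and $s$ are related by a smooth change of variable with nonvanishing derivative at $s=0$, uniformly as $z$ ranges over a compact subset of $\bbar{S^*M}\setminus(\pl_-S^*M\cup\Gamma_-)$. Inverting, $s(\tau,z)$ — equivalently $\tau$ — is a smooth function of $(e^{-t},z)$ on $[0,1)\x(\bbar{S^*M}\setminus(\pl_-S^*M\cup\Gamma_-))$ vanishing exactly at $e^{-t}=0$, i.e. at $\tau=\tau_+(z)$.

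It then remains to set $F_+(u,z) := f\big(\bbar{\varphi}_{\tau(u,z)}(z)\big)$, where $\tau(u,z)$ is the smooth function just constructed with $\tau(e^{-t},z)=t(\cdot)$-image described above; this is a composition of smooth maps, hence $F_+\in C^\infty([0,1)\x(\bbar{S^*M}\setminus(\pl_-S^*M\cup\Gamma_-)))$, and $F_+(e^{-t},z)=f(\varphi_t(z))$ by construction. For the remainder estimate \eqref{Fpm}: $F_+(0,z)=f(\bbar{\varphi}_{\tau_+(z)}(z))=f(B_+(z))$, and Taylor's theorem in $u$ at $u=0$ gives $F_+(u,z)=f(B_+(z))+u\cdot\partial_u F_+(\tilde u,z)$ for some $\tilde u\in[0,u]$. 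The factor $\partial_u F_+ = df\cdot d\bbar{\varphi}_\tau(z)\cdot\bbar{X}\cdot\partial_u\tau$, and the only place where $\tau_+(z)$ enters is through the length of the time interval over which one flows — tracking constants, $|\partial_u F_+|$ is bounded by $C\,\tau_+(z)$ times a quantity uniform on compact sets (this is where one sees that short geodesics, $\tau_+(z)$ small, give a uniformly small remainder), yielding $F_+(e^{-t},z)=f(B_+(z))+\mc{O}(\tau_+(z)e^{-t})$ with the remainder uniform for $z$ in compact subsets of $\bbar{S^*M}\setminus(\pl_-S^*M\cup\Gamma_-)$.

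The main obstacle is the bookkeeping in the step $t=-\log s + b(s,z)$ and its inversion: one must check carefully that $a(s,z)$ stays bounded away from $0$ and that $b$ is genuinely smooth including at the boundary face and jointly in $(s,z)$ — in particular that the integral defining $t$ converges and that differentiating under the integral in $z$ is legitimate uniformly on compacta. Everything else (smoothness of $F_+$ as a composition, the Taylor remainder, the appearance of $\tau_+(z)$) is then routine. One should also remark that on the non-boundary part the statement is just smoothness of flows, and the content is the uniform-up-to-boundary behavior; the exclusion of $\pl_\mp S^*M\cup\Gamma_\mp$ is exactly the set where $\tau_\pm(z)$ fails to be finite/smooth.
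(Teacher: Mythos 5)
Your plan is essentially the same as the paper's proof: reparametrize via $\bbar{\varphi}_\tau$, extract the logarithmic divergence of $t(\tau,z)$ as $\tau\to\tau_+(z)$, invert, and read off $F_+$ as $f$ composed with the smooth (to the boundary) flow. One step deserves more care than you give it. You write $e^{-t}=s\,e^{-b(s,z)}$ with $s=\tau_+(z)-\tau$ and claim this is a smooth change of variable with ``nonvanishing derivative at $s=0$, uniformly as $z$ ranges over a compact subset of $\bbar{S^*M}\setminus(\pl_-S^*M\cup\Gamma_-)$.'' But that compact subset may contain points of $\pl_+S^*M$, where $\tau_+(z)\to 0$; and since $b(0,z)=\log\tau_+(z)+(\text{bounded})$, the Jacobian $\pl_s(se^{-b})|_{s=0}=e^{-b(0,z)}$ blows up like $1/\tau_+(z)$. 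So the uniformity you invoke fails, and the ``smooth change of variable'' phrasing alone does not deliver smoothness of $F_+$ up to $\pl_+S^*M$. The paper fixes this by making the scaling explicit: it shows $\tau_+(z)-\tau=e^{-t}\tau_+(z)H(e^{-t},z)$ with $H$ smooth and positive (cf.\ the displayed equation~\eqref{renorflow}), so that $\varphi_t(z)=\bbar{\varphi}_{-e^{-t}\tau_+(z)H(e^{-t},z)}(B_+(z))$ is manifestly a smooth function of $(e^{-t},z)$ even on $\pl_+S^*M$, and the $\tau_+(z)$ factor in \eqref{Fpm} is immediate. Your Taylor remainder estimate in fact recovers exactly this $\tau_+(z)$-scaling, so your instinct is right; you should promote it from an afterthought to the main point of the inversion step.
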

\begin{proof}
The flow $\bbar{\varphi}_{\tau}$ of $\bbar{X}$ is a 
non-complete flow satisfying \eqref{reparam}. 
Since $\bbar{X}$ is smooth down to $\rho=0$ and since near each
$(y_0,\eta_0)\in \pl_+ S^*M$ we have  
$\bbar{X}= -\pl_\rho +\rho Y$ in the coordinates $(\rho,y,\eta)$ for some
smooth vector field $Y$ near $(y_0,\eta_0)$, we obtain that 
$\rho(\bbar{\varphi}_\tau(y,\eta))$ for small $\tau\leq 0$ 
is a smooth function of $(\tau,y,\eta)$ such that  
\begin{equation}\label{rhovphitau} 
\rho(\bbar{\varphi}_\tau(y,\eta))=-\tau+\mc{O}(\tau^2).
\end{equation}
with remainder uniform for $(y,\eta)$ in compact sets.
Now take $z_0\notin \Gamma_-\cup \pl_-S^*M$, we can write each point $z$ in
a small enough neighborhood of $z_0$ in $\bbar{S^*M}\setminus \pl_-S^*M$   
as $z=\bbar{\varphi}_{-\tau_+(z)}(B_+(z))$ with $\tau_+(z)$ smooth in $z$ and we get 
that for $\tau \in [0,\tau_+(z))$, the function $t(\tau,z)$ defined by
  \eqref{reparam} is given by 
\begin{equation}\label{taupm}
\begin{split}
t(\tau,z)=& \int_{-\tau_+(z)}^{\tau-\tau_+(z)}\frac{1}{\rho(\bbar{\varphi}_{s}(B_+(z)))}ds=
\int_{\tau_+(z)-\tau}^{\tau_+(z)}\frac{1}{s}ds +G(\tau,z)\\
=&-\log (1- \tfrac{\tau}{\tau_+(z)})+G(\tau,z) 
\end{split}
\end{equation}
where $G$ is a smooth function of $\tau,z$, for $\tau\in [0,\tau_+(z)]$ and
$z$ in a neighborhood  
of $z_0$ in $\bbar{S^*M}\setminus (\pl_-S^*M\cup \Gamma_-)$. 
This implies in particular that for $t\geq 0$, $\tau=
\tau_+(z)-e^{-t}\tau_+(z)H(e^{-t},z)$ for some smooth positive  
function $H$ on $[0,1)\x \bbar{S^*M}\setminus (\pl_-S^*M\cup \Gamma_-)$, thus for $t\geq 0$
\begin{equation}\label{renorflow}
\varphi_{t}(z)=\bbar{\varphi}_{-e^{-t}\tau_+(z)H(e^{-t},z)}(B_+(z)).
\end{equation} 
Thus if $f\in C^\infty(\bbar{S^*M})$, then $f(\varphi_{t}(z))$ 
is equal to $F_+(e^{-t},z)$ for some smooth function $F_+$ in $[0,1)\x \bbar{S^*M}\setminus (
\pl_-S^*M\cup \Gamma_-)$,  satisfying \eqref{Fpm}. The same argument works
with $f(\varphi_{-t}(z))$ for $t\geq 0$. 
\end{proof}

\subsection{Short geodesics}
In asymptotically hyperbolic manifolds, there are geodesics that are
arbitrarily small when viewed in the conformally compactified manifold. For
example, in hyperbolic space viewed as the unit ball, half-circles 
orthogonal to the unit sphere $\mathbb{S}^n=\pl\hh^{n+1}$ with endpoints
arbitrarily close to one another are such geodesics.  In order to prove the
existence and analyze these geodesics in general, we introduce two
types of local coordinates near the boundary $\pl\bbar{S^*M}$ and describe
the form of $\bbar{X}$ in each of them.  

Fix $0<\eps$ small.  We cover the region $\rho\in [0,\eps)$ of
$\bbar{S^*M}$ by the two types of neighborhoods   
\[
\begin{gathered}
U_1:=\{(\rho,y,\bbar{\xi}_0,\eta)\in \bbar{S^*M}; \rho<\eps,\,\,  \rho|\eta|_{h_\rho}<\tfrac12\},\\
U_2:= \{(\rho,y,\bbar{\xi}_0,\eta)\in \bbar{S^*M}; \rho<\eps,\,\,  |\eta|_{h_\rho}>\tfrac12\},
\end{gathered}
\]
and we use the coordinates $(\rho,y,\eta)$ on $U_1$ and 
$(\theta,y,\eta)$ on $U_2$, where $\theta\in[0,\pi]$ is defined by  
\[ \sin(\theta)= \rho|\eta|_{h_{\rho}}, \quad \cos(\theta)=\bbar{\xi}_0.\]
Notice that $U_1$ has two connected components $U_1^\pm$ corresponding 
to $\textrm{sign}(\bbar{\xi}_0)=\pm 1$. For $U_2$, if $\eps$ is small
enough, the function 
$\rho\to \rho|\eta|_{h_{\rho}}$ has positive derivative for $\rho<\eps$ so
is invertible; the limit $\rho\to 0$ when $\eta$ is in a compact set
corresponds to either $\theta\to 0$ or $\theta\to \pi$.  We can 
recover the coordinate $\xi_0$ by the expression   
$\xi_0=|\eta|_{h_\rho}\cot(\theta)$.  
A function $f:\bbar{S^*M}\to \rr$ is smooth if it is smooth on 
$S^*M$ and if $f|_{U_1}$ viewed in the coordinates $(\rho,y,\eta)$ 
extends as a smooth function to $\{\rho=0\}$, 
and $f|_{U_2}$ viewed in the coordinates $(\theta,y,\eta)$ extends as a smooth function 
down to $\{\theta=0\}$ and up to $\{\theta=\pi\}$. 
The vector field $\bbar{X}$ can be written in the coordinates $(\rho,y,\eta)$ in $U_1$ as
\begin{equation}\label{formofX} 
\bbar{X}= \textrm{sign}(\xi_0)\sqrt{1-\rho^2|\eta|^2_{h_\rho}}\pl_\rho+\rho
\sum_{i,j}h^{ij}_\rho\eta_i\pl_{y^j}- 
\demi \rho \sum_{k}\pl_{y^k}|\eta|^2_{h_{\rho}}\pl_{\eta_k}
\end{equation}
and in the coordinates $(\theta,y,\eta)$ in $U_2$ as 
\begin{equation}\label{forminXU2} 
Y:=|\eta|_{h_\rho}^{-1}\bbar{X}= (1+Q)\pl_\theta
+\sin(\theta)\sum_{i,j}\frac{h^{ij}_{\rho}\eta_i}{|\eta|_{h_\rho}^2}\pl_{y^j}
-\tfrac12 
\sin(\theta)\sum_{k}\frac{\pl_{y^k}|\eta|^2_{h_\rho}}{|\eta|^2_{h_\rho}}\pl_{\eta_k} 
\end{equation}
with $Q:=\frac{\sin \theta}{2|\eta|^3_{h_\rho}}\pl_{\rho}|\eta|^{2}_{h_\rho}$.
For instance, to evaluate the coefficient of $\pl_\theta$ in
\eqref{forminXU2}, from $\theta=\cos^{-1}\bbar{\xi_0}$ one has  
$\pl_{\bbar{\xi_0}}\theta= -(1-\bbar{\xi_0}^2)^{-1/2}=-(\rho|\eta|_{h_\rho})^{-1}$,
so from \eqref{formofbarX} there follows
\[
|\eta|_{h_\rho}^{-1}\bbar{X}\theta 
=(\rho|\eta|_{h_\rho}^2)^{-1}\big[\rho|\eta|_{h_\rho}^2
+\tfrac12 \rho^2\pl_\rho|\eta|_{h_\rho}^2\big]=1+Q.
\]

The existence and asymptotics of short geodesics is the content of the 
following lemma.   
\begin{lemm}\label{smallgeo} 
There exists $R_0>0$ so that if $z=(y_0,\eta_0)\in \pl_-S^*M$ with
$|\eta_0|_{h_0}>R_0$, then $z\notin \bbar{\Gamma}_-$.   
If we set $R=|\eta_0|_{h_0}>R_0$ and $\de=R^{-1}$, the integral 
curves $(\theta(s),y(s),\eta(s))$ of $Y$ have the property 
that $\theta$, $y$, and $\de \eta$ extend smoothly in $\de$ down to   
$\de =0$.  Moreover,
\[ 
\tau_+(z)=\de\pi+\mc{O}(\de^2), \quad
\rho(\bbar{\varphi}_{\tau}(z))=\de\sin(\alpha_z(\tau))+\mc{O}(\de^2) 
\] 
where $\alpha_z:[0,\tau_+(z)]\to [0,\pi]$ is a diffeomorphism depending
smoothly on $z$ and satisfying $\pl_\tau\alpha_z(\tau)=R+\mc{O}(1)$.    
\end{lemm}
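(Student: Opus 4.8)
The plan is to work in the chart $U_2$ with coordinates $(\theta,y,\eta)$ and study the integral curves of the vector field $Y=|\eta|_{h_\rho}^{-1}\bbar X$ from \eqref{forminXU2}, rescaling $\eta$ by $\de=R^{-1}$. First I would set $\zeta:=\de\eta$ and rewrite $Y$ in the coordinates $(\theta,y,\zeta)$; since $|\eta|_{h_\rho}=\de^{-1}|\zeta|_{h_\rho}$, the factors of $|\eta|_{h_\rho}^{-1}$ produce a vector field that is smooth in $(\theta,y,\zeta,\de)$ down to $\de=0$, and at $\de=0$ it reduces to $\pl_\theta+\mc O(\de)$-corrections. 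Concretely, $Q=\tfrac{\sin\theta}{2|\eta|_{h_\rho}^3}\pl_\rho|\eta|^2_{h_\rho}=\mc O(\rho|\eta|_{h_\rho}^{-1})=\mc O(\de^2)$ since $\rho|\eta|_{h_\rho}=\sin\theta\le 1$ along the curve, the $\pl_{y^j}$ coefficient is $\sin\theta\, h^{ij}_\rho\eta_i|\eta|_{h_\rho}^{-2}=\mc O(\de)$, and the $\pl_{\eta_k}$ coefficient, after converting to $\pl_{\zeta_k}$, is also $\mc O(\de)$. Hence the ODE system for $(\theta,y,\zeta)$ has a right-hand side smooth in $\de$ with $\de$-independent initial data (at $\de=0$: $\theta(0)=0$, $y(0)=y_0$, $\zeta(0)=\eta_0/|\eta_0|_{h_0}$), so by smooth dependence on parameters the solutions $\theta(s)$, $y(s)$, $\zeta(s)=\de\eta(s)$ extend smoothly in $\de$ down to $\de=0$, with $\theta(s)=s+\mc O(\de)$ at leading order.

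Next I would extract the stopping time. Since $\bbar\varphi_\tau$ is the flow of $\bbar X$ and $Y=|\eta|_{h_\rho}^{-1}\bbar X$, the flows differ only by a time reparametrization: if $s\mapsto(\theta(s),\dots)$ is the $Y$-integral curve then $\bbar\varphi_{\tau(s)}(z)$ with $d\tau/ds=|\eta(s)|_{h_\rho}^{-1}=\de|\zeta(s)|_{h_\rho}^{-1}$. The $Y$-curve reaches $\pl_+S^*M$ exactly when $\theta=\pi$ (recall $\cos\theta=\bbar\xi_0$, and $\bbar\xi_0=+1$, i.e. $\theta=0$, on $\pl_-S^*M$ while $\bbar\xi_0=-1$, i.e. $\theta=\pi$, on $\pl_+S^*M$), which by the implicit function theorem happens at some $s_+=\pi+\mc O(\de)$ depending smoothly on $z$; this also shows $z\notin\bbar\Gamma_-$ once $R$ is large enough that the whole curve stays in $U_2\cup$(a fixed coordinate patch) and $\rho\to 0$ at the end. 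Then $\tau_+(z)=\int_0^{s_+}\de|\zeta(s)|_{h_\rho}^{-1}\,ds$. At $\de=0$, $|\zeta(s)|_{h_0}\equiv 1$ (the $\de=0$ curve has $\zeta$ of constant $h_0$-norm, since the $\de=0$ vector field does not move $y$ or $\zeta$), so $\tau_+(z)=\de\int_0^\pi 1\,ds+\mc O(\de^2)=\de\pi+\mc O(\de^2)$.

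For the formula for $\rho(\bbar\varphi_\tau(z))$, note that along the curve $\sin\theta=\rho|\eta|_{h_\rho}=\de^{-1}\rho|\zeta|_{h_\rho}$, hence $\rho=\de\sin\theta\,|\zeta|_{h_\rho}^{-1}=\de\sin\theta\,(1+\mc O(\de))$. Writing $\alpha_z(\tau)$ for the value of $\theta$ at $\bbar X$-time $\tau$ — i.e. $\alpha_z(\tau)=\theta(s)$ where $\tau=\tau(s)$ — this is a diffeomorphism $[0,\tau_+(z)]\to[0,\pi]$ smooth in $z$ (composition of two smooth-with-smooth-inverse maps), and $\rho(\bbar\varphi_\tau(z))=\de\sin(\alpha_z(\tau))+\mc O(\de^2)$. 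Finally $\pl_\tau\alpha_z(\tau)=\dfrac{d\theta/ds}{d\tau/ds}=\dfrac{1+\mc O(\de)}{\de|\zeta|_{h_\rho}^{-1}}=\de^{-1}(1+\mc O(\de))=R+\mc O(1)$. The main obstacle is the bookkeeping at the endpoints: one must check that the integral curve genuinely stays inside the single chart $U_2$ for the whole time interval $[0,s_+]$ (so that \eqref{forminXU2} is valid throughout) and that $y(s),\eta(s)$ remain in a fixed compact set uniformly in $\de$, which follows from the $\de=0$ analysis plus continuity once $R_0$ is chosen large enough — and to make the $\mc O(\de^2)$ remainder in $\tau_+$ precise one needs that the $\mc O(\de)$ correction to $|\zeta(s)|_{h_\rho}$ integrates against $ds$ without a secular term, which is automatic here since the interval of integration has length $\pi+\mc O(\de)$ and the integrand is $1+\mc O(\de)$ uniformly.
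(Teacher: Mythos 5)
Your proposal follows the same route as the paper: work in the chart $U_2$, rescale $\eta$ by $\de=R^{-1}$ so that the $Y$-ODE becomes smooth in $\de$ down to $\de=0$ with a trivial limiting solution, solve $\theta=\pi$ by the implicit function theorem, and recover $\tau_+(z)$ and $\alpha_z$ by the reparametrization $d\tau/ds=|\eta|_{h_\rho}^{-1}$. One small slip: since $\pl_\rho|\eta|^2_{h_\rho}=\mc O(|\eta|^2_{h_\rho})$ one gets $Q=\mc O\big(\sin\theta/|\eta|_{h_\rho}\big)=\mc O(\rho)=\mc O(\de)$, not $\mc O(\de^2)$ (the paper's $\til Q=Q/\de$ is $\mc O(1)$), but this does not affect your argument since $Q\to 0$ at $\de=0$ either way; the paper additionally rescales $y$ to $u=(y-y_0)/\de$ to expose the $\mc O(\de)$ motion of $y$, which is not needed for this lemma but is used later in the proof of Theorem~\ref{determination}.
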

\begin{proof} We write $X=\sin(\theta)Y$ in the region $U_2$ of $S^*M$, 
where $Y=|\eta|_{h_\rho}^{-1}\bbar{X}$ is given by  
\eqref{forminXU2} and we recall $\sin(\theta)=\rho|\eta|_{h_\rho}$. Denote
by $(\theta(s),y(s),\eta(s))$ the integral curve of $Y$ with initial
condition $(0,y_0,\eta_0)$ and set $R=|\eta_0|_{h_0}$ with $R> R_0$.     
Rescale the integral curve equations:  set $\de = R^{-1}$,
$u=(y-y_0)/\de$, $\om=\de \eta$.  Then $\om_0=\de\eta_0$ has 
$|\om|_{h_0}=1$ and the integral curve equations for $Y$ become 
\begin{equation}\label{intY}
\frac{d\theta}{ds}=1+\de\til{Q}\qquad
\frac{du^i}{ds}=\sin\theta\,\frac{h^{ij}_\rho\om_j}{|\om|^2_{h_\rho}}\qquad
\frac{d\om_i}{ds}=-\delta \sin\theta\,  
\frac{\pl_{y^i}h^{jk}_\rho\om_j\om_k}{2|\om|^2_{h_\rho}}
\end{equation}
with initial conditions $\theta(0)=0$, $u(0)=0$, $\om(0)=\om_0$, where 
$\til{Q}=Q/\de=\frac{\sin\theta}{2|\om|^3_{h_\rho}}\pl_\rho
h_\rho^{ij}\om_i\om_j$.  Everywhere the argument of $h_\rho$ and its derivatives
is $y_0+\de u$ and $\rho$ is determined implicitly as the solution of  
$\rho|\om|_{h_\rho}=\de\sin\theta$.  The right-hand sides of these
equations are smooth in all arguments $(\theta,u,\om,y_0,\de)$, including 
down to $\de =0$.  The solution for $\de=0$ is 
\[
\theta=s,\qquad \om=\om_0,\qquad u=(1-\cos s)\om_0^\sharp, 
\]
where $\om_0^\sharp$ is the dual vector to $\om_0$
using $h_0(y_0)$.  This corresponds to the geodesic on the hyperbolic space
defined  
by the metric $\rho^{-2}\big(d\rho^2+h_0(y_0)\big)$ with coefficients
frozen at $y_0$.  By a standard 
result \cite[Theorem 7.4]{CL}, there is a solution smooth in $\de\geq 0$
small and for all $s$ up to $\theta(s)=\pi$, whereupon $\rho= 0$ 
(one may continue slightly further by choosing some smooth extension of 
$h_\rho$ to $\rho<0$).  
Hence the geodesic reaches $\pl_+S^*M$, so $z\notin \bbar{\Gamma}_-$.  
The implicit function theorem implies that there is
a uniquely defined smooth function $s_0$ for $0\leq \de$ small for which  
$\theta(s_0)=\pi$ and $s_0=\pi$ for $\de = 0$.  We view $s_0(z)$ as a
function of $z=(y_0,\eta_0)$ for $R=|\eta_0|_{h_0}$ large.   So
$s_0(z)=\pi +\mc{O}(\de)$.  Since $|\om|_{h_\rho}=1$ 
for all $s$ when $\de=0$, it follows that $|\om|_{h_\rho}=1+\mc{O}(\de)$ 
for $s\in[0,s_0(z)]$, which becomes in terms of the original variables  
\begin{equation}\label{eta=1/R}
\de|\eta|_{h_\rho}=1+\mc{O}(\de).
\end{equation} 
We also have $\rho/(\de\sin\theta)=|\om|_{h_\rho}^{-1}=1+\mc{O}(\de)$, so 
$\rho=\de\sin\theta +\mc{O}(\de^2)$ uniformly for $s\in[0,s_0(z)]$.   

Since 
$Y=|\eta|_{h_\rho}^{-1}\bbar{X}$, we also can write the flow
$\bbar{\varphi}_{\tau}(z)$ as a reparametrization  
of the flow of $Y$ (just like in \eqref{reparam}) and viewing $s$ as a
function of $(\tau,z)$ we get 
$s(\tau,z)=R\tau+\mc{O}(\tau)$, $\tau_+(z)=\de\pi+\mc{O}(\de^2)$ and 
\[ 
\pl_\tau s(\tau,z)=|\eta(s(\tau,z))|_{h_\rho}=R+\mc{O}(1).
\]
Since $\dot{\theta}(s)=1+\mc{O}(\de)$, we obtain that
$\alpha_z(\tau):=\theta(s(\tau,z))$ satisfies   
$\pl_\tau\alpha_z(\tau)=R+\mc{O}(1)$ and this achieves the proof.
\end{proof}

\subsection{Splitting of $TS^*M$, Sasaki metric, conjugate
  points}\label{conjugate} 
As we discussed previously, for any Riemannian manifold $(M,g)$, the
tangent bundle of $S^*M$ is a contact manifold with contact splitting  
$TS^*M=\rr X\oplus \ker\alpha$.
Moreover, we have the further splitting
\begin{equation}\label{splitting} 
\ker\alpha=\mc{H}\oplus\mc{V},
\end{equation}
where $\mc{V}:=\ker d\pi$ is the vertical bundle and 
$\mc{H}=\ker \alpha\cap \ker \mc{K}$ is the horizontal bundle. 
Here $\mc{K}:TT^*M\rightarrow TM$ is the connection map, defined by 
$\mc{K}(\zeta)=D_t z(0)^\sharp$, where $\zeta\in T_{(x,\xi)}T^*M$, $z(t)$ 
is a curve in $T^*M$ with $z(0)=(x,\xi)$, $\dot{z}(0)=\zeta$, $D_t$ is 
the covariant derivative along the curve $\pi(z(t))$ in $M$, and 
${}^\sharp$ denotes the canonical isomorphism $T^*M\to TM$ induced by $g$  
(see \cite[Chap 1.3.1]{Pa} for details about $\mc{K}$).  If $z=(x,\xi)\in 
S^*M$, then any 
$\zeta \in \ker \mc{K}$, a priori only assumed to be in $T_z T^*M$,
is actually already in $T_z S^*M$.        
If $\mc{Z}\to S^*M$ is the bundle whose fibers are $\mc{Z}_{(x,\xi)}=\{ 
v\in T_xM: \xi(v)=0\}$, the maps  
$d\pi|_{\mc{H}}: \mc{H}\to \mc{Z}$ and $\mc{K}|_{\mc{V}}:\mc{V}\to
\mc{Z}$ are isomorphisms.  We denote by $\mc{L}$ the isomorphism  
\begin{equation}\label{L}
\mc{L}:\ker\alpha\rightarrow \mc{Z}\oplus \mc{Z},\qquad
\mc{L}(\zeta)=\big(d\pi(\zeta),\mc{K}(\zeta)\big). 
\end{equation}
The Sasaki metric $G$ on $S^*M$ is defined by  
\begin{equation}\label{sasaki}
G(\zeta,\zeta')=g(d\pi(\zeta),d\pi(\zeta'))+g(\mc{K}(\zeta),\mc{K}(\zeta')),  
\qquad \zeta,\zeta'\in T_z(S^*M).  
\end{equation}

If $z\in S^*M$, the space of normal Jacobi fields along the geodesic 
$\gamma_z(t):=\pi(\varphi_t(z))$ is isomorphic to
$\ker\alpha_z=\mc{H}_z\oplus\mc{V}_z$.  For $\zeta=h+v\in
\mc{H}_z\oplus\mc{V}_z$, the corresponding 
Jacobi field $Y(t)$ is determined by the initial conditions 
$\big(Y(0),D_tY(0)\big)=\mc{L}(\zeta)=\big(d\pi(h),\mc{K}(v)\big)$.
Two points $p,q\in M$ are said to be \emph{conjugate points} if  
there exist $z\in S^*_pM$ and $T>0$ so that $\varphi_T(z)\in S^*_qM$ and 
\begin{equation}\label{noconjugatepoints}  
d\varphi_{T}(z).\mc{V}(z)\cap \mc{V}(\varphi_T(z))\neq\{0\}.
\end{equation}
This is equivalent to 
the statement that there is a normal Jacobi field along $\gamma$ which
vanishes at both $0$ and $T$.  

Lemma, p. 201, of \cite{GKM} asserts that if $p$ is a point in a simply
connected Riemannian manifold $M$ such that $\exp_p$ is everywhere
defined and a local diffeomorphism, then the exponential map
$\exp_p:T_pM\to M$ is a diffeomorphism.  Since $\exp_p$ is everywhere
defined and a local diffeomorphism for each $p$ in a complete 
manifold with no conjugate points, it follows
that the exponential map $\exp_p:T_pM\to M$ is a diffeomorphism at each
point in a non-trapping asymptotically hyperbolic manifold with no
conjugate points.

\section{Boundary value problem and X-ray transform}\label{Xraysection}

We  first consider the non-trapping case, i.e. 
$\Gamma_-\cup\Gamma_+=\emptyset$. 
\subsection{Resolvent in the non-trapping case}
The first boundary value problem we consider is the following:
\begin{lemm}\label{resol1}
For each $\la\in\cc$, for each $f\in C_c^\infty(S^*M)$, there is a unique
$u_\pm(\la) \in C^\infty(S^*M)$ such that  
\[ (-X\pm\la)u^\la_\pm=f ,\,\textrm{ with } u_\pm(\la)=0 \textrm{ near }\pl_\pm S^*M.\]
and the operator $R_\pm(\la): C_c^\infty(S^*M)\to C^\infty(SM)$ defined by $R_\pm(\la)f=u_\pm(\la)$ is 
continuous and holomorphic in $\la$. 
\end{lemm}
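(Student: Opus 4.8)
The plan is to write $u_\pm(\la)$ explicitly as an integral of $f$ along the geodesic flow and to control that integral using the non-trapping hypothesis via Lemma~\ref{convtobdry}, Lemma~\ref{smoothpb} and Corollary~\ref{boundmap}. For $z\in S^*M$ set
\[
u_+(\la)(z):=\int_0^{\infty}e^{-\la s}\,f(\varphi_s(z))\,ds,
\qquad
u_-(\la)(z):=-\int_0^{\infty}e^{-\la s}\,f(\varphi_{-s}(z))\,ds .
\]
A direct computation of $\frac{d}{dt}u_\pm(\la)(\varphi_t(z))\big|_{t=0}$ (substitute $s\mapsto s+t$, then differentiate) yields $Xu_+(\la)=\la u_+(\la)-f$ and $Xu_-(\la)=-\la u_-(\la)-f$, i.e. $(-X\pm\la)u_\pm(\la)=f$, and holomorphy of $\la\mapsto u_\pm(\la)$ is apparent from the integrand; so the proof reduces to showing that these integrals are well behaved.

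The main point is that for $z$ in a fixed compact set the integrand vanishes for all large $s$, uniformly in $z$. Apply Lemma~\ref{smoothpb} to the function $\rho\in C^\infty(\bbar{S^*M})$ (the defining function, lifted by $\pi$): since $\bbar{\Gamma_-}=\emptyset$ in the non-trapping case and $\rho(B_+(z))=0$ because $B_+(z)\in\pl_+S^*M$, one gets $\rho(\varphi_s(z))=\mc{O}(\tau_+(z)e^{-s})$ as $s\to+\infty$, uniformly for $z$ in compact subsets of $S^*M$; here $\tau_+$ is smooth on $\bbar{S^*M}$ by Corollary~\ref{boundmap}. Hence, with $c_0:=\min_{\supp f}\rho>0$, for each compact $L\subset S^*M$ there is $T_L<\infty$, depending only on $L$ and a lower bound for $c_0$, such that $\rho(\varphi_s(z))<c_0$ for all $z\in L$ and $s\geq T_L$, so $f(\varphi_s(z))=0$ there; the same holds in backward time. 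Thus over $L$ the integral defining $u_+(\la)$ runs only over $s\in[0,T_L]$. This shows $u_\pm(\la)\in C^\infty(S^*M)$, legitimizes differentiation under the integral sign (hence the equation $(-X\pm\la)u_\pm(\la)=f$ and holomorphy in $\la$), and gives, for $\supp f$ contained in a fixed compact $K$, estimates on every $C^k$-seminorm of $u_\pm(\la)$ over compacts by $C^k$-seminorms of $f$; this is the continuity of $R_\pm(\la)$.

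For the boundary condition, let $\eps>0$ be as in Lemma~\ref{convtobdry} and choose $\de<\min(\eps,c_0)$. The set $\{\rho<\de,\ \bbar{\xi}_0<-\tfrac12\}$ is a neighborhood of $\pl_+S^*M$ in $\bbar{S^*M}$; on it $\xi_0=\bbar{\xi}_0/\rho<0$ and $\rho<\eps$, so by Lemma~\ref{convtobdry} the forward trajectory of any point there keeps $\rho$ nonincreasing, hence $<c_0$, hence stays off $\supp f$, so $u_+(\la)$ vanishes there; similarly $u_-(\la)$ vanishes near $\pl_-S^*M$ using backward trajectories. For uniqueness, if $v\in C^\infty(S^*M)$ satisfies $(-X+\la)v=0$ and vanishes near $\pl_+S^*M$, then $v(\varphi_t(z))=v(z)e^{\la t}$; since $\varphi_t(z)\to B_+(z)\in\pl_+S^*M$ as $t\to+\infty$ by Corollary~\ref{boundmap}, the trajectory eventually enters the region where $v=0$, forcing $v(z)=0$, and as $z$ is arbitrary $v\equiv 0$. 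Applying this to the difference of two solutions gives uniqueness for the $+$ problem, and the time-reversed argument handles the $-$ problem.

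I expect the only non-formal step to be the local uniformity in the second paragraph — that geodesics exit $\supp f$ within a time bounded on compact subsets of $S^*M$. This is precisely what the analysis of the flow near infinity provides: in the non-trapping case every orbit converges to $\pl_\pm S^*M$, and Lemma~\ref{smoothpb} (or a compactness argument built on Lemma~\ref{convtobdry}) makes the rate of convergence locally uniform. Everything else is the standard solution of a transport equation by integration along characteristics.
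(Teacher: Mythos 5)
Your proposal is correct and takes the same route as the paper: the paper simply writes down the integral formulas $R_+(\la)f(z)=\int_0^\infty e^{-\la t}f(\varphi_t(z))\,dt$, $R_-(\la)f(z)=-\int_{-\infty}^0 e^{\la t}f(\varphi_t(z))\,dt$ (your $u_-$ is the same after $t=-s$) and declares continuity and uniqueness clear. What you add — the locally uniform exit time from $\supp f$ via Lemma~\ref{smoothpb}/Corollary~\ref{boundmap}, the neighborhood $\{\rho<\de,\ \bbar{\xi}_0<-\tfrac12\}$ of $\pl_+S^*M$ together with the monotonicity from Lemma~\ref{convtobdry} for the vanishing near the boundary, and the ODE $v(\varphi_t(z))=v(z)e^{\la t}$ plus convergence to $\pl_+S^*M$ for uniqueness — are exactly the details the paper is implicitly invoking.
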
\label{resol2}
\begin{proof} The operator $R_\pm(\la)$ is simply given by
\begin{equation}\label{Rpm} 
R_+(\la)f(z)=\int_0^\infty e^{-\la t}f(\varphi_t(z))dt ,\quad
R_-(\la)f(z)=-\int_{-\infty}^0e^{\la t}f(\varphi_t(z))dt, 
\end{equation}
its continuity and uniqueness are clear. 
\end{proof}
We want to extend the action of these operators to $C^\infty(\bbar{S^*M})$.  
\begin{lemm}
The operators of Lemma~\ref{resol1} extend to 
holomorphic families of operators 
$R_\pm(\la):C^\infty(\bbar{S^*M})\to C^\infty(S^*M)$ for ${\rm Re}(\la)>0$
with meromorphic extensions to $\cc$ with first order poles at     
$-\nn_0$. The residue of $R_\pm(\la)$ at $\la=0$ is the operator $P_\pm$
defined by \[ P_\pm f=\pm f\circ B_\pm.\] 
\end{lemm}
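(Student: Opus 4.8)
The plan is to recognize $R_\pm(\lambda)$ as a Mellin transform in the variable $s=e^{\mp t}$ and to read off its meromorphic structure from a Taylor expansion at $s=0$, the needed smoothness being exactly what Lemma~\ref{smoothpb} provides; I write the argument for $R_+$, the case of $R_-$ being the same with time reversed. First I would substitute $s=e^{-t}$ in \eqref{Rpm}. Since we are in the non-trapping case $\Gamma_-=\emptyset$, Lemma~\ref{smoothpb} gives $f(\varphi_t(z))=F_+(e^{-t},z)$ with $F_+$ smooth on $[0,1)\times(\bbar{S^*M}\setminus\pl_-S^*M)$, in particular smooth on $[0,1]\times S^*M$, and with $F_+(0,z)=f(B_+(z))$ by \eqref{Fpm}, where $B_+\colon S^*M\to\pl_+S^*M$ is the smooth map of Corollary~\ref{boundmap}. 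The substitution turns \eqref{Rpm} into
\[ R_+(\lambda)f(z)=\int_0^1 s^{\lambda-1}F_+(s,z)\,ds . \]
By \eqref{renorflow} one has $F_+(s,z)=f\big(\bbar{\varphi}_{-s\tau_+(z)H(s,z)}(B_+(z))\big)$, so $F_+$ and all its $z$-derivatives are bounded on $[0,1]$ uniformly for $z$ in compact subsets of $S^*M$; hence the integral converges for $\Re\lambda>0$, is holomorphic there, lies in $C^\infty(S^*M)$, and depends continuously on $f\in C^\infty(\bbar{S^*M})$. On $C_c^\infty(S^*M)$ it coincides with the operator of Lemma~\ref{resol1}, both being given by the same integral, and the identity $(-X+\lambda)R_+(\lambda)=\mathrm{Id}$ persists by the computation in Lemma~\ref{resol1}.

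Next I would extract the meromorphic continuation. For any $N\in\nn$, Taylor's theorem with integral remainder gives
\[ F_+(s,z)=\sum_{k=0}^{N}\frac{(\partial_s^k F_+)(0,z)}{k!}\,s^k+s^{N+1}G_N(s,z), \]
with $G_N$ smooth and coefficients $(\partial_s^k F_+)(0,\cdot)\in C^\infty(S^*M)$. Substituting and using $\int_0^1 s^{\lambda-1+k}\,ds=(\lambda+k)^{-1}$ gives, for $\Re\lambda>0$,
\[ R_+(\lambda)f(z)=\sum_{k=0}^{N}\frac{(\partial_s^k F_+)(0,z)}{k!\,(\lambda+k)}+\int_0^1 s^{\lambda+N}G_N(s,z)\,ds , \]
and the last term is holomorphic in $\lambda$ for $\Re\lambda>-(N+1)$ and smooth in $z$. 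Letting $N\to\infty$ shows that $\lambda\mapsto R_+(\lambda)f$ extends meromorphically to $\cc$ with at most simple poles at the points of $-\nn_0$, the residue at $\lambda=-k$ being $\tfrac1{k!}(\partial_s^k F_+)(0,\cdot)$. In particular $\Res_{\lambda=0}R_+(\lambda)f=F_+(0,\cdot)=f\circ B_+$, so $P_+f=f\circ B_+$; for $R_-$ the extra minus sign in \eqref{Rpm} gives $P_-f=-f\circ B_-$, i.e. $P_\pm f=\pm f\circ B_\pm$ as claimed.

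The one genuinely substantial ingredient is Lemma~\ref{smoothpb} itself — that $f\circ\varphi_t$ is the pullback under $t\mapsto e^{-t}$ of a function smooth up to $s=0$ whose Taylor coefficients there are smooth in $z$. Granting that, everything else is the elementary Mellin bookkeeping above; the only remaining care is to justify differentiation under the integral sign and the continuity of $R_\pm(\lambda)$ as a map of Fréchet spaces, for which one uses the uniform-on-compacts bounds coming from \eqref{renorflow}. I do not expect any obstacle beyond this.
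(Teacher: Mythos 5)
Your proof is correct and takes essentially the same route as the paper: both arguments reduce everything to Lemma~\ref{smoothpb}, then Taylor-expand $F_+(u,z)$ at $u=0$ to $N$-th order and observe that $\int_0^\infty e^{-(\lambda+j)t}\,dt=(\lambda+j)^{-1}$ (equivalently, after your Mellin substitution $s=e^{-t}$, $\int_0^1 s^{\lambda+k-1}\,ds=(\lambda+k)^{-1}$) while the remainder term is holomorphic for $\Re\lambda>-(N+1)$. The change of variable to $s=e^{-t}$ is a cosmetic repackaging of the same bookkeeping and does not alter the argument's substance.
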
 
\begin{proof}
We just consider $R_+(\la)$ as $R_-(\la)$ is similar. First, we notice that
from Lemma \ref{convtobdry},  
for $z$ in any compact set $B\subset S^*M$, the curves $(\varphi_t(z))_{t\in\rr,z\in B}$ lie 
in a compact region of $\bbar{S^*M}$, thus
\[ R_+(\la)f(z)=\int_{0}^{\infty}e^{-\la t}f(\varphi_t(z))dt\]
converges uniformly on compact sets of $S^*M$ for ${\rm Re}(\la)>0$; it is
smooth  
and holomorphic in $\la$ there, and also bounded on $A\cap S^*M$ for each
compact set $A\subset \bbar{S^*M}$ (by a constant depending on ${\rm
  Re}(\la)$). Next, by Lemma \ref{smoothpb}, we can  
write for $z$ in each open set $A\subset S^*M$ with compact closure in $S^*M$
\[
R_+(\la)f(z)= \int_0^\infty e^{-\la t}F_+(e^{-t},z)dt
\]
for some $F_+$ smooth in $[0,1)\x \bbar{S^*M}\setminus \pl_-S^*M$ with $F_+(e^{-t},z)=f(B_+(z))+
\mc{O}(e^{-t}\tau_+(z))$, thus we have by Taylor expansion of $F_+(u,z)$ at
$u=0$ that for ${\rm Re}(\la)>0$ 
and each $N\in\nn$ 
\[\begin{split}
R_+(\la)f(z)= & \sum_{j=0}^N\int_0^\infty e^{-\la t}F_{+,j}(z)e^{-jt}dt+
\int_0^\infty e^{-\la t}e^{-t(N+1)}r_N(t,z)dt\\ 
= & \sum_{j=0}^N\frac{F_{+,j}(z)}{\la+j}+ \int_0^\infty e^{-\la t}e^{-t(N+1)}r_N(t,z)dt
\end{split}
\]
for some $r_N$ bounded in $[0,\infty)\x \bbar{A}$. The last integral is
  holomorphic in ${\rm Re}(\la)>-N-1$ and the first terms admit a 
  meromorphic extension with poles at $-\nn_0$. We notice that the residue
  at $0$ is given by the operator $P_+f:=f\circ B_+$. 
\end{proof}

We can then define the operators $R_\pm(0):=\lim_{\la\to
  0^+}(R_\pm(\la)-\la^{-1}P_\pm)$ acting on $C^\infty(\bbar{S^*M})$, which
by using \eqref{Fpm} can also be written (for $z\in S^*M$) as the
converging integrals 
\begin{equation}\label{R0}
\begin{gathered}
R_+(0)f(z)=\int_0^\infty (f(\varphi_t(z))-f(B_+(z)))dt, \\
R_-(0)f(z)=\int_0^\infty (f(B_-(z))-f(\varphi_{-t}(z)))dt.
\end{gathered}\end{equation}
We define $\rho C^\infty(\bbar{S^*M})$ to be the subspace of
$C^\infty(\bbar{S^*M})$ consisting of smooth functions on $\bbar{S^*M}$
which vanish at $\pl\bbar{S^*M}$ (such functions $f$  
can be factorized as $f=\rho \til{f}$ for some smooth $\til{f}$).
\begin{lemm}\label{propRpm}
The operators $R_\pm(0)$ defined by \eqref{R0} extend as continuous
operators 
\[ 
R_\pm(0):  C^\infty(\bbar{S^*M})\to  C^\infty(\bbar{S^*M}\setminus \pl_\mp
S^*M)
\]
satisfying $(R_\pm(0)f)|_{\pl_\pm S^*M}=0$ and 
\begin{equation}\label{identityR} 
-XR_\pm(0)={\rm Id}\mp P_\pm.
\end{equation}
If $f\in C^\infty(\bbar{S^*M})$ vanishes at $\pl_\pm S^*M$, then $u_\pm:=R_\pm(0)f$ 
is the unique smooth solution in $\bbar{S^*M}\setminus \pl_\mp S^*M$ of the boundary value problem
\[ -Xu_\pm=f ,\quad u_\pm|_{\pl_\pm S^*M}=0.\]
Finally $R_\pm(0)$ extend as continuous operators  
\begin{equation}\label{extR0}
R_\pm(0): \rho C^\infty(\bbar{S^*M})\to C^\infty(\bbar{S^*M}).
\end{equation}
\end{lemm}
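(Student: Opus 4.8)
The plan is to establish the five assertions in turn — smoothness and continuity of $R_\pm(0)$, the boundary value, the identity \eqref{identityR}, uniqueness, and the improved mapping property \eqref{extR0} — using Lemma~\ref{smoothpb}, the reparametrization $X=\rho\bbar{X}$ of Lemma~\ref{Xform}, and, for the last point, the algebraic cancellation afforded by the extra factor $\rho$. I argue for $R_+(0)$; the case of $R_-(0)$ follows by reversing time. Recall that in the non-trapping case $\Gamma_\pm=\emptyset$, so $B_+$ and $\tau_+$ are smooth on all of $\bbar{S^*M}$ (Corollary~\ref{boundmap}), and by Lemma~\ref{smoothpb}, for $t\geq 0$ one has $f(\varphi_t(z))=F_+(e^{-t},z)$ with $F_+\in C^\infty([0,1)\x(\bbar{S^*M}\setminus\pl_-S^*M))$ and $F_+(0,z)=f(B_+(z))=(P_+f)(z)$.

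\emph{Smoothness, continuity, boundary value.} Set $g(t,z):=f(\varphi_t(z))-f(B_+(z))$, so that $R_+(0)f(z)=\int_0^\infty g(t,z)\,dt$ by \eqref{R0}. For $t\geq0$ one has $g(t,z)=F_+(e^{-t},z)-F_+(0,z)=e^{-t}G_+(e^{-t},z)$, where $G_+(u,z):=u^{-1}\big(F_+(u,z)-F_+(0,z)\big)$ is smooth in $(u,z)$ on $[0,1]\x(\bbar{S^*M}\setminus\pl_-S^*M)$ (by Hadamard's lemma near $u=0$, and near $u=1$ since there $F_+(u,z)=f(\varphi_{-\log u}(z))$). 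Hence on every compact $A\subset\bbar{S^*M}\setminus\pl_-S^*M$ and every multi-index $\alpha$ there is $C_\alpha$ with $|\pl_z^\alpha g(t,z)|\leq C_\alpha e^{-t}$ for $t\geq 0$, $z\in A$; differentiating under the integral sign then gives $R_+(0)f\in C^\infty(\bbar{S^*M}\setminus\pl_-S^*M)$, with each seminorm of $R_+(0)f$ bounded by finitely many seminorms of $f$, i.e. continuity. For $z\in\pl_+S^*M$ we have $X=\rho\bbar{X}=0$ at $z$, so $\varphi_t(z)=z=B_+(z)$ and $g(t,z)\equiv 0$; thus $(R_+(0)f)|_{\pl_+S^*M}=0$.

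\emph{The identity and uniqueness.} Since $B_+\circ\varphi_s=B_+$, for $z\in S^*M$ one has $R_+(0)f(\varphi_s(z))=\int_s^\infty g(r,z)\,dr$, hence $XR_+(0)f(z)=\tfrac{d}{ds}\big|_{s=0}R_+(0)f(\varphi_s(z))=-g(0,z)=-\big(f(z)-(P_+f)(z)\big)$; by continuity $-XR_+(0)=\mathrm{Id}-P_+$ on $\bbar{S^*M}\setminus\pl_-S^*M$, which is \eqref{identityR}. If in addition $f|_{\pl_+S^*M}=0$ then $P_+f=f\circ B_+\equiv 0$ (as $B_+$ maps into $\pl_+S^*M$), so $u_+:=R_+(0)f$ solves $-Xu_+=f$, $u_+|_{\pl_+S^*M}=0$. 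For uniqueness, the difference $w$ of two smooth solutions on $\bbar{S^*M}\setminus\pl_-S^*M$ satisfies $Xw=0$, $w|_{\pl_+S^*M}=0$; being constant on flow lines and continuous up to $\pl_+S^*M$, it satisfies $w(z)=\lim_{t\to\infty}w(\varphi_t(z))=w(B_+(z))=0$ for $z\in S^*M$, so $w\equiv0$.

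\emph{The property \eqref{extR0} and the main obstacle.} Now let $f=\rho\til{f}$ with $\til{f}\in C^\infty(\bbar{S^*M})$. Since $B_\pm(z)\in\pl_\pm S^*M\subset\{\rho=0\}$, we get $f(B_\pm(z))=0$, so $R_+(0)f(z)=\int_0^\infty f(\varphi_t(z))\,dt$ and $R_-(0)f(z)=-\int_{-\infty}^0 f(\varphi_t(z))\,dt$. The key step is to switch from the $X$-parametrization to the $\bbar{X}$-parametrization: by \eqref{reparam}, $\varphi_t(z)=\bbar{\varphi}_{\tau(t)}(z)$ with $d\tau/dt=\rho(\bbar{\varphi}_\tau(z))$ and $\tau(t)\nearrow\tau_+(z)$ as $t\to\infty$, so the change of variables combined with $f=\rho\til{f}$ makes the factors of $\rho$ cancel:
\[
R_+(0)f(z)=\int_0^{\tau_+(z)}\frac{f(\bbar{\varphi}_\tau(z))}{\rho(\bbar{\varphi}_\tau(z))}\,d\tau=\int_0^{\tau_+(z)}\til{f}(\bbar{\varphi}_\tau(z))\,d\tau,
\]
and likewise $R_-(0)f(z)=-\int_0^{\tau_-(z)}\til{f}(\bbar{\varphi}_{-\tau}(z))\,d\tau$. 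These are finite integrals; since $\bbar{X}$ is smooth on $\bbar{S^*M}$ (extend it smoothly across $\pl\bbar{S^*M}$), its flow $\bbar{\varphi}_\tau(z)$ is jointly smooth in $(\tau,z)$ for $\tau$ in bounded intervals and $z$ near any compact set, and $\tau_\pm$ is smooth on all of $\bbar{S^*M}$ in the non-trapping case (as $\bbar{\Gamma_\mp}=\emptyset$); differentiating the integral with respect to $z$ (Leibniz rule, variable endpoint) therefore shows $R_\pm(0)f\in C^\infty(\bbar{S^*M})$, now smooth up to $\pl_\mp S^*M$ as well, with continuous dependence on $\til{f}$. I expect the main difficulty to lie exactly here: for general $f$, forward geodesics become infinitely long in the $X$-parametrization as $z\to\pl_-S^*M$ and $R_+(0)f$ genuinely fails to be smooth there; the hypothesis $f\in\rho C^\infty(\bbar{S^*M})$ supplies precisely the vanishing needed so that, after reparametrizing by the boundary-transverse field $\bbar{X}$, both the integrand and the (finite) domain of integration are smooth in $z$ up to the entire boundary.
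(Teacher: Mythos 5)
Your proposal is correct and follows essentially the same route as the paper: smoothness of $R_\pm(0)f$ and the vanishing at $\pl_\pm S^*M$ come from Lemma~\ref{smoothpb} and the estimate \eqref{Fpm} (which you re-derive via Hadamard's lemma), uniqueness follows because the difference of two solutions is constant along flow lines and equals $u\circ B_\pm=0$, and \eqref{extR0} follows from the change of variables \eqref{reparam} that trades $dt$ for $d\tau/\rho$. Your direct flow-differentiation argument for the identity \eqref{identityR} is a fine substitute for the paper's implicit reliance on the residue computation of the preceding lemma.
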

\begin{proof} From Lemma \ref{smoothpb} and \eqref{Fpm}, we notice that $R_\pm(0)f$ extend smoothly 
to $\pl_\pm S^*M$ and we get near $\pl_\pm S^*M$ (uniformly on compact sets
of $\bbar{S^*M}\setminus\pl_-S^*M$)  
\begin{equation}\label{annulation}
R_\pm(0)f(z)=\mc{O}(\tau_\pm(z))
\end{equation}
thus vanishing at $\pl\bbar{S^*M}$. The second statement is clear since the
difference $u$ of  
two solutions would be constant along flow lines of $X$, and thus equal to
$u\circ B_\pm=0$. For the last part, we use \eqref{reparam} to get by a
change of variable $\tau\mapsto t(\tau,z)$ 
\begin{equation}\label{R_+viabarphi} 
R_+(0)f(z)=\int_0^{\tau_+(z)} f(\bar{\varphi}_{\tau}(z)) \frac{1}{\rho(\bar{\varphi}_{\tau}(z))}d\tau=
\int_0^{\tau_+(z)} \til{f}(\bar{\varphi}_{\tau}(z)) d\tau
\end{equation}
if $f=\rho \til{f}$ for some some smooth $\til{f}\in C^\infty(\bbar{S^*M})$. This
proves the last claim of the Lemma and the same argument works with 
$R_-(0)$. 
\end{proof}

\subsection{Extension operator and X-ray transform in the non-trapping case} 
The next boundary value problem for the flow we consider is the extension problem. We have the 
\begin{lemm}\label{ext}
For each $\omega\in C^\infty(\pl_-S^*M)$, there is a unique $w\in C^\infty(\bbar{S^*M})$ 
such that 
\[Xw=0, \quad w|_{\pl_-S^*M}=\omega\]
and it is given by $w(z)=\omega(B_-(z))$. Its value at $\pl_+S^*M$ is
$w|_{\pl_+S^*M}=\omega \circ S_g^{-1}$ 
where $S_g$ is the scattering map defined by \eqref{scattering}.
\end{lemm}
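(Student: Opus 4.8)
The plan is to establish existence, uniqueness, smoothness, and the boundary value at $\pl_+S^*M$ in turn, relying mainly on the smooth structure of the flow $\bbar\varphi_\tau$ of $\bbar X$ and on Corollary~\ref{boundmap}. For existence, I would simply set $w(z)=\omega(B_-(z))$. Since $B_-:\bbar{S^*M}\setminus\bbar{\Gamma_+}\to\pl_-S^*M$ is smooth by Corollary~\ref{boundmap} and in the non-trapping case $\bbar{\Gamma_+}=\emptyset$, the composition $w=\omega\circ B_-$ is smooth on all of $\bbar{S^*M}$. The identity $Xw=0$ follows because $B_-$ is constant along flow lines: $B_-(\varphi_t(z))=B_-(z)$ for all $t$, since $\varphi_t(z)$ and $z$ lie on the same integral curve and $B_-$ records its backward limit; differentiating in $t$ at $t=0$ gives $Xw=0$. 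The boundary condition $w|_{\pl_-S^*M}=\omega$ holds because $B_-(z)=z$ for $z\in\pl_-S^*M$, again by Corollary~\ref{boundmap}.

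For uniqueness, suppose $w_1,w_2$ both solve the problem and set $u=w_1-w_2$, so $Xu=0$ and $u|_{\pl_-S^*M}=0$. Since $X=\rho\bbar X$ and $\rho>0$ on $S^*M$, we also have $\bbar Xu=0$ on $S^*M$, and by continuity $\bbar Xu=0$ on $\bbar{S^*M}$. Thus $u$ is constant along the integral curves of $\bbar X$. Every such curve, traced backward, reaches $\pl_-S^*M$ in finite time $\tau_-(z)$ (Corollary~\ref{boundmap} and the discussion following it, using non-trapping so that $z\notin\bbar{\Gamma_+}\cup\bbar{\Gamma_-}$ is automatic away from the relevant boundary), hence $u(z)=u(\bbar\varphi_{-\tau_-(z)}(z))=u(B_-(z))=0$. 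Therefore $w_1=w_2$.

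For the value at the outgoing boundary, fix $z_+\in\pl_+S^*M$. Then $z_+$ lies on a unique integral curve whose backward limit is $z_-:=B_-(z_+)\in\pl_-S^*M$, and by definition of the scattering map $S_g(z_-)=B_+(z_-)=z_+$, i.e. $z_-=S_g^{-1}(z_+)$. Since $w$ is constant along this flow line and equals $\omega$ at its incoming endpoint, $w(z_+)=\omega(z_-)=\omega(S_g^{-1}(z_+))$, which is the claimed formula $w|_{\pl_+S^*M}=\omega\circ S_g^{-1}$. Here I am using that $S_g$ is a diffeomorphism $\pl_-S^*M\to\pl_+S^*M$ in the non-trapping case, so $S_g^{-1}$ makes sense and is smooth, consistent with the smoothness of $w$ already established.

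The only genuinely delicate point is the regularity of $w$ up to $\pl\bbar{S^*M}$, and in particular up to $\pl_-S^*M$ itself, since the flow time to reach the boundary behaves like $-\log$ of the distance (cf. \eqref{taupm} in Lemma~\ref{smoothpb}); but this is precisely what Corollary~\ref{boundmap} already packages, by working with the smooth complete-up-to-the-boundary flow $\bbar\varphi_\tau$ of $\bbar X$ and the smooth finite hitting time $\tau_-(z)$, so that $B_-(z)=\bbar\varphi_{-\tau_-(z)}(z)$ is manifestly smooth. Hence I expect the proof to be short, with essentially no obstacle beyond correctly invoking Corollary~\ref{boundmap}; everything else is the elementary observation that a function killed by a nonvanishing vector field is determined by its values on a hypersurface transverse to that vector field which every flow line meets.
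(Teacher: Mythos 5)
Your proof is correct and follows the same route the paper sketches: define $w=\omega\circ B_-$, rely on Corollary~\ref{boundmap} for smoothness of $B_-$ up to the boundary and for $B_-|_{\pl_-S^*M}=\mathrm{Id}$, and get uniqueness and the value on $\pl_+S^*M$ from constancy along flow lines of $\bbar X$, all of which hit $\pl_-S^*M$ in finite $\tau$-time in the non-trapping case. The paper just records this in one line; your version is the same argument with the details filled in.
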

\begin{proof}
The solution $w$ has to satisfy $\bbar{X}w=0$ and $w$ is constant along
flow lines of $\bbar{X}$, thus  
$w(z)$ is given by $\omega(B_-(z))$. The other part is clear.
\end{proof}
We define the \emph{extension operator} using this Lemma by 
\begin{equation}\label{mcE}
\mc{E}: C^\infty(\pl_-S^*M)\to C^\infty(\bbar{S^*M}), \quad \mc{E}\omega (z)=\omega(B_-(z)).
\end{equation}
By Lemma \ref{convtobdry}, we also see that $\mc{E}: C_c^\infty(\pl_-S^*M)\to C_c^\infty(\bbar{S^*M})$.

We can now define the \emph{$X$-ray transform} operator $I$ by 
\begin{equation}\label{defXray}
I: \rho C^\infty(\bbar{S^*M})\to C^\infty(\pl_-S^*M), \quad If(z):=(R_+(0)f)|_{\pl_-S^*M}. 
\end{equation}
We can relate $\mc{E}$ to the operator $I$ by the
\begin{lemm}\label{adj}
The extension operator $\mc{E}$ defined by \eqref{mcE} is the adjoint of $I$ with respect to 
the scalar product induced by the Liouville measures $|\mu|$ on $S^*M$ and $|\mu_\pl|$  on 
$\pl_-S^*M$.
\end{lemm}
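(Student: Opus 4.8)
The plan is to deduce the adjoint identity from Stokes' theorem applied to the vector field $X$, exploiting the transport identity $-XR_+(0)=\mathrm{Id}-P_+$ of Lemma~\ref{propRpm} and the fact that $\mc{E}\om$ is annihilated by $X$. Since we are in the non-trapping case $\Gamma_\pm=\emptyset$ (so $B_\pm$ is defined on all of $S^*M$), and since $\mc{E}$ maps $C_c^\infty(\pl_-S^*M)$ into $C_c^\infty(\bbar{S^*M})$ by the remark after \eqref{mcE}, it suffices to prove
\[
\int_{\pl_-S^*M}(If)\,\om\,|\mu_\pl|=\int_{S^*M}f\,(\mc{E}\om)\,|\mu|
\]
for $f\in\rho C^\infty(\bbar{S^*M})$ and $\om\in C_c^\infty(\pl_-S^*M)$, both integrals then being convergent; the general statement follows by density and continuity.

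I would first record the two transport equations in play. Put $u:=R_+(0)f$ and $w:=\mc{E}\om$. By \eqref{extR0} one has $u\in C^\infty(\bbar{S^*M})$; by Lemma~\ref{propRpm}, $u|_{\pl_+S^*M}=0$ and, by definition, $If=u|_{\pl_-S^*M}$. Moreover \eqref{identityR} gives $-Xu=(\mathrm{Id}-P_+)f=f-f\circ B_+$, and since $f$ vanishes on $\pl\bbar{S^*M}\supset\pl_+S^*M$ whereas $B_+$ maps $S^*M$ into $\pl_+S^*M$, the term $f\circ B_+$ is identically zero, so $Xu=-f$ on $S^*M$. On the other side, Lemma~\ref{ext} gives $w\in C_c^\infty(\bbar{S^*M})$ with $Xw=0$ and $w|_{\pl_-S^*M}=\om$.

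Then I would run the integration by parts. Orient $\bbar{S^*M}$ by the $(2n+1)$-form $\rho\mu$, which by Lemma~\ref{mesure} is smooth and nowhere vanishing up to $\pl\bbar{S^*M}$ and agrees with the orientation of $\mu$ on the interior; by the same lemma $\iota_X\mu$ extends smoothly to $\bbar{S^*M}$. Since $Xw=0$ we have $X(uw)=(Xu)w=-fw$, and since the flow preserves $\mu$ (so $\mc{L}_X\mu=0$) while $d(uw\,\mu)=0$ for degree reasons, Cartan's formula gives $fw\,\mu=-X(uw)\,\mu=-\mc{L}_X(uw\,\mu)=-d\big(uw\,\iota_X\mu\big)$. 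The $2n$-form $uw\,\iota_X\mu$ is smooth on $\bbar{S^*M}$ and compactly supported (as $w$ is), so Stokes' theorem applies and
\[
\int_{S^*M}f\,(\mc{E}\om)\,|\mu|=-\int_{\bbar{S^*M}}d\big(uw\,\iota_X\mu\big)=-\int_{\pl\bbar{S^*M}}uw\,\iota_\pl^*(\iota_X\mu).
\]
The $\pl_+S^*M$ contribution vanishes because $u|_{\pl_+S^*M}=0$. On $\pl_-S^*M$ one has $\iota_\pl^*(\iota_X\mu)=\mu_\pl$ (Lemma~\ref{mesure}), $u=If$, $w=\om$, while the boundary orientation induced by $\rho\mu$ is opposite to the one defined by $\mu_\pl$ there (precisely the observation recorded right after Lemma~\ref{mesure}); this sign reversal converts the remaining boundary term into $+\int_{\pl_-S^*M}(If)\,\om\,|\mu_\pl|$, which is the claimed identity.

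I expect the only genuinely delicate point to be the orientation and sign bookkeeping at the two boundary faces, where one must use the observation after Lemma~\ref{mesure} (orientations induced by $\rho\mu$ and by $\mu_\pl$ agree on $\pl_+S^*M$ but are opposite on $\pl_-S^*M$) both to discard the $\pl_+$ term — where $u$ vanishes — and to produce the $\pl_-$ term with the correct sign. All other ingredients, namely the smoothness up to $\pl\bbar{S^*M}$ of $u$, $w$, and $\iota_X\mu$ and the compact support needed to apply Stokes on the non-compact manifold $\bbar{S^*M}$, are supplied directly by Lemmas~\ref{mesure}, \ref{ext}, and \ref{propRpm} together with the remark following \eqref{mcE}.
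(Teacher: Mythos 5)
Your proof is correct and takes essentially the same route as the paper: both reduce the pairing to $-X(R_+(0)f\cdot\mc{E}\om)$ via $X\mc{E}\om=0$, convert $X(\cdot)\mu$ to $d(\cdot\,\iota_X\mu)$ using $\mc{L}_X\mu=0$, and apply Stokes, with the $\pl_+S^*M$ term dying because $R_+(0)f$ vanishes there and the $\pl_-S^*M$ term producing $\int If\cdot\om\,|\mu_\pl|$. The only difference is presentational: you spell out the orientation bookkeeping (using the remark after Lemma~\ref{mesure} that $\rho\mu$ and $\mu_\pl$ induce opposite orientations on $\pl_-S^*M$) which the paper leaves implicit, and you work directly with $f\in\rho C^\infty(\bbar{S^*M})$, $\om\in C_c^\infty$ rather than the paper's initial choice of $f\in C_c^\infty(S^*M)$, $\om\in C^\infty$.
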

\begin{proof}
Let $f\in C_c^\infty(S^*M)$ and $\omega\in C^\infty(\pl_-S^*M)$, then by
using $\mc{L}_X\mu=0$, $X\mc{E}(\omega)=0$, $R_+(0)f|_{\pl_+S^*M}=0$ and
Lemma \ref{mesure}, we get 
\[ \begin{split}
\int_{S^*M}(f\cdot\mc{E}(\omega))|\mu|=&
\int_{S^*M}-X(R_+(0)f\cdot\mc{E}(\omega))\mu 
= -\int_{S^*M}\mc{L}_X(R_+(0)f\cdot\mc{E}(\omega)\mu)\\
=&-\int_{S^*M}d(R_+(0)f\cdot\mc{E}(\omega) \iota_X\mu)
= \int_{\pl_-S^*M}If\cdot\omega\,|\mu_\pl|  
\end{split}\]
which gives the desired property. The same argument works with $\omega$
smooth compactly supported and $f\in \rho C^\infty(\bbar{S^*M})$. 
\end{proof}
In view of this Lemma, we will instead write $I^*$ instead of $\mc{E}$ for
what follows when $\mc{E}$ acts on  
$C_c^\infty(\pl_-S^*M)$.
Using a similar argument, we also get a Santalo formula
\begin{lemm}\label{santalo}
Let $f\in C_c^\infty(S^*M)$, we have the identity
\[ \int_{S^*M} f|\mu| = \int_{\pl_-S^*M}If(z) |\mu_\pl(z)|.\] 
Consequently $I$ extends to a bounded operator $I:L^1(S^*M,|\mu|)\to L^1(\pl_-S^*M,|\mu_\pl|)$.
\end{lemm}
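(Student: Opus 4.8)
The plan is to derive the Santal\'o-type identity directly from the already-established adjoint relation between $I$ and $\mc{E}$. Recall from Lemma~\ref{adj} that for $f\in C_c^\infty(S^*M)$ and $\omega\in C^\infty(\pl_-S^*M)$ we have
\[
\int_{S^*M}(f\cdot \mc{E}(\omega))\,|\mu| = \int_{\pl_-S^*M} If\cdot\omega\,|\mu_\pl|.
\]
Taking $\omega\equiv 1$ gives $\mc{E}(\omega)=\omega(B_-(\cdot))\equiv 1$, so the left-hand side becomes $\int_{S^*M}f\,|\mu|$ and the right-hand side becomes $\int_{\pl_-S^*M} If(z)\,|\mu_\pl(z)|$, which is exactly the claimed identity. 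Since the proof of Lemma~\ref{adj} applies verbatim with this choice of $\omega$ (compactly supported is not needed for $\omega\equiv 1$ because $f$ is compactly supported in $S^*M$, so the integrand on $\pl_-S^*M$ is supported in a compact set by Lemma~\ref{convtobdry} — the backward limit map $B_-$ is proper from compact sets of $S^*M$), this is essentially immediate. Alternatively, one can reprove it directly: write $f = -X(R_+(0)f)$ by \eqref{identityR} (since the compact support of $f$ makes $P_+f=f\circ B_+$ irrelevant here — actually one should be slightly careful and instead use that $f=-XR_+(0)f + P_+ f$ with $P_+f = f\circ B_+$; but $\int_{S^*M} P_+f\,|\mu|$... this needs handling), so it is cleanest to just invoke Lemma~\ref{adj}.

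For the second assertion, that $I$ extends to a bounded operator $L^1(S^*M,|\mu|)\to L^1(\pl_-S^*M,|\mu_\pl|)$, I would argue by density. For $f\in C_c^\infty(S^*M)$, apply the identity just proved to $|f|$ in place of $f$ — more precisely, since $If$ is itself an integral of $f$ along geodesics, we have the pointwise bound $|If(z)|\leq I(|f|)(z)$ wherever the latter makes sense; but $|f|$ is not smooth, so instead note that for $f\geq 0$ the identity gives $\|If\|_{L^1(\pl_-S^*M)} = \int_{\pl_-S^*M} If\,|\mu_\pl| = \int_{S^*M} f\,|\mu| = \|f\|_{L^1(S^*M)}$, and for general smooth compactly supported $f$ one splits $f=f_+-f_-$ or simply observes $\|If\|_{L^1}\leq \|I|f|\,\|_{L^1}$ after approximating $|f|$ from above by smooth compactly supported functions, yielding $\|If\|_{L^1(\pl_-S^*M)}\leq \|f\|_{L^1(S^*M)}$. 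Since $C_c^\infty(S^*M)$ is dense in $L^1(S^*M,|\mu|)$ and $I$ is thus bounded (indeed a contraction) on this dense subspace, it extends uniquely to a bounded operator on all of $L^1$.

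The only genuine subtlety — and the step I would flag as requiring care rather than being the "main obstacle," since there is no real obstacle here — is the properness of $B_-$ on compact sets, i.e. ensuring that when $f$ has compact support in $S^*M$, the function $If = (R_+(0)f)|_{\pl_-S^*M}$ is supported in a compact subset of $\pl_-S^*M$ (away from the "boundary at infinity" of $\pl_-S^*M$, which is noncompact), so that all the integrals genuinely converge and Fubini applies in the Santal\'o computation. This follows from Lemma~\ref{convtobdry}: if $z\in\pl_-S^*M$ with $|\eta|_{h_0}$ large, then by Lemma~\ref{smallgeo} the forward geodesic from $z$ is a short geodesic staying in $\{\rho<\eps\}$, hence never meets the compact support of $f$, so $If(z)=0$ there. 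Thus $\supp(If)$ is contained in $\{|\eta|_{h_0}\leq R\}$ for some $R$, intersected with a compact set in the base, hence compact. With that in hand, the change-of-variables/Fubini manipulation underlying the Santal\'o identity is justified, and the $L^1$-boundedness follows by the density argument above.
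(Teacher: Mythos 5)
Your proof is correct and takes essentially the same route as the paper: the paper also obtains the identity from the Stokes computation underlying Lemma~\ref{adj} (which is exactly your choice $\omega\equiv 1$ there), and then states the $L^1$ extension by density. Your hedging about $P_+f$ is unnecessary since $f\in C_c^\infty(S^*M)$ extended by zero to $\bbar{S^*M}$ has $f\circ B_+=0$, so $f=-XR_+(0)f$ cleanly; and your observation via Lemma~\ref{smallgeo} that $If$ has compact support in $\pl_-S^*M$ is a correct and useful point that the paper leaves implicit.
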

\begin{proof}
We just use Stokes formula like in the proof of Lemma \ref{adj} to get 
\[ 
\int_{S^*M}f\mu= -\int_{S^*M}X(R_+(0)f)\mu=\int_{\pl_- S^*M}If\,|\mu_\pl|.  
\] 
The boundedness of $I$ on $L^1$ just follows by density.
\end{proof}
We next relate the operator $I^*I$ to the resolvents $R_\pm(0)$. First, define the operator 
\begin{equation}\label{defPi}
\Pi : C^\infty(\bbar{S^*M})\to C^\infty(S^*M), \quad \Pi :=R_+(0)-R_-(0).
\end{equation} 
It satisfies for each $f\in C^\infty(\bbar{S^*M})$, $X\Pi f= P_+f+P_-f$ and thus 
\[ X\Pi f= 0 \,\, \textrm{ if }\,\, f|_{\pl \bbar{S^*M}}=0.\]
If $f\in \rho C^\infty(\bbar{S^*M})$, we can actually write $\Pi f$ as the converging integral
\begin{equation}\label{expPi}
\forall z\in S^*M, \quad  \Pi f(z)=\int_{-\infty}^\infty f(\varphi_t(z))dt
\end{equation}
(notice from \eqref{renorflow} that \eqref{expPi} also converges if $f$ is 
any continuous functions on $S^*M$ 
which is $\mc{O}(1/|\log(\rho)|^\alpha)$ for $\alpha>1$ near $\pl
\bbar{S^*M}$). Then we get 
\begin{lemm}\label{I*I}
We have that $\Pi=I^*I$ as operators mapping 
$C_c^\infty(S^*M)$ to $C^\infty(\bbar{S^*M})$, and this extends to the
identity $\Pi=\mc{E}I$ as operators mapping $\rho C^\infty(\bbar{S^*M})$ to 
$C^\infty(\bbar{S^*M})$. 
\end{lemm}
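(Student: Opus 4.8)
**Proof plan for Lemma~\ref{I*I}.**

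The plan is to unwind all the definitions and reduce the statement to the identity $\Pi f = \mc{E}(If)$ pointwise, using the explicit integral formulas already established. First I would treat the case $f\in C_c^\infty(S^*M)$. By \eqref{defPi} and \eqref{R0}, for $z\in S^*M$ we have $\Pi f(z) = \int_0^\infty(f(\varphi_t(z)) - f(B_+(z)))\,dt + \int_0^\infty(f(B_-(z)) - f(\varphi_{-t}(z)))\,dt$; since $f$ is compactly supported in $S^*M$, both $f\circ B_+$ and $f\circ B_-$ vanish identically (the limits $B_\pm(z)$ lie on $\pl_\pm S^*M$, where $f=0$), so this collapses to $\Pi f(z) = \int_{-\infty}^\infty f(\varphi_t(z))\,dt$, which is \eqref{expPi}. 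On the other hand, $If(z') = (R_+(0)f)|_{\pl_-S^*M}(z') = \int_0^\infty f(\varphi_t(z'))\,dt$ for $z'\in\pl_-S^*M$ (again the $f\circ B_+$ term drops), and then $\mc{E}(If)(z) = If(B_-(z)) = \int_0^\infty f(\varphi_t(B_-(z)))\,dt$. The key geometric input is that $\varphi_t(B_-(z))$ for $t\ge 0$ traces out exactly the forward part of the flow line through $z$, reparametrized; more precisely, since $B_-(z)$ is the backward limit point of the orbit of $z$, the orbit of $B_-(z)$ under $\varphi_t$ is the same orbit, and $z = \lim_{\tau\to\infty}\varphi_{-\tau}\varphi_\tau(z)$ sits on it. Thus $\int_0^\infty f(\varphi_t(B_-(z)))\,dt = \int_{-\infty}^\infty f(\varphi_t(z))\,dt = \Pi f(z)$, using that $f\in C_c^\infty(S^*M)$ so the integral over the missing half-line contributes the rest of the real line's worth of the orbit. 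Combined with Lemma~\ref{adj}, which identifies $\mc{E}$ with $I^*$ on $C_c^\infty(\pl_-S^*M)$, this gives $\Pi = I^*I$ on $C_c^\infty(S^*M)$.

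For the extension to $f\in \rho C^\infty(\bbar{S^*M})$, I would argue that every operator in sight is already continuous on that larger space: $R_\pm(0)$ extend continuously to $\rho C^\infty(\bbar{S^*M})\to C^\infty(\bbar{S^*M})$ by \eqref{extR0}, hence so does $\Pi$; the restriction map to $\pl_-S^*M$ is continuous, so $I:\rho C^\infty(\bbar{S^*M})\to C^\infty(\pl_-S^*M)$ is continuous; and $\mc{E}:C^\infty(\pl_-S^*M)\to C^\infty(\bbar{S^*M})$ is continuous by \eqref{mcE}. The formula \eqref{expPi} shows $\Pi f(z) = \int_{-\infty}^\infty f(\varphi_t(z))\,dt$ still holds for such $f$ (the integral converges by the decay noted after \eqref{expPi}, since $f=\rho\til f$ with $\til f$ smooth), and using the reparametrization \eqref{reparam} exactly as in \eqref{R_+viabarphi} one gets $If(z') = \int_0^{\tau_+(z')}\til f(\bbar\varphi_\tau(z'))\,d\tau$ for $z'\in\pl_-S^*M$ and then $\mc{E}(If)(z) = If(B_-(z)) = \int_{-\tau_-(z)}^{\tau_+(z)}\til f(\bbar\varphi_\tau(z))\,d\tau$, which by the same change of variables equals $\int_{-\infty}^\infty f(\varphi_t(z))\,dt = \Pi f(z)$. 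Alternatively, one may avoid re-deriving the formula and instead note that $C_c^\infty(S^*M)$ is dense in $\rho C^\infty(\bbar{S^*M})$ for the relevant topology, so the identity $\Pi = \mc{E}I$ propagates from the dense subspace by continuity of both sides.

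The main obstacle I anticipate is the bookkeeping near the boundary: one must be careful that when $f\in\rho C^\infty(\bbar{S^*M})$ (no longer compactly supported in the interior), the boundary terms $f\circ B_\pm$ genuinely vanish — this is where one uses that $f$ vanishes on $\pl\bbar{S^*M}$ and that $B_\pm(z)\in\pl_\pm S^*M\subset\pl\bbar{S^*M}$ — and that the orbit-reparametrization argument is valid up to and including the endpoints, which is precisely what \eqref{reparam}, \eqref{renorflow} and \eqref{R_+viabarphi} provide. The rest is a routine unwinding of the definitions \eqref{defXray}, \eqref{mcE}, \eqref{defPi} together with Lemma~\ref{adj}.
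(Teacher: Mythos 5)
Your proof is correct, but it takes a genuinely different route from the paper's. You verify the identity $\Pi f = \mc{E}(If)$ pointwise by unwinding the integral formulas: writing $\Pi f(z)=\int_{-\infty}^\infty f(\varphi_t(z))\,dt$ via \eqref{expPi}, expressing $If$ on $\pl_-S^*M$ via the reparametrized flow \eqref{R_+viabarphi}, and observing that $\mc{E}(If)(z)=If(B_-(z))$ integrates $f$ over the entire orbit through $z$, which is exactly $\Pi f(z)$. The paper instead argues through quadratic forms and adjoints: it first establishes $R_+(0)^*=-R_-(0)$ from \eqref{R0}, so that $\Pi=R_+(0)+R_+(0)^*$ is self-adjoint, then uses Stokes' formula to compute $\cjg R_+(0)f,f\cjd=\demi\int_{\pl_-S^*M}(If)^2\,|\mu_\pl|$, from which $\cjg\Pi f,f\cjd=\|If\|^2=\cjg I^*If,f\cjd$, and concludes by self-adjointness; the extension to $\rho C^\infty(\bbar{S^*M})$ is then by density, as in your alternative. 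Your direct computation makes the geometric content transparent (the pointwise identity that integrating $f$ forward from the incoming boundary point of an orbit recovers the full orbit integral) and gives $\Pi=\mc{E}I$ and $\Pi=I^*I$ essentially simultaneously; the paper's argument is shorter once the adjoint relation $R_+(0)^*=-R_-(0)$ is in hand, and foregrounds the $L^2$ structure that is used later in Lemma~\ref{weighted}. One small caution in your first paragraph: the expression $\int_0^\infty f(\varphi_t(z'))\,dt$ for $z'\in\pl_-S^*M$ is not literally meaningful, since $\varphi_t$ is a flow on $S^*M$ and $z'$ lies on the boundary; the rigorous statement is the reparametrized formula $\int_0^{\tau_+(z')}\til f(\bbar\varphi_\tau(z'))\,d\tau$ which you correctly supply in the second paragraph, so the argument goes through once that substitution is made from the start.
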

\begin{proof}
First by \eqref{R0}, we have for each $f,f'\in C_c^\infty(S^*M)$ real valued, 
\begin{equation}\label{adjoint}
\cjg R_+(0)f,f'\cjd=-\cjg f,R_-(0)f'\cjd,
\end{equation}
that is $R_+(0)^*=-R_-(0)$.
If $u:=R_+(0)f$, we have $u|_{\pl_-S^*M}=If$ and by Stokes formula 
\[ \cjg R_+(0)f,f\cjd =-\int_{S^*M}Xu.u \mu=-\demi
\int_{S^*M}X(u^2)\mu=\demi \int_{\pl_-S^*M}(If)^2|\mu_\pl| 
\]
which shows $\Pi f=I^*If$ using \eqref{adjoint}. By Lemma \ref{adj} and
since $If\in C_c^\infty(\pl_-S^*M)$ if $f\in C_c^\infty(S^*M)$, we get 
$\Pi=\mc{E}I$ on $C_c^\infty(S^*M)$ and thus $\Pi=\mc{E}I$ 
on $\rho C^\infty(\bbar{S^*M})$ by density and boundedness of $\mc{E}$ on $C^\infty(\bbar{S^*M})$.
\end{proof}
We can extend this identity to weighted $L^2$ spaces by using 
\begin{lemm}\label{weighted}
For $\beta>1/2$, the operator $I : |\log\rho|^{-\beta} L^2(S^*M,|\mu|)\to
L^2(\pl_-S^*M,|\mu_\pl|)$ is bounded  
and we have $\Pi=I^*I$ as a bounded operator $\Pi:  |\log\rho|^{-\beta} 
L^2(S^*M,|\mu|)\to  |\log\rho|^{\beta} L^2(S^*M,|\mu|)$. 
\end{lemm}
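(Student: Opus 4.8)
The plan is to define $If$ for $f$ in the weighted space directly by the (a.e.\ absolutely convergent) integral $If(z)=\int_{\rr}f(\varphi_t(z))\,dt$ along the complete geodesic associated with $z\in\pl_-S^*M$, and to estimate it via Cauchy--Schwarz together with the Santal\'o identity of Lemma~\ref{santalo}. Writing $f=|\log\rho|^{-\beta}g$ with $g\in L^2(S^*M,|\mu|)$, one has for $z\in\pl_-S^*M$
\[
|If(z)|^2\leq\Big(\int_{\rr}|\log\rho(\varphi_t(z))|^{-2\beta}\,dt\Big)\Big(\int_{\rr}|g(\varphi_t(z))|^2\,dt\Big),
\]
so the whole matter reduces to the uniform bound
\begin{equation}\label{keyintbd}
C_\beta:=\sup_{z\in\pl_-S^*M}\int_{\rr}|\log\rho(\varphi_t(z))|^{-2\beta}\,dt<\infty,\qquad \beta>\tfrac12 .
\end{equation}
Granting \eqref{keyintbd} we get $|If(z)|^2\leq C_\beta\,I(|g|^2)(z)$, and since $|g|^2\in L^1(S^*M,|\mu|)$ the Santal\'o identity (Lemma~\ref{santalo}, extended to $L^1$ by density) gives $\|If\|^2_{L^2(\pl_-S^*M)}\leq C_\beta\int_{S^*M}|g|^2\,|\mu|=C_\beta\|f\|^2_{|\log\rho|^{-\beta}L^2(S^*M,|\mu|)}$; this proves boundedness of $I$, and the extension so obtained agrees with $I$ on $\rho C^\infty(\bbar{S^*M})$.

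To prove \eqref{keyintbd} I would use that we are in the non-trapping case, so every maximal geodesic runs from $\pl_-S^*M$ to $\pl_+S^*M$, and (normalizing $\rho<1$ on $M$) that $|\log\rho|=-\log\rho\geq c_0>0$ on $S^*M$. Fix $\eps_0>0$ small enough for Lemma~\ref{convtobdry} to apply on $\{\rho<\eps_0\}$. Along a complete geodesic the set of times with $\rho\geq\eps_0$ is a single bounded interval $[T_-,T_+]$: at a downward crossing of $\{\rho=\eps_0\}$ one has $\xi_0\leq0$, so by Lemma~\ref{convtobdry} the geodesic never returns to $\{\rho\geq\eps_0\}$, and reversing time handles the incoming end. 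On $[T_-,T_+]$ the integrand is bounded by $c_0^{-2\beta}$, and $T_+-T_-$ is bounded uniformly in $z$ — a geodesic staying arbitrarily long in the compact set $\{\rho\geq\eps_0\}$ would yield, by a standard compactness argument, a point of $S^*M$ whose entire orbit lies in $\{\rho\geq\eps_0\}$, contradicting non-trapping. On each of the two escaping half-lines $t\geq T_+$ and $t\leq T_-$, estimate \eqref{uniformrho} gives $\rho(\varphi_t(z))\leq C\eps_0\,e^{-(|t|-T_\pm)}$, hence $|\log\rho(\varphi_t(z))|\geq|t|-T_\pm+c_1$ with $c_1>0$ uniform after shrinking $\eps_0$; so each tail contributes at most $\int_0^\infty(s+c_1)^{-2\beta}\,ds$, which is finite \emph{exactly because} $2\beta>1$. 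Summing the three pieces gives \eqref{keyintbd}.

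For the identity $\Pi=I^*I$ I would argue by duality. With respect to the $L^2(S^*M,|\mu|)$ pairing, the dual of $|\log\rho|^{-\beta}L^2(S^*M,|\mu|)$ is $|\log\rho|^{\beta}L^2(S^*M,|\mu|)$, so the Banach adjoint of the bounded operator $I:|\log\rho|^{-\beta}L^2(S^*M,|\mu|)\to L^2(\pl_-S^*M,|\mu_\pl|)$ is a bounded operator $I^*:L^2(\pl_-S^*M,|\mu_\pl|)\to|\log\rho|^{\beta}L^2(S^*M,|\mu|)$. By Lemma~\ref{adj} this $I^*$ coincides with the extension operator $\mc{E}$ on smooth inputs, hence is the continuous extension of $\mc{E}$. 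Composing, $\Pi=I^*I$ is bounded from $|\log\rho|^{-\beta}L^2(S^*M,|\mu|)$ to $|\log\rho|^{\beta}L^2(S^*M,|\mu|)$. Finally, $\rho C^\infty(\bbar{S^*M})$ is dense in $|\log\rho|^{-\beta}L^2(S^*M,|\mu|)$ (it contains $C_c^\infty(S^*M)$, which is dense), and on that subspace the composition $I^*I$ equals the operator $\Pi$ of \eqref{defPi}--\eqref{expPi} by Lemma~\ref{I*I}; hence the two agree everywhere.

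The step I expect to be the main obstacle is the uniform integrability bound \eqref{keyintbd}: it is the only point at which the geometry really enters, requiring one to splice the uniform exponential escape rate \eqref{uniformrho} near $\pl\bbar{M}$ together with a uniform bound on the time a geodesic spends in the compact core $\{\rho\geq\eps_0\}$, the latter genuinely using non-trapping (this is precisely the mechanism that fails for a hyperbolic trapped set, where one expects only an $L^2$-averaged version). Everything after that — Cauchy--Schwarz, Santal\'o, and the duality argument — is routine.
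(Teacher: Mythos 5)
Your proof is correct and follows the same Cauchy--Schwarz/Santal\'o mechanism as the paper, but establishes the key uniform bound $\sup_z\int_\rr|\log\rho(\varphi_t(z))|^{-\gamma}\,dt<\infty$ for $\gamma>1$ by a genuinely different route. The paper dichotomizes geodesics by the size of $|\eta|_h$ at $\pl_-S^*M$: for $|\eta|_h>R_0$ it invokes the explicit short-geodesic asymptotics of Lemma~\ref{smallgeo} (the change of variable $\tau\mapsto\alpha_z(\tau)$ with $\rho\approx\delta\sin\alpha_z$, $\delta=|\eta|_h^{-1}$) to reduce to the convergent model integral $\int_0^\pi(\sin\alpha)^{-1}|\log(\sin\alpha/R_0)|^{-\gamma}\,d\alpha$, while for $|\eta|_h\leq R_0$ it uses compactness of $\{|\eta|_h\leq R_0\}\subset\pl_-S^*M$, continuity of $\tau_+$, and \eqref{rhovphitau}. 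You instead decompose in the physical time $t$ into a compact core $\{\rho\geq\eps_0\}$ and two escape tails, bounding the core sojourn time by a compactness/non-trapping contradiction and the tails by the uniform exponential decay \eqref{uniformrho}. Both are valid; yours is more elementary and makes the role of non-trapping transparent, whereas the paper's $\tau$-variable computation is better adapted to the rest of \S3 (and to how one would try to proceed in the trapped case). One small point to tidy: you should explicitly handle geodesics that never reach $\{\rho\geq\eps_0\}$ (so $[T_-,T_+]$ is empty); there one takes $T_+=T_-$ to be the unique time where $\xi_0=0$ and applies \eqref{uniformrho} from that vertex in both directions, which only improves the constant. Your duality/density argument for $\Pi=I^*I$, resting on Lemmas~\ref{adj} and~\ref{I*I}, supplies correctly the step the paper's proof leaves implicit.
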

\begin{proof} Let $\beta>1$.  By Lemma \ref{smallgeo} and a change of
variable $\tau\mapsto \alpha_z(\tau)$ (with $\alpha_z(\tau)$ the
function of Lemma   \ref{smallgeo}), there is $R_0$ so that  
for each $R>R_0$ and all $z=(y,\eta)\in \pl_-S^*M$ with $|\eta|_{h}=R$,
there are $C,C'>0$ so that 
\[
\begin{split}
\int_{0}^{\tau_+(z)}
\rho^{-1}|\log \rho|^{-\beta}(\bbar{\varphi}_{\tau}(z))d\tau\leq & CR\int_{0}^{\tau_+(z)}
\sin(\alpha_z(\tau))^{-1}\Big|\log \frac{\sin(\alpha_z(\tau))}{R}\Big|^{-\beta}d\tau\\
\leq & C\int_0^\pi
\sin(\alpha)^{-1}\Big|\log\frac{\sin(\alpha)}{R_0}\Big|^{-\beta} 
(1+\mc{O}(1/R))d\alpha\\ 
\leq & C'.
\end{split}
\]
If $z=(y,\eta)$ is such that $|\eta|_{h}\leq R_0$, the
trajectory $\bbar{\varphi}_\tau(z)$ stays in a compact set of $\bbar{S^*M}$
and using \eqref{rhovphitau}, one has    
\[\int_{0}^{\tau_+(z)}\frac{\rho^{-1}}{|\log \rho|^{\beta}}(\bbar{\varphi}_{\tau}(z))d\tau\leq C\]
for some $C$ uniform in $z$ (depending on $R_0$). 
We can then write, by using Lemma \ref{santalo}, \eqref{R_+viabarphi} and
Cauchy-Schwartz, that for $f$ real-valued and $\beta>1$ 
\[ \begin{split}
||If||^2_{L^2}\leq &
\int_{\pl_-S^*M}\Big(\int_{0}^{\tau_+(z)}
\frac{1}{\rho|\log \rho|^{\beta}}(\bbar{\varphi}_{\tau}(z))d\tau\int_{0}^{\tau_+(z)}
\big(\frac{|\log \rho|^{\beta}}{\rho}f^2\big)(\bbar{\varphi}_{\tau}(z))d\tau\Big) |\mu_\pl(z)|\\
\leq & C\int_{\pl_-S^*M}\int_{0}^{\tau_+(z)}
(\rho^{-1}|\log \rho|^{\beta}f^2)(\bbar{\varphi}_{\tau}(z))d\tau |\mu_\pl(z)|\\
\leq & C|||\log \rho|^{\beta/2}f||_{L^2(S^*M,|\mu|)}
\end{split}\]
for some $C>0$ uniform. This proves the claim.
\end{proof}
Next we relate the X-ray transform to the scattering map. 
\begin{lemm}\label{Xrayscat}
Let $f\in C^\infty(\bbar{S^*M})$, then $IXf= S_g^*(f|_{\pl_+S^*M})-f|_{\pl_-S^*M}$.
\end{lemm}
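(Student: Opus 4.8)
The plan is to compute $R_+(0)(Xf)$ explicitly and then restrict to $\pl_-S^*M$. First note that since $X=\rho\bbar{X}$ with $\bbar{X}$ a smooth vector field on $\bbar{S^*M}$ (Lemma~\ref{Xform}), for $f\in C^\infty(\bbar{S^*M})$ we have $Xf=\rho\,\bbar{X}f\in \rho C^\infty(\bbar{S^*M})$; in particular $Xf$ vanishes on $\pl\bbar{S^*M}$ and $R_+(0)(Xf)$ is a well-defined element of $C^\infty(\bbar{S^*M})$ by \eqref{extR0}. Moreover $P_+(Xf)=(Xf)\circ B_+=0$, because $B_+$ maps into $\pl_+S^*M\subset\pl\bbar{S^*M}$ and $Xf$ vanishes there, and $R_+(0)(Xf)$ vanishes on $\pl_+S^*M$ by Lemma~\ref{propRpm}.

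The claim is that $R_+(0)(Xf)=f\circ B_+-f$ on $\bbar{S^*M}$. Applying \eqref{identityR} to $Xf$ and using $P_+(Xf)=0$ gives $-X\big(R_+(0)(Xf)\big)=Xf$, hence $X\big(R_+(0)(Xf)+f\big)=0$, so the function $R_+(0)(Xf)+f\in C^\infty(\bbar{S^*M})$ is constant along the flow lines of $X$ and therefore equals its pullback by $B_+$. Since $R_+(0)(Xf)$ vanishes on $\pl_+S^*M$, evaluating at $B_+(z)\in\pl_+S^*M$ yields $\big(R_+(0)(Xf)+f\big)(z)=f(B_+(z))$, which is the claim. (Alternatively, this can be read off directly from the integral formula \eqref{R0}: $R_+(0)(Xf)(z)=\int_0^\infty Xf(\varphi_t(z))\,dt=\int_0^\infty \tfrac{d}{dt}f(\varphi_t(z))\,dt=f(B_+(z))-f(z)$, the integrand being $\mc{O}(e^{-t})$ and the limit being $f(B_+(z))$ by Lemma~\ref{smoothpb}.)

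Finally, restrict to $\pl_-S^*M$: by the definition \eqref{defXray} of the X-ray transform, $I(Xf)=\big(R_+(0)(Xf)\big)\big|_{\pl_-S^*M}=f\circ B_+|_{\pl_-S^*M}-f|_{\pl_-S^*M}$, and since $B_+|_{\pl_-S^*M}=S_g$ by \eqref{scattering} we obtain $I(Xf)=S_g^*(f|_{\pl_+S^*M})-f|_{\pl_-S^*M}$, as desired. (In the trapped case the same argument applies verbatim on $\pl_-S^*M\setminus\bbar{\Gamma_-}$.) The computation is short; the only point requiring care is the bookkeeping that ensures $Xf$ lies in the domain $\rho C^\infty(\bbar{S^*M})$ of the operators $R_+(0)$ and $P_+$, together with the vanishing of the relevant boundary restrictions — there is no serious obstacle.
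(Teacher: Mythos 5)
Your proof is correct. Your primary route — applying the operator identity $-XR_+(0)=\mathrm{Id}-P_+$ from \eqref{identityR} to $Xf$, observing $P_+(Xf)=0$, concluding that $R_+(0)(Xf)+f$ is flow-invariant and hence equal to $(f\circ B_+)$, and then reading off the boundary value — is a slightly more abstract packaging than the paper's. The paper instead computes directly from \eqref{R_+viabarphi}: writing $IXf(z)=\int_0^{\tau_+(z)}(\bbar{X}f)(\bbar{\varphi}_\tau(z))\,d\tau$ and recognizing the integrand as $\tfrac{d}{d\tau}\big(f\circ\bbar{\varphi}_\tau\big)$, so the integral telescopes to $f(\bbar{\varphi}_{\tau_+(z)}(z))-f(z)$. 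Your parenthetical alternative is essentially that same telescoping, just done in the $t$-parametrization via \eqref{R0} instead of the $\tau$-parametrization via \eqref{R_+viabarphi}. The core point is identical in all three: $Xf$ is an exact derivative along orbits, so $R_+(0)$ applied to it telescopes to a difference of boundary values. Your abstract route has the minor advantage of avoiding any explicit integral manipulation, relying instead on the already-established uniqueness of flow-invariant smooth extensions; the direct computation is shorter. Either works, and your acknowledgement of the trapped case (restrict to $\pl_-S^*M\setminus\bbar{\Gamma_-}$) is the right caveat.
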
 
\begin{proof}
We have $Xf\in \rho C^\infty(\bbar{S^*M})$, thus $IXf$ makes sense as an
element in $C^\infty(\pl_-S^*M)$  
and by \eqref{R_+viabarphi}, we have for $z\in \pl_-S^*M$
\[ IXf(z)=\int_0^{\tau_+(z)} (\rho^{-1}Xf)(\bbar{\varphi}_\tau(z))d\tau=
\int_0^{\tau_+(z)} (\bbar{X}f)(\bbar{\varphi}_\tau(z))d\tau=f(\bbar{\varphi}_{\tau_+(z)}(z))-f(z)
\]
which completes the proof.
\end{proof}
Now we characterize the kernel of $I$ in the following Lemma.
\begin{lemm}\label{kerI}
Let $f\in \rho C^\infty(\bbar{S^*M})$, then $If=0$ if and only if there
exists $u\in \rho C^\infty(\bbar{S^*M})$ such that $Xu=f$. 
\end{lemm}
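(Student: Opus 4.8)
\textbf{Proof plan for Lemma~\ref{kerI}.}

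The plan is to establish both implications, with the easy direction first. If $f=Xu$ for some $u\in \rho C^\infty(\bbar{S^*M})$, then $f\in \rho C^\infty(\bbar{S^*M})$ (indeed $Xu=\rho\bbar{X}u$ and $u$ vanishes at the boundary, so actually $f$ vanishes there as well, which is consistent with $f\in\rho C^\infty$), and by Lemma~\ref{Xrayscat} with $u$ in place of $f$ we get $IXu = S_g^*(u|_{\pl_+S^*M}) - u|_{\pl_-S^*M}$. Since $u\in \rho C^\infty(\bbar{S^*M})$ vanishes on all of $\pl\bbar{S^*M}=\pl_-S^*M\sqcup\pl_+S^*M$, both boundary restrictions are zero, hence $If=IXu=0$.

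For the converse, suppose $f\in\rho C^\infty(\bbar{S^*M})$ with $If=0$. The natural candidate for the primitive is $u:=R_+(0)f$. By the last part of Lemma~\ref{propRpm} (the mapping property \eqref{extR0}), since $f\in\rho C^\infty(\bbar{S^*M})$ we have $u=R_+(0)f\in C^\infty(\bbar{S^*M})$; this is the crucial regularity gain that makes the argument work. By \eqref{identityR} we have $-Xu = \mathrm{Id}\,f - P_+f = f - f\circ B_+$ (using $P_+f = f\circ B_+$). So $u$ would solve $Xu=f$ exactly if $f\circ B_+$ vanished; in general it does not, so instead I replace $u$ by $\til u := -R_+(0)f + \mc{E}(If)\circ(\text{something})$ — more precisely, I will use that $-Xu = f - f\circ B_+$ and that $f\circ B_+ = X(\text{harmonic correction})$ is not quite available, so the cleaner route is: set $u:= -R_-(0)f$, which by \eqref{identityR} satisfies $-X(-R_-(0)f) = -(f - P_-f)\cdot(-1)$... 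Let me instead organize it as follows. We have the two identities $-XR_+(0)f = f - f\circ B_+$ and $-XR_-(0)f = f - f\circ B_-$ (the latter from $-XR_-(0)=\mathrm{Id}+P_-$, noting $P_-f = -f\circ B_-$; signs must be tracked carefully). The operator $\Pi = R_+(0)-R_-(0)$ satisfies $X\Pi f = -(f\circ B_+) + (f\circ B_-) + \dots$; more usefully, $\Pi f$ is constant along flow lines when $f$ vanishes on $\pl\bbar{S^*M}$, which holds here. By Lemma~\ref{I*I}, $\Pi f = \mc{E}(If) = \mc{E}(0) = 0$, so $R_+(0)f = R_-(0)f$. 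Therefore $u := R_+(0)f = R_-(0)f$ satisfies BOTH $u|_{\pl_+S^*M}=0$ (from the $R_+(0)$ representation and Lemma~\ref{propRpm}) AND $u|_{\pl_-S^*M}=0$ (from the $R_-(0)$ representation), so $u\in C^\infty(\bbar{S^*M})$ vanishes on all of $\pl\bbar{S^*M}$, i.e.\ $u\in\rho C^\infty(\bbar{S^*M})$. Finally, from $-Xu = f - f\circ B_+$ and also $-Xu = f - f\circ B_-$ we get $f\circ B_+ = f\circ B_-$ as functions on $S^*M$; but $f\circ B_\pm$ is constant along flow lines with boundary value $f|_{\pl_\pm S^*M}=0$, hence $f\circ B_\pm \equiv 0$, giving $-Xu = f$. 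Replacing $u$ by $-u$ yields $Xu=f$ with $u\in\rho C^\infty(\bbar{S^*M})$, as required.

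The main obstacle I anticipate is bookkeeping of signs and of the precise residue/boundary-vanishing statements — in particular verifying that $u=R_+(0)f=R_-(0)f$ really does vanish on \emph{both} boundary components simultaneously, which is what upgrades $u$ from $C^\infty(\bbar{S^*M}\setminus\pl_\mp S^*M)$ to $\rho C^\infty(\bbar{S^*M})$, and then deducing $f\circ B_\pm\equiv 0$ cleanly. Once the identity $\Pi f = \mc{E}(If)$ from Lemma~\ref{I*I} is invoked to force $R_+(0)f=R_-(0)f$, the rest is formal; the regularity input \eqref{extR0} of Lemma~\ref{propRpm} (smoothness of $R_\pm(0)f$ up to the boundary when $f$ vanishes there) is the other essential ingredient and should be cited explicitly.
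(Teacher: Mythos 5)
Your proof is correct, but the converse direction takes a more circuitous route than the paper's. The paper observes that $If:=(R_+(0)f)|_{\pl_-S^*M}$ is literally defined as the boundary value of $R_+(0)f$ on $\pl_-S^*M$, so $If=0$ gives $u|_{\pl_-S^*M}=0$ with no machinery at all; combined with $u|_{\pl_+S^*M}=0$ and smoothness up to the boundary (both from Lemma~\ref{propRpm} and \eqref{extR0}), $u=-R_+(0)f$ is directly seen to lie in $\rho C^\infty(\bbar{S^*M})$. You instead invoke $\Pi=\mc{E}I$ from Lemma~\ref{I*I} to deduce $R_+(0)f=R_-(0)f$ and then read off the two boundary vanishings from the two representations; this works, but it calls on a heavier tool (the adjoint/Santal\'o identity) to recover what the definition of $I$ already hands you. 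Similarly, your remark that ``in general $f\circ B_+$ does not vanish'' is misleading in the present setting: for $f\in\rho C^\infty(\bbar{S^*M})$ one has $f|_{\pl\bbar{S^*M}}=0$, and since $B_\pm$ maps into $\pl_\pm S^*M$ this immediately gives $P_\pm f=f\circ B_\pm\equiv 0$, hence $-XR_\pm(0)f=f$ right away by \eqref{identityR}. You ultimately reach the same conclusion, but via the unnecessary detour of comparing $f\circ B_+$ with $f\circ B_-$ and then arguing the common value vanishes. The easy direction via Lemma~\ref{Xrayscat} matches the paper exactly.
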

\begin{proof} 
If we set $u:=-R_+(0)f$, we have $Xu=f$ and $u|_{\pl_+S^*M}=0$ and 
$u\in C^\infty(\bbar{S^*M})$ by \eqref{extR0}. By definition of $If$, if
$If=0$ then $u|_{\pl_-S^*M}=0$. Notice that we also have $u=-R_-(0)f$.  
The converse follows from Lemma \ref{Xrayscat}. 
\end{proof}

\subsection{The case with hyperbolic trapping}
In this section, we assume that the trapped set $K$ is a hyperbolic set for
the geodesic flow. It has zero Liouville measure and $\Gamma_\pm$ also have
zero measure by \cite[Section 2.4]{Gu}.  
The incoming and outgoing resolvents $R_\pm(\la)$ can be defined like in
the non-trapping case by the expression \eqref{Rpm} for ${\rm Re}(\la)>0$.  
These integrals extend analytically to $\la\in \cc$ continuously as maps 
\[ R_\pm(\la): C_c^\infty(S^*M)\to C^\infty(S^*M\setminus \Gamma_\mp)\]
Since each compact set of $S^*M$ is included in some strictly convex manifold
$\{z\in S^*M; \rho(z)\geq \eps\}$ with boundary, we can use
\cite[Propositions 4.2 and 4.3]{Gu} which say that  
\begin{equation}\label{Rpm0trap} 
R_\pm(0): H^s_{\rm comp}(S^*M)\to H^{-s}_{\rm loc}(S^*M)\cap L^{p}_{\rm
  loc}(S^*M)
\end{equation}
for all $s>0$ and all $p\in[1,\infty)$, and the wave-front sets of
  $R_\pm(0)f$ satisfy 
\[
{\rm WF}(R_\pm(0)f)\subset E_\mp^*
\]
where $E_\mp^*\subset T_{\Gamma_\mp}^*(S^*M)$ are continuous 
subbundles over $\Gamma_\mp$ satisfying 
\[
E_-^*(E_s\oplus \rr X)=0, \quad E_+^*(E_u\oplus \rr X)=0
\]
on $K$.  Moreover $R_\pm(0)$ satisfy 
\[ 
-XR_\pm(0)={\rm Id}
\]
in the distribution sense in $S^*M$ and are given by the expressions 
\begin{equation}\label{expRtrap}
\begin{gathered}
R_+(0)f(z)=\int_0^{\tau_+(z)} f(\bbar{\varphi}_{\tau}(z))
\frac{1}{\rho(\bbar{\varphi}_{\tau}(z))}d\tau, \quad \forall z\notin
\Gamma_- , \\ 
R_-(0)f(z)=-\int_{-\tau_-(z)}^{0} f(\bbar{\varphi}_{\tau}(z))
\frac{1}{\rho(\bbar{\varphi}_{\tau}(z))}d\tau, \quad \forall z\notin
\Gamma_+.  
\end{gathered}\end{equation}
\begin{prop}\label{WFanalysis}
If the trapped set $K$ is a hyperbolic set, the resolvents $R_\pm(0)$
extend as bounded maps 
\[ 
\rho C^\infty(\bbar{S^*M})\to L^p_{\rm loc}(S^*M)
\]
for all $p<\infty$, $R_\pm(0)f$ extend as functions in  
$C^\infty(\bbar{S^*M}\setminus \bbar{\Gamma_\mp})$  where
$\bbar{\Gamma_\mp}$ are given by \eqref{Gammaclos}, and 
$(R_\pm(0)f)|_{\pl_\pm S^*M}=0$.  
Finally, as distributions on $S^*M$, we have  
\[
{\rm WF}(R_\pm(0)f)\subset E_\mp^*.
\]
\end{prop}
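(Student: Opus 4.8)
We argue for $R_+(0)$; the statements for $R_-(0)$ follow by reversing the direction of the flow. Write $f=\rho\tilde f$ with $\tilde f\in C^\infty(\bbar{S^*M})$. For $f\in\rho C^\infty(\bbar{S^*M})$, the definition \eqref{R0} of $R_+(0)$ reduces (using $f|_{\pl\bbar{S^*M}}=0$ and the change of variables \eqref{reparam}, exactly as in \eqref{R_+viabarphi} and Lemma~\ref{propRpm}) to the expression \eqref{expRtrap}, i.e.
\begin{equation}\label{sketchRformula}
R_+(0)f(z)=\int_0^{\tau_+(z)}\tilde f(\bbar{\varphi}_\tau(z))\,d\tau,\qquad z\in S^*M\setminus\Gamma_-,
\end{equation}
the integration being over the compact arc $\{\bbar{\varphi}_\tau(z):0\le\tau\le\tau_+(z)\}\subset\bbar{S^*M}$, so the integral is finite for each such $z$. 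By Corollary~\ref{boundmap}, $B_+$ and hence $\tau_+$ extend smoothly to $\bbar{S^*M}\setminus\bbar{\Gamma_-}$; since $\bbar X$ is a smooth vector field on $\bbar{S^*M}$ to which $\pl_-S^*M$ is not an exit face, the right side of \eqref{sketchRformula} is a smooth function of $z\in\bbar{S^*M}\setminus\bbar{\Gamma_-}$, and it vanishes at $\pl_+S^*M$ because $\tau_+\equiv0$ there. This gives the smoothness and boundary-vanishing assertions; in particular the singular support of $R_+(0)f$ is contained in $\Gamma_-$, so $\WF(R_+(0)f)\subset T^*_{\Gamma_-}(S^*M)$ once $R_+(0)f$ is known to be a distribution.

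To prove $R_+(0)f\in L^p_{\rm loc}(S^*M)$, fix $\eps>0$ small enough that $K\cap\{\rho\le 2\eps\}=\emptyset$, that each $\{\rho\ge\eps'\}$ with $\eps'\le2\eps$ is strictly convex for the flow, and that $\dot\xi_0<0$ throughout $\{\rho<2\eps\}$ and the conclusion of Lemma~\ref{convtobdry} holds whenever $\rho<2\eps$ and $\xi_0\le0$. Pick $\chi\in C^\infty(\bbar{S^*M})$ with $\chi\equiv1$ on $\{\rho\le\eps\}$ and $\supp\chi\subset\{\rho<2\eps\}$, and split $f=f_c+f_\infty$ with $f_c:=(1-\chi)f\in C_c^\infty(S^*M)$ and $f_\infty:=\chi f\in\rho C^\infty(\bbar{S^*M})$ supported in the collar $\{\rho<2\eps\}$. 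Applied to $f_c$, the results of \cite{Gu} recalled in and around \eqref{Rpm0trap} give $R_+(0)f_c\in L^p_{\rm loc}(S^*M)$ for all $p<\infty$, together with $\WF(R_+(0)f_c)\subset E_-^*$. For $f_\infty$ we claim that $R_+(0)f_\infty$ is bounded on every compact $A\subset S^*M$ (say $A\subset\{\rho\ge\eps_0\}$). The point is that, since $\supp f_\infty\subset\{\rho<2\eps\}$ is disjoint from $K\subset\{\rho>2\eps\}$, strict convexity of $\{\rho\ge2\eps\}$ together with Lemma~\ref{convtobdry} force the forward orbit of any $z\in A\setminus\Gamma_-$ to spend at most a uniformly bounded $\tau$-time $C_A$ inside $\{\rho<2\eps\}$ before a final passage $\tau^*(z)$ into $\{\rho<2\eps\}$, after which $\rho$ decreases monotonically to $0$ and the passage point $\bbar{\varphi}_{\tau^*(z)}(z)$ lies in a fixed compact subset of $\bbar{S^*M}\setminus\bbar{\Gamma_-}$. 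Splitting \eqref{sketchRformula} at $\tau^*(z)$, the first integral is bounded by $C_A\|\tilde f_\infty\|_{\infty}$, and the second equals $R_+(0)f_\infty(\bbar{\varphi}_{\tau^*(z)}(z))$, which is bounded by the first paragraph. Hence $R_+(0)f\in L^p_{\rm loc}(S^*M)$, and tracking constants shows the maps $R_\pm(0):\rho C^\infty(\bbar{S^*M})\to L^p_{\rm loc}(S^*M)$ are continuous.

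It remains to prove $\WF(R_+(0)f)\subset E_-^*$. By the first paragraph $\WF(R_+(0)f)\subset T^*_{\Gamma_-}(S^*M)$, and from $-XR_+(0)f=f\in C^\infty(S^*M)$ together with propagation of singularities for the transport field $X$ — whose characteristic flow is the symplectic lift $\Phi_t$ — this set is $\Phi_t$-invariant. Moreover $R_+(0)f$ is obtained by forward integration and vanishes near $\pl_+S^*M$, i.e. it is the forward solution of $-Xu=f$. All of this takes place inside an arbitrarily small neighbourhood of the compact hyperbolic set $\Gamma_-$, which is precisely the setting of \cite{Gu} (and \cite{DyGu}): the radial point estimates there at the hyperbolic trapped set pin the $\Phi_t$-invariant set $\WF(R_+(0)f)$ inside $E_-^*$, just as for compactly supported data, so that the non-compactness of $\supp f$ plays no role. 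Verifying that this microlocal radial-estimate argument of \cite{Gu} applies verbatim near $\Gamma_-$ to the distribution $R_+(0)f$ is the step that must be checked with care; everything else is the reparametrization of the flow near infinity from the first paragraph together with the cutoff reduction above.
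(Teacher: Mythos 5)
Your first paragraph (smoothness on $\bbar{S^*M}\setminus\bbar{\Gamma_\mp}$ via the extended $\tau_\pm$, vanishing at $\pl_\pm S^*M$, hence $\operatorname{sing\,supp}\subset\Gamma_-$) matches the paper's opening step.

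For the $L^p_{\rm loc}$ statement you take a genuinely different route. The paper reduces the collar-supported piece to the compactly-supported case via the flow-pullback identity $\bbar{\varphi}_{-\tau_0}^* R_+(0)(\rho\,\bbar{\varphi}_{\tau_0}^*\tilde f)= R_+(0)(\rho\tilde f)$, noting that $\bbar{\varphi}_{-\tau_0}(U)\subset\{\rho>\delta\}$, so that \eqref{Rpm0trap} applies directly. You instead split $f=f_c+f_\infty$ and bound $R_+(0)f_\infty$ directly by tracking the forward orbit; the decisive point is that the re-entry points on $\{\rho=2\eps\}$ satisfy $\bbar{\xi}_0\le0$, and $\{\rho\le2\eps,\bbar{\xi}_0\le0\}$ is a compact subset of $\bbar{S^*M}$ disjoint from $\bbar{\Gamma_-}$ by Lemma~\ref{convtobdry}, on which $\tau_+$ is bounded by Corollary~\ref{boundmap}. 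This works and is more elementary, though note that, as written, your claim that the forward orbit spends a uniformly bounded $\tau$-time in $\{\rho<2\eps\}$ ``before'' the final passage needs a word when $A$ already dips into $\{\rho<2\eps\}$; it is cleaner to simply say the forward orbit meets $\{\rho<2\eps\}$ in at most two $\tau$-intervals by strict convexity, each of uniformly bounded length. The payoff of the paper's flow-pullback is that it handles the $L^p_{\rm loc}$ bound \emph{and} the wave-front set in one stroke, whereas your split treats them separately.

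There is a genuine gap in the wave-front set part, which you yourself flag. You establish $\WF(R_+(0)f)\subset T^*_{\Gamma_-}(S^*M)$, note it is $\Phi_t$-invariant and that $u=R_+(0)f$ is the forward solution of $-Xu=f$, and then appeal to ``radial point estimates at the hyperbolic trapped set'' to conclude $\WF\subset E^*_-$. But the radial estimates in \cite{Gu,DyGu} are formulated for compactly supported (or appropriately weighted) data; what needs to be argued is that the non-compactness of $\supp f$ near $\pl_+S^*M$ does not spoil them, i.e. that the forward-solution microlocal boundary condition is still usable. This is exactly the step the paper's flow-pullback trick is designed to dodge: pulling back by $\bbar{\varphi}_{\tau_0}$ produces a \emph{compactly supported} source in $\{\rho>\delta\}$, to which \cite[Props.\ 4.2--4.3]{Gu} apply verbatim, giving $\WF\subset E^*_-$ there; then $\WF(R_+(0)(\rho\tilde f))$ is obtained as the image under $\bbar{\Phi}_{\tau_0}=(\bbar{\varphi}_{\tau_0},(d\bbar{\varphi}_{\tau_0}^{-1})^T)$, and $\bbar{\Phi}_{\tau_0}(E^*_-)\cap T^*(S^*M)\subset E^*_-$ because $E^*_-$ is $\Phi_t$-invariant. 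I would recommend replacing your sketched propagation-of-singularities argument by this conjugation-by-the-flow argument, which also renders your separate $L^p_{\rm loc}$ estimate unnecessary.
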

\begin{proof} First, the expression \eqref{expRtrap} for the resolvent and
  the arguments of Lemma~\ref{propRpm} show that for $f\in \rho
  C^\infty(\bbar{S^*M})$,  
$R_\pm(0)f$ extends as a smooth function in the subset of $\bbar{S^*M}$
  where $\tau_\pm$ is smooth, i.e. on $\bbar{S^*M}\setminus  
\bbar{\Gamma_\mp}$. In particular, $R_\pm(0)f$ is smooth near $\pl_\pm S^*M$ 
and since $\tau_\pm|_{\pl_\pm S^*M}=0$, we get $R_\pm(0)f|_{\pl_\pm S^*M}=0$. 
If in addition $\supp(f)\cap \Gamma_\pm=\emptyset$, then $R_\pm(0)f$ is easily seen to 
be in $C^\infty(S^*M)$ (\cite[Lemma 4.1]{Gu}).
Let us now show that $R_\pm(0)f$ makes sense as a function in $L_{\rm
  loc}^p(S^*M)$ for all $p<\infty$. We consider $R_+(0)f$, as the argument
is the same for $R_-(0)f$.  
In view of \eqref{Rpm0trap} and the discussion above, 
it suffices to consider $R_+(0)(\rho \til{f})$ with 
$\til{f}\in C^\infty(\bbar{S^*M})$ supported in an arbitrarily small open set
$U$ contained in a small neighborhood of $\bbar{\Gamma_+}\cap \{z\in
\bbar{S^*M}; \rho(z) \leq \eps\}$.  
If $U$ is a small enough open set  then $\bbar{\varphi}_{-\tau_0}(U)\subset
\{\rho>\delta\}$ for some $\tau_0>0$ and $\delta>0$. The following formula
holds for $\til{f}\in C^\infty(U)\cap C^\infty(\bbar{S^*M})$ 
\[ \bbar{\varphi}_{-\tau_0}^* R_+(0)(\rho \bbar{\varphi}_{\tau_0}^*\til{f})= R_+(0)(\rho \til{f}) 
\]
as functions on $S^*M\setminus \Gamma_-$.
Now we can use $\supp(\rho \bbar{\varphi}_{\tau_0}^*\til{f})\subset
\{\rho>\delta\}$, and \eqref{Rpm0trap} shows that
$\bbar{\varphi}_{-\tau_0}^* R_+(0)(\rho \bbar{\varphi}_{\tau_0}^*\til{f})$
makes sense as a function in $L^p_{\rm loc}(S^*M)$ for all $p<\infty$,
giving  
a sense to $R_+(0)(\rho \til{f})$ as an element in $L^p_{\rm loc}(S^*M)$ for
all $p<\infty$. The wave-front set of $R_+(0)(\rho \til{f})$ is the flowout of
the wave-front set  
$R_+(0)(\rho \bbar{\varphi}_{\tau_0}^*\til{f})$ by the map 
$\bbar{\Phi}_{\tau_0}:=(\bbar{\varphi}_{\tau_0},(d\bbar{\varphi}_{\tau_0}^{-1})^T)$
on $T^*(S^*M)$, and is thus contained in $E_-^*$ (using that $E_-^*$ is
invariant by the flow $\Phi_{t}$ of \eqref{symplift}, it is easy to check
that $\bbar{\Phi}_{\tau_0}(E_-^*)\cap T^*(S^*M)\subset E_-^*$). 
\end{proof}

The same argument as Lemma \ref{santalo} shows that for each $f\in
C_c^\infty(S^*M\setminus (\Gamma_+\cup\Gamma_-))$, we have the identity 
\begin{equation}\label{santalotrap}
\int_{S^*M} f|\mu| =
\int_{\pl_-S^*M}\int_0^{\tau_+(z)}f(\bbar{\varphi}_\tau(z))\frac{1}{\rho(\bbar{\varphi}_\tau(z))}
d\tau |\mu_\pl(z)|
\end{equation} 
and using that $S^*M\setminus (\Gamma_+\cup \Gamma_-)$ is an open set of 
full measure, we can use a density argument to deduce that Santalo's
formula \eqref{santalotrap} holds for all $f\in L^1(S^*M,|\mu|)$ and the
X-ray transform operator  
\[ I: L^1(S^*M,|\mu|)\to L^1(\pl_-S^*M,|\mu_\pl|), \quad
If(z):=\int_0^{\tau_+(z)}f(\bbar{\varphi}_\tau(z))\frac{1}{\rho(\bbar{\varphi}_\tau(z))}d\tau \] 
is bounded and can also be considered as a map 
$I: \rho C^\infty(\bbar{S^*M})\to C^{\infty}(\pl_-S^*M\setminus \bbar{\Gamma_-})$ by 
setting $If=(R_+(0)f)|_{\pl_-S^*M\setminus \bbar{\Gamma_-}}$.

We obtain a Livsic type theorem similar to Lemma \ref{kerI}
\begin{lemm}\label{kerIbis}
Assume that the trapped set $K$ is a hyperbolic set. Let $f\in \rho
C^\infty(\bbar{S^*M})$, then $If=0$ on  
$\pl_-S^*M\setminus \bbar{\Gamma_-}$ if and only if there exists $u\in \rho
C^\infty(\bbar{S^*M})$ such that $Xu=f$. 
\end{lemm}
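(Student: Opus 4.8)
The plan is to run the argument of Lemma~\ref{kerI} with the single resolvent there replaced by the pair $R_\pm(0)$ of Proposition~\ref{WFanalysis}, and to control the regularity at the trapped set by means of the wave-front bounds $\WF(R_\pm(0)f)\subset E_\mp^*$. The implication ``$f=Xu$ with $u\in\rho C^\infty(\bbar{S^*M})$ $\Rightarrow$ $If=0$'' is the easy one: such a $u$ vanishes on all of $\pl\bbar{S^*M}=\pl_-S^*M\sqcup\pl_+S^*M$, and, exactly as in the proof of Lemma~\ref{Xrayscat} (whose computation is local to a single orbit and so applies verbatim here), for $z\in\pl_-S^*M\setminus\bbar{\Gamma_-}$ one gets $If(z)=\int_0^{\tau_+(z)}(\bbar{X}u)(\bbar{\varphi}_\tau(z))\,d\tau=u(S_g(z))-u(z)=0$.

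For the converse, assume $If=0$ on $\pl_-S^*M\setminus\bbar{\Gamma_-}$ and set $u:=-R_+(0)f$, $u':=-R_-(0)f$. Proposition~\ref{WFanalysis} gives $u\in C^\infty(\bbar{S^*M}\setminus\bbar{\Gamma_-})\cap L^p_{\loc}(S^*M)$ with $Xu=f$ on $S^*M$ in the distributional sense, $\WF(u)\subset E_-^*$, $u|_{\pl_+S^*M}=0$, and, by the definition of $I$ in terms of $R_+(0)$, also $u|_{\pl_-S^*M\setminus\bbar{\Gamma_-}}=-If=0$; symmetrically $u'\in C^\infty(\bbar{S^*M}\setminus\bbar{\Gamma_+})$ with $Xu'=f$, $\WF(u')\subset E_+^*$, and $u'|_{\pl_-S^*M}=0$. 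The next step is the identity $u=u'$, i.e. $\Pi f:=R_+(0)f-R_-(0)f=0$: on the open full-measure set $S^*M\setminus(\Gamma_-\cup\Gamma_+)$ every orbit joins $\pl_-S^*M\setminus\bbar{\Gamma_-}$ to $\pl_+S^*M$, and a change of variables in \eqref{expRtrap} (as in \eqref{expPi}) identifies $\Pi f(z)$ there with $\int_{-\infty}^{\infty}f(\varphi_t(z))\,dt=If(B_-(z))$, which vanishes by hypothesis; since $\Pi f\in L^p_{\loc}(S^*M)$, it vanishes identically.

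Once $u=u'$ is known, the two wave-front inclusions combine to $\WF(u)\subset E_-^*\cap E_+^*$. As $E_-^*$ is supported over $\Gamma_-$ and $E_+^*$ over $\Gamma_+$, this intersection lies over $K=\Gamma_-\cap\Gamma_+$, where $E_-^*=E_s^*$ and $E_+^*=E_u^*$; a covector in both annihilates $E_s\oplus\rr X$ and $E_u\oplus\rr X$, hence all of $T_K(S^*M)$ by the splitting in \eqref{hypofK}, so it must be zero. Therefore $\WF(u)=\emptyset$ and $u\in C^\infty(S^*M)$. Combining this with $u=-R_+(0)f\in C^\infty(\bbar{S^*M}\setminus\bbar{\Gamma_-})$ and $u=-R_-(0)f\in C^\infty(\bbar{S^*M}\setminus\bbar{\Gamma_+})$, and noting that by \eqref{Gammaclos} the boundary traces of $\bbar{\Gamma_-}$ and $\bbar{\Gamma_+}$ lie in the disjoint sets $\pl_-S^*M$ and $\pl_+S^*M$, so that $\bbar{\Gamma_-}\cap\bbar{\Gamma_+}=K$ is an interior set, the three smooth extensions patch to $u\in C^\infty(\bbar{S^*M})$. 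Finally $u=-R_-(0)f$ gives $u|_{\pl_-S^*M}=0$ and $u=-R_+(0)f$ gives $u|_{\pl_+S^*M}=0$, whence $u\in\rho C^\infty(\bbar{S^*M})$ with $Xu=f$, as desired.

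I expect the crux to be the regularity of $u=-R_+(0)f$ at the trapped set $K$: a priori $u$ is only an $L^p_{\loc}$ function near $\Gamma_-$, and the sole leverage for upgrading it to a smooth function is the micro-local bound $\WF(u)\subset E_-^*$ imported from \cite{Gu}, which becomes effective only after the identity $R_+(0)f=R_-(0)f$ has been established and coupled with the transversality $E_s^*\cap E_u^*=\{0\}$ over $K$. The remaining ingredients — the reparametrization identifying $\Pi f$ with $If\circ B_-$ on the full-measure set, and the (routine) check that $\bbar{\Gamma_-}$ and $\bbar{\Gamma_+}$ meet $\pl\bbar{S^*M}$ in disjoint pieces so the local smooth extensions cover $\bbar{S^*M}$ — are the same soft arguments as in the non-trapping case.
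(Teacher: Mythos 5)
Your proposal is correct and follows essentially the same route as the paper's proof: set $u=-R_+(0)f$, show $If=0$ forces $R_+(0)f=R_-(0)f$ a.e., combine the two wave-front bounds from Proposition~\ref{WFanalysis} to get $\WF(u)\subset E_-^*\cap E_+^*=\{0\}$, and read off the vanishing boundary values from both resolvent expressions. The only difference is cosmetic — you spell out why $E_-^*\cap E_+^*=\{0\}$ and why the three smooth pieces patch up to the boundary, steps the paper leaves implicit.
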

\begin{proof} 
By Proposition \ref{WFanalysis}, if we set $u:=-R_+(0)f$, we have $Xu=f$ in
the distribution sense with $u\in C^\infty(\bbar{S^*M}\setminus 
\bbar{\Gamma_-})\cap L^p_{\rm loc}(S^*M)$ 
and $u|_{\pl_+S^*M}=0$. By definition of $If$, $If=0$ implies that 
\[R_+(0)f(z)=R_-(0)f(z) \quad \forall z\in S^*M\setminus (\Gamma_-\cup \Gamma_+) \]
but this also implies that $u=-R_-(0)f=-R_+(0)f$ as functions in $L^1_{\rm
  loc}(S^*M)$. Proposition \ref{WFanalysis} shows that $u\in
C^\infty(\bbar{S^*M}\setminus K)$, and that  
\[
{\rm WF}(u)\subset E_-^*\cap E_+^*=\{0\}
\]
thus $u\in C^\infty(\bbar{S^*M})$. We also have $u|_{\pl_-S^*M}=0$ since $u=-R_-(0)f$,
thus $u|_{\pl \bbar{S^*M}}=0$. That $If=0$ if $f=Xu$ with $u\in \rho C^\infty(\bbar{S^*M})$ 
is straightforward and goes like in Lemma \ref{Xrayscat}.
\end{proof}

\subsection{Injectivity of X-ray transform on tensors}

The gradient of a function $f\in C^\infty(S^*M)$ with respect to the Sasaki metric $G$ splits into
\[ 
\nabla f= (Xf)X+ \overset{v}{\nabla}f+\overset{h}{\nabla}f, \quad
\textrm{ with  }\,\overset{v}{\nabla}f\in C^\infty(S^*M,\mc{V}), \,  \,  
\overset{h}{\nabla}f\in C^\infty(S^*M,\mc{H})
\]
where $\mc{H},\mc{V}$ are the horizontal and vertical bundles defined in Section \ref{conjugate}.
We can then view $\overset{v}{\nabla}f,\overset{h}{\nabla}f$ as elements in
$C^\infty(S^*M; \mc{Z})$ using $d\pi$ and $\mc{K}$. 

We need to describe the behavior of $u=R_+(0)f$ near $\pl\bbar{S^*M}$  
when $f$ is a function vanishing to high order at the boundary, and
similarly for its derivative.  
\begin{lemm}\label{propofu} 
Assume that $g$ is either non-trapping or the trapped set $K$ is hyperbolic. 
Let $f\in C^\infty(S^*M)$ be a function which can be written 
as  $f=\rho^{k}\til{f}$ for some $\til{f}\in C^\infty(S^*M)$, and some
$k\in (1,\infty)$ and such that  
\[  ||\til{f}||_{L^\infty(S^*M)}<\infty ,\quad ||\nabla \til{f}||_{L^\infty(S^*M)}<\infty\] 
with respect to the Sasaki metric $G$.
Let $\eps>0$, the function $u_\pm:=R_\pm(0)f$ is such that there is
$C_{k,\eps}>0$ such that  
for all $z\in W_\pm^\eps:=\{ \rho\leq \eps, \pm \xi_0\leq 0\}$
\[ |u_\pm(z)|\leq C_{k,\eps}\rho(z)^k||\til{f}||_{L^\infty}, \quad 
||\nabla u_\pm(z)||_G\leq C_{k,\eps}\rho(z)^{k}(||\nabla \til{f}||_{L^\infty}+||\til{f}||_{L^\infty}).\]
More generally, if $z:=(\rho,y, \bbar{\xi}_0,\bar{\eta})$ are local
coordinates near the boundary of ${^0S}^*\bbar{M}$ with 
$\bbar{\xi}_0^2+\sum_{i,j}h_\rho^{ij}\bar{\eta}_i\bar{\eta}_j=1$ and if
$f\in \rho^{\infty}C^{\infty}({^0S}^*\bbar{M})$,  
for all multiindices $\alpha$, all $N\in\nn$ and all $z\in W_\pm^\eps$ 
\[ 
|\pl_z^\alpha u_\pm(z)| \leq
C_{N,\alpha}\rho(z)^{N} ||\rho^{-N}\pl_z^\alpha
f||_{L^\infty({^0S}^*\bbar{M})}.
\] 
\end{lemm}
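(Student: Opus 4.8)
The plan is to work directly from the integral representations of $R_\pm(0)$ together with the uniform exponential decay of $\rho$ along orbits issuing from $W_\pm^\eps$ supplied by Lemma~\ref{convtobdry}; I treat $u_+=R_+(0)f$, the case of $u_-$ following by reversing time. I will take $\eps\le\eps_0$ with $\eps_0$ the constant of Lemma~\ref{convtobdry} (the only case needed in the applications); then $W_+^\eps$ avoids $\bbar{\Gamma_-}$ and $\Gamma_+$, so $u_+$ is smooth on $W_+^\eps$, the forward orbit of any $z\in W_+^\eps$ stays in $\{\rho\le\eps,\ \xi_0\le0\}$ and converges to $\pl_+S^*M$, and \eqref{uniformrho} gives $\rho(\varphi_t(z))\le C\rho(z)e^{-t}$ with $C$ uniform in $z$. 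For the first bound: since $f=\rho^k\til f$ with $k>1$ and $\til f$ bounded, $f$ has zero boundary values, so $P_+f=f\circ B_+=0$ and $u_+(z)=\int_0^\infty f(\varphi_t(z))\,dt$ by \eqref{R0} (or by \eqref{expRtrap} in the trapped case); hence $|u_+(z)|\le\|\til f\|_{L^\infty}\int_0^\infty\big(C\rho(z)e^{-t}\big)^k\,dt=C_{k,\eps}\rho(z)^k\|\til f\|_{L^\infty}$, the integral converging because $k>1$.

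For the gradient I would decompose $\nabla u_+=(Xu_+)X+\overset{v}{\nabla}u_++\overset{h}{\nabla}u_+$. Since $P_+f=0$, \eqref{identityR} (or Proposition~\ref{WFanalysis}) gives $Xu_+=-f$, so the flow component has $G$-norm $|f(z)|\le\rho(z)^k\|\til f\|_{L^\infty}$, using $\|X\|_G=1$. For the transverse parts I would differentiate under the integral: for $W\in\ker\alpha_z$, $Wu_+(z)=\int_0^\infty df_{\varphi_t(z)}\big(d\varphi_t(z)W\big)\,dt$, with $d\varphi_t$ preserving $\ker\alpha$. Two estimates enter. First, $\rho$ is pulled back from $\bbar{M}$ along $\pi$, so $\overset{v}{\nabla}\rho=0$, $|X\rho|\le\rho$ and $\|\overset{h}{\nabla}\rho\|_G\le|d\rho|_g\le C\rho$ near $\pl\bbar{S^*M}$; hence $\|\nabla f\|_G\le C\rho^{k}\big(\|\til f\|_{L^\infty}+\|\nabla\til f\|_{L^\infty}\big)$, and with $\rho(\varphi_t(z))\le C\rho(z)e^{-t}$ this gives $|df_{\varphi_t(z)}(V)|\le C\rho(z)^ke^{-kt}\big(\|\til f\|_{L^\infty}+\|\nabla\til f\|_{L^\infty}\big)\|V\|_G$. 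Second, by \eqref{curvature} the sectional curvature along any forward orbit meeting $W_+^\eps$ equals $-1+\mc{O}(\eps)$, so the Jacobi equation under the identification $\ker\alpha\cong\mc{Z}\oplus\mc{Z}$ bounds $\|d\varphi_t(z)|_{\ker\alpha}\|_G\le Ce^{(1+C'\eps)t}$ for $t\ge0$. Combining and integrating, the $t$-integral is $\le C\rho(z)^k\int_0^\infty e^{-(k-1-C'\eps)t}\,dt$, finite once $\eps$ is small enough that $C'\eps<k-1$; this bounds $\overset{v}{\nabla}u_++\overset{h}{\nabla}u_+$ and finishes the gradient estimate.

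For the $\rho^\infty$ estimate, $f\in\rho^\infty C^\infty({}^0S^*\bbar{M})$ and $\pl_z^\alpha$ is a derivative in the coordinates $(\rho,y,\bbar{\xi}_0,\bar{\eta})$ on the compact manifold ${}^0S^*\bbar{M}$; again $u_+(z)=\int_0^\infty f(\varphi_t(z))\,dt$. Differentiating under the integral and using Fa\`a di Bruno, $\pl_z^\alpha u_+(z)$ is a finite sum of terms $\int_0^\infty\big((\pl^\beta f)\circ\varphi_t\big)(z)\,P_{\alpha\beta}\big(\pl_z^{\le|\alpha|}\varphi_t(z)\big)\,dt$ with $\beta\le\alpha$ and $P_{\alpha\beta}$ universal polynomials. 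The closure $\overline{W_+^\eps}$ is compact in ${}^0S^*\bbar{M}$, forward orbits from $W_+^\eps$ stay inside it, and there the geodesic flow has bounded coefficients with bounded derivatives of every order in the ${}^0$-coordinates (read off from \eqref{formofbarX}, \eqref{formofX}, \eqref{forminXU2}); the variational equations then give $|\pl_z^\gamma\varphi_t(z)|\le C_\gamma e^{C_\gamma t}$ for $|\gamma|\le|\alpha|$, $t\ge0$, uniformly in $z\in W_+^\eps$. Since $f$ and all its ${}^0$-derivatives vanish to infinite order, $|(\pl^\beta f)(\varphi_t(z))|\le\|\rho^{-N}\pl^\beta f\|_{L^\infty}\big(C\rho(z)e^{-t}\big)^N$ for every $N$; choosing $N$ large enough depending on $\alpha$ makes every term integrable and yields $|\pl_z^\alpha u_+(z)|\le C_{N,\alpha}\rho(z)^N\max_{\beta\le\alpha}\|\rho^{-N}\pl^\beta f\|_{L^\infty}$, so, $N$ being arbitrary, the stated estimate follows.

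The hard part will be the uniform control of the (higher-order) variational flow near $\pl\bbar{S^*M}$, in particular through the short-geodesic regime $|\eta|_{h_\rho}\to\infty$: this is exactly where $\overline{W_\pm^\eps}$ is non-compact in $\bbar{S^*M}$ but compact in ${}^0S^*\bbar{M}$ --- the reason the refined statement is phrased on the latter --- and where one must pass between the charts $U_1,U_2$ and use the rescaling of Lemma~\ref{smallgeo} to see that the flow and all its derivatives remain bounded.
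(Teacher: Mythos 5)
Your argument follows the paper's strategy: use the integral representation of $R_\pm(0)$, the uniform exponential decay $\rho(\varphi_t(z))\le C\rho(z)e^{-t}$ for $z\in W_\pm^\eps$ from \eqref{uniformrho}, and Gronwall for the variational flow on the compact manifold ${}^0S^*\bbar{M}$. The $L^\infty$ bound and the higher-order $\rho^\infty$ estimate match the paper; your worry in the last paragraph that one must pass between the charts $U_1,U_2$ and invoke the rescaling of Lemma~\ref{smallgeo} is unfounded --- once one notes, as you do, that $X$ extends to a smooth vector field on the compact manifold ${}^0S^*\bbar{M}$ with uniformly bounded derivatives there, the estimate $|\pl_z^\gamma\varphi_t(z)|\le C_\gamma e^{C_\gamma t}$ for $t\ge 0$ follows directly from Gronwall on the variational equations, which is all the paper uses.

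There is, however, a genuine gap in the gradient estimate. You control $d\varphi_t|_{\ker\alpha}$ by treating the curvature along the orbit as uniformly pinched, $K=-1+\mc{O}(\eps)$ on $\{\rho\le\eps\}$, and obtain $\|d\varphi_t(z)|_{\ker\alpha}\|_G\le Ce^{(1+C'\eps)t}$; the $t$-integral then only converges when $C'\eps<k-1$. That is a real restriction: the lemma asserts the bound for every sufficiently small $\eps$ with a constant $C_{k,\eps}$, imposing no relation between $\eps$ and $k-1$, and for $k$ close to $1$ your proof does not cover the claimed range. The paper avoids this by exploiting that the pinching is not merely uniform but \emph{decaying along the orbit}: since $\rho(\varphi_t(z))=\mc{O}(\rho(z)e^{-t})$ uniformly on $W_\pm^\eps$, the Jacobi operator satisfies $\mc{R}(Y_t,\dot x(t))\dot x(t)=-Y_t+\mc{O}(e^{-t}|Y_t|)$, so the Jacobi equation is an \emph{integrable} perturbation of the constant-curvature $-1$ model, and Gronwall yields the sharp, $\eps$-independent bound $|Y_t|_g+|Y'_t|_g\le Ce^{t}$ uniformly for $t\ge 0$ and $z\in W_\pm^\eps$. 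With this bound the integrand is $\sim\rho(z)^ke^{-(k-1)t}$, and one integrates for all $k>1$. The ingredients for the fix --- the uniform $\mc{O}(e^{-t})$ decay of $\rho$ along orbits, \eqref{curvature}, and Gronwall --- are all already present in your write-up; you just need to combine them to obtain the sharp Jacobi growth rather than falling back to the crude $\eps$-uniform pinching.
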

\begin{proof} We deal with the case $u_+$ since the $u_-$ case is
  similar. From  \eqref{uniformrho},  
if $\eps>0$ is small enough, there is $C>0$ such that for all $z=(x,\xi)\in
W_+^\eps$ and all $t\geq 0$ 
\begin{equation} \label{unif1}
\rho(\varphi_t(z))\leq C\rho(z)e^{-t}
\end{equation} 
which implies that for such $z$
\[ 
|u_+(z)|\leq C_k\rho(z)^{k}||\til{f}||_{L^\infty}\int_0^\infty e^{-kt}dt\leq
C_k\rho(z)^{k}||\til{f}||_{L^\infty}.
\]
To estimate $\nabla u$, it suffices to estimate
$||\overset{v}{\nabla}u||_G$ and $||\overset{h}{\nabla}u||_G$.  
Using the decomposition of $d\varphi_t$ in the splitting \eqref{splitting}
in terms of Jacobi fields \cite[Lemma 1.40]{Pa}, we have for each $V\in
\mc{V}$ of Sasaki norm $1$ 
\[|G(\overset{v}{\nabla}u_+(z),V)|\leq C\int_{0}^\infty
\rho^{k}(\varphi_t(z))\Big(||\til{f}||_{L^\infty}\Big|\frac{d\rho}{\rho}(\varphi_t(z))\Big|_g 
+||\nabla \til{f}||_{L^\infty}\Big)(|Y_t(z)|_g+|Y'_t(z)|_g) dt
\]
where $Y_t(z)$ is the Jacobi field solving 
\[ 
Y''_t(z)+ \mc{R}(Y_t(z),\dot{x}(t))\dot{x}(t)=0, \quad Y_0(z)=0,
\quad Y'_0(z)=V
\] 
if $\mc{R}$ denotes the Riemann curvature tensor of $g$ and
$x(t)=\pi(\varphi_t(z))$. Since the  sectional curvatures at $x$ are
uniformly pinched in $(-1-c\rho(x),-1+c\rho(x))$  for some $c$ uniform,  
and since $\rho(\varphi_t(z))=\mc{O}(e^{-t})$ uniformly in $z$, we get 
$\mc{R}(Y_t(z),\dot{x}(t))\dot{x}(t)=-Y_t(z)+\mc{O}(e^{-t}|Y_t(z)|)$, and 
by Gronwall's inequality we deduce that there is $C>0$ so that for each $t$ 
and each $z\in W_+^\eps$  
\[ 
|Y_t(z)|_g+|Y'_t(z)|_g \leq Ce^{t}. 
\] 
One has $|d\rho/\rho|_g=1$ in the region $\rho\leq \eps$ and using the
uniform estimates \eqref{unif1} and \eqref{uniformrho}, we deduce that
there is $C_k>0$ such that 
\[
||\overset{v}{\nabla}u_+(z)||_G\leq
C_k\rho(z)^{k}(||\til{f}||_{L^\infty}+||\nabla \til{f}||_{L^\infty}).
\]
The same argument works with $|\overset{h}{\nabla}u_+(z)|$.

To prove the last statement, we first notice that $|\pl_{z}^\alpha\varphi_t(z)|\leq 
C_\alpha e^{c_0|\alpha|.|t|}$ for some $c_0$ by using Gronwall's inequality 
and the fact that the vector field $X$ has Lipschitz constants uniformly
bounded on the compact manifold ${^0}S^*\bbar{M}$. Then take
$k>N+c_0|\alpha|$, we have  
\[|\pl_z^\alpha u_\pm(z)| \leq C_{\alpha}\int_0^\infty \rho^N(\varphi_t(z))
||\rho^{-N}(\pl_z^{\alpha}f)||_{L^\infty} e^{c_0|\alpha|t}dt\]
and the conclusion follows from \eqref{unif1}. 
\end{proof}

We can view a symmetric tensor $f\in C^\infty(M; \otimes^m_ST^*M)$ of rank 
$m\in \nn_0$ as a function on  $S^*M$ by the map
\[ \pi_m^*: C^\infty(M; \otimes^m_ST^*M)\to C^\infty(S^*M), \quad
\pi_m^*f(x,\xi):=f(x)(\otimes^m\xi^\sharp). 
\]
where $\xi^\sharp$ is the dual to $\xi$ through the metric,
i.e. $g(\xi^\sharp,\cdot)=\xi$. 
If $m=0$, $\pi_0^*=\pi^*$ is simply the pullback by the base projection
$\pi: S^*M\to M$.  
Notice also that $\pi_m^*$ maps $C_c^\infty(M; \otimes^m_ST^*M)$ to $C_c^\infty(S^*M)$.
We denote by $\mc{D}'(S^*M)$ (resp. $\mc{D}'(M;\otimes^m_ST^*M)$) 
the space of distributions on $S^*M$, i.e. the dual space to the space
$C_c^\infty(S^*M)$ of compactly supported smooth functions on $S^*M$
(resp. dual space to $C_c^\infty(M;\otimes_S^mT^*M)$). 

It is straightforward to see that  $\pi_m^*$ maps continuously 
\[\pi_m^*: \rho^{-m}C^\infty(\bbar{M}; \otimes^m_ST^*\bbar{M})\to C^{\infty}(\bbar{S^*M}).\] 
The dual operator defined by
\[ {\pi_m}_*: \mc{D}'(S^*M)\to \mc{D}'(M,\otimes_S^mT^*M), \quad \cjg
\pi_m^*f,\til{f}\cjd=\cjg f,{\pi_m}_*\til{f}\cjd 
\] 
(for each $f\in C_c^\infty(M; \otimes^m_ST^*M),\til{f}\in
C_c^\infty(S^*M)$) is also continuous as a map 
\begin{equation}\label{pushm}  
\begin{gathered}
{\pi_m}_*: C^\infty(S^*M)\to C^\infty(M,\otimes_S^mT^*M),\\
{\pi_m}_*: \rho^NL^\infty(\bbar{S^*M})\to \rho^{N-m}L^\infty(\bbar{M},\otimes_S^mT^*\bbar{M}),\\
{\pi_m}_*: \rho^N C^\infty({^0S}^*\bbar{M})\to \rho^{N-m}C^\infty(\bbar{M},\otimes_S^mT^*\bbar{M}).
\end{gathered}
\end{equation} 

If $\mc{S}$ denotes the symmetrization operator on tensors, we define the 
symmetrized derivative as \[ D:= \mc{S}\nabla\]
where $\nabla: C^\infty(M; \otimes_S^mT^*M)\to C^\infty(M;
T^*M\otimes(\otimes_S^mT^*M))$ is the Levi-Civita 
connection for $g$. It is easy to check that for $f\in C^\infty(M; \otimes_S^mT^*M)$
\begin{equation}\label{DvsX_+} 
\pi^*_{m+1}Df=X\pi_m^*f.
\end{equation}
Recall also that for $m=1$ and $f$ a smooth $1$-form, 
$2Df=\mc{L}_{f^\sharp}g$ where $\mc{L}$ is the Lie derivative and 
$f^\sharp$ is the dual vector field to $f$ through $g$. 
For a tensor $f=\rho^{-m}\til{f}$ with $\til{f}\in C^\infty(\bbar{M};\otimes^m_S
T^*\bbar{M})$, one has for $|\xi|_g=1$ 
\begin{equation}\label{estimnablav}
||\nabla \pi_m^*f(x,\xi)||_G\leq C_m(|\nabla^{\bar{g}}
\til{f}(x)|_{\bar{g}}+|\til{f}(x)|_{\bar{g}}).
\end{equation}
for some constant $C_m>0$ depending on $m$.
We call X-ray transform on symmetric tensors of rank $m$ the map   
\[
I_m: \rho^{-m+1} C^\infty(\bbar{M};\otimes^m_S T^*\bbar{M})\to
C^\infty(\pl_-S^*M\setminus \bbar{\Gamma_-} ), \quad I_m:= I \pi_m^*. 
\]
We want to study the kernel of $I_m$ and we follow the presentation of
\cite[Section 2]{PSU2} in that aim. 
The generator $X$ of the flow acts also on smooth sections of $\mc{Z}$ by  
using parallel transport along geodesics: if $v\in \Gamma(\mc{Z})$,
$v(\varphi_t(x,\xi))$ is a vector field along the geodesic 
$\pi(\varphi_t(x,\xi))$, and we set 
\[ 
Xv(x,\xi)=\nabla_{\pl_t}v(\varphi_t(x,\xi))|_{t=0}
\]
(here $\nabla$ is the Levi-Civita
connection pulled back to the bundle $\mc{Z}$ over $S^*M$).  
The adjoints of $\overset{v}{\nabla}$ and $\overset{h}{\nabla}$ acting on 
compactly supported functions are denoted  
$\overset{v}{\rm{div}}$ and $\overset{h}{\rm{div}}$. Finally let
$\mc{R}_{x,\xi}:\mc{Z}_{x,\xi}\to \mc{Z}_{x,\xi}$ be the operator defined
by  
$\mc{R}_{x,\xi}v=\mc{R}(v,\xi^\sharp)\xi^\sharp$ where $\mc{R}$ is the Riemann curvature tensor of $g$.
By Lemma 2.1 in \cite{PSU2}, we have the commutator formulas
\begin{equation}\label{commut}
[X,\overset{v}{\nabla}]=-\overset{h}{\nabla}, \quad
[X,\overset{h}{\nabla}]=\mc{R}\overset{v}{\nabla}, \quad \overset{h}{\rm
  div}\overset{v}{\nabla}-\overset{v}{\rm div}\overset{h}{\nabla}=-nX 
\end{equation}
where we view $\overset{v}{\nabla}u$ and $\overset{h}{\nabla}u$ as sections
of the bundle $\mc{Z}$ for $u\in C^\infty(S^*M)$. 
Let us start with a simple 
\begin{lemm}\label{reductionexact}
Let $f\in \rho^{-m+1} C^\infty(\bbar{M};\otimes^m_S T^*\bbar{M})$ for
$m\in\nn$. Then there exists a tensor $q\in
\rho^{-m+2}C^\infty(\bbar{M};\otimes^{m-1}_S T^*\bbar{M})$ such that
$\iota_{\pl_\rho}(f-Dq)=0$ near $\pl \bbar{M}$. 
\end{lemm}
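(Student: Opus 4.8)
The plan is to work in the collar neighborhood $\mc{C}_\eps\cong (0,\eps)_\rho\x\pl\bbar{M}$ where $g=\rho^{-2}(d\rho^2+h_\rho)$, and to build $q$ so as to kill, one derivative-component at a time, the $\pl_\rho$-contractions of $f$. Write $N:=\rho\pl_\rho$, which is a smooth nonvanishing section of ${}^{\mc L}T\bbar{M}$ of unit $g$-length (equivalently $d\rho/\rho$ is a unit covector, as noted after \eqref{ident}). A symmetric $m$-tensor $f$ decomposes near the boundary according to how many times it is contracted with $N$: writing $\iota_N$ for interior multiplication by $N$, the "$\pl_\rho$-free" part is the piece annihilated by $\iota_N$, and the full tensor is reconstructed from $\iota_N^j f$, $j=0,\dots,m$. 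The goal is to choose $q$ with $\iota_N(f-Dq)=0$, i.e. to match $\iota_N Dq$ to $\iota_N f$ in the collar; outside the collar $q$ can be extended arbitrarily (cut off smoothly), since the statement only asks for the identity near $\pl\bbar{M}$.

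First I would record the structure of $D$ in these coordinates. For a symmetric $(m-1)$-tensor $q$, $Dq=\mc S\nabla q$, and the key term is the $\rho$-derivative: modulo lower-order (undifferentiated) Christoffel contributions and terms not involving $\pl_\rho$-differentiation, $\iota_N Dq$ contains a summand of the form $c_m\, \rho\,\pl_\rho q + (\text{zeroth order in }q)$, where the zeroth-order part is an algebraic (bundle) operator built from the connection coefficients of $g$ in normal form — these are smooth in $\rho$ and explicitly computable from $h_\rho$ (the only singular Christoffel symbols are the $\rho^{-1}$ ones coming from the conformal factor, and those contribute the algebraic leading term). The upshot I expect is an equation of the schematic form
\[
\rho\,\pl_\rho q + A(\rho)q = \iota_N f \ -\ (\text{terms depending only on the }\iota_N\text{-free part of }q),
\]
with $A(\rho)$ a smooth family of bundle endomorphisms of $\otimes^{m-1}_S T^*\pl\bbar M$. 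Decomposing $q$ itself by its own $N$-contraction order and peeling off from the top, this becomes a triangular system of first-order linear ODEs in $\rho$ for the components of $q$, with a regular singular point at $\rho=0$.

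Next I would solve this ODE system in the class $\rho^{-m+2}C^\infty(\bbar M;\otimes^{m-1}_S T^*\bbar M)$. Since $f\in\rho^{-m+1}C^\infty$, the right-hand side $\iota_N f$ lies in $\rho^{-m+1}C^\infty$, i.e. the RHS of the component ODE for the $j$-fold $N$-contracted part of $q$ has the appropriate weight, and one checks the indicial roots of $\rho\,\pl_\rho + A(\rho)$ acting on the $j$-th component are of the form (integer determined by $m$ and $j$) so that one may integrate $\int_0^\rho$ and obtain a solution in exactly $\rho^{-m+2}C^\infty$, smooth up to $\rho=0$. Concretely, for each component one writes the solution by variation of constants, $q_j(\rho,y)=\rho^{-\mu_j}\int_0^\rho s^{\mu_j-1}(\cdots)\,ds$ with $\mu_j$ the relevant indicial weight, and Borel/Taylor-expansion arguments (as used repeatedly in the paper for exactly this kind of regular-singular integration, cf. the proof of Lemma~\ref{propRpm}) give smoothness of $q_j$ up to the boundary with the claimed leading power. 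One then cuts $q$ off to vanish outside $\mc{C}_\eps$; the contraction $\iota_N(f-Dq)$ vanishes identically on a possibly smaller collar, which is all that is required.

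The main obstacle, and the step I would be most careful about, is verifying that the indicial roots of the triangular ODE system are such that the inhomogeneous equation is solvable in the stated weighted smooth class — i.e. that no resonance forces a $\log\rho$ into $q$ or the wrong power of $\rho$. This is where the specific normalization $|d\rho|_{\rho^2 g}=1$ and the $-g\circ g+\rho^{-3}Q$ form of the curvature (equation \eqref{curvature}), hence the precise $\rho$-dependence of the Christoffel symbols, must be used; a brute-force computation of $\nabla$ in normal form, and of the leading symbol of $\iota_N\circ D$ on the $N$-filtration of symmetric tensors, will pin down $A(0)$ and its eigenvalues. I expect the eigenvalues to come out as small non-positive (or suitably offset) integers matched precisely so that integrating from $0$ lands in $\rho^{-m+2}C^\infty$ — consistent with the $m=1$ and $m=2$ special cases used later in the paper — but this bookkeeping is the crux. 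The rest (triangularity of the peeling, smoothness of the variation-of-constants integral, the cutoff) is routine.
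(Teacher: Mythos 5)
Your strategy is essentially the same as the paper's: decompose $f$ according to the number of normal ($d\rho$, equivalently $\rho\pl_\rho$) factors, and eliminate them one at a time by solving a triangular first--order ODE system in $\rho$, integrating from $\rho=0$. What you correctly flag as the crux — that the indicial weights of $\iota_{\pl_\rho}\circ D$ on the normal filtration are such that integration from $0$ lands in $\rho^{-m+2}C^\infty$ with no $\log\rho$ — you leave unverified, and this is precisely what the paper pins down. Concretely, the conformal change formula $\nabla^g_XY=\nabla^{\bar g}_XY-\rho^{-1}d\rho(X)Y-\rho^{-1}d\rho(Y)X+\rho^{-1}\bar g(X,Y)\pl_\rho$ gives $\nabla^g d\rho=\tfrac12\pl_\rho h_\rho+\rho^{-1}d\rho^2-\rho^{-1}h_\rho$, from which one reads off that for a tangential $j$-tensor $q_j$,
\[
D\bigl(\mc{S}(q_j\otimes d\rho^{m-1-j})\bigr)
=\mc{S}\bigl((\pl_\rho q_j+(m-1+j)\rho^{-1}q_j+Aq_j)\otimes d\rho^{m-j}\bigr)+T,
\]
with $A$ a smooth bundle endomorphism and $T$ having fewer $d\rho$-factors. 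Multiplying by $\rho^{m+j-1}$ turns the component ODE into $(\pl_\rho+A)(\rho^{m+j-1}q_j)=\rho^{m+j-1}r_j$; since $r_j\in\rho^{-m+1}C^\infty$ the right-hand side is $O(\rho^j)$, and the equation for $v_j=\rho^{m+j-1}q_j$ is a \emph{regular} (nonsingular) ODE, so $\int_0^\rho$ gives $v_j\in\rho^{j+1}C^\infty$, i.e. $q_j\in\rho^{-m+2}C^\infty$. No resonance can occur in this formulation, which settles the point you had to leave open. Aside from this (and the cosmetic use of $N=\rho\pl_\rho$ versus $\pl_\rho$, which differ by a power of $\rho$ and induce the same filtration), your argument is the paper's argument.
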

\begin{proof} We write $f$ as 
\[ f=\sum_{j=0}^m \mc{S}(f_j\otimes d\rho^{m-j})
\]
where $f_j$ are tangential symmetric tensors of rank $j$, i.e 
$\iota_{\pl_\rho}f_j=0$ near $\pl\bbar{M}$, and $f_j\in
\rho^{-m+1}C^{\infty}(\bbar{M};\otimes^j_ST^*\pl\bbar{M})$.   
First we recall (see \cite[Th. 1.159]{Be}) that for $g=\bar{g}/\rho^2$, 
\begin{equation}\label{Besse} 
\nabla^{g}_XY=\nabla^{\bar{g}}_XY
-\frac{d\rho}{\rho}(X)Y-\frac{d\rho}{\rho}(Y)X+\rho^{-1}\bar{g}(X,Y)\pl_\rho.\end{equation}   
Using this and the Koszul formula, we have 
\begin{equation}\label{nabladrho} 
 \nabla^gd\rho=\demi \pl_\rho
 h_\rho+\frac{d\rho^2}{\rho}-\frac{h_\rho}{\rho}. 
 \end{equation}
If $\alpha$ is a smooth tangential $1$-form in a collar $(0,\eps)_\rho\x
\pl\bbar{M}$ near $\pl\bbar{M}$ (that is  $\iota_{\pl \rho}\alpha=0$), we have
from the Koszul formula that  
$(\nabla^{g}\alpha)(\pl_\rho,\pl_\rho)=0$ near $\pl\bbar{M}$. 
If in addition $\alpha$ is smooth up to $\pl \bbar{M}$, we also get from \eqref{Besse} 
\begin{equation}\label{nablaalpha} 
\nabla^g\alpha =\frac{d\rho}{\rho}\otimes \alpha+\alpha\otimes \frac{d\rho}{\rho}+
\alpha', \quad \alpha' \in C^{\infty}(\bbar{M}; \otimes^2T^*\bbar{M}).
\end{equation}
We also have for $q_0$ a smooth function near $\pl \bbar{M}$
\[ D(q_0d\rho^{m-1})(\pl_\rho,\dots,\pl_\rho)=\pl_\rho q_0+(m-1)\rho^{-1}q_0.\]
To eliminate the $d\rho^m$ term in $f$, we have to solve 
$\pl_\rho(\rho^{m-1}q_0)=\rho^{m-1}f_0$ and, assuming that 
$f_0=\mc{O}(\rho^{-m+1})$, we thus set (for some $\chi\in
C_c^\infty([0,\eps))$ equal to $1$ near $0$) 
 \[ q_0(\rho,y)=\chi(\rho)\rho^{-m+1}\int_0^\rho s^{m-1}f_0(s,y)ds \in \rho^{-m+2}C^\infty(\bbar{M})\]
 so that $(f-D(q_0d\rho^{m-1}))(\pl_\rho,\dots,\pl_\rho)=0$ near
 $\pl\bbar{M}$. This means that
 $f-D(q_0d\rho^{m-1})=\sum_{j=0}^{m-1}\mc{S}(\til{f}_j\otimes d\rho^{m-1-j})$
 near $\pl\bbar{M}$ for some tensors $\til{f}_j\in 
 \rho^{-m+1}C^\infty(\bbar{M};\otimes^{j+1}_ST^*\pl\bbar{M})$. 
We can then proceed by induction. For a tensor $q_j\in C^\infty((0,\eps)\x
\pl\bbar{M};\otimes^{j}_ST^*\pl\bbar{M})$,
we have from \eqref{nabladrho}, \eqref{nablaalpha} 
\[ 
D(\mc{S}(q_j \otimes d\rho^{m-1-j}))=\mc{S}((\pl_\rho
q_j+(m-1+j)\rho^{-1}q_j+Aq_j)\otimes d\rho^{m-j}) + T,
\] 
where $A$ is a smooth section of ${\rm End}(\otimes_S^{j}T^*\pl \bbar{M})$
up to $\rho=0$, and $T$ is a section of 
$\mc{S}(d\rho^{m-j-1} \otimes (\otimes_S^{j+1}T^*\pl \bbar{M}))\oplus 
\mc{S}(d\rho^{m-j-2} \otimes(\otimes_S^{j+2}T^*\pl\bbar{M}))$.  
Consequently, for $r_j\in C^\infty((0,\eps)\x \pl\bbar{M};
\otimes_S^{j}T^*\pl \bbar{M}))$, the equation  
$D(\mc{S}(q_j \otimes d\rho^{m-1-j}))=\mc{S}(r_j\otimes d\rho^{m-j})$ modulo terms in 
$\mc{S}(d\rho^{m-1-j} \otimes(\otimes_S^{j+1}T^*\pl \bbar{M}))
\oplus \mc{S}(d\rho^{m-j-2} \otimes(\otimes_S^{j+2}T^*\pl\bbar{M}))$ 
becomes an ODE of the form
\[ 
(\pl_\rho+A)(\rho^{m+j-1}q_j)=\rho^{m+j-1}r_j.
\]
If $r_j\in \rho^{-m+1} C^\infty([0,\eps)\x \pl\bbar{M}; 
  \otimes_S^{j}T^*\pl \bbar{M}))$, there is $q_j\in
  \rho^{-m+2}C^\infty([0,\eps)\x\pl\bbar{M};\otimes_S^{j}T^*\pl \bbar{M}))$
    solving this ODE. Therefore, we can inductively construct $q\in 
    \rho^{-m+2}C^\infty(\bbar{M};\otimes_S^{m-1}T^*\bbar{M})$ which 
    satisfies $\iota_{\pl\rho}(f-Dq)=0$ near $\pl \bbar{M}$.  
\end{proof}

Next we will show the following 
\begin{prop}\label{vanishing}
Let $(M,g)$ be an asymptotically hyperbolic manifold and $m\geq 0$. Let
$f\in \rho^{1-m} C^\infty(\bbar{M};\otimes^m_S T^*\bbar{M})$ be a tensor
satisfying $I_mf=0$.  Then there exists a tensor $q\in \rho^{2-m}  
C^\infty(\bbar{M};\otimes_S^{m-1}T^*\bbar{M})$ such that, for all
$N\in\nn$, 
$f-Dq\in \rho^{N}C^\infty(\bbar{M}; \otimes_S^mT^*\bbar{M})$.
(In particular, in the case $m=0$, this states that $f\in
\rho^\infty C^\infty(\bbar{M})$.)  
\end{prop}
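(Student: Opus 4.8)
The plan is to peel off a single potential term to make $f$ tangential at $\pl\bbar{M}$, and then to show the remaining tangential tensor vanishes to infinite order by reading off its Taylor coefficients at $\pl\bbar{M}$ one at a time from the X-ray transform along the short geodesics of Lemma~\ref{smallgeo}. For $m\geq 1$ I would begin by applying Lemma~\ref{reductionexact} to produce $q\in\rho^{2-m}C^\infty(\bbar{M};\otimes_S^{m-1}T^*\bbar{M})$ with $\iota_{\pl_\rho}(f-Dq)=0$ in a collar $(0,\eps_0)_\rho\x\pl\bbar{M}$, noting that $f-Dq$ still lies in $\rho^{1-m}C^\infty(\bbar{M};\otimes_S^mT^*\bbar{M})$. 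The crucial point is that this does not spoil the hypothesis $I_mf=0$: since $\pi_{m-1}^*q\in\rho C^\infty(\bbar{S^*M})$ vanishes on $\pl\bbar{S^*M}$, \eqref{DvsX_+} gives $\pi_m^*Dq=X\pi_{m-1}^*q$, and the computation in the proof of Lemma~\ref{Xrayscat} (which holds on $\pl_-S^*M\setminus\bbar{\Gamma_-}$ in the hyperbolically trapped case as well, via \eqref{expRtrap}) yields $I_mDq=0$. Hence $I_m(f-Dq)=I_mf=0$ on $\pl_-S^*M\setminus\bbar{\Gamma_-}$, so I may replace $f$ by $f-Dq$ and assume $f$ is tangential near $\pl\bbar{M}$. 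When $m=0$ this step is vacuous and $q=0$.

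\textbf{Peeling off Taylor coefficients via short geodesics.} I would then prove by induction on $k\geq 1$ that $f\in\rho^{k-m}C^\infty(\bbar{M};\otimes_S^mT^*\bbar{M})$, the case $k=1$ being the hypothesis. For the inductive step write $f=\rho^{k-m}F$ with $F\in C^\infty(\bbar{M};\otimes_S^mT^*\bbar{M})$ tangential near $\pl\bbar{M}$. Fix $y_0\in\pl\bbar{M}$ and a covector $\eta_0$ on $\pl\bbar{M}$ with $R:=|\eta_0|_{h_0}$ large, put $\de=R^{-1}$ and $z=(y_0,\eta_0)\in\pl_-S^*M$; by Lemma~\ref{smallgeo} we have $z\notin\bbar{\Gamma_-}$, so $I_mf(z)=0$. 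Using $I_mf=(R_+(0)\pi_m^*f)|_{\pl_-S^*M}$ together with the integral formula \eqref{R_+viabarphi} (resp. \eqref{expRtrap} in the trapped case), $I_mf(z)=\int_0^{\tau_+(z)}\rho^{-1}(\bbar{\varphi}_\tau(z))\,\pi_m^*f(\bbar{\varphi}_\tau(z))\,d\tau$. Now I substitute the asymptotics of Lemma~\ref{smallgeo}: in the chart $U_2$ with coordinates $(\theta,y,\eta)$, tangentiality of $F$ forces $\pi_m^*f=\rho^{k+m}P$, where $P$ is, in these coordinates, a polynomial homogeneous of degree $m$ in $\eta$ with coefficients smooth up to $\rho=0$, while along the short geodesic $\rho=\de\sin\alpha_z(\tau)+\mc{O}(\de^2)$, $|\eta|_{h_\rho}=\de^{-1}(1+\mc{O}(\de))$, $\pl_\tau\alpha_z=R+\mc{O}(1)$, and $y$, $\de\eta$ depend smoothly on $\de$; after the substitution $\tau\mapsto\alpha=\alpha_z(\tau)\in[0,\pi]$ this gives
\[
I_mf(z)=\de^{k}\,\Big(\int_0^\pi\sin^{k+m-1}\alpha\,d\alpha\Big)\,F(y_0)\big(\otimes^m\om_0^\sharp\big)+\mc{O}(\de^{k+1}),
\]
where $\om_0=R^{-1}\eta_0$ is $h_0$-unit, $\om_0^\sharp$ its $h_0$-dual, and $F(y_0):=F|_{\pl\bbar{M}}(y_0)$. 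Since the constant $\int_0^\pi\sin^{k+m-1}\alpha\,d\alpha$ is positive and $I_mf(z)=0$, dividing by $\de^k$ and letting $R\to\infty$ gives $F(y_0)(\otimes^m\om_0^\sharp)=0$ for every $y_0\in\pl\bbar{M}$ and every $h_0$-unit $\om_0$; a symmetric $m$-tensor whose associated polynomial vanishes on the unit cosphere is zero, so $F|_{\pl\bbar{M}}=0$, i.e. $f\in\rho^{k+1-m}C^\infty$ and $f$ is still tangential. The induction then yields $f\in\rho^NC^\infty$ for all $N$, hence $f\in\rho^\infty C^\infty$, which together with the first reduction proves the proposition.

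\textbf{The main obstacle} is the asymptotic expansion displayed above: one must keep the error terms uniform all the way to the endpoints $\alpha=0,\pi$ of the short geodesic, i.e. precisely where the geodesic escapes to the conformal infinity and crude pointwise bounds on $\rho^{-1}\pi_m^*f$ degenerate. I expect to handle this by working in the chart $U_2$, where tangentiality of $F$ makes $\pi_m^*f$ homogeneous in $\eta$ with coefficients smooth up to $\rho=0$, so that $\rho^{-1}\pi_m^*f$ reduces, to leading order in $\de$, to the integrand of the geodesic X-ray transform of the model hyperbolic metric $\rho^{-2}(d\rho^2+h_0(y_0))$ with coefficients frozen at $y_0$; the smooth dependence on $\de$ up to $\de=0$ supplied by Lemma~\ref{smallgeo} (via \cite[Theorem 7.4]{CL}) then makes the remainder genuinely $\mc{O}(\de^{k+1})$, while the model integral $\int_0^\pi\sin^{k+m-1}\alpha\,d\alpha$ converges because its integrand is bounded when $k+m-1\geq 0$. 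A subsidiary point is verifying that everything goes through uniformly in the two admissible cases, using Lemma~\ref{propRpm} in the non-trapping case and Proposition~\ref{WFanalysis} with \eqref{expRtrap} in the hyperbolically trapped case, together with the fact, used above, that $I_m$ annihilates $Dq$ for potentials $q$ of the stated decay.
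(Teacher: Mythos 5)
Your proposal is correct and follows essentially the same route as the paper's own proof: Lemma~\ref{reductionexact} (plus $I_m(Dq)=0$ via Lemma~\ref{Xrayscat}) to reduce to tangential tensors, then an induction on the vanishing order $k$ in which the $k$-th Taylor coefficient $F|_{\pl\bbar{M}}(y_0)(\otimes^m\om_0^\sharp)$ is extracted from the $\mc{O}(\de^k)$ term of $I_mf$ along the short geodesics of Lemma~\ref{smallgeo}, producing the positive model constant $\int_0^\pi\sin^{k+m-1}\alpha\,d\alpha$. The only presentational differences are that the paper parametrizes by $\theta$ and uses $I_mf(z)=\int f(\gamma(t))(\otimes^m\dot\gamma(t))\,dt$ directly, while you reparametrize by $\alpha_z(\tau)$ and use $I_m f=(R_+(0)\pi_m^*f)|_{\pl_-S^*M}$ with \eqref{R_+viabarphi}/\eqref{expRtrap}; these are the same computation.
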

\begin{proof} Begin with the case $m=0$.  We will show that if 
$f=\rho^{k} \til{f}$ with $\til{f}\in   
C^\infty(\bbar{M})$ and $k\geq 1$, for $y_0\in \pl M$ fixed we will have
$\til{f}(0,y_0)=0$ if $I_0f=0$.  Since this holds for each $y_0$, we deduce 
that $f\in \rho^{k+1}C^\infty(\bbar{M})$ and by induction it vanishes to
all orders at $\pl \bbar{M}$. 

Let $R>R_0$ be large and set $\de=1/R$ as in the proof of Lemma
\ref{smallgeo}, so that the geodesics in $S^*M$
with endpoint in the past given by $(y_0, R\omega_0)$ for  $\omega_0\in  
T_{y_0}^*\pl M$ fixed with length  
$|\omega_0|_{h_0}=1$ are contained in a region $\rho\leq C\de$ 
where we can use the coordinates $(\theta,y,\eta)$. 
The proof of Lemma~\ref{smallgeo} shows that $\rho = \de \sin\theta
+\mc{O}(\de^2)$, 
$y=y_0+\de u=y_0+\mc{O}(\de)$, and $d\theta/dt =
(\sin\theta)(1+\mc{O}(\de)$ when $\theta$ is viewed as a function of $t$.      
Now $\theta:\rr \to (0,\pi)$ is a diffeomorphism for $\de$ small, so we
can change variable in the integral defining $I_0f(y_0,R\omega_0)$:  
\[ 
\begin{split}
I_0f(y_0,R\omega_0)&=\int_{-\infty}^\infty
(\rho^k\til{f})(\pi(\varphi_t(z)))dt=\int_{0}^\pi (\de\sin
  \theta)^k\til{f}(\gamma(\theta))\frac{d\theta}{\sin\theta}+\mc{O}(\de^{k+1})\\ 
&=\de^k\til{f}(0,y_0)\int_{0}^\pi (\sin
  \theta)^{k-1}d\theta+\mc{O}(\de^{k+1})
\end{split}
\]
where $\gamma(\theta)=(\rho(\theta),y(\theta))$ denotes the $\theta$
parametrization of the geodesic starting at $(y_0,R\om_0)$.
Thus if $I_0f=0$, we get $\til{f}(0,y_0)=0$.   

Now we show the case $m\geq 1$ similarly. We use 
Lemma~\ref{reductionexact} and since $I_m(Dq)=0$, we are reduced to
analyze the case where $\iota_{\pl\rho}f=0$ near $\pl\bbar{M}$.   
We can assume that the tensor $f$ can be written in the decomposition 
$[0,\eps)_\rho\x \pl \bbar{M}$ near a point $y_0\in \pl \bbar{M}$ as 
\[
f(\rho,y)=\rho^{k-m}\til{f}(\rho,y)=\rho^{k-m}\sum_{J}
\til{f}_J(\rho,y)dy^{J}  
\]
for some $\til{f}_J\in C^\infty(\bbar{M})$ and some $k\geq1$,
where $dy^J:=dy^{j_1}\cdots dy^{j_m}$ if $J=(j_1,\dots,j_m)$ with
$1\leq j_i\leq n$.  Since $X=(\sin\theta)Y$, \eqref{forminXU2} shows that   
\[
\frac{dy^j}{dt}=(\sin\theta)^2
\sum_i\frac{h^{ij}_{\rho}\eta_i}{|\eta|_{h_\rho}^2} 
=\de(\sin\theta)^2(\om_0^\sharp)^j+O(\de^2)
\]
where $\om_0^\sharp$ denotes the dual vector using $h_0(y_0)$.  
As before, for each 
$\omega_0\in T_{y_0}^*\pl \bbar{M}$, we have 
\[
\begin{split} 
I_mf(y_0,R\omega_0)=& 
\int_{-\infty}^\infty
(\rho^{k-m}\til{f})(\gamma(t))(\otimes^m\dot{y}(t))dt\\     
=&\de^k\til{f}(0,y_0)(\otimes^m\om_0^\sharp)\int_{0}^\pi (\sin
\theta)^{k+m-1}d\theta +\mc{O}(\de^{k+1}).  
\end{split}
\]
Thus 
$\til{f}(0,y_0)=0$ if $I_m(f)=0$, which shows by induction on 
$k$ that $f$ vanishes to all orders at $\pl \bbar{M}$.  
\end{proof}
Now we prove Theorem~\ref{injXray} using Proposition~\ref{vanishing} and 
energy identities.

\noindent
{\it Proof of Theorem~\ref{injXray}.}
First, we use Proposition \ref{vanishing}. In the case $m=0$,
  $f\in \rho^NC^\infty(\bbar{M})$ for all $N\in\nn$; in the case $m\geq 1$,
we have  
$f=Dq+f_1$ with $q\in
\rho^{2-m}C^\infty(\bbar{M};\otimes_S^{m-1}T^*\bbar{M})$ and $f_1\in 
  \rho^NC^\infty(\bbar{M};\otimes_S^mT^*\bbar{M})$ for all $N\in\nn$.  
Then $\pi_m^*f_1\in \rho^\infty C^\infty({^0S}^*\bbar{M})$.
 Note that $I_m(Dq)=IX\pi_{m-1}^*q=0$ by Lemma
 \ref{Xrayscat}, thus $I_m(f_1)=0$.  We can thus replace $f$ by $f_1$ 
 and to avoid too many notations, we will assume $f=f_1$ for the rest of
 the proof.  

The Pestov identity in the strictly convex region   
$W_\eps:=\{z\in S^*M; \rho(z)\geq \eps\}$ is proved for instance in 
\cite[Proposition 2]{PSU2} but we also need to take into account boundary terms. 
The manifold $W_\eps$ has boundary denoted by $\pl W_\eps$ and the natural 
volume form on it $\mu_\eps:=\iota_{\eps}^*\iota_X\mu$ if $\iota_\eps:
W_\eps\to S^*M$ is the inclusion map.  
We write $L^2$ for the space $L^2(W_\eps,|\mu|)$, then  using
\eqref{commut}, we get for each $u\in C^\infty(S^*M)$ 
\[ \begin{split}
||\overset{v}{\nabla}Xu||_{L^2}^2-||X\overset{v}{\nabla}u||_{L^2}^2=& 
\cjg \overset{v}{\rm div}\overset{v}{\nabla}Xu,Xu\cjd_{L^2}+\cjg
X^2\overset{v}{\nabla}u,\overset{v}{\nabla}u\cjd_{L^2}- 
\int_{\pl W_\eps}\cjg\overset{v}{\nabla}u,X\overset{v}{\nabla}u\cjd \mu_\eps\\
 =& \cjg (\overset{v}{\rm div}X^2\overset{v}{\nabla}-X\overset{v}{\rm
   div}\overset{v}{\nabla}X)u,u\cjd_{L^2} 
 -\int_{\pl W_\eps}\cjg\overset{v}{\nabla}u,(X\overset{v}{\nabla}-\overset{v}{\nabla} X)u\cjd\mu_\eps
\\
 =& \cjg (\overset{h}{\rm div}\overset{v}{\nabla}X-\overset{v}{\rm
   div}X\overset{h}{\nabla})u,u\cjd_{L^2} 
 +\int_{\pl W_\eps}\cjg\overset{v}{\nabla}u,\overset{h}{\nabla}u\cjd\mu_\eps\\
 =& \cjg (\overset{h}{\rm div}\overset{v}{\nabla}X-\overset{v}{\rm
   div}\overset{h}{\nabla}X-\overset{v}{\rm
   div}\mc{R}\overset{v}{\nabla})u,u\cjd_{L^2}+\int_{\pl
   W_\eps}\cjg\overset{v}{\nabla}u,\overset{h}{\nabla}u\cjd\mu_\eps\\ 
 =& -\cjg (nX^2+\overset{v}{\rm
   div}\mc{R}\overset{v}{\nabla})u,u\cjd_{L^2}+\int_{\pl
   W_\eps}\cjg\overset{v}{\nabla}u,\overset{h}{\nabla}u\cjd\mu_\eps\\ 
||\overset{v}{\nabla}Xu||_{L^2}^2-||X\overset{v}{\nabla}u||_{L^2}^2=&
n||Xu||^2_{L^2}-\cjg
\mc{R}\overset{v}{\nabla}u,\overset{v}{\nabla}u\cjd_{L^2}+\int_{\pl
  W_\eps}(\cjg\overset{v}{\nabla}u,\overset{h}{\nabla}u\cjd-nuXu)\mu_\eps 
%||\overset{v}{\nabla}Xu||_{L^2}^2-||X\overset{v}{\nabla}u||_{L^2}^2 =&
%n||Xu||^2_{L^2}-\cjg
%R\overset{v}{\nabla}u,\overset{v}{\nabla}u\cjd_{L^2}+\int_{\pl W_\eps}\cjg
%u,\overset{h}{\rm div}\overset{v}{\nabla}u\cjd\mu_\eps. 
  \end{split}\]
By Lemma \ref{kerI} and Lemma \ref{kerIbis} we have
$R_+(0)\pi_m^*f=R_-(0)\pi_m^*f$, which we denote by
$-u\in \rho C^\infty(\bbar{S^*M})$, and which satisfies $Xu=\pi_m^*f$. 
By Lemma \ref{propofu} we have estimates in $\{\rho\leq \eps, \pm \xi_0\leq
0\}$ for $R_\pm(0)\pi_m^*f$ and thus some estimates on $u$ in $\{\rho\leq
\eps\}$: this implies in particular that $u$ can be extended as a smooth
function on ${^0S}^*\bbar{M}$ which vanishes to all orders at the boundary 
$\{\rho=0\}$, i.e. $u\in \rho^\infty C^\infty({^0S}^*\bbar{M})$.  
Using also \eqref{estimnablav} we deduce that 
$u,||\overset{v}{\nabla}u||_G,||\overset{h}{\nabla}u||_G$ are in $\rho^{N}L^\infty \subset L^2(S^*M)$ 
for all $N>0$, and the following functions are also in these spaces
\[
||\overset{v}{\nabla}Xu||_{L^2}=  ||\overset{v}{\nabla}f||_G,\quad
||X\overset{v}{\nabla}u||_G\leq ||\overset{v}{\nabla}f||_G
+||\overset{h}{\nabla}u||_G. 
\]
A consequence of this is that we can let $\eps\to 0$ to 
obtain the Pestov identity
\[
||\overset{v}{\nabla}Xu||_{L^2(S^*M)}^2-||X\overset{v}{\nabla}u||_{L^2(S^*M)}^2
= n||Xu||^2_{L^2(S^*M)}-\cjg
\mc{R}\overset{v}{\nabla}u,\overset{v}{\nabla}u\cjd_{L^2(S^*M)}.
\]
If $m=0$ or $m=1$, we have $||\overset{v}{\nabla}Xu||^2=m(n-1+m)||Xu||^2$ and thus 
\begin{equation}\label{conseqpestov}
0=||X\overset{v}{\nabla}u||_{L^2(S^*M)}^2 + n(1-m)||Xu||^2_{L^2(S^*M)}-\cjg
\mc{R}\overset{v}{\nabla}u,\overset{v}{\nabla}u\cjd_{L^2(S^*M)}.
\end{equation} 

Let $Z\in C^\infty(S^*M;\mc{Z})\cap \rho^NL^\infty$ so that 
$|XZ|_G\in \rho^NL^\infty$ and $|X^2Z|_G\in \rho^NL^\infty$, and define the quadratic form
\[
A(Z)=||XZ||^2_{L^2(S^*M)}-\cjg \mc{R}Z,Z\cjd_{L^2(S^*M)}.
\]
We first claim that $A(Z)\geq 0$ for all such $Z$. Indeed, let 
$\chi_\eps(x)=\chi(\rho(x)/\eps)\in C_c^\infty(S^*M)$ with $\chi\in C^\infty([0,\infty))$
equal to $0$ in $\rho \in [0,1/2]$ and $1$ near $\rho=1$; then 
$\supp X(\chi_\eps)\subset \{\rho\in [\eps/2,\eps]\}$ and
$|X(\chi_\eps)|\leq C\eps^{-1}$. 
Write $Z_\eps=Z\chi_\eps$, which is compactly supported.  Since 
\[ 
A(Z_\eps)=\int_{S^*M}\chi_\eps (|XZ|_g^2-\cjg \mc{R}Z,Z\cjd_g) |\mu|+
\int_{S^*M} (|X(\chi_\eps)Z|^2_g+X(\chi_\eps^2)\cjg Z,XZ\cjd_g) |\mu| 
\]
we deduce that 
\[ 
\lim_{\eps\to 0}A(Z_\eps)=A(Z). 
\]
Now, we can use the fact that $g$ has no conjugate points, 
thus for each geodesic $\gamma_{x,\xi}$ with initial condition $(x,\xi)\in S^*M$, 
the index form satisfies for each smooth vector field $Z$ in $\mc{Z}$ along 
$\gamma_{x,\xi}$ with $Z(0)=Z(T)=0$
\[
\int_{0}^T\big(|\nabla_{\pl_t}Z(t)|^2_g-\cjg
\mc{R}_{x(t)}(Z(t),v(t))v(t),Z(t)\cjd_g\big) dt\geq 0,
\] 
where $x(t)=\pi(\varphi_t(x,\xi))$ and $v(t)=\dot{x}(t)$.
Decomposing $A(Z_\eps)$
along geodesics using Lemma \ref{santalo} (or \eqref{santalotrap} in case with trapping)
\[ 
A(Z_\eps)=\int_{\pl_-S^*M\setminus \bbar{\Gamma_-}}\int_0^\infty
\big(|\nabla_{\pl_t}Z_\eps(t)|_g^2-\cjg 
\mc{R}_{x(t)}(Z_\eps(t),v(t))v(t),Z_\eps(t)\cjd_g\big)dt|\mu_\pl| \geq 0.  
\]
We conclude that $A(Z)\geq 0$ as announced.

We next claim that if $Z$ is as in the previous paragraph and $A(Z)=0$,
then $Z=0$.  
When restricted to a non-trapped geodesic $\{\varphi_t(x,\xi);t\in\rr\}$,
$Z$ is viewed as a vector field $Z(t)$ in $\mc{Z}_{\varphi_t(x,\xi)}$ along  
$\gamma_{x,\xi}:=\pi(\{\varphi_t(x,\xi);t\in\rr\})$, and it satisfies   
$|Z|=\mc{O}(e^{-N |t|})$ and 
$|\nabla_{\pl_t}Z|=|XZ|=\mc{O}(e^{-N |t|})$ 
as $t\to\pm \infty$ by \eqref{uniformrho}.
We claim that if $A(Z)=0$, then for each geodesic 
$\gamma_{x,\xi}:=\pi(\{\varphi_t(x,\xi);t\in\rr\})$, 
we have 
$Z''(t)+\mc{R}_{x(t)}(Z(t),v(t))v(t)=0$, i.e. $Z$ is a Jacobi field which
in turn vanishes faster than any exponential as $t\to \pm \infty$. Indeed,
if $A(Z)=0$, since $A(Z+sY)\geq 0$ for all $s\in \rr$ and all  
$Y\in C_c^\infty(S^*M;\mc{Z})$, we have by differentiating at 
$s=0$ that  
\[ 
\int_{S^*M} (\cjg XZ,XY\cjd_g-\cjg\mc{R}Z,Y\cjd_g) |\mu|=0
\] 
and thus by integrating by parts, $X^2Z+\mc{R}Z=0$. Restricting this
identity to the geodesic $\gamma_{x,\xi}$ gives that $Z(t)$ is a Jacobi
field vanishing faster than any exponential at $\pm \infty$.  Any Jacobi
field vanishing faster than $e^{-|t|}$ as 
$t\to \infty$ or $t\to -\infty$ must vanish identically (see 
Lemma~\ref{lower}), which shows that $Z=0$ on the set of non-trapped
geodesics and thus everywhere by density (recall ${\rm
  Vol}(\Gamma_\pm)=0$). 

Now, using this with $Z=\overset{v}{\nabla}u$ in \eqref{conseqpestov}, we
obtain that $\pi_0^*f=Xu=0$ when $m=0$, 
showing (i). When $m=1$ we get $\overset{v}{\nabla}u=0$,
which means that $u=\pi_0^*q$  
for $q=c_n{\pi_0}_*u$ with $c_n>0$ depending only on $n$.  By \eqref{pushm}
we deduce that $q\in \rho^\infty C^\infty(\bbar{M})$, which shows (ii). 

To conclude, we consider the case (iii) of a symmetric tensor $f$ in $\ker
I_m$. We assume that the curvature is non-positive, so that the flow is
$1$-controlled in the sense of \cite[Section 4]{PSU2}. We use the proof of
\cite[Theorem 10.1]{PSU2}, which applies almost verbatim in our case. If
$u\in \rho^\infty C^\infty({^0S}^*\bbar{M})$ satisfies  
$Xu=\pi_m^*f$ (just as above for $m=0,1$), we decompose $u$ into eigenmodes
of the vertical Laplacian $\overset{v}{\Delta}:=\overset{v}{{\rm
    div}}\overset{v}{\nabla}$,   
$u=\sum_{k=0}^\infty u_k$. We recall from \cite{PSU2} that these eigenmodes
generate some bundles $\Omega_k$ over $S^*M$, and the operator $X$ maps  
$C^\infty(S^*M;\Omega_k)\to C^\infty(S^*M;\Omega_{k-1}\oplus
\Omega_{k+1})$. Let $\til{u}=u-\sum_{k\leq m-1}u$, then the same arguments
as in the proof of Theorem 10.1 of \cite{PSU2} show that
$X(\overset{v}{\Delta}\til{u}-(m(m+1+n)+n)\til{u})=0$. But  
$\overset{v}{\Delta}\til{u}$ and $\til{u}$ decay to all orders at $\pl 
\bbar{S^*M}$, thus $\overset{v}{\Delta}\til{u}-(m(m+1+n)+n)\til{u}=0$ and
thus $u_k=0$ for all $k\not=m+1$. Then $X\til{u}=Xu_{m+1}=X_-u_{m+1}$ with   
$X_\pm: C^\infty(S^*M;\Omega_{k}) \to C^\infty(S^*M;\Omega_{k\pm 1})$ being
the differential operators so that $Xw=X_+w+X_-w$ for $w\in
C^\infty(S^*M;\Omega_{k})$. In particular we obtain $X_+u_{m+1}=0$ and that
is equivalent to $u_{m+1}$ being the lift by $\pi_{m+1}^*$ of a trace-free
conformal Killing tensor. But since $u_{m+1}$ decays to all orders at  
$\pl\bbar{S^*M}$, the conformal  
Killing tensor vanishes at the boundary $\pl \bbar{M}$ and a standard
Weitzenbock formula shows that $u_{m+1}=0$ (see for example the proof in
\cite[Proposition 6.6]{HMS}).  
By using \eqref{pushm}, this implies that $u=\sum_{k\leq m-1}u_k$ is of the
form $\pi_{m-1}^*q$ for some $q\in \rho^\infty C^\infty(\bbar{M};
\otimes_S^{m-1}T^*\bbar{M})$, which satisfies $Xu=\pi_m^*f$. By
\eqref{DvsX_+}, we have $Dq=f$.  
\stopthm
\begin{rem} The proof above actually shows injectivity of $I_m$ on spaces 
with weaker regularity and decay. For example for $I_0$, it is only
 required that the quantities in the Pestov identity are finite, the
 boundary term at $\rho=\eps$ tends to $0$ and $A(Z)\geq 0$. For
 Cartan-Hadamard manifolds (complete simply connected manifolds with
 non-positive curvature), sharp conditions on the decay are given in 
 \cite{Le} in dimension $2$ and in \cite{LRS} in dimensions greater than 
 $2$.    
\end{rem}

\section{Renormalized length of geodesics} 

\subsection{Renormalized length}
First, we define the renormalized length.
\begin{lemm}\label{renormdist}
Let $(M,g)$ be an asymptotically hyperbolic manifold and $\rho$ a
geodesic defining function. For each $z\in 
\pl_-S^*M\setminus \bbar{\Gamma_-}$, the function  
$\la\mapsto I_0(\rho^{\la})(z)$ has a meromorphic extension from ${\rm
  Re}(\la)>0$ to 
${\rm Re}(\la)>-1$, with only a simple pole at $\la=0$ and residue 
\[{\rm Res}_{\la=0}I_0(\rho^\la)=2.\]
The regular value $L_g(z):=(I_0(\rho^\la)(z)-2/\la)|_{\la=0}$ for each
$z\in \pl_- S^*M\setminus \bbar{\Gamma_-}$ is also given by  
\begin{equation}\label{Lg}
L_g(z)=\lim_{\eps\to 0}\Big(\ell_g(\gamma_z\cap
\{\rho>\eps\})+2\log\eps\Big)
\end{equation}
where $\gamma_z$ is the geodesic $\pi(\bbar{\varphi}_\tau(z))_{\tau\in(0,\tau_+(z))}$. \end{lemm}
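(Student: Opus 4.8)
The plan is to rewrite $I_0(\rho^\lambda)(z)$ as an integral of $\rho^\lambda$ along $\gamma_z$ in unit-speed parametrization and to read off its pole structure at $\lambda=0$ from the asymptotics of $\rho$ at the two ends of $\gamma_z$. Fix $z\in\pl_-S^*M\setminus\bbar{\Gamma_-}$ and let $t\mapsto\gamma_z(t)$ be the unit-speed parametrization of the geodesic, so that $\rho(\gamma_z(t))\to0$ as $t\to\pm\infty$; concretely $\gamma_z(t)=\pi(\varphi_t(z_0))$ (after a time shift) for any interior point $z_0=\bbar\varphi_{\tau_0}(z)$ on the orbit, and by \eqref{reparam} the arc length between $\bbar\varphi_{\tau}(z)$ and $\bbar\varphi_{\tau'}(z)$ is $\int_{\tau}^{\tau'}\rho(\bbar\varphi_s(z))^{-1}\,ds$, which sweeps out all of $\rr$ as $\tau,\tau'$ sweep $(0,\tau_+(z))$. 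Hence, for $\mathrm{Re}\lambda>0$,
\[
I_0(\rho^\lambda)(z)=\int_{-\infty}^\infty \rho(\gamma_z(t))^\lambda\,dt,
\]
the integral converging absolutely by the estimate below (and coinciding with the X-ray transform of the tensor $\rho^\lambda$ when $\lambda\in\nn$).

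The first and main step is to establish that there are constants $C_\pm(z)>0$ with $\rho(\gamma_z(t))=C_+(z)e^{-t}(1+O(e^{-2t}))$ as $t\to+\infty$ and $\rho(\gamma_z(t))=C_-(z)e^{t}(1+O(e^{2t}))$ as $t\to-\infty$. Indeed, by Lemma~\ref{convtobdry} the orbit eventually enters and remains in $\{\rho<\eps,\ \xi_0<0\}$, so from some time on the estimate \eqref{uniformrho} gives $0\le\dot\rho/\rho+1\le Ce^{-2t}$; therefore $t\mapsto\log\rho(\gamma_z(t))+t$ is nondecreasing and bounded, hence has a finite limit $\ell_+(z)$, and $\log\rho(\gamma_z(t))+t=\ell_+(z)-O(e^{-2t})$. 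Put $C_+(z)=e^{\ell_+(z)}$; the argument at $t\to-\infty$ is identical with time reversed (near the backward end $\bbar\xi_0\to+1$, so $\xi_0>0$ and the backward-time version of \eqref{uniformrho} applies), yielding $C_-(z)$. Alternatively one may invoke Lemma~\ref{smoothpb} at an interior point of $\gamma_z$, which gives the stronger statement $\rho(\gamma_z(\pm t))=e^{-t}\Psi_\pm(e^{-t})$ with $\Psi_\pm$ smooth near $0$ and $\Psi_\pm(0)=C_\pm(z)>0$.

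Now split $\int_{-\infty}^\infty=\int_{-\infty}^{-T}+\int_{-T}^{T}+\int_{T}^{\infty}$ for a fixed large $T$. The middle integral is entire in $\lambda$ and equals $2T$ at $\lambda=0$, since $\rho\circ\gamma_z$ is continuous and positive on $[-T,T]$. For the forward tail, insert the expansion above:
\[
\int_T^\infty\rho(\gamma_z(t))^\lambda\,dt
=C_+(z)^\lambda\Big[\frac{e^{-\lambda T}}{\lambda}
+\int_T^\infty e^{-\lambda t}\big((1+O(e^{-2t}))^\lambda-1\big)\,dt\Big].
\]
The remaining integrand is $O(e^{-2t})$ locally uniformly in $\lambda$, so that integral is holomorphic on $\mathrm{Re}\lambda>-2$ and vanishes at $\lambda=0$, while $C_+(z)^\lambda e^{-\lambda T}/\lambda$ is meromorphic with a single simple pole at $\lambda=0$ of residue $1$ and Laurent expansion $\lambda^{-1}+(\log C_+(z)-T)+O(\lambda)$. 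The backward tail behaves identically and contributes $\lambda^{-1}+(\log C_-(z)-T)+O(\lambda)$. Summing, $\lambda\mapsto I_0(\rho^\lambda)(z)$ extends meromorphically to $\mathrm{Re}\lambda>-1$ with its only pole a simple pole at $\lambda=0$ of residue $2$, and $I_0(\rho^\lambda)(z)=2/\lambda+\log(C_+(z)C_-(z))+O(\lambda)$, so $L_g(z)=(I_0(\rho^\lambda)(z)-2/\lambda)|_{\lambda=0}=\log(C_+(z)C_-(z))$.

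It remains to identify this with the right-hand side of \eqref{Lg}. For $\eps$ small enough (less than the constant of Lemma~\ref{convtobdry} and than $\sup_t\rho(\gamma_z(t))$), the set $\{t:\rho(\gamma_z(t))>\eps\}$ is a bounded open interval $(t_-(\eps),t_+(\eps))$: bounded because $\rho\circ\gamma_z\to0$, and connected because $\{\rho\ge\eps\}$ is strictly convex (a geodesic meets it in a connected set) and because, by Lemma~\ref{convtobdry}, once $\rho<\eps$ with $\dot\rho\le0$ the function $\rho$ stays strictly decreasing, precluding re-entry at either end. Hence $\ell_g(\gamma_z\cap\{\rho>\eps\})=t_+(\eps)-t_-(\eps)$, and from $\rho(\gamma_z(t_\pm(\eps)))=\eps$ together with the expansion above, $t_+(\eps)=-\log\eps+\log C_+(z)+o(1)$ and $t_-(\eps)=\log\eps-\log C_-(z)+o(1)$ as $\eps\to0$; therefore $\ell_g(\gamma_z\cap\{\rho>\eps\})+2\log\eps\to\log(C_+(z)C_-(z))=L_g(z)$. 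The one delicate point is the second step — obtaining the asymptotics of $\rho$ along $\gamma_z$ with a summable relative error ($O(e^{-2|t|})$ rather than merely $o(1)$), since it is this quantitative decay that yields holomorphy all the way to $\mathrm{Re}\lambda>-1$; but this is exactly the content of \eqref{uniformrho}, already established, so the remaining steps are routine.
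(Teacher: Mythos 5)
Your proof is correct, and it reaches the conclusion by a somewhat different route than the paper, though both rest on the same underlying fact: $\rho$ vanishes to exactly first order at the two endpoints of the geodesic, i.e. $\dot\rho/\rho\to\mp 1$ as $t\to\pm\infty$. The paper first passes to the rescaled variable $\tau$ and writes $I_0(\rho^\la)(z)=\int_0^{\tau_+(z)}\rho(\bbar{\varphi}_\tau(z))^{\la-1}\,d\tau$; near each endpoint one has $\rho(\bbar{\varphi}_\tau)=\tau(1+\tau f)$ with $f$ smooth, and a single integration by parts in $\tau$ produces the $1/\la$ pole plus a function holomorphic on $\Real\la>-1$. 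For the limit formula \eqref{Lg}, the paper switches back to the arc-length parameter and substitutes $u=\rho(t)$ in the tails using $\pl_t\rho=\mp\rho+\mc{O}(\rho^2)$. You instead work in arc length throughout: you first upgrade \eqref{uniformrho} (equivalently Lemma~\ref{smoothpb}) to the two-sided asymptotics $\rho(\gamma_z(t))=C_\pm(z)e^{\mp t}(1+\mc{O}(e^{-2|t|}))$ with $C_\pm(z)>0$, split the integral at $\pm T$, and read off the Laurent expansion $I_0(\rho^\la)(z)=2/\la+\log\big(C_+(z)C_-(z)\big)+\mc{O}(\la)$ directly; then \eqref{Lg} follows by locating the crossing times $t_\pm(\eps)$ from the same asymptotics. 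Your route has the merit of yielding the closed form $L_g(z)=\log\big(C_+(z)C_-(z)\big)$, with $C_\pm(z)=\lim_{t\to\pm\infty}e^{\pm t}\rho(\gamma_z(t))$, which is not recorded explicitly in the paper (this is in fact the pair of exponential rates of approach of the geodesic to the two boundary endpoints with respect to $\rho$). The paper's $\tau$-parametrization has the mild advantage of making joint smoothness in $(\la,z)$ immediately visible, since $f(\tau,z)$ is jointly smooth and the $\tau$-domain is compact, whereas your constants $C_\pm(z)$ require the extra remark (which you supply via Lemma~\ref{smoothpb}) that they depend smoothly on $z$. Both arguments are complete.
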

\begin{proof} Equation \eqref{R_+viabarphi} shows that for $z\in
  \pl_-S^*M\setminus \bbar{\Gamma_-}$ and ${\rm Re}(\la)>0$ we can write  
\[
I_0(\rho^\la)(z)=
\int_{0}^{\tau_+(z)}\rho(\bbar{\varphi}_\tau(z))^{\la-1}d\tau
=\int_0^\delta + \int_\delta^{\tau_+(z)-\delta} +\int_{\tau_+(z)-\delta}^{\tau_+(z)}.
\]
The analogue of \eqref{rhovphitau} near $\pl_-S^*M$ shows that if
$\delta>0$ is small enough, then there is $f(\tau,z)$ smooth for 
$\tau \in[0,\delta]$ with $|\tau f(\tau,z)|<1/2$ so that   
$\rho(\bbar{\varphi}_\tau(z))=\tau(1+\tau f(\tau,z))$.  Integration by
parts shows that
\[
\int_{0}^\delta \rho(\bbar{\varphi}_\tau(z))^{\la-1}d\tau
=\int_{0}^{\delta}\tau^{\la-1}(1+\tau f(\tau,z))^{\la-1}d\tau
= \frac{1}{\la} +p(\la)
\]
where $p(\la)$ is holomorphic in ${\rm Re}(\la)>-1$. 
Likewise 
$\int_{\tau_+(z)-\delta}^{\tau_+(z)}\rho(\bbar{\varphi}_\tau(z))^{\la-1}d\tau$
has the same form.  Since
$\int_\delta^{\tau_+(z)-\delta}\rho(\bbar{\varphi}_\tau(z))^{\la-1}d\tau$
is an entire function of $\lambda$, the first statement in the Lemma is
proved.  

Next, we see that if we set 
$\rho(t):=\rho(\varphi_t(z_0))$ for $z_0$ a point on the orbit 
$\{\bbar{\varphi}_\tau(z);\tau\in(0,\tau_+(z))\}$, 
then for any $\eps>0$
\[ 
\lim_{\la\to 0}\Big(I_0(\rho^\la)(z)-\frac{2}{\la}\Big)= \int_{\rho(t)>\eps}dt+\lim_{\la\to 0}
\Big(\int_{\rho(t)<\eps}\rho(t)^{\la}dt-\frac{2}{\la}\Big).
\]
By \eqref{formofX}, we have for $\pm t>0$ large enough 
$\pl_t\rho(t)=X(\rho)(\varphi_t(z_0))=\mp \rho(t)+\mc{O}(\rho(t)^2)$, so we 
can change variable $u:=\rho(t)$ and write 
\[\int_{\rho(t)<\eps}\rho(t)^{\la}dt= 2\int_0^{\eps}u^\la \frac{du}{u(1+\mc{O}(u))}=
\frac{2\eps^{\la}}{\la}+ \int_0^\eps u^{\la}a(u)du
\]
where $a(u)$ is continuous in $[0,1]$. Thus 
\[\lim_{\la\to 0}
\Big(\int_{\rho(t)<\eps}\rho(t)^{\la}dt-\frac{2}{\la}\Big)=2\log\eps+\mc{O}(\eps)\] 
which completes the proof by letting $\eps\to 0$.
\end{proof}

\begin{defi}\label{renormlength}
The function $L_g:\pl_-S^*M\setminus \bbar{\Gamma_-}$ defined in
Lemma~\ref{renormdist} is called the \emph{renormalized length
  function} associated to $\rho$.  
\end{defi}

If $\widehat{\rho}$ is an arbitrary boundary defining function (not necessarily
geodesic), it can be written as $\widehat{\rho}=\rho e^{\omega}$ for some
geodesic boundary defining function $\rho$, and   
the same argument as in Lemma \ref{renormdist} shows that
$I_0(\widehat{\rho}^{\la})$ has a meromorphic extension to ${\rm
  Re}(\la)>-1$ with a pole at $\la=0$ and 
residue ${\rm Res}_{\la=0}I_0(\widehat{\rho}^\la)=2$. Moreover, one has  
\[
\begin{split}
\big[I_0(\widehat{\rho}^\la)(z)-I_0(\rho^\la)(z)\big]\big|_{\la=0}
&=\lim_{\la\to   0}I_0(\rho^\la(e^{\la\omega}-1))(z)=\lim_{\la\to 0} \la 
I_0(\rho^\la \omega)(z)\\
&= {\rm Res}_{\la=0}I_0(\rho^\la \omega)(z).
\end{split}
\]
By the same argument as in Lemma \ref{renormdist}, it is direct to evaluate
this residue to obtain
\[
\big[I_0(\widehat{\rho}^\la)(z)-I_0(\rho^\la)(z)\big]\big|_{\la =0}
=\omega(\pi(z))+\omega(\pi(S_g(z))).
\]
So the renormalized length function associated to any defining function can
be defined as in Lemma~\ref{renormdist}, and it satisfies 
\begin{equation}\label{dependLg}
\widehat{L}_g(z)-L_g(z)=\omega(\pi(z))+\omega(\pi(S_g(z))). 
\end{equation}
Note that two defining functions determine the same $L_g$ if they induce
the  same representative for the conformal infinity.  In particular,  
a general defining function determines the same $L_g$ as the geodesic
defining function inducing the same representative for the conformal
infinity. 

It is evident that if $\psi:\bbar{M}\to \bbar{M}$ is a diffeomorphism which
restricts to the identity on $\pl\bbar{M}$, then $L_g=L_{\psi^*g}$, where
both renormalized lengths are calculated with respect to the same
representative for the conformal infinity.  

\subsection{Boundary determination}

In this section we prove Theorems~\ref{determination} and
\ref{realanalytic}.  In both proofs we will use the observation that if
$g,g'$ are two asymptotically hyperbolic metrics on $M$ and $h,h'$ are 
representative metrics for their respective conformal infinities, there
exists a diffeormorphism $\psi:\bbar{M}\to \bbar{M}$ equal to the identity
on $\pl\bbar{M}$ so that in the product decomposition $[0,\eps]_\rho\x 
\pl\bbar{M}$ for $g$ induced by $h$, one has   
\begin{equation}\label{choicediffeo}
g=\frac{d\rho^2+h_\rho}{\rho^2},\qquad \psi^*g'=\frac{d\rho^2+h'_\rho}{\rho^2}
\end{equation}
where $h_\rho$ and $h'_\rho$ are smooth 1-parameter families of metrics on 
$\pl \bbar{M}$ with $h_0=h$, $h_0'=h'$.  In fact, if 
$\chi,\chi':[0,\eps]_\rho\x \pl\bbar{M}\to \bbar{M}$ are the boundary
identification maps for $g$, $g'$ corresponding to $h$, $h'$, meaning that 
\[
\chi^*g=\frac{d\rho^2+h_\rho}{\rho^{2}},\qquad
\chi'^*g'=\frac{d\rho^2+h'_\rho}{\rho^{2}},
\]
then one can take $\psi$ to be an extension of $\chi'\circ\chi^{-1}$ to 
$\bbar{M}$.  Note that if $g$ and $g'$ are real-analytic, then $\psi$ can
be taken to be real-analytic near $\pl\bbar{M}$.  

\noindent
{\it Proof of Theorem~\ref{determination}.}
Choose $\psi$ as in \eqref{choicediffeo}; we will show that 
$L_g=L_{g'}$ implies that $h_\rho =h'_{\rho}$ to infinite order.   

We work with one metric $g$ in normal form and use the short geodesics
derived in Lemma~\ref{smallgeo} to show that $L_g$ determines the Taylor 
expansion of $h_\rho$ at $\rho=0$.  .  
Fix $y_0\in \pl\bbar{M}$ and consider the renormalized  
length $L_g(y_0,\eta_0)$ (using $\rho$) where 
we write $\eta_0=\delta^{-1}\omega_0$ with $\delta$ small and $0\neq 
\omega_0\in T_{y_0}^*\pl\bbar{M}$ fixed, but, at least to start, not
necessarily satisfying $|\omega_0|_{h_0}=1$.      
Equation \eqref{forminXU2} implies that $X\theta = \sin\theta(1+Q)$, 
where $Q=\frac{\sin \theta}{2|\eta|^3_{h_\rho}}\pl_{\rho}|\eta|^2_{h_{\rho}}$. 
So we can rewrite the integral (with $z$ any point on the 
orbit $\bbar{\varphi}_\tau(y_0,\eta_0)$) for ${\rm Re}(\la)>0$    
\[ 
\int_\rr \rho(\pi(\varphi_t(z))^\la dt= \int_0^{\pi} 
(\sin \theta)^{\la-1} |\eta|_{h_\rho}^{-\la}\frac{d\theta}{(1+Q)}. 
\]
Since 
$\lim_{\la\to 0}\Big(\int_{0}^\pi(\sin \theta)^{\la-1}\,d\theta
-2/\la\Big)=2\log 2$, we have
\[ 
\begin{split}
L_g(y_0,\delta^{-1}\om_0)&=\lim_{\la\to 0}\Big(\int_{0}^\pi (\sin
\theta)^{\la-1}  
\Big(\frac{|\eta|_{h_\rho}^{-\la}}{1+Q}\Big)d\theta-2/\la\Big)\\
&=\lim_{\la\to 0}\int_{0}^\pi (\sin \theta)^{\la-1}
\Big(\frac{|\eta|_{h_\rho}^{-\la}}{1+Q}-1\Big)
d\theta+2\log 2\\ 
&=\lim_{\la\to 0}\Big[\int_{0}^\pi \frac{(\sin \theta)^{\la-1}}{1+Q}
\Big(|\eta|_{h_\rho}^{-\la}-1\Big)d\theta
+\int_{0}^\pi(\sin\theta)^{\la-1}
\Big(\frac{1}{1+Q}-1\Big)d\theta\Big]+2\log 2\\  
&=\lim_{\la\to 0}\int_{0}^\pi \frac{(\sin \theta)^{\la-1}}{1+Q}
\Big(|\eta|_{h_\rho}^{-\la}-1\Big)d\theta
-\int_{0}^\pi \frac{Q}{(\sin\theta)(1+Q)}d\theta+2\log 2  
\end{split}
\]
If $f(\lambda,\theta)$ is a smooth function satisfying $f(0,\theta)=0$,
then the same argument as in the proof of Lemma~\ref{renormdist}
shows that
\[
\lim_{\la\to 0}\int_0^\pi (\sin\theta)^{\la -1} f(\la,\theta)\,d\theta
=\pl_\la f(0,0)+\pl_\la f(0,\pi).
\]
This can be used to evaluate the limit in the last line above, giving  
\begin{equation}\label{L0}
\begin{split}
L_g(y_0,\delta^{-1}\om_0)&= -\log(|\eta|_{h_\rho})|_{\theta=0}  
-\log(|\eta|_{h_\rho})|_{\theta=\pi}+2\log 2
-\delta\int_{0}^\pi \frac{\til{Q}}{(\sin\theta)(1+\delta\til{Q})}d\theta\\
&= 2\log 2\delta -\log(|\omega|_{h_\rho})|_{\theta=0}  
-\log(|\omega|_{h_\rho})|_{\theta=\pi}
-\delta\int_{0}^\pi \frac{\til{Q}}{(\sin\theta)(1+\delta\til{Q})}d\theta  
\end{split}
\end{equation}
where $\til{Q}=Q/\de=\frac{\sin\theta}{2|\om|^3_{h_\rho}}\pl_\rho 
h_\rho^{ij}\om_i\om_j$ as in the proof of Lemma~\ref{smallgeo}.  
Thus  
\[
L_g(y_0,\delta^{-1}\omega_0)= 2\log 2\delta-2\log|\omega_0|_{h_0} 
+O(\delta).
\]
Taking $\delta\to 0$, this shows that $L_g(y_0,\eta_0)$ determines
$|\omega_0|_{h_0}$, thus the metric $h_0$.   

Henceforth assume that $|\om_0|_{h_0}=1$.  Equation \eqref{L0} shows that
$L_g(y_0,\delta^{-1}\om_0)$ determines  
\begin{equation}\label{Fdef}
F(\de):=-\log(|\omega|_{h_\rho})|_{\theta=\pi} 
-\frac{\delta}{2}
\int_{0}^\pi \frac{\pl_\rho h^{ij}\om_i\om_j}{|\om|^3_{h_\rho}}
(1+\delta\til{Q})^{-1}d\theta.    
\end{equation}
Now $F(\de)$ is a smooth function of $\delta$ down to $\delta=0$ which
satisfies $F(\de)=\mc{O}(\de)$.  We show that the Taylor expansion of $F$
at $\de=0$ determines the Taylor expansion of $h_\rho$ at $\rho =0$.  
Denote $'=\pl_\de|_{\de=0}$, write $h(\rho,y)=h_\rho(y)$, 
and recall the solution 
$u(\theta)=(1-\cos\theta)\om_0^\sharp$, $\om(\theta)=\om_0$ for $\de=0$
derived in the proof of Lemma~\ref{smallgeo}.  Differentiation of
\eqref{Fdef} gives
\[
\begin{split}
F'(0)&=-\frac12 \Big(|\om|^2_{h_\rho}|_{\theta=\pi}\Big)'
-\frac{\pi}{2}\pl_\rho h(0,y_0)(\om_0,\om_0)\\
&=-\frac12 \Big(h^{ij}(0,y_0+\de u(\pi))\om_i(\pi)\om_j(\pi)\Big)' 
-\frac{\pi}{2}\pl_\rho h(0,y_0)(\om_0,\om_0)\\
&=-(\om_0)^k(\pl_{y^k}h)(0,y_0)(\om_0,\om_0)
-h(0,y_0)(\om(\pi)',\om_0)
-\frac{\pi}{2}\pl_\rho h(0,y_0)(\om_0,\om_0).
\end{split}
\]
It is clear that with the possible exception of $\om(\pi)'$,  
the first two terms are determined by $h_0$.  This is
the case for $\om(\pi)'$ as well:  
taking $\theta$ as independent variable, \eqref{intY} becomes   
\begin{equation}\label{intY2}
\frac{du^i}{d\theta}
=\sin\theta\,\frac{h^{ij}_\rho\om_j}{|\om|^2_{h_\rho}}
(1+\de\til{Q})^{-1}\qquad
\frac{d\om_i}{d\theta}=-\delta \sin\theta\,  
\frac{\pl_{y^i}h^{jk}_\rho\om_j\om_k}{2|\om|^2_{h_\rho}}
(1+\de\til{Q})^{-1}.
\end{equation}
The linearization of the second equation about $\de=0$ is
\[
\frac{d\om_i'}{d\theta}=- \tfrac12 (\sin\theta)
\pl_{y^i}h(0,y_0)(\om_0,\om_0),
\]
from which it is clear that $\om(\pi)'$ also is determined by $h_0$.  
Since we have already determined $h_0$, it follows that
$F'(0)$ determines $\pl_\rho h(0,y_0)(\om_0,\om_0)$ for all $y_0$, $\om_0$.
Thus $L_g$ determines $\pl_\rho h_\rho|_{\rho=0}$.    

We now claim by induction on $k$ that $L_g$ determines $\pl_\rho^k
h_\rho|_{\rho=0}$.  Apply $\pl_\de^k|_{\de=0}$ to \eqref{Fdef} and expand
using the chain rule (recall that $h$ and its derivatives are evaluated at 
$(\rho,y_0+\de u)$, and $\rho$ is determined implicitly as a function of
$(\theta,u,\om,y_0,\de)$ by $\rho |\om|_{h_\rho}=\de\sin\theta$).  The  
derivatives of $\om$ and $u$ which appear are $\pl_\de^l\om$ for  
$0\leq l\leq k$ and $\pl_\de^lu$ for $0\leq l\leq k-1$.
Differentiation of  
\eqref{intY2} shows that the pair $(\pl_\de^k u|_{\de=0}, \pl_\de^k
\om|_{\de=0})$ satisfies an inhomogeneous system of linear differential 
equations involving $\pl_\de^l
u|_{\de=0}$ and $\pl_\de^l \om|_{\de=0}$ for $0\leq l\leq k-1$;  
the equation for $\pl_\de^k u|_{\de=0}$ involves
$\pl_\rho^lh_\rho|_{\rho =0}$ for $0\leq l\leq k$ but the equation for  
$\pl_\de^k \om|_{\de=0}$ involves only  
$\pl_\rho^lh_\rho|_{\rho=0}$ for $0\leq l\leq k-1$ because of the leading
factor of $\delta$ in the equation for $\om$.  Thus the derivatives 
$\pl_\de^l\om$ for $0\leq l\leq k$ and $\pl_\de^lu$ for $0\leq l\leq k-1$
which appear in $\pl_\de^kF|_{\de=0}$ are all determined by 
$\pl_\rho^lh_\rho|_{\rho=0}$ for $0\leq l\leq k-1$.  The expansion of 
$\pl_\de^kF|_{\de=0}$ via the chain rule also has explicit dependence on
derivatives of $h_\rho$.  Since $\rho =0$ when $\theta=\pi$, only
tangential derivatives of $h_0$ appear when 
$\pl_\de^k \log(|\omega|_{h_\rho})|_{\theta=\pi}$ is expanded.  Hence  
$\pl_\de^k \log(|\omega|_{h_\rho})|_{\theta=\pi}$ is 
determined by $\pl_\rho^lh_\rho|_{\rho =0}$ for $0\leq l\leq k-1$.  On the
other hand, since $\pl_\de \rho|_{\de=0}=\sin\theta$, we have  
\[
\pl_\de^k\Big(\de \int_{0}^\pi \frac{\pl_\rho h^{ij}\om_i\om_j}{|\om|^3_{h_\rho}}
(1+\delta\til{Q})^{-1}d\theta\Big)
=k\pl_\rho^kh(0,y_0)(\om_0,\om_0)\int_0^\pi (\sin\theta)^{k-1}d\theta +
R_k, 
\]
where $R_k$ depends only on $\pl_\rho^lh_\rho|_{\rho =0}$ for $0\leq l\leq
k-1$.  Thus it follows by induction that $L_g$ determines
$\pl_\rho^kh(0,y_0)(\om_0,\om_0)$ for each 
$y_0$, $\om_0$, so also $\pl_\rho^kh_\rho|_{\rho=0}$.  
\stopthm

We remark that the determination of $h_\rho$ to infinite order in the proof
of Theorem~\ref{determination} above is constructive in the sense that 
it provides an algorithm for calculating the Taylor expansion in $\rho$.  
We also remark that it follows from the proof that $h=h'$ under the
hypotheses of Theorem~\ref{determination}.

\noindent
{\it Proof of Theorem~\ref{realanalytic}.}  
Let $\psi$ be a real-analytic diffeomorphism defined in a neighborhood of
$\pl\bbar{M}$, equal to the identity on $\pl\bbar{M}$, and for which 
\eqref{choicediffeo} holds.  Theorem~\ref{determination} shows that 
$h'_\rho -h_\rho$ vanishes to infinite order at $\rho=0$.  Real-analyticity
implies that $\psi^*g'=g$ near $\pl\bbar{M}$.  

We show that $\psi$ extends to an isometry 
$\psi:\bbar{M}\to\bbar{M}$.  If $\bbar{M}$ is simply connected, this is 
the standard result (Corollary 6.4, p. 256 of \cite{KN}) 
that a local isometry between simply connected, complete, real-analytic
Riemannian manifolds extends to a global isometry.  We claim that since our
$\psi$ is defined in a full neighborhood of $\pl\bbar{M}$, the same
argument applies under the weaker hypothesis
$\pi_1(\bbar{M},\pl\bbar{M})=0$.  The argument goes as follows.  Choose 
a point $p\in \pl\bbar{M}$.  If $q\in M$, completeness implies that $\psi$
can be extended by analytic continuation along any curve from $p$ to $q$.
It must be shown that the continuation in a neighborhood of $q$ is
independent of the curve.  If $\bbar{M}$ is 
simply connected, any closed curve based at $p$ can be deformed to the 
constant curve, and the result follows.  Under the hypothesis 
$\pi_1(\bbar{M},\pl\bbar{M})=0$, the closed curve can only 
be deformed into a closed curve in $\pl\bbar{M}$.  But since $\psi$ is
already defined in a full neighborhood of $\pl\bbar{M}$, this is sufficient
to conclude that the analytic continuation is path-independent.  $\psi$ is
a diffeomorphism since $\psi^{-1}$ extends by the same argument, and  
the relation $\psi^*g'=g$ follows by analytic continuation.    
\stopthm

\subsection{Deformation rigidity}

In this section we prove Theorem~\ref{deformation}.

\noindent
{\it Proof of Theorem~\ref{deformation}.}
First suppose (i) holds.  
There exists a smooth family $\phi(s):\bbar{M}\to 
  \bbar{M}$ of diffeomorphisms satisfying $\phi(s)|_{\pl\bbar{M}}={\rm Id}$
  such that   
$\phi(s)^*g(s)=(d\rho^2+h_\rho(s))/\rho^2$ in a product decomposition near
  $\pl \bbar{M}$, with $h_\rho(s)$ 
a smooth family of metrics on $\pl\bbar{M}$ satisfying $h_0(s)=h(s)$. Thus
we can assume that $g(s)$ is already reduced to that form.  
By Theorem~\ref{determination}, we also have  
$h_\rho(s)=h_\rho(0)+\mc{O}(\rho^\infty)$ uniformly in $s$, and in
particular $h(s)=h(0)$.  Thus the identifications    
of $\pl_\pm S^*M$ with $T^*\pl\bbar{M}$ agree for all $s$, so in the rest
of the proof we view the boundary as $\pl_\pm S^*M$ rather than
$T^*\pl\bbar{M}$.   

Fix $z\in \pl_-S^*M$; the geodesics for $g(s)$ with initial value 
$z\in \pl_-S^*M$ form a smooth in $s$ family of curves given by 
$\gamma_s(\tau,z):=\pi(\bbar{\varphi}_{\tau}(s,z))$ if
$\bbar{\varphi}_\tau(s,\cdot)$ is the flow  
of $\bbar{X}(s)=\rho^{-1}X(s)$ associated to $g(s)$.
Since $S_{g(s)}=S_{g(0)}$ by assumption, we have 
$z':=S_{g(0)}(z)=S_{g(s)}(z)\in \pl_+S^*M$.  We denote by 
$\tau_+(s,z)$ the time so that 
$\bbar{\varphi}_{\tau_+(s,z)}(s,z)=z'$, and 
we define $\gamma_s(\tau,z'):=\pi(\bbar{\varphi}_{-\tau}(s,z'))$ for  
$0\leq \tau\leq \tau_+(s,z)$.  
Since $\bbar{X}(s)=\bbar{X}(0)+\mc{O}(s\rho^\infty)$ when viewed as smooth vector fields 
on ${^bT}^*\bbar{M}$, we have for all $N\in\nn$ and $\tau$ small
\[\begin{gathered}
\bbar{\varphi}_{\tau}(s,z)=\bbar{\varphi}_{\tau}(0;z)+\mc{O}(s\max_{\sigma\leq
  \tau}\rho(\gamma_{s}(\sigma,z))^N), \\ 
\bbar{\varphi}_{-\tau}(s,z')=\bbar{\varphi}_{-\tau}(0;z')+\mc{O}(s\max_{\sigma\leq
  \tau}\rho(\gamma_s(\sigma,z'))^N),  
\end{gathered}\]
where  here the
remainder is uniform in $\tau$ (this follows for instance from the formula
in \cite[Lemma 2.2]{SUV} relating two flows in terms of the difference of
their vector fields).   
We write dot for the $\tau$ derivative and prime for the $s$ derivative at 
$s=0$, and we remove the $0$ index when  
$s=0$ (e.g. $g(0)$ is denoted $g$, $g'(0)$ is denoted $g'$, etc). We have
from the discussion above  
that the vector fields $\gamma'(\tau)$ and $\dot{\gamma}'(\tau)$ satisfy for all $N\in\nn$
\begin{equation}\label{fastvanish} 
\gamma'(\tau,z)=\mc{O}(\tau^N), \quad \dot{\gamma}'(\tau,z)=\mc{O}(\tau^N)
\end{equation}
for $\tau$ small, and the same holds with $z'$ replacing $z$.  If
$\bbar{g}(s):=\rho^2 g(s)$, remark that
$\bbar{g}(s)_{\gamma_s(\tau,z)}(\dot{\gamma}_s(\tau,z),\dot{\gamma}_s(\tau,z))=1$,
thus for $\eps\in (0,\tau_+(s,z))$ small, we get for ${\rm Re}(\la)>0$  
\[ 
\begin{split}
\pl_s\Big[\int_0^{\eps}\rho^{\la-1}&(\gamma_s(\tau,z))
\bbar{g}(s)_{\gamma_s(\tau,z)}(\dot{\gamma}_s(\tau,z),\dot{\gamma}_s(\tau,z))d\tau
\Big]\Big|_{s=0}\\
=&\int_0^\eps
\rho^{\la}(\gamma(\tau,z))\pl_{s}\Big[\rho^{-1}(\gamma_s(\tau,z)) 
\bbar{g}(\dot{\gamma}_s(\tau,z),\dot{\gamma}_s(\tau,z))\Big]\Big|_{s=0}d\tau\\
&+\la\int_{0}^{\eps}
\rho^{\la-2}(\gamma(\tau,z))d\rho(\gamma(\tau,z)).\gamma'(\tau,z)d\tau\\
&+\int_0^\eps  \rho^{\la-1}(\gamma(\tau,z))
\bbar{g}'(\dot{\gamma}(\tau,z),\dot{\gamma}(\tau,z))d\tau.  
\end{split}
\]
Due to \eqref{fastvanish} and $g'=\mc{O}(\rho^\infty)$, the three integrals
extend holomorphically near $\la=0$ and are uniformly $\mc{O}(\eps^{N})$
for all $N$ for $\la$ near $0$.  
Now the same arguments give the same identity with $z$ replaced by
$z'$. Finally we have 
\[
\begin{split}
\pl_s\Big[\int_\eps^{\tau_+(s,z)-\eps}\rho^{\la-1}(\gamma_s&(\tau,z))
\bbar{g}(s)_{\gamma_s(\tau,z)}(\dot{\gamma}_s(\tau,z),\dot{\gamma}_s(\tau,z))d\tau
\Big]\Big|_{s=0}\\
=&\pl_s\Big[\int_\eps^{\tau_+(s,z)-\eps}  \rho^{\la}(\gamma(\tau,z))
  \rho^{-1}(\gamma_s(\tau,z))
  \bbar{g}(\dot{\gamma}_s(\tau,z),\dot{\gamma}_s(\tau,z))d\tau\Big]\Big|_{s=0}\\
&+\int_\eps^{\tau_+(z)-\eps} \rho^{\la-1}(\gamma(\tau,z))
\bbar{g}'(\dot{\gamma}(\tau,z),\dot{\gamma}(\tau,z))d\tau\\
&+\la\int_\eps^{\tau_+(z)-\eps}\rho^{\la-2}(\gamma(\tau,z))d\rho(\gamma(\tau,z)).\gamma'(\tau,z)d\tau.   
\end{split}
\]
Summing up, and evaluating at $\la=0$, we obtain 
\begin{equation}\label{variationLgfinal}
\begin{split}
\pl_s [L_{g(s)}(z)]|_{s=0}=&
\int_\eps^{\tau_+(z)-\eps} \rho^{-1}(\gamma(\tau,z))
\bbar{g}'(\dot{\gamma}(\tau,z),\dot{\gamma}(\tau,z))d\tau\\ 
&+\pl_s\Big[\int_\eps^{\tau_+(s,z)-\eps}  \rho^{-1}(\gamma_s(\tau,z))
  \bbar{g}(\dot{\gamma}_s(\tau,z),\dot{\gamma}_s(\tau,z))d\tau\Big]\Big|_{s=0}+\mc{O}(\eps^N). 
\end{split}
\end{equation}
As $\eps\to 0$, the first term tends to $I_2(g')(z)$ where $I_2$ is the
X-ray transform on symmetric $2$-tensors associated to $g=g(0)$. 
Let $t(\tau,z)=\int_\eps^\tau \rho^{-1}(\gamma_s(\sigma,z))d\sigma$ and make the change of variable 
$\tau\mapsto t(\tau,z)$ in the second integral of \eqref{variationLgfinal},
which
becomes  \[\pl_s\Big[\int_{0}^{t_s(\eps)}g(\dot{\alpha}_s(t),\dot{\alpha}_s(t))dt\Big]\Big|_{s=0}\] 
where $\alpha_s(t):=\gamma_s(\tau,z)$ and
$t_s(\eps):=t(\tau_+(s,z)-\eps,z)$. We recognize the energy functional of
the curve $\alpha_s(t)$ for $g(0)$, and since $\alpha_0(t)$ is a geodesic
of $g(0)$,  
we get by \cite[Theorem 3.31]{GHL} and \eqref{fastvanish}  that 
\[\pl_s\Big[\int_{0}^{t_s(\eps)}g(\dot{\alpha}_s(t),\dot{\alpha}_s(t))dt\Big]\Big|_{s=0}=
g(\gamma'(\eps,z),\dot{\alpha}_0(t_0(\eps)))-g(\gamma'(\eps,z'),\dot{\alpha}_0(0))=\mc{O}(\eps^N)\]
for all $N\in\nn$.
Therefore, by letting $\eps\to 0$ in \eqref{variationLgfinal}, we conclude that 
\[\pl_s [L_{g(s)}(z)]|_{s=0}=I_2(g')(z).\]
 
By Theorem~\ref{injXray}, 
we deduce that $g'=Dq$ for some $q\in \rho^N L^\infty \cap 
C^\infty(M,T^*M)$ for all $N\geq 1$.  (The proof of Theorem~\ref{injXray}
shows that $q$ can be chosen to vanish to infinite order if $f=g'$ does.)   
The same holds by linearizing at any $s$ and by duality we obtain a vector
field $q^\sharp(s)\in C^\infty(M,TM)\cap \rho^NL^\infty$ so that  
$\pl_sg(s)=\mc{L}_{\demi q^\sharp(s)}g(s)$, whose dependence is smooth in  
$s$ since $q^\sharp(s)$ is the dual to
$q(s)={\pi_1}_*(R_{s,+}(0)\pl_sg(s))$ where $R_{s,+}(0)$ is the resolvent
of the vector field $X(s)$, which is smooth in $s$ when acting on
$\pi_2^*(\rho^\infty C^\infty(\bbar{M},\otimes_S^2T^*\bbar{M}))$ by the
expression \eqref{Rpm}. Integrating the vector field  
$\demi q^\sharp(s)$, we obtain a smooth $1$-parameter family of diffeomorphisms 
$\psi(s):\bbar{M}\to \bbar{M}$ 
equal to the identity at $\pl\bbar{M}$ so that $\psi(s)^*g(s)=g(0)$.  This
concludes the proof under assumption (i).  

Proposition~\ref{equivalencedglens} shows that (ii) implies (i), so the
result is true under assumption (ii) as well.  
\stopthm

\section{Simplicity and renormalized distance} 

In this section we parametrize geodesics on a non-trapping asymptotically
hyperbolic manifold by their two endpoints instead of their starting point
and direction.  To do this, we clearly need assumptions which imply at
least that there is a unique geodesic connecting any two points of the
boundary.  The assumption we make is 
that there are no conjugate points at infinity, i.e. there are no nonzero
Jacobi fields along any geodesic which decay as $t\to \infty$ and as $t\to
-\infty$.  We will call such asymptotically hyperbolic manifolds simple.
As we discuss below, this implies that $(M,g)$ has no conjugate points.    

Stable and unstable bundles for the geodesic flow on a complete manifold 
$(M,g)$ with no conjugate points are defined in \S 2 of \cite{Eb2}.   
An alternative statement of the condition that there are no nonzero
Jacobi fields decaying as $t\to \infty$ and as $t\to -\infty$ is that the  
stable and unstable bundles are everywhere transverse.  In \S 3 of
\cite{Eb2} it is proved that transversality of the stable and  
unstable bundles implies that the geodesic flow is Anosov 
in the case that the universal cover of $M$ is compactly homogeneous,
i.e. it can be 
covered by translates by isometries of a fixed compact set.  We first need 
to establish the analogous result for non-trapping asymptotically
hyperbolic manifolds.  An easy alternate construction of the stable and  
unstable bundles can be given in this setting using elementary ODE theory
which also proves that they are smooth.  In terms of this construction, it
is not difficult to establish directly that transversality implies that the
geodesic flow is hyperbolic.  
We begin this section by presenting these arguments.              
\begin{rem}
We typically use the terminology ``Anosov geodesic flow'' in classical
settings such as a compact manifold or the universal cover of  
a compact manifold, and ``hyperbolic geodesic flow'' otherwise, for
example for asymptotically hyperbolic manifolds.  But these 
mean the same thing: the existence of a splitting
\eqref{hyperbolicsplitting} with uniform contraction/expansion estimates
(for some $\nu>0$) as in Proposition~\ref{uniformbounds}.      
\end{rem}

We assume throughout this section that $(M,g)$ is a non-trapping
asymptotically hyperbolic manifold.  Let $z_0\in S^*M$.  Locally near
$z_0$, choose a smooth hypersurface  
$\mathcal{S}\subset S^*M$ transverse to $X$ with $z_0\in \mathcal{S}$. 
For $z\in \mathcal{S}$, choose an orthonormal basis
$\{w_1^z,\ldots,w_n^z\}$ for $\mc{Z}_z$ varying smoothly with $z$.  
For each $z$ and each $j$, extend $w_j^z$ to the geodesic $\gamma_z$ by
parallel translation.  This gives an orthonormal frame field $w_j^z(t)$ for 
$\dot{\gamma_z}(t)^\perp$, varying smoothly with
$(z,t)\in \mathcal{S}\times \rr$.  A normal vector field along   
$\gamma_z$ can be written $Y(t)=\sum_{j=1}^ny^j(t)w_j^z(t)$.  The Jacobi 
equation takes the form
\begin{equation}\label{Jacobi2}
\ddot{y}+R^z(t)y=0\qquad(\,\,\dot{}=\partial_t),
\end{equation}
where $y(t)=(y^1(t),\ldots,y^n(t))^T$ and $R^z(t)\in \rr^{n\times n}$
is the matrix of the linear transformation 
$Y\rightarrow \mc{R}(Y,\dot{\gamma}_z(t))\dot{\gamma}_z(t)$ in the frame     
$(w_1^z(t),\ldots,w_n^z(t))$.  Certainly $R^z(t)$ is smooth in $t$ and
$z$.  Using \eqref{curvature} and the arguments of the proof of
Lemma~\ref{smoothpb}, one sees that $R^z(t)$ has the form  
\[
R^z(t)=-I-S^z(t)  
\]
where $S^z(t)\in \rr^{n\times n}$ satisfies 
$|\partial_z^{\alpha}S^z(t)|\leq C_\alpha e^{-|t|}$, $t\in \rr$, 
with $C_\alpha$ independent of $z\in \mathcal{S}$ near $z_0$.  Here $\pl_z$ 
denotes partial differentiation with respect to some choice of local 
coordinates on $\mc{S}$.  

Reduce \eqref{Jacobi2} to a first order system in the usual way:
introduce 
\[
x=
\begin{pmatrix}
y\\
\dot{y}
\end{pmatrix},\quad
A=\begin{pmatrix}
0&I\\
I&0
\end{pmatrix},\quad
\widetilde{S}^z(t)= 
\begin{pmatrix}
0&0\\
S^z(t)&0 
\end{pmatrix}
\]
so that \eqref{Jacobi2} becomes
\begin{equation}\label{firstorder}
\dot{x}=\big(A+\widetilde{S}^z(t)\big)x.  
\end{equation}
Let $e_j$, $1\leq j\leq n$ denote the standard basis for $\rr^n$ and set 
\[
\eb_j^\pm=\begin{pmatrix}
e_j\\
\pm e_j
\end{pmatrix}.
\]
Then $\{\eb_j^-,\eb_j^+;1\leq j\leq n\}$ is an orthogonal basis for
$\rr^{2n}$ satisfying $A\eb_j^\pm = \pm \eb_j^\pm$.  
\begin{prop}\label{odetheorem}
For each $j$, $1\leq j\leq n$ and each choice of $\pm$, there is a solution
$x^z_{\pm,j}$ of \eqref{firstorder} satisfying 
\[
\lim_{t\rightarrow\infty}e^{\mp t}x^z_{\pm,j}(t)=\eb^\pm_j. 
\]
Moreover, $x^z_{-,j}$ is uniquely determined by this condition, and 
$x^z_{-,j}(t)$ is $C^\infty$ in $(z,t)$.  
\end{prop}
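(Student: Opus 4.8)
The plan is to construct $x^z_{-,j}$ by a contraction mapping argument in a space of bounded functions on a half-line $[T,\infty)$, after an exponential rescaling that diagonalizes the leading behavior, and then to extend it to all of $\rr$ as the solution of the linear equation \eqref{firstorder}; the solution $x^z_{+,j}$ is obtained in the same way with the roles of the two eigenspaces of $A$ interchanged. Write $\rr^{2n}=E^+\oplus E^-$, where $E^\pm=\operatorname{span}\{\eb_1^\pm,\dots,\eb_n^\pm\}$ are the $(\pm 1)$-eigenspaces of $A$, and let $P_\pm$ denote the corresponding orthogonal projections. Setting $\til x(t)=e^tx(t)$ transforms \eqref{firstorder} into $\dot{\til x}=(A+I)\til x+\til S^z(t)\til x$, where $A+I$ vanishes on $E^-$ and equals $2I$ on $E^+$. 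A solution of this rescaled system on $[T,\infty)$ with $\til x(t)\to\eb_j^-$ as $t\to\infty$ is, by variation of parameters, precisely a fixed point of
\[
(\mc T_z\til x)(t)=\eb_j^- -\int_t^\infty\big(e^{2(t-s)}P_+ + P_-\big)\,\til S^z(s)\,\til x(s)\,ds ,
\]
the $E^+$-component being integrated from $+\infty$ because decay at infinity forces it, and the $E^-$-component from $+\infty$ because its prescribed limit there is $\eb_j^-$.

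Since $|\til S^z(s)|\le C_0 e^{-|s|}$ uniformly for $z$ in a neighborhood of $z_0$, and $e^{2(t-s)}\le 1$ for $s\ge t$, one gets $|(\mc T_z\til x-\mc T_z\til x')(t)|\le 2C_0e^{-T}\|\til x-\til x'\|_\infty$ for $t\ge T$; hence for $T$ large (uniformly in $z$) $\mc T_z$ is a contraction of the closed ball of radius $2\sqrt 2$ in the Banach space $C_b([T,\infty);\rr^{2n})$ of bounded continuous functions with the sup norm, mapping it into itself. Its unique fixed point $\til x^z$ satisfies $\til x^z(t)\to\eb_j^-$ as $t\to\infty$, since the integral tends to $0$; we set $x^z_{-,j}(t)=e^{-t}\til x^z(t)$ on $[T,\infty)$ and extend $x^z_{-,j}$ to all of $\rr$ as the solution of \eqref{firstorder} with this value at $t=T$, so that $\lim_{t\to\infty}e^tx^z_{-,j}(t)=\eb_j^-$. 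For uniqueness, if $x,x'$ are two solutions of \eqref{firstorder} on $\rr$ with $e^tx(t),e^tx'(t)\to\eb_j^-$, then $d:=e^t(x-x')$ solves the rescaled homogeneous system and $d(t)\to 0$; letting the lower limit tend to $+\infty$ in variation of parameters gives $d(t)=-\int_t^\infty(e^{2(t-s)}P_+ + P_-)\til S^z(s)d(s)\,ds$, so $d$ is a fixed point of a linear operator of norm $\le 2C_0e^{-T}<1$ on $C_b([T,\infty);\rr^{2n})$, whence $d\equiv 0$ on $[T,\infty)$, and then $d\equiv 0$ on $\rr$ by uniqueness for the linear ODE. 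The construction of $x^z_{+,j}$ is identical after the rescaling $\hat x(t)=e^{-t}x(t)$, for which $A-I$ vanishes on $E^+$ and equals $-2I$ on $E^-$: one integrates the $E^+$-component from $+\infty$ (to impose the limit $\eb_j^+$) and the $E^-$-component from $T$ (so that it decays), and the estimate $\int_T^t e^{-2(t-s)}e^{-s}\,ds\le e^{-T}$ again yields a contraction; here the freedom in the $E^-$-data at $t=T$ accounts for non-uniqueness.

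For smoothness, the hypothesis $|\pl_z^\alpha\til S^z(t)|\le C_\alpha e^{-|t|}$ together with joint smoothness of $\til S^z(t)$ in $(z,t)$ shows, by applying dominated convergence to the difference quotients in $z$, that $z\mapsto\mc T_z$ is a $C^\infty$ family of affine contractions of $C_b([T,\infty);\rr^{2n})$; the uniform contraction principle with parameters then gives that $z\mapsto\til x^z\in C_b([T,\infty);\rr^{2n})$ is $C^\infty$, so in particular $z\mapsto x^z_{-,j}(T)=e^{-T}\til x^z(T)$ is $C^\infty$. Since the right-hand side of \eqref{firstorder} is $C^\infty$ in $(z,t,x)$, the standard theorem on smooth dependence of ODE solutions on parameters and initial data shows that $x^z_{-,j}(t)$—being the solution of \eqref{firstorder} with data $x^z_{-,j}(T)$ at $t=T$—is $C^\infty$ jointly in $(z,t)$. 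The one point demanding care is arranging the integration directions and the exponential weight so that $\mc T_z$ is a genuine contraction on a single half-line with constants uniform in $z$ near $z_0$; once that is in place, the existence, uniqueness, and regularity statements all follow by routine arguments.
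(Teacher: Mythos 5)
Your argument is correct and is essentially the same as the paper's: the paper simply cites Problem 29, p.~104 of Coddington--Levinson (the integrable perturbation of a constant diagonalizable system), and your fixed-point/variation-of-constants construction, with the $E^+$ and $E^-$ components integrated from the appropriate endpoints, is precisely the argument outlined there, including the uniform-contraction route to smooth dependence on the parameter~$z$.
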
 
The existence and uniqueness part of Proposition~\ref{odetheorem} 
is a special case of Problem 29, p. 104 of 
\cite{CL}, which applies to an ode system of the form \eqref{firstorder}
with $A$ diagonalizable and $E(t)$ integrable.  A argument similar to that
outlined in the solution to this problem in \cite{CL} proves smooth
dependence on the parameter $z$.  

Note that $\{x^z_{\pm,j}\}$ is a basis for the solutions of
\eqref{firstorder}.  Their first components thus form a basis for the
solutions of \eqref{Jacobi2}, and the corresponding 
$Y^z_{\pm,j}$ form a basis for the space of normal Jacobi fields with the
property that $Y^z_{\pm,j}$ is asymptotic to $e^{\pm t}w^z_j(t)$
as $t\rightarrow\infty$.  Under the isomorphism $\mc{L}$ defined in \eqref{L},
the stable bundle corresponds to initial data of the Jacobi fields which
decay like $e^{-t}$ as $t\rightarrow \infty$:  
\begin{defi}\label{stable}
The stable bundle $E_s$ is the subbundle of $\ker\alpha$ defined by    
\[
\mc{L}\big(E_s(\varphi_t(z))\big)=
\operatorname{span}\{\big(Y^z_{-,j}(t),(Y^z_{-,j})'(t)\big): 1\leq j\leq
n\}, \quad t\in \mathbb{R}.
\]
\end{defi}

\noindent 
Here $' = D_t$.  
When viewed as a function of the point on a geodesic rather than on the
time parameter, the decaying Jacobi fields are independent of which 
point is labeled as the initial point.  Consequently $E_s$ is well-defined
independent of the choice of initial point $z$ on the geodesic.
Since the $Y^z_{-,j}(t)$ are smooth functions of $(z,t)$, $E_s$ is a smooth
subbundle of $TS^*M$.  The unstable bundle $E_u$ 
is defined analogously in terms of the initial data of the Jacobi fields
which decay as $t\rightarrow -\infty$.  The bundles $E_s$ and $E_u$ are
invariant under the geodesic flow since Jacobi fields along a geodesic in
$M$ correspond to flow-invariant vector fields along the corresponding
integral curve in $S^*M$.  

\begin{defi}\label{simpleuptobdry}
A non-trapping asymptotically hyperbolic manifold $(M,g)$ is said to be
\emph{simple} if $E_s(z)\cap E_u(z)=\{0\}$ for each $z\in S^*M$.      
\end{defi}

\noindent
A non-trapping asymptotically hyperbolic manifold with non-positive
curvature is simple.  That there are
no solutions to \eqref{Jacobi2} which decay exponentially as $t\to \pm
\infty$ follows by taking the inner product with $y(t)$ and integrating by
parts.    

When $E_s(z)\cap E_u(z)=\{0\}$ for all $z\in S^*M$, we have a hyperbolic
splitting for the flow: 
\begin{equation}\label{hyperbolicsplitting}
T(S^*M)=\rr X\oplus E_s\oplus E_u. 
\end{equation}
The next proposition asserts that the decay estimates are uniform in this
case.     
\begin{prop}\label{uniformbounds}
Let $(M,g)$ be a non-trapping asymptotically hyperbolic manifold such that 
$E_s(z)\cap E_u(z)=\{0\}$ for each $z\in S^*M$.  Then its 
geodesic flow is hyperbolic in the following sense:  for 
any $0<\nu<1$, there exists a constant $C>0$ so that:   
\begin{enumerate}
\item
If $z\in S^*M$ and $\zeta\in E_s(z)$, then 
\[
\| d\varphi_t(z).\zeta\|_G\leq Ce^{-\nu t}\|\zeta\|_G,\enspace t\geq 0\quad
\text{and}\quad \| d\varphi_t(z).\zeta\|_G\geq C^{-1}e^{-\nu t}\|\zeta\|_G,\enspace
t\leq 0  
\]
\item
If $z\in S^*M$ and $\zeta\in E_u(z)$, then 
\[
\| d\varphi_t(z).\zeta\|_G\leq Ce^{\nu t}\|\zeta\|_G,\enspace t\leq 0\quad
\text{and} \quad\| d\varphi_t(z).\zeta\|_G\geq C^{-1}e^{\nu t}\|\zeta\|_G,\enspace 
t\geq 0  
\]
\end{enumerate}
\end{prop}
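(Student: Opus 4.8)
The plan is to reduce the whole proposition to the single forward estimate
\[
\|d\varphi_t(z).\zeta\|_G\le Ce^{-\nu t}\|\zeta\|_G,\qquad \zeta\in E_s(z),\ t\ge 0,
\]
with $C$ uniform in $z$. The backward lower bound for $E_s$ then follows by applying this estimate with base point $\varphi_t(z)$ to the vector $d\varphi_t(z).\zeta$ and using flow-invariance of $E_s$; and both statements for $E_u$ follow from those for $E_s$ via the symmetry $\xi\mapsto-\xi$, which conjugates $\varphi_t$ to $\varphi_{-t}$, preserves the Sasaki metric, and exchanges $E_s$ and $E_u$.

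For the forward estimate I would exploit that the flow is \emph{uniformly hyperbolic near infinity}. Fix $\epsilon>0$ small. In a parallel frame along a geodesic the Jacobi equation is \eqref{Jacobi2}--\eqref{firstorder} with $R^z(t)=-I-S^z(t)$, and \eqref{curvature} gives $|S^z(t)|\le c\,\rho(\gamma_z(t))$, so $|S^z(t)|\le M$ uniformly on $S^*M$, while by Lemma~\ref{convtobdry} and \eqref{uniformrho}, on any orbit segment contained in $\{\rho<\epsilon\}$ along which $\rho$ is monotone one has $|S^z(t)|\le c'\epsilon\, e^{-|t-t_0|}$, $t_0$ the endpoint where $\rho$ is smallest. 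On such a segment \eqref{firstorder} is an exponentially small, integrable perturbation of $\dot x=Ax$, and a quantitative version of the Coddington--Levinson argument behind Proposition~\ref{odetheorem} shows that the stable solutions $x^z_{-,j}$ stay $O(\epsilon)$-close, after rescaling by $e^{t}$, to the model directions $\eb_j^-$ throughout. I would package this as: for a cone field $\mathcal{C}\subset\ker\alpha$ of small aperture around $\operatorname{span}\{\eb_j^-\}$ and a uniform constant $C_0$, on every orbit segment contained in $\{\rho<\epsilon\}$ the cone $\mathcal{C}$ is forward-invariant and $\|d\varphi_s.\zeta\|_G\le C_0e^{-\nu s}\|\zeta\|_G$ for $\zeta\in\mathcal{C}$ (segments with one interior turning point of $\rho$ being handled by splitting at the turning point and using continuity of the cone estimate there).

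Next I would record the structure of orbits. By \eqref{floweq}, for $\epsilon$ small $\xi_0$ is strictly decreasing along the flow inside $\{\rho<\epsilon\}$, so by Lemma~\ref{convtobdry} an orbit enters $\{\rho<\epsilon\}$ at most once after first leaving it, and once inside it stays there and descends monotonically to $\pl_+S^*M$. Since $\{\rho\ge\epsilon/2\}\cap S^*M$ is compact and, by non-trapping, contains no complete orbit, there is a uniform $T_0$ bounding the total time any orbit spends in $\{\rho\ge\epsilon/2\}$. Hence any $\varphi_{[0,t]}(z)$ decomposes into at most two near-infinity pieces and one ``bulk'' excursion of length $\le T_0$; on the bulk excursion $\|d\varphi_s\|_G\le e^{M|s|}$ is uniformly bounded, and on each near-infinity piece I would apply the cone estimate, once the following point is settled.

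\textbf{The main obstacle} is to show that $E_s(z)\subset\mathcal{C}(z)$ for every $z$ with $\rho(z)<\epsilon$ lying outside a fixed-time neighbourhood of the bulk (the possibly long ``ascent'' phase near $\pl_-S^*M$ is exactly the case that must be controlled). The forward orbit from such a $z$ ends in a monotone descent on which, by the refined solutions above, $E_s$ is $O(\epsilon)$-close to $\operatorname{span}\{\eb_j^-\}$, hence in $\mathcal{C}$; pulling this subspace back through the bounded bulk excursion rotates it by a bounded but not small amount, so it need not lie in $\mathcal{C}$ at the entrance to the bulk, but pulling it back further along the near-hyperbolic ascent contracts the unstable part of the subspace relative to the stable part by a factor $\sim e^{-2D}$, $D$ the remaining ascent time, so the subspace re-enters $\mathcal{C}$ once $D$ exceeds a fixed constant; the bounded initial stretch of the ascent where this may fail is absorbed into the bulk excursion. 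Granting this, concatenating the at most three pieces gives $\|d\varphi_t(z).\zeta\|_G\le C_0^2 e^{(M+\nu)(T_0+\mathrm{const})}e^{-\nu t}\|\zeta\|_G$, which is the desired uniform bound. Reconciling, uniformly in $z$, the two descriptions of $E_s$ — as the space of decaying Jacobi fields and as a backward limit of forward-iterated cones — is where all the care is required; everything else is either elementary ODE comparison or compactness.
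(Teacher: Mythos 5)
Your reduction steps (time reversal for the backward lower bound; the symmetry $\xi\mapsto-\xi$ to transfer from $E_s$ to $E_u$) are exactly as in the paper, and the overall dichotomy you set up --- pinched negative curvature near infinity, compactness for the bulk --- is also the one the paper uses. But the paper implements this more cleanly: it works with the space of orbits (identified with $\pl_-S^*M$), observes that the set of orbits meeting the bulk is a compact subset there, and proves a uniform estimate on any compact set of orbits (Proposition~\ref{compactset}) from a local ODE bound (Lemma~\ref{upperbound}, built from Lemma~\ref{upper} and the lower bound \eqref{anosovlower}); orbits staying entirely in the pinched region are handled by the classical comparison estimate of Klingenberg (Proposition~\ref{nearboundary}). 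Because the dichotomy is applied at the level of whole orbits rather than orbit segments, there is no need to propagate a cone field through the bulk or to verify that $E_s$ lies inside a cone on the ascent; this is precisely the step you flag as the ``main obstacle.''

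There is a genuine gap in your cone-propagation argument: as written, it never invokes the hypothesis $E_s(z)\cap E_u(z)=\{0\}$, yet the proposition is false without it, so some step must use it. The step that needs it is exactly the one you identify as delicate. When you claim that pulling back through the ascent ``contracts the unstable part of the subspace relative to the stable part,'' you are implicitly decomposing the pulled-back $E_s$ against the stable/unstable splitting and treating it as a graph over the stable directions; this requires a uniform lower bound on the angle between $E_s$ and $E_u$ at all points that exit the bulk. Such a uniform bound follows from $E_s\cap E_u=\{0\}$, continuity (indeed smoothness, Definition~\ref{stable}) of the bundles, and compactness of the bulk region --- but without the hypothesis there is no reason the pulled-back subspace is transverse to the unstable cone, and the factor $e^{-2D}$ would never bring it into $\mathcal{C}$. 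To complete your route you would need to insert a lemma in the spirit of \eqref{anosovlower} giving this uniform transversality over the compact bulk, and at that point you have essentially reproduced Proposition~\ref{compactset} in cone-field language.
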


\noindent
The proof of Proposition~\ref{uniformbounds} will be given after
Proposition~\ref{nearboundary} below.   

It is a consequence of Proposition~\ref{uniformbounds} and the following
result of Gerhard Knieper that simple asymptotically hyperbolic manifolds
have no conjugate points.   
\begin{prop}\label{knieper}\cite{Kn} 
Let $(M,g)$ be a complete connected non-compact Riemannian manifold with
sectional curvature bounded below by $-\beta^2$ and with hyperbolic
geodesic flow with constants $C$, $\nu$ (as in the statement of  
Proposition~\ref{uniformbounds}).  There is a constant
$\sigma(\beta,\nu,C)>0$ so that if $(M,g)$ satisfies the following three 
conditions:
\begin{itemize}
\item[(i)] For any $z\in S^*M$, there is an open neighborhood
$\mathcal{U}\subset S^*M$ of $z$ such that 
$\lim_{t\to \infty} d_g(\gamma(0),\gamma(t))=\infty$ uniformly for all
  geodesics $\gamma$ with $\dot{\gamma}(0)\in \mathcal{U}$.  
%\item[(i)] For any compact $K\subset M$, there is $T>0$ so that for any
%$p\in K$ and geodesic $\gamma$ with $\gamma(0)=p$, it holds that
%$\gamma([T,\infty))\cup \gamma((-\infty,-T]) \subset M\setminus K$  
\item[(ii)] There exists a compact set $K\subset M$ such that for
all $p\in M\setminus K$ and for all geodesics $\gamma$ with $\gamma(0)=p$,   
$\gamma(t)|_{t\in [-1,\sigma]}$ has no conjugate points   
\item[(iii)] $(M,g)$ has at least one geodesic without conjugate points, 
\end{itemize}

\noindent
then $(M,g)$ has no conjugate points.
\end{prop}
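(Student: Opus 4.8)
The plan is to adapt the classical argument of Eberlein \cite{Eb2} (and Klingenberg) showing that an Anosov geodesic flow has no conjugate points, replacing the compactness (or compact--homogeneity) hypothesis used there by the conditions (i)--(iii) and the curvature bound. Call $z\in S^*M$ \emph{nonconjugate} if the geodesic $\gamma_z(t):=\pi(\varphi_t(z))$, $t\in\rr$, carries no pair of conjugate points in the sense of \eqref{noconjugatepoints}, and let $\mc N\subset S^*M$ be the set of nonconjugate vectors. The goal is to prove that $\mc N$ is nonempty, open, and closed; since $M$ is connected and $\dim M=n+1\ge 2$, the bundle $S^*M$ (with connected fibers $\cong\sph^n$) is connected, so this forces $\mc N=S^*M$, which is precisely the assertion that $(M,g)$ has no conjugate points. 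Nonemptiness of $\mc N$ is exactly hypothesis (iii). (In the compact setting this step is instead supplied by a hyperbolic closed geodesic, obtained from the closing lemma; in our noncompact, possibly simply connected setting no such orbit need exist, which is why (iii) is imposed.)

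The analytic input is the quantitative form of hyperbolicity for the Jacobi equation \eqref{Jacobi2}. By Proposition~\ref{odetheorem} and Proposition~\ref{uniformbounds}, along every orbit the bundles $E_s,E_u$ consist exactly of the initial data of the normal Jacobi fields decaying like $Ce^{-\nu t}$ as $t\to+\infty$, resp.\ like $Ce^{\nu|t|}$ as $t\to-\infty$; constancy of the Wronskian $W(Y_1,Y_2)=\langle Y_1',Y_2\rangle-\langle Y_1,Y_2'\rangle$ then shows that $E_s$ and $E_u$ are Lagrangian in the $2n$--dimensional space of normal Jacobi fields, and the transversality in Definition~\ref{simpleuptobdry} makes them complementary. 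Equivalently, the matrix Riccati equation attached to $R^z(t)=-I-S^z(t)$ has a global stable solution and a global unstable solution along every orbit. The curvature lower bound $-\beta^2$ controls the growth of Jacobi fields from above (Rauch comparison), while near infinity the pinching \eqref{curvature} of the sectional curvature toward $-1$ rules out conjugate points on short geodesic segments; hypothesis (ii), phrased for the segment $\gamma(t)|_{t\in[-1,\sigma]}$ with $\gamma(0)\in M\setminus K$, is the quantitative version of this, and it is what pins down the constant $\sigma=\sigma(\beta,\nu,C)$. Hypothesis (i) guarantees that, locally uniformly in $z$, the geodesic $\gamma_z$ leaves every compact set in a controlled time; combined with a uniform bound (coming from the hyperbolic estimates) on how long a conjugate interval can persist, this confines the portion of $\gamma_z$ along which conjugate points could occur to a fixed compact enlargement $K'$ of $K$, on which all of the above estimates are uniform.

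With this localization in hand, openness and closedness of $\mc N$ follow from continuous dependence of solutions of \eqref{Jacobi2} on $(z,t)$ and from the robustness of the splitting \eqref{hyperbolicsplitting}. For openness, if $z\in\mc N$, then --- because the conjugacy of $\gamma_z$ is detected only on a bounded time window inside $K'$ (via the confinement) together with the absence of short conjugate segments outside $K$ (via (ii)) --- the same absence of conjugate points persists for $\gamma_w$ with $w$ near $z$, using continuity of the Jacobi solution matrix in the initial data and the dynamical rigidity of $E_s,E_u$. For closedness, if $z_k\to z$ with $z_k\in\mc N$, a conjugate pair of $\gamma_z$ would, by the same uniform estimates, persist for $\gamma_{z_k}$ for $k$ large, a contradiction.

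I expect the main obstacle to be producing the uniform constants $\sigma$ and the conjugate-interval bound in the noncompact setting and making them interlock: one must take $\sigma=\sigma(\beta,\nu,C)$ large enough that a geodesic which has stayed in the region of nearly constant curvature for time $\sigma$ has already ``latched onto'' the hyperbolic behavior governed by $(\nu,C)$ --- so that the global stable/unstable Riccati solutions exclude conjugate points there --- while simultaneously controlling how long a conjugate interval can survive once it is anchored in the compact core. The delicate case, and precisely what Knieper's argument in \cite{Kn} must handle, is a conjugate pair straddling the interface between the compact core $K$ and the asymptotic region; this is the reason the hypotheses are formulated in terms of the uniform divergence (i) and the fixed-length segment in (ii), rather than simply assuming ``no conjugate points outside a compact set.''
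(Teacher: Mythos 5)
The paper does not contain a proof of Proposition~\ref{knieper}: it is cited to Knieper's work \cite{Kn}, which the authors note is in preparation, and they remark immediately afterward that the proposition is not strictly needed for their results (one may simply build the no-conjugate-points condition into the definition of a simple asymptotically hyperbolic manifold). So there is no proof in the paper against which to compare your attempt.

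What you have written is a roadmap rather than a proof. The structural idea --- show that the set $\mc{N}\subset S^*M$ of unit covectors generating conjugate-free geodesics is nonempty by (iii), open and closed, hence all of $S^*M$ by connectedness --- is indeed the classical Eberlein/Klingenberg strategy, and the roles you assign to each hypothesis (the curvature bound for Rauch comparison, (ii) for a quantitative absence of short conjugate segments near infinity, (i) for the ``confinement'' that localizes potential conjugacy to a compact core, (iii) for nonemptiness) are plausible. But all the load-bearing steps are deferred rather than proved: you never construct the constant $\sigma(\beta,\nu,C)$; you never establish the asserted ``uniform bound on how long a conjugate interval can persist''; you never prove the confinement to a compact enlargement $K'$; and openness and closedness of $\mc{N}$ are stated to ``follow from continuous dependence'' without argument. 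Openness in particular is genuinely delicate in the noncompact setting, since a conjugate pair could a priori appear along a perturbed geodesic $\gamma_w$ at large time, so continuity of the Jacobi propagator on compact time windows is not enough --- one needs a uniform-in-time lower bound on Jacobi fields once the hyperbolic behavior has engaged, anchored by (i) and (ii). You explicitly acknowledge that producing and interlocking these constants, and handling the interface between the compact core and the asymptotic region, are ``precisely what Knieper's argument in \cite{Kn} must handle.'' That is an honest assessment, and it means the proposal should be graded as a sketch that correctly identifies the skeleton of the argument but does not carry it out.
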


\noindent
It is not hard to verify that (i) and (ii) hold for any nontrapping
asymptotically hyperbolic manifold, using Lemma~\ref{convtobdry} and the
fact that there are no conjugate points on any geodesic segment
sufficiently near infinity, where the curvature is negative.  Condition
(iii) also holds since the short geodesics described in
Lemma~\ref{smallgeo} have no conjugate points.      

Proposition~\ref{knieper} is not necessary for the purposes of this paper:  
if preferred, one can just add the assumption that there are no conjugate
points in the definition of a simple asymptotically hyperbolic manifold.   

We will prove Proposition~\ref{uniformbounds} by dividing the set of all  
orbits of the geodesic flow (i.e. lifted unparametrized geodesics) into two
subsets, one a compact  
set of orbits, and the other consisting of orbits, each of whose
projection to $M$ stays in a fixed small neighborhood of $\pl M$ (short
geodesics).  A different argument is used to establish the bounds when
$z$ lies in either of the two sets of orbits.  

We begin by establishing uniform bounds for any compact set of orbits.  
This is done by deriving uniform bounds locally in the set of
orbits, and for this case we obtain the estimates (1), (2) with $\nu=1$.   
Unless explicitly stated otherwise, $(M,g)$ is assumed only to be 
non-trapping.   

Let $z_0\in S^*M$, choose a transverse hypersurface 
$\mc{S}\subset S^*M$, and rewrite the equation for normal Jacobi fields as a   
$\mathbb{R}^{2n}$-valued first order system via a choice of parallel orthonormal
frame as above.  We have the following two lemmas, the first asserting
uniform upper bounds and the second uniform lower bounds on solutions.  

\begin{lemm}\label{upper}
There is a constant $K>0$ independent of $z\in \mc{S}$ near $z_0$, so that   
for all $t\in \mathbb{R}$ and $1\leq j\leq n$: 
\[
|x_{-,j}^z(t)|\leq K e^{-t},\qquad |x_{+,j}^z(t)|\leq K e^{|t|}.
\]
\end{lemm}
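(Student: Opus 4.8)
The plan is to deduce both bounds from a single elementary Gronwall estimate for the first order system \eqref{firstorder}, using only that $A$ is symmetric with eigenvalues $\pm1$ and that $\widetilde S^z(t)$ is exponentially small with constants uniform in $z$ near $z_0$. First I will note that a solution of \eqref{firstorder} which is not identically zero never vanishes (by uniqueness for the linear system), so $t\mapsto\log|x^z(t)|^2$ is well defined and smooth; in particular this applies to $x^z_{\pm,j}$, whose rescaled limits $\eb_j^\pm$ are nonzero. Differentiating and using $|\langle A\xi,\xi\rangle|\le|\xi|^2$ together with $\|\widetilde S^z(t)\|=\|S^z(t)\|\le C_0e^{-|t|}$, where $C_0$ is independent of $z\in\mc S$ near $z_0$, I will get $\big|\tfrac{d}{dt}\log|x^z(t)|^2\big|\le 2\big(1+\|\widetilde S^z(t)\|\big)$. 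Integrating between $s\le t$ and using $\int_{\rr}\|\widetilde S^z(t)\|\,dt\le 2C_0$, this produces a constant $C_1=e^{2C_0}$, uniform in $z$, such that for every nonzero solution $x^z$
\[
C_1^{-1}e^{-(t-s)}\,|x^z(s)|\;\le\;|x^z(t)|\;\le\;C_1e^{t-s}\,|x^z(s)|\qquad\text{for all }s\le t .
\]

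Given this two-sided estimate the two bounds follow quickly. For $x^z_{+,j}$ I will anchor the estimate at $0$: taking $s=0$ gives $|x^z_{+,j}(t)|\le C_1e^{t}|x^z_{+,j}(0)|$ for $t\ge0$, and taking $t=0$ (so the left inequality reads $|x^z_{+,j}(s)|\le C_1e^{-s}|x^z_{+,j}(0)|$) handles $s\le0$; since $\overline{\mc S}$ may be shrunk to be compact and $x^z_{+,j}(0)$ depends continuously on $z$ (via the integral-equation construction referred to after Proposition~\ref{odetheorem}), $\sup_z|x^z_{+,j}(0)|<\infty$, hence $|x^z_{+,j}(t)|\le Ke^{|t|}$ for all $t$. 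For $x^z_{-,j}$ I will instead propagate backward from $+\infty$: fixing $t$, the estimate gives $|x^z_{-,j}(t)|\le C_1e^{\tau-t}|x^z_{-,j}(\tau)|$ for every $\tau\ge t$, and by Proposition~\ref{odetheorem} $e^{\tau}x^z_{-,j}(\tau)\to\eb_j^-$, so for all sufficiently large $\tau\ge t$ one has $|x^z_{-,j}(\tau)|\le 2|\eb_j^-|\,e^{-\tau}$; substituting yields $|x^z_{-,j}(t)|\le 2C_1|\eb_j^-|\,e^{-t}$, and since $t$ is arbitrary this is the claimed bound. Taking $K$ to be the larger of the two constants finishes the proof.

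The only delicate point — the main, if mild, obstacle — is the uniformity in $z$ of the constant $K$. For $x^z_{-,j}$ it is automatic once one observes that the argument needs only the \emph{pointwise} limit at $+\infty$ to produce one good time $\tau$, after which the $z$-uniform constants $C_1$ and $2|\eb_j^-|$ do all the work; no uniform rate of convergence is required, and the asymptotics of $x^z_{+,j}$ at $+\infty$ are not used at all. For $x^z_{+,j}$ the uniformity reduces entirely to the local boundedness of $z\mapsto x^z_{+,j}(0)$, which follows from joint continuity in $(z,t)$ of the constructed solution together with the compactness of $\overline{\mc S}$.
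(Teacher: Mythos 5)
Your argument is correct and is essentially the Gronwall-plus-asymptotics route the paper sketches before omitting the proof as standard. The neat point you make — that for $x^z_{-,j}$ only the pointwise limit at $+\infty$ is needed, since the large time $\tau$ cancels out of the two-sided Gronwall estimate and the remaining constants $C_1$ and $|\eb_j^-|$ are $z$-uniform — cleanly addresses the uniformity of $K$ without requiring a uniform rate of convergence, and the appeal to the integral-equation construction of \cite{CL} for the continuity of $x^z_{+,j}(0)$ in $z$ is the right ingredient to close the $+$ case on a compact neighborhood in $\mc{S}$.
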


\begin{lemm}\label{lower}
There is a constant $k$ independent of $z\in \mc{S}$ near $z_0$ such that
if $\la=(\la_1,\ldots, \la_n)^T\in \mathbb{R}^n$, $\mu=(\mu_1,\ldots,
\mu_n)^T\in \mathbb{R}^n$ and $t\in \mathbb{R}$, then   
\[
\Big|\sum_{j=1}^n \left(\la_j x_{+,j}^z(t)+\mu_jx_{-,j}^z(t)\right)\Big|
\geq k|(\la,\mu)|e^{-|t|}
\]
and
\[
\Big|\sum_{j=1}^n \la_j x_{+,j}^z(t)\Big|\geq k|\la|e^{t}.
\]
If $E_s(z)\cap E_u(z)=\{0\}$ for $z\in \mc{S}$ near $z_0$, then also  
\begin{equation}\label{anosovlower}
\Big|\sum_{j=1}^n\mu_j x_{-,j}^z(t)\Big|\geq k|\mu|e^{-t}.
\end{equation}
\end{lemm}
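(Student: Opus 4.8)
The plan is to build everything on the symplectic (Wronskian) structure of the linear system \eqref{firstorder}. First I would note that $S^z(t)$ is symmetric, being the matrix in an orthonormal frame of the self-adjoint curvature operator $Y\mapsto\mc{R}(Y,\dot\gamma_z)\dot\gamma_z$; a direct check then shows $J(A+\widetilde{S}^z(t))$ is symmetric, where $J=\left(\begin{smallmatrix}0&I\\-I&0\end{smallmatrix}\right)$, so \eqref{firstorder} is Hamiltonian and the standard symplectic form $\omega(v,w)=\langle Jv,w\rangle$ on $\rr^{2n}$ is preserved: $\omega(x(t),\tilde x(t))$ is independent of $t$ for any two solutions. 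Combining this with $e^{\mp t}x^z_{\pm,j}(t)\to\eb^\pm_j$ as $t\to\infty$ and the identity $\omega(x^z_{+,i}(t),x^z_{-,j}(t))=\omega\big(e^{-t}x^z_{+,i}(t),\,e^{t}x^z_{-,j}(t)\big)$ gives $\omega(x^z_{+,i}(t),x^z_{-,j}(t))\equiv c\,\delta_{ij}$ with $c=\omega(\eb^+_1,\eb^-_1)\neq0$; the same computation with two $(-)$ solutions, now carrying a factor $e^{-2t}\to0$, gives $\omega(x^z_{-,i}(t),x^z_{-,j}(t))\equiv0$, i.e. $\operatorname{span}\{x^z_{-,j}(t)\}$ is Lagrangian for every $t$.

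For the first inequality I would not use the symplectic structure: from $\|A+\widetilde{S}^z(s)\|\le1+Ce^{-|s|}$ Gronwall's inequality gives $|\Psi(0)|\le e^{|t|+C}|\Psi(t)|$ for any solution $\Psi$, uniformly for $z$ near $z_0$, and since $\{x^z_{\pm,j}(0)\}$ is a basis of $\rr^{2n}$ depending continuously on $z$ (fixing, as one may, a continuous choice of the $x^z_{+,j}$) and is a basis at $z_0$, there is $c_0>0$ with $|\Psi(0)|\ge c_0|(\la,\mu)|$ for $\Psi=\sum_j(\la_jx^z_{+,j}+\mu_jx^z_{-,j})$; the two estimates combine to the claim. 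For the second inequality, set $X(t)=\sum_j\la_jx^z_{+,j}(t)$ and pair with $x^z_{-,i}(t)$: then $|c|\,|\la_i|=|\omega(X(t),x^z_{-,i}(t))|\le|X(t)|\,|x^z_{-,i}(t)|\le Ke^{-t}|X(t)|$ by Lemma~\ref{upper}, so $|X(t)|\ge(|c|/K\sqrt n)\,e^{t}|\la|$ for all $t$.

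The third inequality is the only place $E_s\cap E_u=\{0\}$ enters, and is the crux. For $t\ge0$ it is the first inequality with $\la=0$. For $t\le0$ I would bring in the reverse-time unstable solutions: applying Proposition~\ref{odetheorem} and Lemma~\ref{upper} to \eqref{firstorder} after the substitution $t\mapsto-t$ (which preserves the structure, since $R^z(-t)=-I-S^z(-t)$ still satisfies the decay bound) produces unique solutions $u^z_j$ with $u^z_j(t)\sim e^{t}\eb^+_j$ as $t\to-\infty$, smooth in $(z,t)$, and with $|u^z_j(t)|\le Ke^{t}$ for $t\le0$ uniformly near $z_0$; their first components span $\mc{L}(E_u(\varphi_tz))$ by the definition of $E_u$. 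The numbers $d^z_{ij}:=\omega(u^z_i(t),x^z_{-,j}(t))$ are then constant in $t$ and bounded by $K^2$ (as $t\to-\infty$ the two factors are $O(e^{t})$ and $O(e^{-t})$). The key claim is that $D^z=(d^z_{ij})$ is invertible with $|\det D^z|$ bounded below uniformly for $z$ near $z_0$: if $D^z\mu=0$ then $\Xi(t):=\sum_j\mu_jx^z_{-,j}(t)$ is $\omega$-orthogonal to every $u^z_i(t)$ and, the stable subspace being Lagrangian, to every $x^z_{-,j}(t)$; by transversality at $\varphi_tz$ (which follows from transversality at $z$ by flow-invariance of $E_s,E_u$) the family $\{u^z_i(t)\}\cup\{x^z_{-,j}(t)\}$ is a basis of $\rr^{2n}$, so nondegeneracy of $\omega$ forces $\Xi\equiv0$ and $\mu=0$; continuity of $z\mapsto d^z_{ij}$ and invertibility at $z_0$ then give the uniform lower bound on $|\det D^z|$, hence on its smallest singular value since the entries are bounded by $K^2$. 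Finally $\omega(u^z_i(t),\Xi(t))=(D^z\mu)_i$, so for $t\le0$ one gets $c_1|\mu|\le|D^z\mu|\le\sqrt n\,Ke^{t}|\Xi(t)|$, i.e. $|\Xi(t)|\ge(c_1/\sqrt n K)\,e^{-t}|\mu|$, which combined with the $t\ge0$ case is \eqref{anosovlower}.

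The main obstacle, as expected, is the uniform invertibility of the stable–unstable Wronskian matrix $D^z$: this is exactly where the hypothesis $E_s\cap E_u=\{0\}$ is consumed, and it amounts to the quantitative statement that a nonzero stable Jacobi field must grow like $e^{-t}$ as $t\to-\infty$ (it cannot also be unstable), uniformly over nearby orbits. Everything else reduces to Gronwall's inequality, the already-established asymptotics of the $x^z_{-,j}$ (Proposition~\ref{odetheorem} and Lemma~\ref{upper}), and the constancy of symplectic Wronskians.
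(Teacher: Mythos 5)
Your argument is correct, and it fills in a proof that the paper explicitly omits (the text after Lemma~\ref{lower} just says the construction in Coddington--Levinson gives explicit estimates for large $t$, extendable to all $t$ by Gronwall's inequality, and that ``the arguments are standard, so the proofs\dots are omitted''). Your first inequality follows exactly the route the paper hints at: the basis $\{x^z_{\pm,j}(0)\}$ is uniformly non-degenerate near $z_0$, and Gronwall propagates the bound with the correct $e^{-|t|}$ loss. For the second and third inequalities you add the symplectic Wronskian structure, which is natural here and arguably cleaner than a pure Gronwall/asymptotics argument: the constancy of $\omega(x^z_{+,i},x^z_{-,j})=2\delta_{ij}$ (your $c=2$, computed correctly) and the Lagrangian property of the stable span give the second inequality directly from Lemma~\ref{upper}, and the stable--unstable Wronskian $D^z$ is precisely the place where transversality $E_s\cap E_u=\{0\}$ is consumed quantitatively. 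Your reduction of \eqref{anosovlower} for $t\ge 0$ to the first inequality with $\la=0$ is also the right observation, since that is the only part of Lemma~\ref{lower} that holds unconditionally. The estimate $|\det D^z|\ge c>0$ uniformly near $z_0$ by continuity, combined with the entrywise bound $|d^z_{ij}|\le K^2$ to control the smallest singular value, completes the argument. The only imprecision is a wording slip: the vectors $u^z_j(t)\in\rr^{2n}$ themselves (not their ``first components'') span, under $\mc{L}^{-1}$, the fiber $E_u(\varphi_t z)$; but the rest of the paragraph makes clear that this is what you meant and is what you use. One could also avoid the symplectic structure entirely, by expanding $\Xi$ in the backward-time basis $\{u^z_j\}\cup\{x'_j\}$ (where $x'_j\sim e^{-t}\eb^-_j$ as $t\to-\infty$), invoking transversality to show the $x'_j$-coordinates are bounded below by $|\mu|$, and then using the backward CL asymptotics plus Gronwall; your Wronskian route is the more economical of the two.
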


The construction of the solutions $x^z_{\pm,j}$ outlined in \cite{CL} gives
explicit estimates for $t$ large.  These estimates can be extended to all
$t\in \rr$ using Gronwall's inequality.  The arguments are standard, so the  
proofs of Lemmas~\ref{upper} and \ref{lower} are omitted.   

\begin{lemm}\label{upperbound}
Suppose that $E_s(z)\cap E_u(z)=\{0\}$ for $z\in \mc{S}$ near $z_0$.  There
is a constant $C$ independent of $z\in \mc{S}$ near $z_0$ so that if  
$x(t)=\sum_{j=1}^n \mu_j x_{-,j}^z(t)$ with $\mu=(\mu_1,\ldots, \mu_n)^T\in
\rr^n$,  
then 
\[
|x(t+s)|\leq Ce^{-t}|x(s)|,\qquad s,t\in \rr.
\] 
\end{lemm}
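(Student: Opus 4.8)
The plan is to deduce the estimate directly by combining the uniform upper bounds of Lemma~\ref{upper} with the uniform lower bound \eqref{anosovlower} of Lemma~\ref{lower}; the hypothesis $E_s(z)\cap E_u(z)=\{0\}$ enters precisely because it is exactly what makes \eqref{anosovlower} available for $z\in\mc{S}$ near $z_0$.

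First I would record two-sided bounds for an arbitrary element $x(t)=\sum_{j=1}^n\mu_j x_{-,j}^z(t)$ of the ``stable'' solution space, valid for \emph{all} $\tau\in\rr$, not just $\tau$ large. From Lemma~\ref{upper}, $|x_{-,j}^z(\tau)|\le Ke^{-\tau}$ for every $\tau\in\rr$ and every $j$, with $K$ uniform for $z\in\mc{S}$ near $z_0$; the triangle inequality together with Cauchy--Schwarz then gives $|x(\tau)|\le K\sqrt{n}\,|\mu|\,e^{-\tau}$ for all $\tau\in\rr$. From \eqref{anosovlower}, which holds under the transversality hypothesis with $k>0$ likewise uniform in $z$ near $z_0$, we have $|x(\tau)|\ge k\,|\mu|\,e^{-\tau}$ for all $\tau\in\rr$.

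The lemma is then immediate: evaluate the upper bound at $\tau=t+s$ and the lower bound at $\tau=s$, obtaining $|x(t+s)|\le K\sqrt{n}\,|\mu|\,e^{-(t+s)}$ and $|\mu|\le k^{-1}e^{s}|x(s)|$, whence
\[
|x(t+s)|\le \frac{K\sqrt{n}}{k}\,e^{-t}\,|x(s)|,\qquad s,t\in\rr,
\]
which is the asserted inequality with $C=K\sqrt{n}/k$. Uniformity of $C$ in $z\in\mc{S}$ near $z_0$ is inherited directly from the uniformity of $K$ and $k$ in Lemmas~\ref{upper} and \ref{lower}.

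I do not expect a genuine obstacle here: all the work is contained in Lemmas~\ref{upper} and \ref{lower}. The one point requiring a little care is that the lower bound must be applied \emph{at time $s$} rather than at a fixed reference time, which is legitimate because \eqref{anosovlower} is stated for all $t\in\rr$; conceptually, the essential input is that transversality of $E_s$ and $E_u$ upgrades the a priori lower bound on $\sum_j\mu_jx_{-,j}^z$ to one that decays no faster than $e^{-t}$, and it is this that turns the crude upper bound into a genuine contraction estimate along orbits.
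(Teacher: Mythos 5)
Your proof is correct and is essentially identical to the paper's: both apply the upper bound from Lemma~\ref{upper} at time $t+s$ and the transversality-dependent lower bound \eqref{anosovlower} at time $s$, and divide, yielding $C=K\sqrt{n}/k$.
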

\begin{proof}
The first estimate of Lemma~\ref{upper} implies 
\[
|x(t)|\leq K\sqrt{n}|\mu|e^{-t}.
\]
Applying this with $t$ replaced by $t+s$ and then \eqref{anosovlower}
with $t$ replaced by $s$ gives 
\[
|x(t+s)|\leq K\sqrt{n}|\mu|e^{-t}e^{-s}\leq
\frac{K\sqrt{n}}{k}e^{-t}|x(s)|.  
\]
\end{proof}

\noindent
Lemma~\ref{upperbound} can be reformulated and extended as follows.   

\begin{prop}\label{compactset}
Suppose that $E_s(z)\cap E_u(z)=\{0\}$ for $z\in S^*M$.   
Let $\mathcal{C}$ be a compact set of orbits of the geodesic flow on $S^*M$.
There exists a constant $C>0$ so that if $z\in o$ for some 
$o\in \mathcal{C}$ and $\zeta\in E_s(z)$, then 
\[
\| d\varphi_t(z).\zeta\|_G\leq Ce^{-t}\|\zeta\|_G,\qquad t\geq 0.
\]
\end{prop}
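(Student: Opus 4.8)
The plan is to reduce the global statement to the local estimate of Lemma~\ref{upperbound} by a compactness argument, passing through the identification $\mc{L}$ between $E_s$ and the span of the decaying Jacobi data along each orbit. First I would fix a compact set of orbits $\mc{C}$ and observe that each orbit $o\in\mc{C}$ meets some transverse hypersurface $\mc{S}$ at a point, so by compactness of $\mc{C}$ and a standard Lebesgue-number argument we can cover $\mc{C}$ by finitely many ``tubes'' $\{\varphi_t(z):z\in\mc{S}_\alpha\text{ near }z_\alpha,\ |t|<T_\alpha\}$, each carrying a parallel orthonormal frame $w_j^z(t)$ and the associated first-order ODE \eqref{firstorder}. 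On each such tube Lemma~\ref{upperbound} provides a constant $C_\alpha$ with $|x(t+s)|\le C_\alpha e^{-t}|x(s)|$ for $s,t\in\rr$, where $x(t)$ is the $\rr^{2n}$-vector of components of a decaying normal Jacobi field $Y(t)=\sum_j\mu_j Y^z_{-,j}(t)$ and its derivative in the frame $w_j^z(t)$.

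Next I would translate this into the Sasaki-norm statement. Under $\mc{L}$ of \eqref{L}, a vector $\zeta\in E_s(z)$ corresponds to the pair $(Y(0),D_tY(0))$ of a decaying normal Jacobi field $Y$ along $\gamma_z$, and $d\varphi_t(z).\zeta$ corresponds to $(Y(t),D_tY(t))$; see the discussion after Definition~\ref{stable} and the Jacobi-field description of $d\varphi_t$. Since the frame $w_j^z(t)$ is parallel and orthonormal, $\|d\varphi_t(z).\zeta\|_G$ is comparable — with constants depending only on the fixed Sasaki metric and the frame, hence uniform on each tube — to $|x(t)|$, where $x(t)$ is the coordinate vector of $(Y(t),D_tY(t))$. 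Applying Lemma~\ref{upperbound} with $s=0$ then gives $\|d\varphi_t(z).\zeta\|_G\le C_\alpha e^{-t}\|\zeta\|_G$ for $t\ge0$ and all $z$ in the tube over $\mc{S}_\alpha$. Taking $C=\max_\alpha C_\alpha$ over the finite cover yields the estimate for all $z$ lying on an orbit in $\mc{C}$.

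There is one point that needs care: the estimate must hold for $z$ anywhere on an orbit of $\mc{C}$, not just on the slices $\mc{S}_\alpha$, and the constant $C_\alpha$ in Lemma~\ref{upperbound} is stated for $z$ ranging over $\mc{S}$ but the inequality $|x(t+s)|\le C e^{-t}|x(s)|$ there is already uniform in the base point $s$ along the orbit — this is exactly what makes the conclusion orbit-wise rather than slice-wise. So given $z=\varphi_s(z')$ with $z'\in\mc{S}_\alpha$, one applies Lemma~\ref{upperbound} to the orbit through $z'$ with that value of $s$, and the flow-invariance of $E_s$ together with the cocycle identity $d\varphi_t(\varphi_s(z'))\circ d\varphi_s(z')=d\varphi_{t+s}(z')$ closes the argument. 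I expect the main (mild) obstacle to be bookkeeping the uniformity: ensuring that the finitely many frame-comparison constants and the finitely many $C_\alpha$ combine into a single $C$, and checking that every orbit of $\mc{C}$ does pass through one of the chosen slices within the allotted time window — both handled by compactness of $\mc{C}$ and transversality of $X$ to the hypersurfaces.
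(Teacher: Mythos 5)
Your proposal is correct and follows essentially the same route as the paper: reduce to Lemma~\ref{upperbound}, note that the $s$-parameter there already handles arbitrary base points along the orbit via the cocycle identity, identify $\|d\varphi_t(z).\zeta\|_G$ with the Euclidean norm $|x(t)|$ of the Jacobi data in a parallel orthonormal frame, and finish by compactness of $\mathcal{C}$ (viewed as a compact subset of the orbit space, i.e.\ of $\pl_-S^*M$). Two small remarks: since the frame is parallel and orthonormal, $\|d\varphi_t(z).\zeta\|_G$ is actually \emph{equal} to $|x(t)|$ (not merely comparable), so no frame-comparison constants are needed; and the time cutoff $|t|<T_\alpha$ in your tubes is extraneous --- what the covering needs to control is only the choice of transverse slice $\mc{S}_\alpha$, since Lemma~\ref{upperbound} already gives the bound for all $s,t\in\rr$ with a constant locally uniform in the slice parameter.
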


\noindent
Recall that each orbit intersects $\pl_-S^*M$ exactly once.  So we 
identify the space of orbits with $\pl_-S^*M$, with the induced  
topology.  
\begin{proof}
First note that Lemma~\ref{upperbound} gives a uniform bound for all
points in the orbit $o$ through $z$.  Namely, $x(0)$ corresponds to an  
element of $E_s(z)$, $x(s)$ corresponds to an element 
$\zeta\in E_s(\varphi_s(z))$, and $x(t+s)$ corresponds to 
$d\varphi_t(\varphi_s(z)).\zeta$.  As $s$ varies over $\rr$, $\varphi_s(z)$
varies over all points on $o$, and as $\mu$ varies over $\rr^n$, $\zeta$ varies
over all elements of $E_s(\varphi_s(z))$.  So, restricting to $t\geq 0$,
Lemma~\ref{upperbound} asserts that there is a uniform upper bound
locally in a neighborhood of any 
orbit in the space of orbits.  Hence there is a uniform bound on any
compact subset of the space of orbits.   
\end{proof}

Geodesics which stay in a set where the curvature is negative are
hyperbolic.  The 
following proposition is a special case of \cite[Theorem 3.2.17]{Kl2}.    
\begin{prop}\label{nearboundary}
Suppose $(M,g)$ is asymptotically hyperbolic.  Let $0<\epsilon<1$.  
If $z\in S^*M$ and the geodesic 
$\{\pi\big(\varphi_t(z)\big):t\in \rr\}$ is contained in the set where all  
sectional curvatures $K$ satisfy 
\begin{equation}\label{epcurv}
-(1+\eps)^2\leq K\leq -(1-\eps)^2,
\end{equation}
then    
\[
\| d\varphi_t(z).\zeta\|_G\leq 
Ce^{-(1-\eps)t}\|\zeta\|_G,\qquad 
t\geq 0,\qquad  \zeta\in E_s(z)
\]
with $C=\frac{1+\eps}{1-\eps}$.
\end{prop}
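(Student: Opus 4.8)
The plan is to reconstruct, in the present notation, the Rauch/Riccati comparison underlying \cite[Theorem~3.2.17]{Kl2}. First I would translate the statement into one about Jacobi fields. By Definition~\ref{stable} and the isomorphism $\mc{L}$ of \eqref{L}, an element $\zeta\in E_s(z)$ corresponds to a normal Jacobi field $Y$ along $\gamma_z(t):=\pi(\varphi_t(z))$ via $\mc{L}(d\varphi_t(z).\zeta)=\big(Y(t),D_tY(t)\big)$, and by Proposition~\ref{odetheorem} both $Y$ and $D_tY$ decay exponentially as $t\to+\infty$. In the Sasaki metric \eqref{sasaki} one has $\|d\varphi_t(z).\zeta\|_G^2=|Y(t)|_g^2+|D_tY(t)|_g^2$ and $\|\zeta\|_G^2=|Y(0)|_g^2+|D_tY(0)|_g^2$, so it suffices to bound $|Y(t)|_g$ and $|D_tY(t)|_g$ from above by the data at $t=0$.

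Next I would introduce the stable Riccati operator. Because \eqref{epcurv} forces every sectional curvature along $\gamma_z$ to be $\le-(1-\epsilon)^2<0$, this geodesic has no conjugate points, and no nonzero stable Jacobi field vanishes anywhere: if $Y(t_1)=0$ then $f(t):=\langle Y(t),D_tY(t)\rangle_g$ has $f'=|D_tY|_g^2-\langle\mc{R}(Y,\dot\gamma_z)\dot\gamma_z,Y\rangle_g\ge|D_tY|_g^2+(1-\epsilon)^2|Y|_g^2\ge0$, is nondecreasing, and tends to $0$ at $+\infty$, hence is $\le0$ everywhere; as $f(t_1)=0$ it vanishes identically on $[t_1,\infty)$, forcing $Y\equiv0$ by the Jacobi equation. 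Hence $Y\mapsto Y(t)$ is a linear isomorphism of the space of stable Jacobi fields onto $\dot\gamma_z(t)^\perp$ for every $t$, and the operator $U(t):\dot\gamma_z(t)^\perp\to\dot\gamma_z(t)^\perp$ defined by $U(t)Y(t):=D_tY(t)$ (for stable $Y$) is well defined, self-adjoint (the Wronskian $\langle Y_1,D_tY_2\rangle_g-\langle D_tY_1,Y_2\rangle_g$ is constant along $\gamma_z$ and vanishes for stable fields), and solves the operator Riccati equation $D_tU+U^2+R_{\dot\gamma_z}=0$, where $R_{\dot\gamma_z}(t):v\mapsto\mc{R}(v,\dot\gamma_z(t))\dot\gamma_z(t)$ satisfies $-(1+\epsilon)^2\operatorname{Id}\le R_{\dot\gamma_z}(t)\le-(1-\epsilon)^2\operatorname{Id}$ by \eqref{epcurv}.

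The crux is the Riccati comparison: comparing $U$ with the constant solutions $-(1-\epsilon)\operatorname{Id}$ and $-(1+\epsilon)\operatorname{Id}$ of the model equations with curvature $-(1-\epsilon)^2$ and $-(1+\epsilon)^2$ yields $-(1+\epsilon)\operatorname{Id}\le U(t)\le-(1-\epsilon)\operatorname{Id}$ for all $t$; this is the classical stable-Riccati comparison and can be extracted from \cite{Eb2} or proved by a scalar comparison for $t\mapsto\langle U(t)v,v\rangle_g$ along parallel unit fields. Granting it, $\tfrac{d}{dt}|Y(t)|_g^2=2\langle U(t)Y(t),Y(t)\rangle_g\le-2(1-\epsilon)|Y(t)|_g^2$, so $|Y(t)|_g\le e^{-(1-\epsilon)t}|Y(0)|_g$ for $t\ge0$, while $|D_tY(t)|_g=|U(t)Y(t)|_g$ lies between $(1-\epsilon)|Y(t)|_g$ and $(1+\epsilon)|Y(t)|_g$. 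Hence $\|d\varphi_t(z).\zeta\|_G^2\le\big(1+(1+\epsilon)^2\big)e^{-2(1-\epsilon)t}|Y(0)|_g^2$ and $\|\zeta\|_G^2\ge\big(1+(1-\epsilon)^2\big)|Y(0)|_g^2$; since $\frac{1+(1+\epsilon)^2}{1+(1-\epsilon)^2}\le\big(\frac{1+\epsilon}{1-\epsilon}\big)^2$ (which reduces to $(1-\epsilon)^2\le(1+\epsilon)^2$), the estimate follows with $C=\frac{1+\epsilon}{1-\epsilon}$.

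The one genuinely non-formal ingredient is the Riccati comparison of the third step, together with the fact that the stable Riccati operator is defined for all $t$; both are standard facts about geodesics in a region of negative curvature — which is precisely why the proposition is quoted from \cite{Kl2} — so a reader content to cite that reference may pass straight to the conclusion. I expect no other difficulty: the reduction to Jacobi fields is immediate from Proposition~\ref{odetheorem}, and the final arithmetic with the Sasaki norm is elementary.
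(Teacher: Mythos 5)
The paper offers no proof of this proposition: it is explicitly quoted as a special case of \cite[Theorem~3.2.17]{Kl2}.  Your reconstruction via the stable Riccati comparison is exactly the classical argument behind that theorem, the final Sasaki-norm arithmetic is correct, and the constant $C=\tfrac{1+\eps}{1-\eps}$ comes out as claimed, so in substance you have supplied the proof the paper delegates to Klingenberg.

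One small technical inaccuracy worth flagging in the parenthetical suggestion for the Riccati comparison: the ``scalar comparison for $t\mapsto\langle U(t)v,v\rangle_g$ along parallel unit fields'' handles the lower bound $U\ge -(1+\eps)\mathrm{Id}$ cleanly, since Cauchy--Schwarz gives $\langle U^2v,v\rangle\ge\langle Uv,v\rangle^2$ and hence $\phi'\le -\phi^2+(1+\eps)^2$, and $\phi(t_0)<-(1+\eps)$ would force finite-time blow-down of $\phi$ against the everywhere-definedness of $U$.  But the same inequality has the wrong sign for the upper bound $U\le-(1-\eps)\mathrm{Id}$: one only gets $\phi'\le -\phi^2+(1+\eps)^2$, which never excludes $\phi\in(-(1-\eps),\,1+\eps)$.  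For the upper bound one should instead take a stable Jacobi field $Y$, set $r=|Y|_g$, note $r''\ge (1-\eps)^2 r$ by the curvature upper bound and Cauchy--Schwarz, and run the scalar comparison for $p=r'/r$ (a supersolution of $p'= (1-\eps)^2-p^2$), using $r\to 0$ to rule out $p(t_0)>-(1-\eps)$; then the span of stable Jacobi field directions gives $U\le -(1-\eps)\mathrm{Id}$.  Since you mainly cite the operator comparison as standard (and it is), this is a refinement rather than a gap.

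Your preliminary step — that along a geodesic confined to \eqref{epcurv} no nonzero stable Jacobi field can vanish, via monotonicity of $\langle Y,D_tY\rangle_g$ — is a nice self-contained replacement for invoking the general no-conjugate-points theory, and the subsequent identification $E_s(\varphi_t(z))\leftrightarrow$ initial data of decaying Jacobi fields is exactly Definition~\ref{stable}.  All the remaining steps (self-adjointness of $U$ via the Wronskian, the Riccati ODE, the elementary inequality $\tfrac{1+(1+\eps)^2}{1+(1-\eps)^2}\le(\tfrac{1+\eps}{1-\eps})^2$) check out.
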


\noindent
{\it Proof of Proposition~\ref{uniformbounds}.}
Given $0<\nu<1$, set $\eps=1-\nu$.  The set of orbits which lie in 
the region where 
\eqref{epcurv} holds is contained in the complement of a compact set 
$\mathcal{C}$ of orbits.  So the inequality for $t\geq 0$ in (1) follows  
immediately from Propositions~\ref{compactset} and \ref{nearboundary}.  

The inequality for $t\leq 0$ in (1) follows from the 
inequality for $t\geq 0$ upon replacing $\zeta$ by
$d\varphi_{-t}(\varphi_t(z)).\zeta$ in the inequality for $t\geq 0$, and then
replacing $t$ by $-t$.     

The inequalities in (2) follow from those in (1) upon noting that the 
stable and unstable spaces are related by $E_u(z)=dS\big(E_s(Sz)\big)$,
where $S:S^*M\rightarrow S^*M$ is the involution $S(x,\xi)=(x,-\xi)$.  
\stopthm

\begin{prop}\label{uniquegeodesics}
Let $(M,g)$ be a simple asymptotically hyperbolic manifold.  If $p,q\in
\bbar{M}$, $p\neq q$, there is a unique geodesic 
(viewed as an unparametrized curve) connecting $p$ and $q$.  In the case
that $p$ and/or $q$ is in $\pl\bbar{M}$, this is interpreted to mean that
the geodesic approaches the point as $t\to \pm \infty$.    
\end{prop}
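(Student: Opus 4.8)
\noindent
The plan is to split into three cases according to whether $p$ and $q$ lie in the interior $M$ or on $\pl\bbar{M}$, to reduce each case to showing that a suitable ``exponential map'' extends to a homeomorphism of compactified parameter spaces, and then to invoke the elementary fact that a proper local homeomorphism onto a connected Hausdorff space is a covering map, hence a homeomorphism when the target is simply connected (or has simply connected complement of a point). Case~1, $p,q\in M$, is immediate: since $(M,g)$ is non-trapping with no conjugate points (Proposition~\ref{knieper}; or by fiat), the discussion at the end of \S\ref{conjugate} shows $\exp_p\colon T_pM\to M$ is a diffeomorphism, so there is a unique geodesic through $p$ and $q$.

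For Case~2, $p\in M$ and $q\in\pl\bbar{M}$, I would fix $p$, let $\bbar B$ be the radial compactification of $T_pM$ (a closed $(n+1)$-ball with $\pl\bbar B\cong S^*_pM$), and set $E_p=\exp_p$ on the interior and $E_p(\xi)=\pi(B_+(p,\xi))=:\Phi_p(\xi)$ on $\pl\bbar B$. The normal form \eqref{renorflow} of the flow near $\pl\bbar{S^*M}$ (valid here since $\bbar{\Gamma_-}=\emptyset$ and $S^*_pM$ is compact) exhibits $\pi(\varphi_t(p,\xi))$ as a smooth function of $(e^{-t},\xi)$ down to $e^{-t}=0$, equal there to $\Phi_p(\xi)\in\pl\bbar M$; hence $E_p$ is smooth up to $\pl\bbar B$ and maps $\pl\bbar B$ into $\pl\bbar M$. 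I would then check $E_p$ is a local diffeomorphism everywhere: on the interior this is $\exp_p$, and along $\pl\bbar B$ the differential $dE_p$ has transverse part a positive multiple of $d\pi(\bbar X(B_+(p,\xi)))=-\pl_\rho$ by \eqref{formofbarX}, transverse to $T_q\pl\bbar M$, while its tangential part $d\Phi_p(\xi)\colon T_\xi S^*_pM\to T_q\pl\bbar M$ is invertible because a nonzero kernel vector would yield a nonzero Jacobi field $Y$ along the geodesic with $Y(0)=0$ and $\|Y(t)\|_g\to0$ as $t\to+\infty$ (vanishing of the $e^t$ coefficient recorded by $\Phi_p$ forces the $e^{-t}$ asymptotics), i.e.\ a nonzero element of $E_s\cap\mc V$, which is excluded on a manifold without conjugate points (the stable Green bundle is a graph over the horizontal bundle). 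Then $E_p$ is a proper local diffeomorphism onto the connected manifold $\bbar M$, hence a covering map; since $M$ is simply connected the inclusion $M\hookrightarrow\bbar M$ gives $\pi_1(\bbar M)=1$, so $E_p$ is a homeomorphism. In particular $\Phi_p\colon S^*_pM\to\pl\bbar M$ is a homeomorphism (so $\pl\bbar M$ is a topological sphere), and $q$ is the image of a unique point of $\bbar B$, giving the unique geodesic from $p$ to $q$.

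For Case~3, $p,q\in\pl\bbar M$, the geodesics with backward endpoint $p$ are parametrized by $z_-\in\pl_-S^*_pM\cong T^*_p\pl\bbar M$, and those from $p$ to $q$ are the ones with $\psi_p(z_-):=\pi(S_g(z_-))=q$. I would show $\psi_p$ is a homeomorphism onto $\pl\bbar M\setminus\{p\}$. Its image avoids $p$: if $\psi_p(z_-)=p$, pick an interior point $w$ on that geodesic; then both rays of the geodesic through $w$ converge to $p$, so $\Phi_w(\xi)=p=\Phi_w(-\xi)$ for the direction $\xi$, contradicting the injectivity of $\Phi_w$ from Case~2. It is a local diffeomorphism: a nonzero kernel vector of $d\psi_p(z_-)$ gives a nonzero Jacobi field along $\gamma_{z_-}$ decaying as $t\to-\infty$ (the backward endpoint $p$ is fixed along the family) and, by vanishing of its dominant behavior at the forward endpoint, also as $t\to+\infty$, contradicting simplicity. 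It is proper onto $\pl\bbar M\setminus\{p\}$: by Lemma~\ref{smallgeo} the short geodesics give $\psi_p(z_-)\to p$ as $|z_-|_{h_0}\to\infty$, so preimages of sets bounded away from $p$ are bounded and closed, hence compact. Identifying $\pl_-S^*_pM\cong\rr^n$ and, using Case~2, $\pl\bbar M\setminus\{p\}\cong\rr^n$, the map $\psi_p$ is a proper local diffeomorphism onto a simply connected space, hence a homeomorphism; so $q=\psi_p(z_-)$ for a unique $z_-$, giving the unique geodesic from $p$ to $q$.

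The main obstacle is the non-degeneracy of the two boundary differentials $d\Phi_p$ and $d\psi_p$. For $d\psi_p$ this is essentially the definition of simplicity, once one makes precise that a degenerate variation among geodesics with a fixed endpoint at infinity corresponds to a Jacobi field that is also subdominant (hence decaying) at the other end. For $d\Phi_p$ one needs, in addition, the fact that $E_s\cap\mc V=\{0\}$ on a manifold without conjugate points (equivalently, finiteness of the stable Riccati solution along every geodesic); establishing this rigorously here uses the hyperbolicity of the flow (Proposition~\ref{uniformbounds}) together with a careful analysis of Jacobi fields as $t\to+\infty$ via the curvature asymptotics \eqref{curvature} and the uniform estimates of Lemma~\ref{convtobdry}. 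Everything else — smoothness of the extended exponential maps up to the boundary and the topological conclusion — follows from the earlier lemmas and standard covering-space theory.
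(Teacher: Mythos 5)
Your proof takes a genuinely different route from the paper's. The paper handles the cases where $p$ and/or $q$ lies on $\pl\bbar M$ by invoking classical results: it shows the conformal boundary agrees with Eberlein's boundary of asymptotic geodesics, notes that the appendix of \cite{Kl1} establishes the uniform visibility axiom under hyperbolicity and bounded geometry, and then cites Propositions~1.5 and~1.7 of \cite{Eb1} together with the uniqueness statement from \cite{Kl1}. You instead construct compactified exponential-type maps $E_p$ and $\psi_p$ and run a covering-space argument (proper local diffeomorphism onto a simply connected base is a homeomorphism). This is more self-contained and quite appealing; it also recovers, as a byproduct, that $\bbar M$ is a topological ball, which in the paper's approach is only implicit. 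The non-degeneracy arguments you sketch (a kernel of $d\Phi_p$ gives a nonzero element of $E_s\cap\mc V$, a kernel of $d\psi_p$ gives a nonzero element of $E_s\cap E_u$) are essentially the same Jacobi-field considerations the paper later carries out in Propositions~\ref{bundlesbdry} and~\ref{Ediff}, and there is no circularity since those propositions' proofs do not use Proposition~\ref{uniquegeodesics}.

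One point does need repair. You define $\bbar B$ to be the \emph{radial} compactification of $T_pM$ and claim $E_p$ extends to a local diffeomorphism up to $\pl\bbar B$. With the standard radial compactification, the boundary defining function is $s'=1/r$, whereas the smooth dependence on the flow parameter established by Lemma~\ref{smoothpb} and \eqref{renorflow} is in the variable $s=e^{-r}$. The change of variables $s'=1/r\mapsto s=e^{-1/s'}$ is smooth at $s'=0$ but has all derivatives vanishing there, so $E_p$ remains smooth on the radial compactification but its transverse derivative at $\pl\bbar B$ is zero — it is \emph{not} a local diffeomorphism there, and the covering-space argument would not apply. The fix is to replace the radial compactification by the compactification whose boundary coordinate is $e^{-r}$ (i.e.\ the one forced on you by \eqref{renorflow}); then your computation showing that the transverse part of $dE_p$ is a nonzero multiple of $\pl_\rho$ is correct (the sign you wrote is off but irrelevant), and the tangential part is invertible provided $E_s\cap\mc V=\{0\}$, which is Proposition~2.11 of \cite{Eb2} for complete manifolds without conjugate points and curvature bounded below. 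With that correction, and with the non-degeneracy arguments made precise along the lines of the paper's Propositions~\ref{bundlesbdry} and~\ref{Ediff}, the proof goes through.
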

\begin{proof}
If both $p$ and $q\in M$, this follows from the fact that the exponential
map is a diffeomorphism at each point.  If one or both of $p$, 
$q$ is in $\pl \bbar{M}$, 
it follows from results in \cite{Kl1}, \cite{Eb1} as follows.  First, the
conformal compactification $\bbar{M}$ agrees with the compactification
used in \cite{Kl1}, \cite{Eb1} where the boundary at infinity consists of 
equivalence classes of geodesics that are asymptotic; two oriented 
geodesics $\gamma_1$ and $\gamma_2$ are said to be asymptotic 
if for any given unit speed parametrization $\gamma_i(t)$ of $\gamma_i$
there is $C>0$ such that $d_g(\gamma_1(t),\gamma_2(t))\leq C$ for all
$t\geq 0$.  In fact, it is easily seen that this notion of asymptotic is
equivalent to the condition that the two geodesics have the same endpoint
in $\bbar{M}$ as $t\to \infty$.  The appendix to \cite{Kl1} is formulated
for the 
universal cover of a compact manifold with Anosov geodesic flow, but the
arguments apply to any complete, simply connected manifold with no
conjugate points for which the geodesic flow is Anosov and for which there
is a uniform lower bound on sectional curvatures.  This appendix  
proves, first, that any such manifold satisfies the uniform visibility
axiom of \cite{Eb1}.  Now Proposition 1.5 of \cite{Eb1} asserts that any
complete, simply connected manifold with no conjugate points satisfying the
uniform visibility axiom has the property that there exists a unique 
geodesic connecting a point of $M$ and a point of $\pl \bbar{M}$, and  
Proposition 1.7 of 
\cite{Eb1} asserts that there exists a geodesic connecting any two distinct   
points of $\pl\bbar{M}$.  Moreover, the appendix to \cite{Kl1} additionally
proves that there exists a unique geodesic connecting any two distinct
points of $\pl\bbar{M}$.     
\end{proof}

We next study boundary mapping properties of the extended 
exponential map for simple asymptotically hyperbolic 
manifolds.  The behavior of $E_s$ and $E_u$ near $\partial \bbar{S^*M}$ 
plays an important role.  
\begin{prop}\label{bundlesbdry}
On a non-trapping asymptotically hyperbolic manifold, each of the
subbundles $E_s$ and $E_u$ of $TS^*M$ extends smoothly to 
a subbundle of $T\bbar{S^*M}$.  
Moreover, the extensions satisfy $\mc{V}_z=E_s(z)$ for $z\in
\partial_+ S^*M$ and $\mc{V}_z=E_u(z)$ for $z\in \partial_- S^*M$.      
\end{prop}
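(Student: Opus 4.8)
The plan is to work in the coordinates $(\rho,y,\bar\xi_0,\eta)$ on ${}^bT^*\bar M$ near $\partial\bar{S^*M}$ and to express $E_s$ and $E_u$ in terms of the solutions $x^z_{\pm,j}$ of the first-order system \eqref{firstorder}, but now allowing the base point $z$ to lie on $\partial\bar{S^*M}$. The key observation is that the $\mathbb{R}^{2n}$-valued system \eqref{firstorder} governing normal Jacobi fields along a geodesic ray approaching $\partial_+S^*M$ depends smoothly on the endpoint data: using Lemma~\ref{convtobdry} and the reparametrization \eqref{reparam}, a geodesic ray ending at $z_+\in\partial_+S^*M$ can be written as $\bar\varphi_{-\tau_+(z)}(z_+)$, and the curvature matrix $R^z(t) = -I - S^z(t)$ with $|S^z(t)|\le Ce^{-|t|}$ extends smoothly in $z$ right up to $z_+$ (this is exactly the structure exploited in Lemma~\ref{propofu} and in the proof of Proposition~\ref{odetheorem}). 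Hence the uniquely determined decaying solutions $x^z_{-,j}(t)$ and the growing solutions $x^z_{+,j}(t)$, together with their defining data under $\mc{L}$ as in Definition~\ref{stable}, depend smoothly on $z\in\bar{S^*M}$. Since $E_s$ and $E_u$ are characterized through $\mc{L}$ by the span of these solution data, and $\mc{L}$ itself (together with $d\pi$ and $\mc{K}$) extends smoothly to $\bar{S^*M}$ by the identification ${}^bT^*\bar M\cong {}^{\mc{L}}T\bar M$ discussed in \S2, the subbundles $E_s, E_u$ extend smoothly to subbundles of $T\bar{S^*M}$.

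For the identification of the fiber at the boundary, fix $z\in\partial_+S^*M$. A normal Jacobi field $Y(t)$ along the geodesic ray ending at $z$ lies in $E_s(z)$ iff it decays like $e^{-t}$; in the frozen-coefficient model (the solution for $\delta=0$ in Lemma~\ref{smallgeo}, i.e. the hyperbolic-space model $\rho^{-2}(d\rho^2 + h_0)$) the decaying Jacobi fields are precisely those vanishing at the endpoint at infinity, and under the duality ${}^bT^*\bar M\cong{}^{\mc{L}}T\bar M$ these correspond at $z$ exactly to the vertical directions: $d\pi$ annihilates them since the corresponding tangent vectors to $\bar{S^*M}$ at $z$ project to zero in ${}^{\mc{L}}T\bar M$ (the geodesic, viewed in ${}^{\mc{L}}T\bar M$, has a well-defined nonzero tangent vector at $z$, and the Jacobi variation with zero endpoint value is tangent to the fiber of $\pi:\bar{S^*M}\to\bar M$). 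Quantitatively, from \eqref{firstorder} with $S^z\equiv 0$ the decaying solutions are $x^z_{-,j}(t)= e^{-t}\eb_j^-$, and tracing through $\mc{L}$ one sees their data lie in $\mc{V}_z$; by the smooth dependence and a dimension count ($\dim E_s(z) = n = \dim\mc{V}_z$), the perturbation argument (the $S^z$-correction is $\mathcal{O}(e^{-|t|})$ and does not change the limiting subspace) gives $E_s(z)=\mc{V}_z$ for all $z\in\partial_+S^*M$. The statement $E_u(z)=\mc{V}_z$ for $z\in\partial_-S^*M$ follows by applying the time-reversal involution $S(x,\xi)=(x,-\xi)$, which swaps $E_s\leftrightarrow E_u$ and $\partial_+S^*M\leftrightarrow\partial_-S^*M$ while preserving $\mc{V}$.

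I expect the main obstacle to be making precise the claim that the solution operator of \eqref{firstorder} depends smoothly on $z$ all the way up to $\partial\bar{S^*M}$ and that passing to the limit commutes with taking the span of the decaying solutions — i.e. that the subbundle $E_s$ does not ``jump'' in the limit. The cleanest route is to redo the construction of $x^z_{-,j}$ from \cite[Problem 29, p.\ 104]{CL} directly on $\bar{S^*M}$: one writes the integral equation $x(t) = e^{-t}\eb_j^- - \int_t^\infty e^{-(t-s)}P_- \widetilde S^z(s)x(s)\,ds + \int_{t_0}^t e^{(t-s)}P_+\widetilde S^z(s)x(s)\,ds$ (with $P_\pm$ the spectral projections of $A$ onto the $\pm1$ eigenspaces), observes that the uniform bound $|S^z(s)|\le Ce^{-|s|}$ holds with $C$ uniform in $z$ including boundary points, and runs the contraction-mapping/Gronwall argument with $z$-dependence carried along. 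Smoothness in $z$ then follows by differentiating the integral equation, exactly as in the proof of Proposition~\ref{odetheorem}. Once this uniform construction is in hand, the identification of the boundary fibers with $\mc{V}$ is a short computation in the $\delta=0$ model plus the perturbation estimate, and the rest is formal.
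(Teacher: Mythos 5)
Your overall plan (extend the ODE construction from Proposition~\ref{odetheorem} up to $\partial\bbar{S^*M}$, then read off the boundary fiber from the frozen constant-curvature model) has the right intuition, but two key steps do not go through as written.

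First, the ODE \eqref{firstorder} is parametrized by arc-length time $t$, and $X = \rho\bbar{X}$ vanishes on $\partial\bbar{S^*M}$, so $\varphi_t$ fixes boundary points. Hence for $z\in\partial_+S^*M$ there is no geodesic curve $t\mapsto\varphi_t(z)$ and no curvature matrix $R^z(t)$: the claim that ``$R^z(t)$ extends smoothly in $z$ right up to $z_+$'' is not meaningful. Your proposed fix via the integral equation does not resolve this: for $z=\bbar{\varphi}_{-\sigma}(z_+)$ with $\sigma\to 0$ one has $|S^z(t)|\leq C\rho(z)e^{-t}$ for $t\geq 0$ (not a uniform $Ce^{-|t|}$), so the family of ODEs degenerates to the constant-coefficient system $\dot x = Ax$ in the limit, and the solutions $e^{-t}\eb_j^-$ carry no information about the direction in $T\bbar{S^*M}$ of the limiting subspace --- because the isomorphism $\mc{L}$ of \eqref{L} also degenerates as $\rho\to 0$. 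The paper sidesteps the time reparametrization entirely: it fixes $z\in S^*M$ interior, pushes a smooth basis $\zeta_j(z)$ of $E_s(z)$ forward along the orbit, and uses the chain rule $d\varphi_{t(\tau,z)}(z).\zeta = d\bbar{\varphi}_\tau(z).\zeta - \rho(\bbar{\varphi}_\tau(z))\bbar{X}(\bbar{\varphi}_\tau(z))\,d_zt(\tau,z).\zeta$. The first term is smooth up to $\tau=\tau_+(z)$ since $\bbar{\varphi}$ is a smooth flow on $\bbar{S^*M}$, and the second is smooth because the $\big(\tau_+(z)-\tau\big)^{-1}$ blowup of $d_zt$ (from differentiating \eqref{taupm}) is exactly cancelled by the vanishing of $\rho(\bbar{\varphi}_\tau(z))$.

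Second, the boundary fiber identification is not as direct as ``tracing through $\mc{L}$''. In the frozen model the decaying solution data is $x^z_{-,j}(t) = e^{-t}(e_j,-e_j)$, which for every finite $t$ is \emph{not} in $\{0\}\oplus\mc{Z}$ (so not vertical), and the naive limit as $t\to\infty$ is the zero vector, not a vertical direction. The decay of the Jacobi field's value at the endpoint is consistent with the claim, but it does not by itself tell you which subspace survives in $T\bbar{S^*M}$ because $\mc{L}$ (i.e.\ $d\pi$ and $\mc{K}$ measured via $g$) degenerates at $\rho=0$. The paper supplies the missing quantitative step via the Sasaki metric: if $\zeta(\tau)$ is a smooth section of $\ker\alpha\cap T\bbar{S^*M}$ along $\bbar{\varphi}_\tau(z)$ with $\zeta(\tau_+(z))\notin\mc{V}$, then $\|\zeta(\tau)\|_G\geq C\rho(\bbar{\varphi}_\tau(z))^{-1}$ because $G(\zeta,\zeta)\geq g(d\pi\zeta,d\pi\zeta)=\rho^{-2}\bbar{g}(d\pi\zeta,d\pi\zeta)$ and some $\bbar{a}^j\neq 0$ in the limit; but the stable decay estimate (Lemma~\ref{upper} plus \eqref{uniformrho}) gives $\|\zeta(\tau)\|_G\leq Ce^{-t}\leq C\rho$, a contradiction. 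That contradiction, together with the dimension count, is what forces $E_s=\mc{V}$ on $\partial_+S^*M$. Your reduction of the $E_u$ statement to the involution $(x,\xi)\mapsto(x,-\xi)$ is fine once the two preceding points are repaired.
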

\begin{proof}
Let $z_0\in S^*M$ and choose a basis  
$\{\zeta_j(z); 1\leq j\leq n\}$ for $E_s(z)$ for $z$ near $z_0$,   
depending smoothly on $z$.  Since $E_s$ is invariant under the geodesic
flow, it follows that $\{d\varphi_t(z).\zeta_j(z); 1\leq j\leq n\}$ is a   
basis for $E_s(\varphi_t(z))$ for all $t\in \mathbb{R}$.  Now write 
$\bbar{\varphi}_\tau(z)=\varphi_{t(\tau,z)}(z)$ as in \eqref{reparam}.  We
will show that $d\varphi_{t(\tau,z)}(z).\zeta_j(z)$, $1\leq j\leq n$,
extend smoothly up to $\tau=\tau_+(z)$ as a function of $(\tau,z)$ for $z$
near $z_0$, and that they remain linearly independent.  {From} this it
follows that the bundle $E_s$ extends smoothly to $\pl_+S^*M$.    

The chain rule gives
\begin{equation}\label{chainrule}
\begin{split}
d\varphi_{t(\tau,z)}(z).\zeta 
&=d \bbar{\varphi}_\tau(z).\zeta -
X(\bbar{\varphi}_\tau(z))d_zt(\tau,z).\zeta\\
&=d \bbar{\varphi}_\tau(z).\zeta -
\rho(\bbar{\varphi}_\tau(z))\bbar{X}(\bbar{\varphi}_\tau(z))d_zt(\tau,z).\zeta 
\end{split}
\end{equation}
Certainly $d \bbar{\varphi}_\tau(z).\zeta_j(z)$ and 
$\bbar{X}(\bbar{\varphi}_\tau(z))$ extend smoothly up to $\tau=\tau_+(z)$.   
Differentiation of \eqref{taupm} shows that  
\[
d_zt(\tau,z)=\eta(\tau,z)\big(\tau-\tau_+(z)\big)^{-1} 
\]
where $\eta(\tau,z)$ is smooth up to $\tau=\tau_+(z)$.  But 
$\rho(\bbar{\varphi}_\tau(z))$ is smooth and vanishes when
$\tau=\tau_+(z)$.  Thus 
$\rho(\bbar{\varphi}_\tau(z))d_zt(\tau,z).\zeta_j(z)$ is smooth up to 
$\tau=\tau_+(z)$, and so also is $d\varphi_{t(\tau,z)}(z).\zeta_j(z)$. 

Now $d\bbar{\varphi}_\tau(z)$ is an isomorphism up to $\tau=\tau_+(z)$
since $\bbar{\varphi}_\tau (z)$ is a smooth flow.  Since
$\{\zeta_1(z),\ldots,\zeta_n(z),\bbar{X}(z)\}$ is a linearly independent
set of vectors at $z$, it follows that 
$\{d\bbar{\varphi}_{\tau_+(z)}(z).\zeta_1(z),\ldots,
d\bbar{\varphi}_{\tau_+(z)}(z).\zeta_n(z),
d\bbar{\varphi}_{\tau_+(z)}(z).\bbar{X}(z)\}$   
is a linearly independent set of vectors at
$\bbar{\varphi}_{\tau_+(z)}(z)$.  But 
$d\bbar{\varphi}_{\tau_+(z)}(z).\bbar{X}(z)=
\bbar{X}(\bbar{\varphi}_{\tau_+(z)}(z))$ 
since $\bbar{\varphi}_\tau$ is the flow of $\bbar{X}$.  Therefore 
$\{d\bbar{\varphi}_{\tau_+(z)}(z).\zeta_j(z)+\alpha_j\bbar{X}(\bbar{\varphi}_{\tau_+(z)}(z)); 
1\leq j\leq n\}$ 
is linearly independent for any $\alpha_1,\ldots,\alpha_n\in \mathbb{R}$. 
It thus follows from \eqref{chainrule} that  
$\{d\varphi_{t(\tau,z)}(z).\zeta_j(z);1\leq j\leq n\}$ remains linearly
independent up to $\tau=\tau_+(z)$ as claimed.

The same argument letting $\tau\rightarrow -\tau_-(z)$ shows that $E_s$ 
extends smoothly to $\pl_-S^*M$.  Then the same argument, taking the
$\zeta_j(z)$ to be a basis for $E_u(z)$, shows that $E_u$ extends smoothly
to $\bbar{S^*M}$.  

Near the boundary, we use the smooth coordinates \eqref{newcoords} for 
${}^bT^*\bbar{M}$. So $\zeta\in T{}(^bT^*\bbar{M})$ can be written   
\[
\zeta = \bbar{a}^0\pl_{\bbar{\rho}} +\sum_j\bbar{a}^j\pl_{\bbar{y}^j}
+\bbar{b}_0\pl_{\bbar{\xi}_0}+\sum_j\bbar{b}_j\pl_{\bbar{\eta}_j}. 
\]
The
$(\bbar{\rho},\bbar{y}^j,\bbar{\xi}_0,\bbar{\eta}_j,\bbar{a}^0,\bbar{a}^j,\bbar{b}_0,\bbar{b}_j)$ 
are smooth coordinates for $T({}^bT^*\bbar{M})$ near $\rho=0$.    
The fiber of $T\bbar{S^*M}\subset T({}^bT^*\bbar{M})$ is given by 
$\bbar{b}_0=0$ over $\partial \bbar{S^*M}$.  
The condition $\alpha(\zeta)=0$ reads $\bbar{\xi}_0\bbar{a}^0 = -\rho
\bbar{\eta}_j\bbar{a}^j$, so the fiber of the smooth  
extension of $\ker \alpha$ is given by $\bbar{a}^0=0$ over 
$\partial \bbar{S^*M}$.  
The fiber of the vertical bundle of $T({}^bT^*\bbar{M})$ is given by
$\bbar{a}^0=0$, $\bbar{a}^j=0$, and to obtain the fiber of $\mc{V}\subset 
T\bbar{S^*M}$ one adds the condition $\bbar{\xi}_0\bbar{b}_0=-\rho^2 
\sum_{ij}h^{ij}\bbar{\eta}_i\bbar{b}_j$ of tangency to $\bbar{S^*M}$.  In
particular, the fiber of $\mc{V}$ over $\partial \bbar{S^*M}$  
is given by $\bbar{a}^0=0$, $\bbar{a}^j=0$, $\bbar{b}_0=0$.   

To identify the fiber of $E_s$ over $\pl_+ S^*M$, we consider the 
asymptotics of the Sasaki metric.
Let $\sigma(\tau)$ be a smooth curve in $\bbar{S^*M}$ with
$\sigma(\tau_0)\in\pl_+ S^*M$ such that $\sigma$ is transverse to $\pl_+
S^*M$ at $\sigma(\tau_0)$, and let $\zeta(\tau)$ be a smooth
section of $\ker\alpha\cap T\bbar{S^*M}$ along $\sigma(\tau)$ with
$\zeta(\tau)\to \zeta(\tau_0)\in T_{\sigma(\tau_0)}\bbar{S^*M}$.  We claim
that if $\zeta(\tau_0)\notin \mc{V}_{\sigma(\tau_0)}$, then there is $C>0$
so that $\|\zeta(\tau)\|_G \geq C\rho(\sigma(\tau))^{-1}$ for $\tau$
sufficiently close to $\tau_0$.  In fact, the coordinates of
$\zeta(\tau_0)$ must satisfy $\bbar{a}^0=\bbar{b}_0=0$ since 
$\zeta(\tau)\in \ker\alpha\cap T\bbar{S^*M}$.  Since 
$\zeta(\tau_0)\notin \mc{V}_{\sigma(\tau_0)}$, it must be that
$\bbar{a}^j(\tau_0)\neq 0$ for some $j$.  But 
\[
G(\zeta,\zeta)\geq
g(d\pi(\zeta),d\pi(\zeta) )
=\rho^{-2}\big[(\bbar{a}^0)^2+h_{ij}\bbar{a}^i\bbar{a}^j\big]
\geq \rho^{-2}\sum_{ij}h_{ij}\bbar{a}^i\bbar{a}^j,
\]
so the claimed inequality follows upon taking $\zeta=\zeta(\tau)$ with
$\tau$ close to $\tau_0$.  

To apply this observation to $E_s$, choose $z\in S^*M$ and take 
$\sigma(\tau)=\bbar{\varphi}_\tau(z)=\varphi_{t(\tau,z)}(z)$, so 
$\tau_0=\tau_+(z)$.  Choose a basis $\zeta_j(z)$, $1\leq j\leq n$ for
$E_s(z)$ and take $\zeta(\tau)=d\varphi_{t(\tau,z)}(z).\zeta_j(z)$ for some
$j$.  Equation \eqref{uniformrho} together with Lemma~\ref{upper} show
that $\|\zeta(\tau)\|_G\leq Ce^{-t}\leq C\rho$.  The above observation 
implies that it must be that $\zeta(\tau_0)\in \mc{V}_{B_+(z)}$.     
But the argument earlier in this proof establishing the smoothness of $E_s$
up to the boundary shows that the $\zeta(\tau_+(z))$ obtained by varying
$j$ form a basis for $E_s(B_+(z))$.  Thus the fibers of $E_s$ and $\mc{V}$
coincide on $\pl_+ S^*M$.  The same argument applies to $E_u$ at $\pl_-
S^*M$.  
\end{proof}
\begin{rem}
By analyzing the behavior of the connection map $\mc{K}$ near 
$\partial \bbar{S^*M}$, it can be shown that also the horizontal bundle
$\mc{H}$ extends smoothly to a subbundle of $T\bbar{S^*M}$, and 
$\mc{H}_z=\mc{V}_z$ for $z\in \partial \bbar{S^*M}$.  The details are
omitted since we will not use this fact.    
\end{rem}

Set
\[
\mc{T}^+\bbar{S^*M}=\{(\tau,z)\in \rr\times \bbar{S^*M}: 0<\tau\leq
\tau_+(z)\}.
\]
\begin{prop}\label{Ediff}
If $(M,g)$ is simple, then the following map is a diffeomorphism
\[
\Psi:\mc{T}^+\bbar{S^*M}\to \bbar{M}\times\bbar{M}\setminus
{\rm diag} ,\qquad
\Psi(\tau,z)=\big(\pi(z),\pi(\bbar{\varphi}_\tau(z))\big). 
\]
\end{prop}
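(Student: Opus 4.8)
The plan is to show that $\Psi$ is a smooth bijection between manifolds-with-corners which respects the boundary stratification and whose differential is invertible everywhere, and then to invoke the inverse function theorem. Since $(M,g)$ simple is in particular non-trapping, $\bbar{\Gamma_\mp}=\emptyset$; by Lemma~\ref{Xform} the field $\bbar{X}$ is smooth on $\bbar{S^*M}$ and transverse to $\pl\bbar{S^*M}$, and by Corollary~\ref{boundmap} the arrival time $\tau_+$ is smooth and nonnegative on $\bbar{S^*M}$, vanishing to first order exactly on $\pl_+S^*M$. Hence $\mc{T}^+\bbar{S^*M}$ is a smooth manifold-with-corners of dimension $2n+2=\dim(\bbar{M}\x\bbar{M})$, with corner $\{z\in\pl_-S^*M,\ \tau=\tau_+(z)\}$, and $\Psi$ is smooth, being assembled from $\bbar{\varphi}_\tau$, $\tau_+$ and $\pi$. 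It lands in $\bbar{M}\x\bbar{M}\setminus{\rm diag}$: if $\pi(z)=\pi(\bbar{\varphi}_\tau(z))$ with $0<\tau\le\tau_+(z)$ then a geodesic would return to its starting point, impossible because $\exp_p$ is a diffeomorphism when $p\in M$ (simple manifolds have no conjugate points, Propositions~\ref{uniformbounds} and \ref{knieper}) and because, when $p\in\pl\bbar{M}$, the interior of the orbit lies over $M$ while its two ideal endpoints are distinct (a consequence of the uniform visibility property established in the proof of Proposition~\ref{uniquegeodesics}). Finally $\Psi$ carries the open stratum onto $M\x M\setminus{\rm diag}$, the faces $\{z\in\pl_-S^*M\}$ and $\{\tau=\tau_+(z),\ z\in S^*M\}$ onto $\pl\bbar{M}\x M$ and $M\x\pl\bbar{M}$, and the corner onto $\pl\bbar{M}\x\pl\bbar{M}\setminus{\rm diag}$.

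Bijectivity is immediate from Proposition~\ref{uniquegeodesics}: for $p\neq q$ in $\bbar{M}$, the unique geodesic joining them, oriented from $p$ to $q$ and lifted to $\bbar{S^*M}$, is a single flow orbit, and on it the initial point $z$ (the point over $p$, unique because $\exp_p$ is a diffeomorphism when $p\in M$, and equal to the orbit's unique point of $\pl_-S^*M$ when $p\in\pl\bbar{M}$) and the reparametrized time of passage through $q$ (unique, equal to $\tau_+(z)$ exactly when $q\in\pl\bbar{M}$) are uniquely determined; this yields both injectivity and surjectivity.

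The heart of the proof is the invertibility of $d\Psi$ at every $(\tau,z)$. Since $d\pi:T_z\bbar{S^*M}\to T_{\pi(z)}\bbar{M}$ is a surjective submersion with kernel $\mc{V}_z$, one has $\ker d\Psi\cong\{(\xi,\dot\tau):\xi\in\mc{V}_z,\ L_{(\tau,z)}(\xi,\dot\tau)=0\}$, where
\[
L_{(\tau,z)}:\mc{V}_z\oplus\rr\longrightarrow T_{\pi(\bbar{\varphi}_\tau(z))}\bbar{M},\qquad
(\xi,\dot\tau)\longmapsto d\pi\bigl(d\bbar{\varphi}_\tau(z)\,\xi+\dot\tau\,\bbar{X}(\bbar{\varphi}_\tau(z))\bigr);
\]
as both sides of $L_{(\tau,z)}$ have dimension $n+1$, it suffices to show $L_{(\tau,z)}$ is surjective. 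When $\bbar{\varphi}_\tau(z)\in S^*M$, i.e. $\tau<\tau_+(z)$, the image of $L_{(\tau,z)}$ is spanned by $\dot\gamma_z$ and by the values at $\bbar{\varphi}_\tau(z)$ of the normal Jacobi fields along $\gamma_z$ vanishing at $z$, so its surjectivity is exactly the absence of conjugate points between $\pi(z)$ and $\pi(\bbar{\varphi}_\tau(z))$, which holds. When $\bbar{\varphi}_\tau(z)=B_+(z)\in\pl_+S^*M$, i.e. $\tau=\tau_+(z)$, write $\xi=\xi^s+\xi^u\in E_s(z)\oplus E_u(z)$; this is legitimate because $\mc{V}_z$ is transverse to $E_s(z)$ and to $E_u(z)$ (a consequence of the absence of conjugate points together with hyperbolicity of the flow, cf. \cite{Eb2}), while for $z\in\pl_-S^*M$ one has simply $\mc{V}_z=E_u(z)$ by Proposition~\ref{bundlesbdry}. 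A chain-rule computation with \eqref{chainrule} then shows that $d\bbar{\varphi}_{\tau_+(z)}(z)$ sends the $E_s(z)$-part of $\xi$ into $\mc{V}_{B_+(z)}=E_s(B_+(z))=\ker d\pi$ and the $E_u(z)$-part into the extended bundle $E_u(B_+(z))$, which lies in $\ker\alpha_{B_+(z)}$; combined with $d\pi(\bbar{X}(B_+(z)))=-\pl_\rho$ (from \eqref{formofbarX}, using $\bbar{\xi}_0=-1$ on $\pl_+S^*M$) and $d\pi(\ker\alpha_{B_+(z)})\subset T_{\pi(B_+(z))}\pl\bbar{M}$ (from the coordinate computation in the proof of Proposition~\ref{bundlesbdry}), this gives that the image of $L_{(\tau_+(z),z)}$ equals $d\pi\bigl(E_u(B_+(z))\bigr)+\rr\pl_\rho$. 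Thus it remains only to see that $d\pi$ is injective on $E_u(B_+(z))$, equivalently $E_u(B_+(z))\cap\mc{V}_{B_+(z)}=\{0\}$; then $d\pi(E_u(B_+(z)))$ is an $n$-plane inside $T\pl\bbar{M}$, hence all of it, and $L_{(\tau_+(z),z)}$ is onto. For this transversality I would use that $d\alpha$ extends smoothly to $\bbar{S^*M}$ and is preserved by the geodesic flow $\varphi_t$ (Lemma~\ref{mesure}): the $d\alpha$-pairing between $E_s(z)$ and $E_u(z)$, nondegenerate because these are complementary Lagrangians in $(\ker\alpha_z,d\alpha)$, passes to the limit $t\to\infty$ to a nondegenerate pairing between $E_s(B_+(z))=\mc{V}_{B_+(z)}$ and $E_u(B_+(z))$, which forces the Lagrangian $E_u(B_+(z))$ to be transverse to $\mc{V}_{B_+(z)}$.

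The main obstacle I anticipate is precisely this boundary analysis of $d\Psi$, above all at the corner, where the degeneration $\mc{V}=E_u$ at $\pl_-S^*M$ must be reconciled with $\mc{V}=E_s$ at $\pl_+S^*M$; the mechanism that keeps the relevant subspaces transverse up to $\pl\bbar{S^*M}$ is the smooth extension and flow-invariance of the symplectic form $d\alpha$ furnished by Lemma~\ref{mesure}.
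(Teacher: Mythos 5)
Your proof follows the same overall architecture as the paper's: bijectivity from Proposition~\ref{uniquegeodesics}, reduction of local invertibility of $d\Psi$ to a statement about vertical vectors carried by the flow, and a three-way case split (interior/interior, boundary/interior, corner) resolved using the extended stable/unstable bundles of Proposition~\ref{bundlesbdry} and Eberlein's transversality $E_u\cap\mc V=\{0\}$. Your reformulation via surjectivity of the $(n+1)$-dimensional map $L_{(\tau,z)}$ is equivalent by dimension count, as you note.

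The one genuinely different step is the corner transversality $E_s(z_0)\cap E_u(z_0)=\{0\}$ for $z_0\in\pl\bbar{S^*M}$. The paper disposes of it in a parenthetical, appealing to the smooth extension furnished by Proposition~\ref{bundlesbdry} together with flow-invariance of $E_s\oplus\rr\bbar X$ and $E_u\oplus\rr\bbar X$: pull back along $d\bbar\varphi_{-\eps}(z_0)$ to an interior point where transversality is known, then descend. Your argument instead exploits the smooth extension and $\bbar\varphi_\tau$-invariance of $d\alpha$ from Lemma~\ref{mesure}: since $\iota_{\bbar X}d\alpha=0$, the $d\alpha$-pairing between flow-adapted bases of $E_s$ and $E_u$ is literally constant along orbits, so its nondegeneracy (complementary Lagrangians in $(\ker\alpha,d\alpha)$) persists to the boundary, where isotropy of $\mc V_{z_0}=E_s(z_0)$ then forces $E_u(z_0)\cap\mc V_{z_0}=\{0\}$. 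This is a clean alternative that makes the mechanism explicit rather than leaving it implicit in the flow-invariance citation. One minor imprecision: in the subcase $z\in\pl_-S^*M$, $\tau<\tau_+(z)$, you invoke ``absence of conjugate points between $\pi(z)$ and $\pi(\bbar\varphi_\tau(z))$'' even though $\pi(z)\in\pl\bbar M$; the precise statement you need there is $E_u\cap\mc V=\{0\}$ on $S^*M$ (Proposition~2.11 of \cite{Eb2}, which you do cite later for the corner), rather than a literal conjugate-point assertion involving a boundary point.
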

\begin{proof}
The bijectivity of $\Psi$ is an immediate consequence of
Proposition~\ref{uniquegeodesics}.  Namely, given 
$(p,q)\in \bbar{M}\times\bbar{M}\setminus \Delta$, there is a unique
geodesic connecting $p$ and $q$; $z$ corresponds to the initial point and 
direction, and $\tau$ the time when the geodesic is parametrized as
$\tau\to\pi(\bbar{\varphi}_\tau(z))$.  

The map $\Psi$ is clearly smooth, so it suffices to show that $d\Psi$ 
is injective at each point.  Write 
\[
\Psi(\tau,z)=\big(\Psi_1(\tau,z),\Psi_2(\tau,z)\big), 
\qquad \Psi_1(\tau,z)=\pi(z),\quad 
\Psi_2(\tau,z)=\pi(\bbar{\varphi}_\tau(z))
\]
so that $\ker d\Psi=\ker d\Psi_1\cap \ker d\Psi_2\subset \rr\pl_\tau \oplus 
T_z\bbar{S^*M}$.  Clearly $\ker d\Psi_1 = \rr\pl_\tau \oplus \mc{V}_z$.   
For $a\in \rr$ and $\zeta \in T_z\bbar{S^*M}$, we have
\[
d\Psi_2(a\pl_\tau +\zeta)=d\pi\big(d\bbar{\varphi}_\tau(z).\zeta
+a\bbar{X}(\bbar{\varphi}_\tau(z))\big).
\]
So injectivity of $d\Psi(\tau,z)$ is equivalent to the statement that if
$\zeta\in \mc{V}_z$ and 
$d\bbar{\varphi}_\tau(z).\zeta\in
\mc{V}_{\bbar{\varphi}_\tau(z)}\oplus\rr\bbar{X}(\bbar{\varphi}_\tau(z))$, 
then $\zeta=0$.  

If both $\pi(z)$ and $\pi(\bbar{\varphi}_\tau(z))$ are
in $M$, this follows from the fact that there are no conjugate points:   
by \eqref{chainrule}, we deduce that 
$d\varphi_{t(\tau,z)}(z).\zeta \in
\mc{V}_{\bbar{\varphi}_\tau(z)}\oplus\rr\bbar{X}(\bbar{\varphi}_\tau(z))$.
But $d\varphi_{t(\tau,z)}(z).\zeta$ is already in $\ker\alpha$, so 
$d\varphi_{t(\tau,z)}(z).\zeta \in \mc{V}_{\bbar{\varphi}_\tau(z)}$ and 
\eqref{noconjugatepoints} implies that $\zeta=0$.  

Suppose next that $\pi(z)\in \pl \bbar{M}$ (so $z\in \pl_-S^*M$) 
and $\pi(\bbar{\varphi}_\tau(z))\in M$.  If $\zeta\in \mc{V}_z$, then 
$\zeta\in E_u(z)$ by Proposition~\ref{bundlesbdry}.  Since 
$E_u\oplus \rr\bbar{X}$ is invariant under the flow $\bbar{\varphi}_\tau$, 
it follows that $d\bbar{\varphi}_\tau(z).\zeta   
\in E_u(\bbar{\varphi}_\tau(z))\oplus
\rr\bbar{X}(\bbar{\varphi}_\tau(z))$.   Proposition 2.11 of \cite{Eb2}
implies that if $(M,g)$ is complete with no conjugate points and sectional 
curvatures bounded from below, then $E_u$
and $E_s$ each intersect $\mc{V}$ only in $\{0\}$.  So if also 
$d\bbar{\varphi}_\tau(z).\zeta \in \mc{V}_{\bbar{\varphi}_\tau(z)}\oplus 
\rr\bbar{X}(\bbar{\varphi}_\tau(z))$, then 
$d\bbar{\varphi}_\tau(z).\zeta \in \rr\bbar{X}(\bbar{\varphi}_\tau(z))$.
Since $\bbar{X}$ is invariant under the flow $\bbar{\varphi}_{\tau}$, it
follows that $\zeta\in \rr\bbar{X}(z)\cap \mc{V}_z=\{0\}$, so $\zeta =0$ as
desired.  The argument if $\pi(z)\in M$ and
$\pi(\bbar{\varphi}_\tau(z))\in \pl\bbar{M}$ (so
$\bbar{\varphi}_\tau(z)\in \pl_+S^*M$) is similar.

The argument if $z\in \pl_-S^*M$ and
$\bbar{\varphi}_\tau(z)\in \pl_+S^*M$ follows the same idea.  In 
this case $\zeta\in E_u(z)$, so $d\bbar{\varphi}_\tau(z).\zeta   
\in E_u(\bbar{\varphi}_\tau(z))\oplus
\rr\bbar{X}(\bbar{\varphi}_\tau(z))$.  If also 
$d\bbar{\varphi}_\tau(z).\zeta \in \mc{V}_{\bbar{\varphi}_\tau(z)}\oplus 
\rr\bbar{X}(\bbar{\varphi}_\tau(z))=E_s({\bbar{\varphi}_\tau(z)})\oplus  
\rr\bbar{X}(\bbar{\varphi}_\tau(z))$, then 
$d\bbar{\varphi}_\tau(z).\zeta \in \rr\bbar{X}(\bbar{\varphi}_\tau(z))$
since $E_u\cap E_s=\{0\}$ (this transversality
holds at $\pl\bbar{S^*M}$ too by Proposition~\ref{bundlesbdry}  
and invariance under the geodesic flow).  
Once again, translating back to $z$ shows that $\zeta\in \rr\bbar{X}(z)$,
so $\zeta =0$.  
\end{proof}

\begin{rem}
Simplicity is a necessary condition for $\Psi$ to be a
local diffeomorphism everywhere on $\bbar{M}\times\bbar{M}\setminus {\rm 
  diag}$.  If $(M,g)$ is non-trapping, $z\in\pl_-S^*M$, and 
$E_s\cap E_u\neq \{0\}$ along the integral curve $\bbar{\varphi}_\tau(z)$, 
then $\Psi$ is not a local diffeomorphism near
$(\tau_+(z),z)$.  In fact, as the proof above shows, for any $\zeta\in 
E_s(z)\cap E_u(z)$, there is $a_\zeta\in \rr$ so that
$d\Psi(a_\zeta\pl_\tau + \zeta)=0$.  
\end{rem}
\begin{rem}
The analogue of Proposition~\ref{Ediff} obtained by replacing 
$\mc{T}^+\bbar{S^*M}$ by the set
$\mc{T}^-\bbar{S^*M}=\{(\tau,z)\in \rr\times \bbar{S^*M}: 
-\tau_-(z)\leq \tau<0\}$ is also true, with the same proof.
\end{rem}

\begin{corr}\label{mapB}
If $(M,g)$ is simple, the following map is a diffeomorphism 
\[ 
B: \pl_-S^*M \to  \pl \bbar{M}\x \pl \bbar{M}\setminus {\rm diag} ,
\quad B(z):= (\pi(z),\pi(S_g(z))).
\] 
\end{corr}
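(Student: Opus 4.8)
The plan is to obtain this as a restriction of the diffeomorphism $\Psi$ of Proposition~\ref{Ediff} to an appropriate boundary stratum. Since $(M,g)$ is simple it is non-trapping, so $\bbar{\Gamma_-}=\emptyset$ and, by Corollary~\ref{boundmap}, $\tau_+$ is smooth on all of $\bbar{S^*M}$; moreover $\tau_+(z)>0$ for $z\in\pl_-S^*M$. Hence the map $\iota:\pl_-S^*M\to\mc{T}^+\bbar{S^*M}$, $\iota(z)=(\tau_+(z),z)$, is well defined and smooth, and by the definition of $S_g$ in \eqref{scattering},
\[
\Psi\circ\iota(z)=\big(\pi(z),\pi(\bbar{\varphi}_{\tau_+(z)}(z))\big)=\big(\pi(z),\pi(S_g(z))\big)=B(z).
\]
In particular $B$ is smooth, and since $\Psi$ takes values in $\bbar{M}\times\bbar{M}\setminus{\rm diag}$, the image of $B$ lies in $\pl\bbar{M}\times\pl\bbar{M}\setminus{\rm diag}$, so $B$ is well defined as asserted.

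Next I would verify that $B$ is a bijection. For surjectivity, given $(y_-,y_+)\in\pl\bbar{M}\times\pl\bbar{M}\setminus{\rm diag}$, Proposition~\ref{uniquegeodesics} provides a geodesic joining $y_-$ and $y_+$; orienting it from $y_-$ to $y_+$, its backward limit $z\in\pl_-S^*M$ satisfies $\pi(z)=y_-$ and $\pi(S_g(z))=y_+$, i.e. $B(z)=(y_-,y_+)$. For injectivity, if $B(z)=B(z')$ then $\gamma_z$ and $\gamma_{z'}$ are geodesics joining the same pair of distinct boundary points, so they coincide as oriented curves by Proposition~\ref{uniquegeodesics}, whence they have the same backward limit and $z=z'$. (Equivalently, since $\tau_+>0$ on $\pl_-S^*M$, $\iota$ is a bijection onto $\{(\tau_+(z),z):z\in\pl_-S^*M\}$ and $\Psi$ restricts to a bijection of this set onto $\pl\bbar{M}\times\pl\bbar{M}\setminus{\rm diag}$.)

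It then remains to show $B^{-1}$ is smooth, and here I would write $B^{-1}=p\circ\Psi^{-1}\big|_{\pl\bbar{M}\times\pl\bbar{M}\setminus{\rm diag}}$, where $p:\rr\times\bbar{S^*M}\to\bbar{S^*M}$ is the projection and $\Psi^{-1}$ is smooth by Proposition~\ref{Ediff}. The point to check is that $\Psi^{-1}(y_-,y_+)$ has the form $(\tau_+(z),z)$ with $z\in\pl_-S^*M$ whenever $y_\pm\in\pl\bbar{M}$. Writing $(\tau,z)=\Psi^{-1}(y_-,y_+)$: $\pi(z)=y_-\in\pl\bbar{M}$ forces $z\in\pl\bbar{S^*M}$, and $z\notin\pl_+S^*M$ because $\tau_+=0$ there while $\tau_+(z)\ge\tau>0$; hence $z\in\pl_-S^*M$. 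Since $\bbar{X}$ is transverse to $\pl\bbar{S^*M}$ (Lemma~\ref{Xform}), pointing inward along $\pl_-S^*M$ and outward along $\pl_+S^*M$, the curve $\bbar{\varphi}_\sigma(z)$ stays in $S^*M$ for $0<\sigma<\tau_+(z)$ and meets $\pl_+S^*M$ at $\sigma=\tau_+(z)$; so $\pi(\bbar{\varphi}_\tau(z))\in\pl\bbar{M}$ only for $\tau=\tau_+(z)$, pinning $\tau=\tau_+(z)$. Consequently $p\circ\Psi^{-1}$ inverts $B$ — one checks $B\circ(p\circ\Psi^{-1})={\rm id}$ and $(p\circ\Psi^{-1})\circ B={\rm id}$ using $\Psi\circ\iota=B$ together with injectivity of $\Psi$ — and it is smooth as a composition of smooth maps.

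The only real work, such as it is, sits in this last paragraph: identifying the slice $\{(\tau_+(z),z):z\in\pl_-S^*M\}$ of $\mc{T}^+\bbar{S^*M}$ as exactly $\Psi^{-1}\big(\pl\bbar{M}\times\pl\bbar{M}\setminus{\rm diag}\big)$ and checking that the time coordinate is forced to equal $\tau_+(z)$ there. This is elementary once Lemma~\ref{Xform}, Corollary~\ref{boundmap} and Proposition~\ref{Ediff} are in hand; no new estimates are required.
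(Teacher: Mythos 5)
Your proposal is correct and is essentially the same argument as in the paper: the paper's one-line proof states that $B$ is the restriction of $\Psi$ to the slice $\{(\tau_+(z),z):z\in\pl_-S^*M\}$, which is exactly what you spell out. Your extra verification that $\Psi$ carries this slice bijectively onto $\pl\bbar{M}\times\pl\bbar{M}\setminus{\rm diag}$ (using the transversality of $\bbar{X}$ to the boundary to pin $\tau=\tau_+(z)$) is precisely the implicit content of the paper's phrasing, so the two approaches coincide.
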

\begin{proof}
$B$ is the restriction of $\Psi$ to $\{(\tau_+(z),z):z\in \pl_-S^*M\}$.   
\end{proof}

Let $d_g:M\times M\setminus {\rm diag} \to (0,\infty)$ denote the distance 
function in the metric $g$.  If $(M,g)$ has no conjugate points, then $d_g$
is smooth.  Let $\rho$ be a smooth defining function for $\pl \bbar{M}$,
and define 
\[\widetilde{d}_g(p,q):=d_g(p,q)+\log\rho(p) +\log\rho(q).\]    
\begin{prop}\label{dtsmooth}
If $(M,g)$ is simple, then $\widetilde{d}_g$ extends
smoothly to $\bbar{M}\times \bbar{M}\setminus {\rm diag}$.   
\end{prop}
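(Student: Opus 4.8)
The plan is to combine the diffeomorphism $\Psi$ of Proposition~\ref{Ediff} with a local analysis of the $g$-length of the connecting geodesic near its endpoints. First I would reduce to the case that $\rho$ is a geodesic boundary defining function: a general defining function is $\rho=e^{\omega}\rho_0$ with $\rho_0$ geodesic and $\omega\in C^\infty(\bbar{M})$, and replacing $\rho$ by $\rho_0$ changes $\widetilde{d}_g$ by the manifestly smooth term $-\omega(p)-\omega(q)$, so we may assume $g=\rho^{-2}(d\rho^2+h_\rho)$ near $\pl\bbar{M}$. Since $(M,g)$ is simple it has no conjugate points, so $\exp_p$ is a diffeomorphism for every $p$ and, by Proposition~\ref{uniquegeodesics}, for $p,q\in M$ the distance $d_g(p,q)$ is the $g$-length of the unique connecting geodesic. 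Writing $(\tau,z)=\Psi^{-1}(p,q)\in\mc{T}^+\bbar{S^*M}$ and using the reparametrization \eqref{reparam}, that geodesic is $s\mapsto\pi(\bbar{\varphi}_s(z))$, $s\in[0,\tau]$, of $g$-length $\int_0^\tau\rho(\bbar{\varphi}_s(z))^{-1}\,ds$ (here $\rho$ is lifted to $\bbar{S^*M}$ by $\pi$, and $\rho(\bbar{\varphi}_s(z))>0$ for $s$ strictly between the two endpoints of the orbit, since $\bbar{X}$ is transverse to $\pl\bbar{S^*M}$). As $\Psi$ is a diffeomorphism, it therefore suffices to show that
\[
F(\tau,z):=\int_0^\tau\frac{ds}{\rho(\bbar{\varphi}_s(z))}+\log\rho(\pi(z))+\log\rho(\pi(\bbar{\varphi}_\tau(z)))
\]
(which equals $\widetilde{d}_g\circ\Psi$ on the interior $\{z\in S^*M,\ 0<\tau<\tau_+(z)\}$, where it is obviously smooth) extends to a smooth function on $\mc{T}^+\bbar{S^*M}$; the two boundary hypersurfaces at issue are $\{z\in\pl_-S^*M\}$ (where $p\in\pl\bbar{M}$) and $\{\tau=\tau_+(z)\}$ (where $q\in\pl\bbar{M}$), together with their corner.

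The heart of the matter is a one-ended statement: for $a>0$ sufficiently small and $z$ in a neighborhood of $\pl_-S^*M$ in $\bbar{S^*M}$, the function $G_-(z):=\int_0^a\rho(\bbar{\varphi}_s(z))^{-1}\,ds+\log\rho(\pi(z))$ extends smoothly across $\pl_-S^*M$. To prove this, write $z=\bbar{\varphi}_{\tau_b}(z_b)$ with $z_b:=B_-(z)\in\pl_-S^*M$ and $\tau_b:=\tau_-(z)\geq 0$, both smooth in $z$ near $\pl_-S^*M$ by Corollary~\ref{boundmap}. Since $\bbar{X}$ is smooth near $\pl_-S^*M$ with $(\bbar{X}\rho)|_{\pl_-S^*M}=\bbar{\xi}_0=1$, Taylor's theorem (the analogue near $\pl_-S^*M$ of \eqref{rhovphitau}) gives $\rho(\bbar{\varphi}_{s}(z_b))=s\,u(s,z_b)$ with $u$ smooth, $u(0,\cdot)=1$, $u>0$. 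Hence $\rho(\bbar{\varphi}_s(z))^{-1}=\rho(\bbar{\varphi}_{s+\tau_b}(z_b))^{-1}=(s+\tau_b)^{-1}+w(s+\tau_b,z_b)$ with $w(s',z_b):=(s')^{-1}(u(s',z_b)^{-1}-1)$ smooth, so
\[
\int_0^a\frac{ds}{\rho(\bbar{\varphi}_s(z))}=\log(a+\tau_b)-\log\tau_b+\int_{\tau_b}^{a+\tau_b}w(s',z_b)\,ds',\qquad \log\rho(\pi(z))=\log\tau_b+\log u(\tau_b,z_b).
\]
The $\log\tau_b$ terms cancel, leaving $G_-(z)=\log(a+\tau_b)+\int_{\tau_b}^{a+\tau_b}w(s',z_b)\,ds'+\log u(\tau_b,z_b)$, which is visibly smooth in $(\tau_b,z_b)$ down to $\tau_b=0$ and hence in $z$. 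The symmetric statement near $\pl_+S^*M$ — that $\til{G}_+(w):=\int_{-b}^0\rho(\bbar{\varphi}_s(w))^{-1}\,ds+\log\rho(\pi(w))$ extends smoothly across $\pl_+S^*M$ — follows by applying $G_-$ to the flip $S(x,\xi)=(x,-\xi)$, which conjugates the flow to its time reversal and fixes $\rho\circ\pi$.

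Given these, the proposition follows by splitting the integral. Fix $(p_0,q_0)\notin{\rm diag}$, let $(\tau_0,z_0)=\Psi^{-1}(p_0,q_0)$, and pick constants $a,b>0$ small enough (in particular $a+b<\tau_0$, and $a,b$ within the neighborhoods of the one-ended statements above); then on a neighborhood of $(\tau_0,z_0)$ one has $0<a<\tau-b<\tau\leq\tau_+(z)$, so
\[
F(\tau,z)=G_-(z)+\int_a^{\tau-b}\frac{ds}{\rho(\bbar{\varphi}_s(z))}+\til{G}_+(\bbar{\varphi}_\tau(z)).
\]
The middle integral has a smooth positive integrand on $[a,\tau-b]$, an interval lying strictly between the endpoints of the orbit, with limits smooth in $(\tau,z)$, so it is smooth. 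The term $G_-(z)$ is smooth near $z_0$: by the one-ended statement if $z_0\in\pl_-S^*M$, and trivially if $z_0\in S^*M$ (taking $a$ small so $\rho(\bbar{\varphi}_s(z))$ stays positive for $s\in[0,a]$ and $\log\rho(\pi(z))$ stays finite). Likewise $\til{G}_+(\bbar{\varphi}_\tau(z))$ is smooth near $(\tau_0,z_0)$, by the symmetric statement if $\bbar{\varphi}_{\tau_0}(z_0)\in\pl_+S^*M$ and trivially otherwise. Thus $F$ extends smoothly near $(\tau_0,z_0)$; since $(p_0,q_0)$ was arbitrary and $\Psi$ is a diffeomorphism, $\widetilde{d}_g$ extends smoothly to $\bbar{M}\times\bbar{M}\setminus{\rm diag}$.

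I expect the genuine content — and the only real obstacle — to be the one-ended cancellation statement for $G_-$: one must track the logarithmic blow-up of the length integral against the $\log\rho$ counterterm precisely enough to see that they cancel modulo a smooth remainder, uniformly as $z$ crosses $\pl_-S^*M$ and including the degenerate regime of very short geodesics, where $a$ must be chosen small relative to $\tau_+(z)$. The corner $\pl_-S^*M\cap\{\tau=\tau_+(z)\}$ then requires no new idea, only applying the one-ended result at both ends simultaneously, as in the displayed splitting.
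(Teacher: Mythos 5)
Your proof is correct and takes essentially the same approach as the paper's: both pass to $(\tau,z)$-coordinates via Proposition~\ref{Ediff} and the reparametrization \eqref{reparam}, and both rely on the linear vanishing of $\rho(\bbar{\varphi}_s(z))$ at the boundary times (the analogue of \eqref{rhovphitau}) to cancel the logarithmic blow-up of the length integral against the $\log\rho$ counterterms. The paper packages this as a single global decomposition $\rho(\bbar{\varphi}_s(z))^{-1}=(s+\tau_-(z))^{-1}+(\tau_+(z)-s)^{-1}+B(s,z)$ with $B$ smooth on the whole orbit, whereas you split the integral at fixed interior times $a$ and $\tau-b$ and invoke the flip $S(x,\xi)=(x,-\xi)$ to reduce the outgoing end to the incoming one — a cosmetic reorganization of the same one-ended cancellation.
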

\begin{proof}
It suffices to prove the result for geodesic defining functions.  
First use Proposition~\ref{Ediff}.  Given $p$, $q\in M$, we can uniquely
write $p=\pi(z)$, $q=\pi(\bbar{\varphi}_\tau(z))$ for $z\in S^*M$
and $0<\tau<\tau_+(z)$.  We need to show that $\til{d}_g(p,q)$ extends
smoothly to $\mc{T}^+\bbar{S^*M}$ as a function of $(\tau,z)$.  

Now $d_g(p,q)$ is the elapsed time $t(\tau,z)$ for the  
geodesic segment joining $p$ to $q$, given by \eqref{reparam}.   As in
\eqref{rhovphitau},   
\eqref{taupm}, for $z\in \bbar{S^*M}$ and $s$ near $\tau_+(z)$ we have 
$\rho(\bbar{\varphi}_s(z))=(\tau_+(z)-s)A_+(s,z)$ for $A_+$ smooth
satisfying $A_+(\tau_+(z),z)=1$, and for $z\in \bbar{S^*M}$ and $s$ near
$-\tau_-(z)$ we have $\rho(\bbar{\varphi}_s(z))=(s+\tau_-(z))A_-(s,z)$ for
$A_-$ smooth satisfying $A_-(-\tau_-(z),z)=1$.  Consequently 
\[
\rho(\bbar{\varphi}_s(z))^{-1}=\big(s+\tau_-(z)\big)^{-1}
+\big(\tau_+(z)-s\big)^{-1}+B(s,z)
\]
with $B(s,z)$ smooth for $z\in \bbar{S^*M}$, $s\in [-\tau_-(z),\tau_+(z)]$.
Carrying out the integration in \eqref{reparam}, one deduces that 
$t(\tau,z)+\log(\tau_+(z)-\tau)+\log\tau_-(z)$
extends smoothly to $\mc{T}^+\bbar{S^*M}$.  Since  
$\log\rho(\bbar{\varphi}_\tau(z))-\log(\tau_+(z)-\tau)$ and 
$\log\rho(z)-\log\tau_-(z)$ both extend smoothly to 
$\mc{T}^+\bbar{S^*M}$, the result follows. 
\end{proof}
\begin{rem}
Theorem 1.2 of \cite{SaWa1} asserts that the conclusion of
Proposition~\ref{dtsmooth} holds under the assumption that $(M,g)$ is
geodesically convex, which is equivalent to assuming that it is
non-trapping with no conjugate points.  However, the proof appears to be
incomplete.  Our proof above uses the additional assumption that there are 
no conjugate points at infinity.
\end{rem}

\begin{rem}
A study of $d_g$ has been carried out for certain perturbations of
hyperbolic space in \cite{MSV}.  This has been extended to non-trapping
asymptotically hyperbolic manifolds in a neighborhood of the boundary
diagonal by Chen-Hassell \cite{ChHa} and Sa Barreto-Wang \cite{SaWa1},
\cite{SaWa2}:  the function $\beta^*d_g+\log\rho_L +\log\rho_R$ extends to
a smooth function on $\mathcal{U}\setminus {\rm diag}_0$, where
$\mathcal{U}$ is a neighborhood of the front face in the Mazzeo-Melrose
stretched product space $\bbar{M}\x_0\bbar{M}$, ${\rm diag}_0$ denotes the
closure of the lift of the interior diagonal, $\rho_R$ and $\rho_L$ are
defining functions for the right and left faces in $\bbar{M}\x_0\bbar{M}$,
and $\beta$ denotes the blow-down map.  
Combining this with Proposition~\ref{dtsmooth}, it follows that 
$\beta^*d_g+\log\rho_L +\log\rho_R\in
C^\infty(\bbar{M}\x_0\bbar{M}\setminus {\rm diag}_0)$ for simple
asymptotically hyperbolic metrics.  The analysis in these papers of the 
short geodesics is closely related to Lemma~\ref{smallgeo} above.    
\end{rem}

For a simple asymptotically hyperbolic metric and a choice of defining 
function $\rho$, we define the \emph{renormalized boundary distance}
$d^R_g\in C^\infty(\pl \bbar{M}\times \pl \bbar{M}\setminus {\rm diag})$ by     
\begin{equation}\label{defdR}
d^R_g:= \til{d}_g|_{\pl \bbar{M}\x \pl \bbar{M}\setminus {\rm diag}}.   
\end{equation}
The realization \eqref{Lg} of $L_g$ shows that 
$d^R_g(p,q)=L_g(B^{-1}(p,q))$
with $B$ defined in Corollary~\ref{mapB}.  Either using \eqref{dependLg} or
directly from the definition, it follows that 
if $\widehat{\rho}=\rho e^{\omega}$  
is another choice of boundary defining function (with $\omega\in
C^\infty(\bbar{M})$), and if $\widehat{d}_g^R(p,q)$ denotes the  
renormalized distance associated to $\widehat{\rho}$, then
\[
\widehat{d}^R_g(p,q)-d^R_g(p,q)=\omega(p)+\omega(q),\qquad p,q\in \pl\bbar{M}. 
\]

The renormalized distance can be defined assuming only that 
there is a unique geodesic joining any two points of $\pl\bbar{M}$.  But if
$(M,g)$ is not simple, the map $B$ in 
Corollary~\ref{mapB} is not a local diffeomorphism,  
and we do not how to prove Proposition~\ref{equivalencedglens} below.     

\begin{rem}
Recall that a Busemann function for a point $p\in \pl\bbar{M}$  
is defined (typically for a Hadamard manifold) as follows.  Choose a
geodesic $\gamma(t)$ for which $\lim_{t\to \infty} \gamma(t)=p$.  The
Busemann function associated to $\gamma$ is the function $B_\gamma:M\to\rr$ 
defined by $B_\gamma(q)=\lim_{t\to \infty}\big(d_g(q,\gamma(t))-t\big)$.   
Observe that it follows from Proposition~\ref{dtsmooth} that the function  
$d^1_g(p,q)=d_g(p,q)+\log\rho(p)=\til{d}_g(p,q)-\log\rho(q)$ 
extends smoothly to $\bbar{M}\x M\setminus {\rm diag}$.  If $p\in
\pl\bbar{M}$, the function $q\to d^1_g(p,q)$ on $M$ is a Busemann function
for $p$, depending on the choice of defining function $\rho$. 
In fact, it is clear that if $\gamma$ is a geodesic and $\rho$ is any 
defining function such that $\lim_{t\to\infty} \rho(\gamma(t))/e^{-t}=1$,
then $B_\gamma(q)=d^1_g(p,q)$.  In particular, it is a consequence of  
Proposition~\ref{dtsmooth} that on a simple asymptotically hyperbolic
manifold, any Busemann function is in $C^\infty(M)$.     
\end{rem}

In the next two propositions we fix a representative $h$ for the conformal 
infinity of a simple asymptotically hyperbolic manifold $(M,g)$, thus 
determining a geodesic defining function $\rho$.   
$d^R_g$ and $L_g$ will denote the corresponding renormalized boundary
distance and renormalized length function.  
The product identification associated to $h$ induces the identification 
\eqref{ident} of each of $\pl_\pm S^*M$ with $T^*\pl\bbar{M}$.  We thereby
view $L_g$ as defined on $T^*\pl\bbar{M}$, and $S_g$ as mapping 
$T^*\pl\bbar{M}$ to itself. 

\begin{prop}\label{scatpoints}
Let $(M,g)$ be a simple asymptotically hyperbolic manifold and let $h$ be a
representative metric for the conformal infinity.   If $p$, $q\in
\pl\bbar{M}$, $p\neq q$, then 
\[
S_g\big(p,-d_p(d^R_g(p,q))\big)=\big(q,d_q(d^R_g(p,q))\big).  
\]
Here $d^R_g(p,q)$ is the renormalized distance function determined by $h$, 
$d_p(d^R_g(p,q))\in T^*_p\pl\bbar{M}$ denotes its exterior derivative with 
respect to $p$, and $d_q(d^R_g(p,q))\in T^*_q\pl\bbar{M}$ its exterior 
derivative with respect to $q$. 
\end{prop}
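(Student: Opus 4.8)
The plan is to read off $d_p d^R_g(p,q)$ and $d_q d^R_g(p,q)$ from the classical first variation of the distance function $d_g$ in the interior of $M$, and then pass to $\pl\bbar{M}$ using the smooth extension of $\til{d}_g$ provided by Proposition~\ref{dtsmooth}. Since $(M,g)$ is simple it is in particular non-trapping, hence simply connected, and it has no conjugate points, so $\exp_p$ is a diffeomorphism at every $p\in M$ (Section~\ref{conjugate}); thus any two points of $M$ are joined by a unique minimizing geodesic, $d_g\in C^\infty(M\times M\setminus{\rm diag})$, and the first variation of arc-length gives, for $p,q\in M$,
\[
d_p d_g(p,q)=-v_p^\flat,\qquad d_q d_g(p,q)=v_q^\flat ,
\]
where $v_p$ and $v_q$ are the initial and terminal velocities of the unit-speed geodesic running from $p$ to $q$, and $\flat$ lowers an index with $g$. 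Since $\til{d}_g(p,q)=d_g(p,q)+\log\rho(p)+\log\rho(q)$, this gives $d_p\til{d}_g(p,q)=-v_p^\flat+d\rho/\rho$ and $d_q\til{d}_g(p,q)=v_q^\flat+d\rho/\rho$ on $M\times M\setminus{\rm diag}$.

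Next I would fix $z_-:=B^{-1}(p,q)\in\pl_-S^*M$, with $B$ the diffeomorphism of Corollary~\ref{mapB}, and set $z_+:=S_g(z_-)\in\pl_+S^*M$, so that $\pi(z_-)=p$, $\pi(z_+)=q$ and $d^R_g(p,q)=L_g(z_-)=\til{d}_g(p,q)$. To evaluate $d_p\til{d}_g$ and $d_q\til{d}_g$ at the boundary point $(p,q)$ I would approach it inside $M\times M$ along the path $s\mapsto(p(s),q(s))$, $s\to 0^+$, with $p(s):=\pi(\bbar{\varphi}_s(z_-))$ and $q(s):=\pi(\bbar{\varphi}_{\tau_+(z_-)-s}(z_-))$; since $\exp_{p(s)}$ is a diffeomorphism, the truncated curve $\sigma\mapsto\pi(\bbar{\varphi}_\sigma(z_-))$, $\sigma\in[s,\tau_+(z_-)-s]$, is precisely the geodesic from $p(s)$ to $q(s)$. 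As $X$ is the Hamilton field of $\demi|\xi|^2_g$, the base velocity of a geodesic at unit $g$-speed is $\xi^\sharp$, so $v_{p(s)}^\flat$ and $v_{q(s)}^\flat$ are exactly the cotangent coordinates of $\bbar{\varphi}_s(z_-)$ and $\bbar{\varphi}_{\tau_+(z_-)-s}(z_-)$. Writing that cotangent coordinate in the ${}^bT^*\bbar{M}$ coordinates \eqref{newcoords} as $\bbar{\xi}_0\rho^{-1}d\rho+\eta\cdot dy$ and using the duality isomorphism ${}^bT^*\bbar{M}\cong{}^{\mc{L}}T\bbar{M}$ recalled before Lemma~\ref{Xform} (so that $v^\flat=\bbar{\xi}_0\rho^{-1}d\rho+\eta\cdot dy$), we obtain
\[
d_p\til{d}_g(p(s),q(s))=\tfrac{1-\bbar{\xi}_0}{\rho}\,d\rho-\eta\cdot dy ,\qquad
d_q\til{d}_g(p(s),q(s))=\tfrac{1+\bbar{\xi}_0}{\rho}\,d\rho+\eta\cdot dy ,
\]
with $\bbar{\xi}_0,\eta,\rho$ evaluated at $\bbar{\varphi}_s(z_-)$ in the first equation and at $\bbar{\varphi}_{\tau_+(z_-)-s}(z_-)$ in the second.

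As $s\to 0^+$ one has $\bbar{\varphi}_s(z_-)\to z_-$ and $\bbar{\varphi}_{\tau_+(z_-)-s}(z_-)\to z_+$; hence $\bbar{\xi}_0\to 1$ at $z_-$ and $\bbar{\xi}_0\to-1$ at $z_+$, the $\eta$-coordinates converge to the $\eta$-coordinates $\eta_{z_-},\eta_{z_+}$ of $z_-,z_+$, and $(1-\bbar{\xi}_0)/\rho\to0$ near $z_-$ while $(1+\bbar{\xi}_0)/\rho\to0$ near $z_+$ (write $1-\bbar{\xi}_0^2=\rho^2|\eta|^2_{h_\rho}$ and use that $|\eta|_{h_\rho}$ stays bounded along the orbit while $\rho\to0$). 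By Proposition~\ref{dtsmooth} the one-forms $d_p\til{d}_g$ and $d_q\til{d}_g$ extend continuously to $\pl\bbar{M}\times\pl\bbar{M}\setminus{\rm diag}$, so these limits compute $d_p\til{d}_g(p,q)$ and $d_q\til{d}_g(p,q)$. Restricting to $\pl\bbar{M}$, which annihilates the $d\rho$-components, gives $d_p d^R_g(p,q)=-\eta_{z_-}$ and $d_q d^R_g(p,q)=\eta_{z_+}$; and by the identification \eqref{ident}, $\eta_{z_-}$ is exactly the covector in $T^*_p\pl\bbar{M}$ corresponding to $z_-\in\pl_-S^*M$ and $\eta_{z_+}$ the covector in $T^*_q\pl\bbar{M}$ corresponding to $z_+\in\pl_+S^*M$. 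Therefore $\big(p,-d_p d^R_g(p,q)\big)=z_-$ and $\big(q,d_q d^R_g(p,q)\big)=z_+=S_g(z_-)$, which is the asserted identity.

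The step I expect to be the main obstacle is the boundary passage in the last paragraph: one must know that $d_p\til{d}_g$ and $d_q\til{d}_g$ extend continuously up to $\pl\bbar{M}$ — this is precisely why the renormalization by $\log\rho$ is needed, since $v_p^\flat$ and $v_q^\flat$ individually blow up at $\pl\bbar{M}$ — so that the boundary values can be computed along the convenient interior path, and one must verify the cancellation of the two singular $\rho^{-1}d\rho$ terms (or simply observe that restriction to $\pl\bbar{M}$ discards them). The remainder is careful bookkeeping of signs through the duality isomorphism and through \eqref{ident}.
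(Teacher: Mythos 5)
Your proof is correct and follows essentially the same route as the paper's: both compute the boundary derivatives of $\til{d}_g$ from the interior first variation of $d_g$ and pass to $\pl\bbar{M}$ using the smooth extension of Proposition~\ref{dtsmooth}, then match the result with the identification~\eqref{ident}. The only cosmetic difference is organization: the paper works with the half-renormalized $d^1_g(p,q')=d_g+\log\rho(p)$ so that $d_{q'}d^1_g$ is an element of $S^*_{q'}M$ tracked to its limit in $\bbar{S^*M}$ with the limits $p'\to p$ and $q'\to q$ taken sequentially, whereas you differentiate $\til{d}_g$ directly and approach $(p,q)$ simultaneously along the geodesic; the two computations are equivalent.
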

\begin{proof}
Define $d^1_g\in C^\infty(\bbar{M}\times M\setminus {\rm diag})$ by 
\[
d^1_g(p',q')=d_g(p',q')+\log\rho(p')=-\log\rho(q')+\til{d}_g(p',q').  
\]
If $p',q'\in M$, $p'\neq q'$, then $\grad_{q'} d_g^1(p',q')=\grad_{q'}
d_g(p',q')$ is the 
unit tangent vector at $q'$ to the geodesic joining $p'$ and $q'$, oriented
to point away from $p'$.  Here $\grad_{q'}$ denotes the gradient with
respect to $g$ in the second argument.  Since $(M,g)$ is simple and 
$d^1_g(p',q')$ is smooth in $p'$ up to $\pl\bbar{M}$, for $p\in
\pl\bbar{M}$ we can let $p'\to p$ along the geodesic joining $p$ to $q'$ to
deduce that 
$\grad_{q'} d^1_g(p,q')$ is the unit tangent vector at $q'$ to the geodesic
joining $p$ and $q'$, oriented to point away from $p$.  The corresponding
dual element of $S_{q'}^*M$ is $d_{q'}(d^1_g(p,q'))$.    

Now fix $p,q\in \pl\bbar{M}$, $p\neq q$, and consider the asymptotics of
$d_{q'}(d^1_g(p,q'))$ as $q'$ approaches $q$ along the geodesic joining $p$ 
to $q$.  Write $q'=(\rho,y)$ in the boundary identification
induced by $h$.  Recalling \eqref{defdR}, we have 
\[
\begin{split}
d_{q'}(d^1_g(p,q'))&=d_{q'}(-\log\rho+\til{d}_g(p,q'))\\
&=-\rho^{-1}d\rho+d_\rho(\til{d}_g(p,q'))
+d_y(\til{d}_g(p,q'))\\
&=-\rho^{-1}d\rho +\mc{O}(1)d\rho
+d_q(d^R_g(p,q))+\mc{O}(\rho)dy.
\end{split}
\]
According to the identification \eqref{ident}, the limiting point in
$\pl_+S^*M\cong T^*\pl\bbar{M}$ is therefore $(q,d_q(d^R_g(p,q)))$, as 
claimed.    

The same argument interchanging the roles of $p$ and $q$ shows that the
beginning point in $\pl_-S^*M\cong T^*\pl\bbar{M}$ for the geodesic from
$p$ to $q$ is $(p,-d_p(d^R_g(p,q)))$.  
\end{proof}

The relation $d^R_g(p,q)=L_g(B^{-1}(p,q))$ and the definition of $B$ in
terms of $S_g$ in Corollary~\ref{mapB} show that the pair $(L_g,S_g)$ 
determines the renormalized length $d^R_g$.  We    
conclude by showing that the converse is true in the following sense:    
\begin{prop}\label{equivalencedglens}
Let $\bbar{M}$ be a compact connected manifold-with-boundary and let $g_1$
and $g_2$ be simple asymptotically hyperbolic metrics on $M$.  Let $h_1$
and $h_2$ be representative metrics 
in the respective conformal infinities.  If $d^R_{g_1}=d^R_{g_2}$, then 
$L_{g_1}=L_{g_2}$ and $S_{g_1}=S_{g_2}$.  
\end{prop}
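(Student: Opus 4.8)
The plan is to push everything through Proposition~\ref{scatpoints}, which recovers the scattering map from the exterior derivatives of the renormalized boundary distance. Write $d^R := d^R_{g_1} = d^R_{g_2}$, and for $i=1,2$ use the identification \eqref{ident} associated to $h_i$ to view $S_{g_i}$ and $L_{g_i}$ as a map, resp.\ a function, on $T^*\pl\bbar{M}$. Fix $p\in\pl\bbar{M}$. Applying Proposition~\ref{scatpoints} to each metric and using $d^R_{g_1}=d^R_{g_2}$ gives, for every $q\in\pl\bbar{M}\setminus\{p\}$,
\[
S_{g_1}\big(p,-d_p d^R(p,q)\big)=\big(q,d_q d^R(p,q)\big)=S_{g_2}\big(p,-d_p d^R(p,q)\big),
\]
so $S_{g_1}$ and $S_{g_2}$ already agree on the subset $\Sigma_p:=\{-d_p d^R(p,q): q\in\pl\bbar{M}\setminus\{p\}\}\subset T^*_p\pl\bbar{M}$. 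The only substantive step is to show $\Sigma_p=T^*_p\pl\bbar{M}$.

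For that I would use Corollary~\ref{mapB} (which relies on simplicity, via Proposition~\ref{Ediff}): the map $z\mapsto\big(\pi(z),\pi(S_{g_1}(z))\big)$ is a diffeomorphism $\pl_-S^*M\to\pl\bbar{M}\x\pl\bbar{M}\setminus{\rm diag}$. Since its first component is $\pi$, it restricts to a diffeomorphism of the fiber $\pl_-S^*_pM$ onto $\{p\}\x(\pl\bbar{M}\setminus\{p\})$; composing with the identification $T^*_p\pl\bbar{M}\cong\pl_-S^*_pM$ from \eqref{ident}, the map $\beta:\xi\mapsto\pi\big(S_{g_1}(p,\xi)\big)$ is a bijection $T^*_p\pl\bbar{M}\to\pl\bbar{M}\setminus\{p\}$. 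But Proposition~\ref{scatpoints} says exactly that $\beta\big(-d_p d^R(p,q)\big)=q$, so $\beta$ maps $\Sigma_p$ onto all of $\pl\bbar{M}\setminus\{p\}=\beta(T^*_p\pl\bbar{M})$; injectivity of $\beta$ then forces $\Sigma_p=T^*_p\pl\bbar{M}$. Since $p$ was arbitrary, $S_{g_1}=S_{g_2}$ on $T^*\pl\bbar{M}$.

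To get $L_{g_1}=L_{g_2}$, recall the identity $L_g(z)=d^R_g\big(\pi(z),\pi(S_g(z))\big)$ — that is, the relation $d^R_g(p,q)=L_g(B^{-1}(p,q))$ read through Corollary~\ref{mapB} — which in the $T^*\pl\bbar{M}$ picture reads $L_g(p,\xi)=d^R_g\big(p,\pi(S_g(p,\xi))\big)$. Since $d^R_{g_1}=d^R_{g_2}$ and, by the previous step, $\pi(S_{g_1}(p,\xi))=\pi(S_{g_2}(p,\xi))$, the right-hand sides coincide for the two metrics, hence $L_{g_1}(p,\xi)=L_{g_2}(p,\xi)$ for all $(p,\xi)$.

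Conceptually the argument is short, and I do not expect a genuine obstacle; the one point that needs care is bookkeeping. The identification \eqref{ident} of $\pl_\pm S^*M$ with $T^*\pl\bbar{M}$, and therefore the meaning of the symbols $S_{g_i}$ and $L_{g_i}$ as objects on $T^*\pl\bbar{M}$, depends on the conformal representative $h_i$; one must make sure the surjectivity step and the final substitution are carried out consistently with these choices. The key input beyond Proposition~\ref{scatpoints} is the bijectivity in Corollary~\ref{mapB}, and this is precisely where simplicity enters: without it, $\beta$ need not be injective and the argument collapses (compare the remark following Proposition~\ref{Ediff}).
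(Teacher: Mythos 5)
Your proposal is correct and follows essentially the same route as the paper's proof: in both, Proposition~\ref{scatpoints} recovers the initial/final covectors from $d^R_g$, and the simplicity hypothesis (via Corollary~\ref{mapB} / Proposition~\ref{uniquegeodesics}) supplies the bijectivity needed to conclude that this determines $S_g$ everywhere, after which $L_g = d^R_g \circ B$ finishes. The only difference is cosmetic: you isolate the set $\Sigma_p$ and argue its surjectivity explicitly through the map $\beta$, whereas the paper phrases the same step by starting from $(p,\eta)$, producing $q$, and invoking uniqueness of the $g_2$-geodesic from $p$ to $q$.
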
  
\begin{proof}
Given $(p,\eta)\in T^*\pl\bbar{M}$, let $q=\pi (S_{g_1}(p,\eta))$ be the 
ending point of the geodesic for $g_1$ starting from $p$ with initial 
direction $\eta$.  By Proposition~\ref{uniquegeodesics}, there is a unique 
geodesic for $g_2$ starting at $p$ and ending at $q$.
Proposition~\ref{scatpoints} shows that the starting and 
ending directions for a geodesic are determined by the endpoints and 
the renormalized distance function $d^R_g$.  Since $d_{g_1}^R=d^R_{g_2}$,
one concludes that $S_{g_1}=S_{g_2}$.  Since  
$L_g(p,\eta)=d^R_g(B(p,\eta))$ and $B$ is determined by $S_g$, it 
follows that also $L_{g_1}=L_{g_2}$. 
\end{proof}

\end{document}